\definecolor{brown}{rgb}{.8,.4,.1}
\definecolor{orange}{rgb}{1,.7,0}
\definecolor{gray}{rgb}{.8,.8,.8}
\definecolor{pink}{rgb}{1,.4,.7}
\definecolor{DarkGreen}{rgb}{.13,0.55,0.13}
\definecolor{pink}{rgb}{1,.4,.7}
\numberwithin{equation}{section}
\newtheorem{thm}{Theorem}[section]
 \newtheorem{cor}[thm]{Corollary}
 \newtheorem{lem}[thm]{Lemma}
 \newtheorem{prop}[thm]{Proposition}
 \newtheorem{conj}[thm]{Conjecture}
  \newtheorem{ques}[thm]{Question}
 \theoremstyle{definition}
 \newtheorem{defn}[thm]{Definition}
 \newtheorem{rem}[thm]{Remark}
 \theoremstyle{definition}
 \newtheorem{ex}[thm]{Example}
  \newtheorem{nota}[thm]{Notation}
 \newcommand{\X}{\mathbb{X}}
  \newcommand{\cL}{\mathcal{L}}
 \newcommand{\ZZ}{\mathbb{Z}}
 \newcommand{\XX}{\mathbb{X}}
 \newcommand{\PP}{\mathbb{P}}
\def\move-in{\parshape=1.75true in 5true in}
 \def\P{\mathbb P} 
 \def\ds{\displaystyle}
 \def\X{\mathbb X}
 \def\k{\Bbbk}
\def\N{\mathbb N}
\def\cocoa{{\hbox{\rm C\kern-.13em o\kern-.07em C\kern-.13em o\kern-.15em A}}}
\DeclareMathAlphabet{\pazocal}{OMS}{zplm}{m}{n}
\newcommand{\La}{\mathcal{L}}
\newcommand{\HS}{\mathbf{HS}}
\newcommand{\WLP}{Weak Lefschetz Property}
\newcommand{\SLP}{Strong Lefschetz Property}
\DeclareMathOperator{\Syz}{Syz}
\DeclareMathOperator{\expd}{exp.dim}
\DeclareMathOperator{\codim}{codim}
\begin{document}

\title[Secant Varieties of the Varieties of Reducible Hypersurfaces in $\P^n$]{Secant Varieties of the Varieties of Reducible Hypersurfaces in $\P^n$} 

\author[M.V.\ Catalisano]{M.V.\ Catalisano}  
\address{Dipartimento di Ingegneria Meccanica, Energetica, Gestionale e dei Trasporti, Universit\`{a} di Genova, Genoa, Italy.}
\email{catalisano@dime.unige.it}

\author[A.V.\ Geramita]{A.V.\ Geramita$^*$} 
\address{Department of Mathematics and Statistics, Queen's University, King\-ston, Ontario, Canada and Dipartimento di Matematica, Universit\`{a} di Genova, Genoa, Italy}
\email{Anthony.Geramita@gmail.com \\ geramita@dima.unige.it}

\author[A.\ Gimigliano]{A.\ Gimigliano} 
\address{Dipartimento di Matematica and CIRAM, Universit\`{a} di Bologna, Bologna, Italy}
\email{alessandr.gimigliano@unibo.it }

\author{B.\ Harbourne}
\address{Department of Mathematics\\
University of Nebraska\\
Lincoln, NE 68588-0130 USA}
\email{bharbour@math.unl.edu}

\author{J.\ Migliore} 
\address{Department of Mathematics, 
 University of Notre Dame, 
  Notre Dame,
IN 46556 \\
 USA}
 \email{migliore.1@nd.edu}

\author{U.\ Nagel}
\address{Department of Mathematics\\
University of Kentucky\\
715 Patterson Office Tower\\
Lexington, KY 40506-0027, USA}
\email{uwe.nagel@uky.edu}

\author[Y.S.\ Shin]{Y.S.\ Shin} 
\address{Department of Mathematics, Sungshin Women's University,  Seoul, 136-742, Republic of Korea}
\email{ysshin@sungshin.ac.kr}

\dedicatory{In fond memory of A.V.\ Geramita, 1942--2016}
     

\begin{abstract}
Given  the space $V=\P^{\binom{d+n-1}{n-1}-1}$ of forms of degree $d$ in $n$ variables, and given
an integer $\ell>1$ and a partition $\lambda$ of $d=d_1+\cdots+d_r$, it is in general an open problem to obtain the dimensions 
of the $(\ell - 1)$-secant varieties $\sigma_\ell(\X_{n-1,\lambda})$ for the subvariety $\X_{n-1,\lambda}\subset V$ of
hypersurfaces whose defining forms have a factorization into forms of degrees $d_1,\ldots,d_r$. 
Modifying a method from intersection theory, we relate this problem to the study of the \WLP \ for a class of graded algebras, based on 
which we give a conjectural formula for the dimension of $\sigma_\ell(\X_{n-1,\lambda})$ for any choice of
parameters $n,\ell$ and $\lambda$. This conjecture gives a unifying framework subsuming all known results.
Moreover, we unconditionally prove the formula in many cases, considerably extending previous results,
as a consequence of which we verify many special cases of previously posed conjectures for dimensions of secant varieties of 
 Segre varieties. 
In the special case of a partition with two parts (i.e., $r=2$), we also relate this problem to a conjecture by Fr\"oberg on 
the Hilbert function of an ideal generated by general forms. 
\end{abstract} 

\keywords {secant variety; variety of reducible hypersurfaces; variety of reducible forms; intersection theory; Weak Lefschetz Property; Fr\"oberg's Conjecture}

\subjclass[2010] {Primary: 14N15, 13D40; Secondary: 14N05, 14C17, 13E10, 13C99}
\date{December 31, 2020}

\thanks{\it *While this paper was being refereed, Tony Geramita passed away. On behalf of Tony's many friends and colleagues 
from all walks of life, the six remaining authors dedicate this paper to him.}

\maketitle

\tableofcontents


\section{Introduction}

Let $S = \k[x_1,\dots,x_n] = \bigoplus_{i \geq 0} [S]_i$, where $\k$ is an algebraically closed field of characteristic zero.  In 1954 Mammana \cite{mammana} introduced the variety of reducible plane curves.  He was seeking to generalize work of C. Segre \cite{segre} (for conics), N. Spampinato \cite{spampianto} (for plane cubics) and G. Bordiga \cite{bordiga} (for plane quartics) as well as other works mentioned in his ample bibliography.

Here we generalize the idea further.  Let $\lambda = [d_1,\dots,d_r]$ be a partition of $d = \sum_{i=1}^r d_i$, which we will write as $\lambda \vdash d$,  where $d_1 \ge d_2 \ge \cdots \ge d_r \ge 1$ and $r \ge 2$.

Consider the variety $\X_{n-1,\lambda} \subset \mathbb P([S]_d) = \mathbb P^{N-1}$ of $\lambda$-reducible forms, where $N = \binom{d+n-1}{n-1}$. That is, 
\[
\X_{n-1,\lambda} = \{ [F] \in \mathbb P^{N-1} \ | \ F = F_1 \cdots F_r \hbox{ for some }0\neq F_i \in [S]_{d_i} \}.
\]
The object of this paper is to study the dimension of the $(\ell-1)$-secant variety of $\X_{n-1,\lambda}$, which we denote by $\sigma_\ell (\X_{n-1,\lambda})$.  
So $\sigma_\ell (\X_{n-1,\lambda})$ is the closure of the union of the linear spans of all subsets of  $\ell$ distinct points of $\X_{n-1,\lambda}$.
We will give a new approach to this problem.

We have
\[
\dim \X_{n-1, \lambda} = \sum_{i=1}^r \binom{d_i + n-1}{n-1}  - r .
\] 
Since all forms in two variables are products of linear forms, we always assume $n \ge 3$, $d \ge r \ge  2$, and $\ell \ge 2$. 
We can (and will) view a general point of $\X_{n-1,\lambda}$ as the product of general forms in $S$ of degrees $d_1,\dots,d_r$ respectively.   

Since, as it is easy to see, no hyperplane contains $\X_{n-1,\lambda}$, 
$\ell\leq N$ general points of $\X_{n-1,\lambda}$ span a linear space of dimension $\ell -1$ (i.e., a
secant $(\ell -1)$-plane), so
by a simple parameter count we have $\dim \sigma_\ell (\X_{n-1,\lambda}) \leq \ell \cdot \dim \X_{n-1,\lambda} + \ell-1$. But
it is possible that $\sigma_\ell (\X_{n-1,\lambda})$ fills its ambient space $\mathbb P^{N-1}$;  this 
clearly happens, for instance, if $\ell\geq N$. 
We combine the two possibilities to obtain an upper bound for the actual dimension of 
$\sigma_\ell(\X_{n-1\lambda})$, typically referred to as the \emph{expected dimension}:
\[
\hbox{exp.dim } \sigma_{\ell} (\X_{n-1,\lambda}) = \min \left \{ \binom{d+n-1}{n-1} -1, \; \ell \cdot \dim \X_{n-1,\lambda} + \ell-1 \right \} .
\]
The {\em defect}, $\delta_\ell$, is the expected dimension minus the actual dimension. When this is positive, we say that $\sigma_\ell (\X_{n-1,\lambda})$ is {\em defective}.  An important part of our work will be to identify when $\sigma_\ell (\X_{n-1,\lambda})$ is defective and to compute the defect.

Secant and join varieties of the Veronese, Segre and Grassmann varieties have been extensively studied.  The recent intense activity in studying these varieties has certainly benefited from the numerous fascinating applications in Communication Theory, Complexity Theory and Algebraic Statistics as well as from the connections to classical problems in Projective Geometry and Commutative Algebra.  (For a partial view of these applications consider the following references and their bibliographies: \cite{AOP}, \cite{AH:1}, \cite{BCS}, \cite{CGG:x}, \cite{CGG:6}, \cite{CS}, \cite{landsberg}, \cite{ravi}, \cite{SS}, \cite{vermeire}.)

However, very little is known about the secant varieties of the varieties of reducible hypersurfaces.  Here also there are useful applications in the study of vector bundles on surfaces and connections to the classical Noether-Severi-Lefschetz Theorem for general hypersurfaces in projective space (see \cite{CCG:1}
\cite{CF}, \cite{Patn}).

The first significant results about the secant varieties of $\lambda$-reducible forms were obtained by Arrondo and Bernardi in \cite{AB} for the special partition $\lambda =[1, \dots ,1]$ (they refer to $\X_{n-1,\lambda}$ for this particular $\lambda$ as the variety of {\it split} or 
{\it completely decomposable} 
forms).  They find the dimensions of secant varieties in this case for a very restricted, but infinite, family of examples.  This was followed by work of Shin \cite{S:1}  who found the dimension of the secant line variety to the varieties of split plane curves of every degree.

This latter result was further generalized by Abo \cite{A}, again for split curves, to a determination of the dimensions of all the higher secant varieties.  Abo also dealt with some cases of split surfaces in $\P^3$ and split cubic hypersurfaces in $\P^n$, for any $n$.

In all the cases considered, the secant varieties  have the expected dimension.  Arrondo and Bernardi have speculated that the secant varieties for split hypersurfaces always have the expected dimension.  We verify this 
for $\sigma_{\ell}(\X_{n-1,\lambda}$) ($\lambda = [1,\dots , 1]\vdash d$) as long as $2\ell \leq n$, which strengthens \cite[Proposition 1.8]{AB}. We also note that their speculation is a special case of our Conjecture \ref{conj:defectivity conj}  (a).

The parameters for this work are $n\geq 3$, $\ell\geq2$ and any partition $\lambda = [d_1,\dots,d_r]$ with $r\geq2$ positive parts. All previous results assume  $d_1=1$ (i.e., the split variety case \cite{A, AB, S:1}) or $n=3$ \cite{CGGS}, or $r=2$ \cite{BCGI, CCG:1}. We extend all of this previous work significantly. See, for example, Theorem \ref{thm: main thm - intro}, as an immediate consequence of which we obtain complete answers in many new cases, including each of the following:
for all $n\gg0$ (in fact $n\geq 2\ell$), fixing any $\ell$ and an arbitrary partition $\lambda$; 
for all $\ell\gg0$ (in fact $\ell\geq \binom{s+n-1}{n-1}$), fixing any $n$ and $d_2\ge\cdots\ge d_r \ge 1$, where $s=d_2+\cdots+d_r$; and
for all $d_1\gg0$ (in fact $d_1\geq (s-1) (n-1)$), fixing any $n$ and $d_2\ge\cdots\ge d_r \ge 1$, and any $\ell$ {\em not} in the interval $(\frac{n}{2},n)$.
We also propose a conjecture (see Conjecture \ref{conj:main conj}), which, if true, gives a complete answer in all remaining cases and which has led us to many of our results.

All approaches to finding the dimension of the secant varieties to a given variety $\X \subset \P^{n-1}$ begin with Terracini's Lemma, including ours.
These all require a good understanding of the tangent space to $\X$ at a general point.  Successful applications of Terracini's Lemma begin by identifying this tangent space as a graded piece of some relatively nice ideal (which we will  call the {\it tangent space ideal}).  To apply Terracini's Lemma, one then needs a way to deal with the sum of tangent space ideals at a finite set of general points of $\X$.  For the Veronese, Segre and Grassmann varieties, the quotient by this ideal sum in the appropriate polynomial ring typically is artinian.  The standard method for dealing with the sum of such ideals (which, per se, have no geometric content) is to pass, using Macaulay duality, to a consideration of the intersection of the {\it perps} of the tangent space ideals (see. e.g. the discussion in \cite{Ger}).  In the classical cases considered above, one obtains a union of special 1-dimensional ideals corresponding to zero dimensional projective schemes.  One then uses geometric methods to get information about the union of the schemes defined by the perps of the tangent space ideals.

This clever use of Macaulay duality had its first notable success with the work of Alexander and Hirschowitz, who completed (after almost one hundred years) the solution of Waring's Problem for general forms (see \cite{AH:1}).  Other work in this direction for these classical varieties can be found in \cite{CCG:1}, \cite{CGG:6},  \cite{CGG:x}, 
\cite{AOP}, \cite{LP}.

In the case of the varieties of reducible hypersurfaces, the method described above no longer works.  In this case, the tangent space ideals already define very nice schemes of dimension $\geq 0$, namely arithmetically Cohen-Macaulay codimension 2 subschemes of $\mathbb P^n$, and their Macaulay duals are artinian!  Thus, one is forced to deal with the sum of the tangent space ideals, i.e., with the {\it intersection} of the  codimension 2 schemes defined by the tangent space ideals at general points.

This is the novelty of our approach: to deal with this intersection we use a version of the so-called diagonal trick from intersection theory, and we show how the so-called Lefschetz properties come in to play in order to study improper intersections. 

As we describe in detail, finding this dimension amounts to viewing the intersection of the aforementioned codimension two subschemes in $\mathbb P^n$ as  the result of consecutive hyperplane sections of their join in $\mathbb P^{n\ell -1}$, where the hyperplanes cut out the diagonal.  
The dimension of the secant variety can then be read off from the Hilbert function in degree $d$ of the coordinate ring of the intersection of such schemes, although ``intersection" must be suitably interpreted in the artinian situation. Algebraically, we are interested in the Hilbert function in degree $d$ of $S/(I_{P_1} + \dots + I_{P_\ell})$  (the $I_{P_i}$ being the tangent space ideals at general points; see Proposition \ref{TangIdeal}), but the geometric notions from intersection theory and hyperplane sections guide our approach.

A key to our method is the observation that we can replace the hyperplanes cutting out the diagonal by truly general hyperplanes. This allows us to compute the dimension of the secant variety $\sigma_{\ell} (\X_{n-1,\lambda})$ in the case where the subschemes meet properly, which occurs precisely when $2 \ell \leq n$. 
In the case of an  improper intersection of the tangent spaces, i.e. $2 \ell > n$, we conjecture that the general hyperplanes induce multiplication maps that all have maximal rank. For a single hyperplane such  behavior has been dubbed the Weak Lefschetz Property in \cite{HMNW}. Assuming this conjectured property of the hyperplane sections, we obtain a formula for the dimension of the secant variety, which is  surprisingly uniform.  
This single formula proposes the dimension for {\em any} choice of $n, \ell$ and $\lambda$.  We will establish it in some cases. It is a conjecture in the rest, but we know of no cases of known results with which it does not agree.

To be more precise, for $0 \le j \le d$ and every given $\ell, n$ and partition $\lambda$ of $d$, we define integers $a_j (\ell, n , \lambda)$ by an explicit formula (see Definition \ref{def:integers a-j}). 
Our formula for the dimension of the secant varieties is the following, which we state as a conjecture so that it can be applied for all $n, \ell$ and $\lambda$, 
in addition to the many cases which we prove below.

\begin{conj}
  \label{conj:main conj} 
Let   $\lambda = [d_1,\ldots,d_r]$ be a partition of $d$  with $r \ge 2$ parts. Then: 
\begin{itemize}
\item[(a)] The secant variety $\sigma_{\ell} (\X_{n-1,\lambda})$ fills its ambient space if and only if there is some  integer $j$ with $s = d_2 + \cdots + d_r \le  j \le d$ such that $a_j (\ell, n , \lambda) \le  0$.

\item[(b)] If $\sigma_{\ell} (\X_{n-1,\lambda})$ does not fill its ambient space, then it has dimension
\begin{align*}
\dim \sigma_{\ell}  (\X_{n-1, \lambda}) = & \ \ell \cdot \dim \X_{n-1, \lambda} + \ell -1    \\[.5ex]
& \hspace*{0.3cm} {\displaystyle -  \sum_{k=2}^\ell (-1)^k \binom{\ell}{k}  \binom{d_1 - (k-1) (d_2 + \cdots + d_r) + n-1}{n-1} }  \\[.5ex]
& \hspace*{0.3cm} {\displaystyle -  \binom{\ell}{2} \binom{2 d_2 - d + n-1}{n-1} - \ell (\ell -1)  \binom{d_1 + d_2 - d+ n-1}{n-1} }. 
\end{align*} 
\end{itemize}
\end{conj}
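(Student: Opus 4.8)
The plan is to reduce the statement, via Terracini's Lemma together with a diagonal trick from intersection theory, to the computation of a single Hilbert-function value, to settle the range $2\ell\le n$ unconditionally, and to identify the remaining cases with a Lefschetz-type property.

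First I would apply Terracini's Lemma: for general points $P_1,\dots,P_\ell\in\X_{n-1,\lambda}$, say $P_i=[F_1^{(i)}\cdots F_r^{(i)}]$ with the $F_j^{(i)}$ general forms, one has $\dim\sigma_\ell(\X_{n-1,\lambda})=\dim\langle T_{P_1}\X_{n-1,\lambda},\dots,T_{P_\ell}\X_{n-1,\lambda}\rangle$. By Proposition~\ref{TangIdeal} the affine tangent space at $P_i$ is the degree-$d$ piece of the tangent space ideal $I_{P_i}=(\widehat{F^{(i)}_1},\dots,\widehat{F^{(i)}_r})$, where $\widehat{F^{(i)}_j}=\prod_{k\ne j}F^{(i)}_k$; this is a codimension-two arithmetically Cohen--Macaulay ideal, with $S/I_{P_i}$ having a minimal free resolution of the form $0\to\bigoplus_{1}^{r-1}S(-d)\to\bigoplus_{j=1}^{r}S(-(d-d_j))\to S$. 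Consequently $\dim\sigma_\ell(\X_{n-1,\lambda})=\binom{d+n-1}{n-1}-1-\dim_\k\bigl(S/(I_{P_1}+\cdots+I_{P_\ell})\bigr)_d$, and $\sigma_\ell$ fills its ambient space exactly when this Hilbert-function value vanishes; so everything comes down to computing $\dim_\k\bigl(S/(I_{P_1}+\cdots+I_{P_\ell})\bigr)_d$.

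Next comes the diagonal trick. Place each $I_{P_i}$ in its own copy of the polynomial ring and form the join algebra $A=\bigotimes_{i=1}^{\ell}\bigl(S^{(i)}/I_{P_i}\bigr)$ in $n\ell$ variables; then $S/(I_{P_1}+\cdots+I_{P_\ell})$ is the quotient of $A$ by the $n(\ell-1)$ linear forms cutting out the small diagonal $\mathbb P^{n-1}\subset\mathbb P^{n\ell-1}$. Since each $S^{(i)}/I_{P_i}$ is Cohen--Macaulay, so is $A$, and $H_A(t)=\bigl(1-\sum_j t^{d-d_j}+(r-1)t^d\bigr)^\ell/(1-t)^{n\ell}$, whose numerator vanishes to order two at $t=1$; this makes the integers $a_j(\ell,n,\lambda)$ of Definition~\ref{def:integers a-j} and all the binomials in part (b) completely explicit. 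The crucial point is that the diagonal linear forms may be replaced by general ones. When $2\ell\le n$ we have $n(\ell-1)\le(n-2)\ell=\dim A$, so a general choice of $n(\ell-1)$ linear forms is a regular sequence on the Cohen--Macaulay algebra $A$; the diagonal forms are also a regular sequence there, since for generic tangent schemes the $\ell$ codimension-two schemes $V(I_{P_i})$ meet in the expected codimension $2\ell$ (this proper-intersection statement must be verified, but reduces to a genericity statement about general forms). Hence both quotients have the same Hilbert series $H_A(t)(1-t)^{n(\ell-1)}=\bigl(1-\sum_j t^{d-d_j}+(r-1)t^d\bigr)^\ell/(1-t)^{n}$; extracting the coefficient of $t^d$ and expanding by inclusion--exclusion over which generators $\widehat{F^{(i)}_j}$ are chosen in each factor yields the formula of part (b) — only the two smallest generator degrees $d-d_1$ and $d-d_2$ survive in degree $\le d$, producing the $\binom{\ell}{k}$-sum and the two extra terms — and its vanishing gives the criterion of part (a). Thus the conjecture is a theorem when $2\ell\le n$.

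Finally, when $2\ell>n$ the intersection is improper: $n(\ell-1)>\dim A$, so no linear forms form a regular sequence, and one must instead kill general linear forms one at a time and control the successive multiplication maps on the intermediate quotients. Part (b), now read through the truncated $a_j$, and part (a) follow exactly when all these maps have maximal rank — a Weak Lefschetz--type property, in the sense of \cite{HMNW}, for the algebras built from the join of the tangent schemes. Proving this maximal-rank property is the main obstacle: it is precisely the kind of Lefschetz statement that is notoriously hard, and already for $r=2$, where the $I_{P_i}$ are complete intersections and the question becomes one about the Hilbert function of an ideal generated by general forms, it is a case of Fr\"oberg's Conjecture, itself open in general. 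Accordingly, beyond $2\ell\le n$ I would fall back on the standard toolkit: degeneration to monomial (or lex/binomial) ideals with lattice-path counting, results on Hilbert functions of generic almost complete intersections, and induction on $n$, on $\ell$, and on $d_1$ — these are the routes that should deliver the various unconditional cases ($n\ge 2\ell$; $\ell$ large; $d_1$ large; $n=3$; $r=2$) announced in the introduction and collected in Theorem~\ref{thm: main thm - intro}.
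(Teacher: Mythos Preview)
Your outline matches the paper's approach: Terracini, the tangent-space ideal, the join algebra $B$, replacing the diagonal by general linear forms, an unconditional proof for $2\ell\le n$, and reduction to a Lefschetz property (the paper's Conjecture~\ref{conj:WLP}) for $2\ell>n$; the link to Fr\"oberg's conjecture for $r=2$ is also exactly what the paper does in Section~\ref{sec:r is 2}. Since the statement is a conjecture, the paper does not prove it in full either---it establishes the case $2\ell\le n$ (Theorem~\ref{thm:dim formula}) and shows the rest would follow from the WLP-Conjecture (Theorem~\ref{thm: gen dim formula}), just as you propose.

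The one substantive difference is your justification for replacing the diagonal linear forms by general ones. You argue, in the range $2\ell\le n$, that both the diagonal forms and general forms are regular sequences on the Cohen--Macaulay join algebra, hence give the same Hilbert series---but this requires an independent proof of proper intersection of the tangent schemes, and more importantly it leaves the replacement unjustified precisely in the range $2\ell>n$ where you then invoke it (``kill general linear forms one at a time''). The paper's Lemma~\ref{sop} handles this uniformly for \emph{all} $\ell$ by a direct argument: the forms defining each $I_{(j)}^{(e)}$ are general in their own block of variables, so their images modulo $n(\ell-1)$ \emph{general} linear forms are again general forms in $n$ variables, and hence $B/\mathcal L B$ and $S/(I_{P_1}+\cdots+I_{P_\ell})$ have identical Hilbert functions whether or not any regular-sequence condition holds. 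This is the observation that makes the Lefschetz reduction legitimate in the improper-intersection range, and incidentally it also \emph{gives} the proper-intersection statement (Proposition~\ref{prop:proper intersection}) rather than needing it as input.
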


\noindent The fact that this conjecture is a consequence of the indicated Lefschetz property is shown in Theorem \ref{thm: gen dim formula}. 
Throughout this paper we use the convention that a binomial coefficient  $\binom{a}{b}$ is zero if $a < 0$. Thus, for example, the last and the penultimate term in the above dimension formula are zero if $r \ge 3$.  (A heuristic approach to the formula in Conjecture \ref{conj:main conj}(b) can be found in Remark \ref{sandro rmk}.)

Although stated differently, previous results imply  that Conjecture \ref{conj:main conj}  is true if $n = 3$ and $\lambda = [1,\ldots,1]$  (see \cite{A}), or if $n=3$, $\lambda$ is arbitrary, and $\ell = 2$ (see \cite{CGGS}).  Here, we prove this conjecture in further cases, most of which are summarized in the following theorem. Note that part (b)(i) of the following result was proved in \cite[Theorem 5.1]{CCG:1} using different language.  

\begin{thm}\label{thm: main thm - intro} 
Let $\lambda=[d_1,\ldots,d_r]$ be a partition of $d=d_1+s$ into $r\geq 2$ parts,
where $s=d_2+\cdots+d_r$.
Then Conjecture \ref{conj:main conj} is true in the following cases: 
\begin{itemize}
\item[(a)] $\ell \le \frac{n}{2}$ or $\ell\geq\binom{s+n-1}{n-1}$;
\item[(b)] $r = 2$ and either
\begin{itemize}
\item[(i)] $\ell \le \frac{n+1}{2}$, or 
\item[(ii)] $\lambda = [d-1, 1]$, or
\item[(iii)] $n=3$; and
\end{itemize}
\item[(c)] 
$r\geq3$ and $n\leq \ell\leq 1+\frac{d_1+n-1}{s}$.
\end{itemize}   
\end{thm}   

We prove Theorem \ref{thm: main thm - intro}(a) in Remark \ref{rem:comp with {CGGS}}(ii) and Proposition \ref{prop:non-defective}(c).  
See Theorem \ref{thm:dim when froeberg} for parts (b)(i, iii), Theorem \ref{thm:case d-1, 1} for part (b)(ii), and
Corollary \ref{Thm1.2c} for part (c).

We also show that if $\ell = \frac{n+1}{2}$ (Proposition \ref{prop:upper bound}) 
or $r = 2$ (Proposition \ref{prop:Froeberg gives main conj}), then the number predicted by Conjecture \ref{conj:main conj}  is at least an upper bound for $\dim \sigma_{\ell}  (\X_{n-1, \lambda})$.

Notice that the dimension formula in Conjecture \ref{conj:main conj} involves a series of comparisons, checking whether $a_j (\ell, n , \lambda) >  0$ for all $j = s, s+1,\ldots,d$. 
Accordingly, it is worthwhile to point out more explicitly some of the consequences it suggests. Again, the following is stated as a conjecture even though in the different settings of the above theorem these results are proven.

\begin{conj}\label{conj:defectivity conj} 
Let   $\lambda = [d_1,\ldots,d_r]$ be a partition of $d$  with $r \ge 2$ parts. Then: 
\begin{itemize}
\item[(a)] If $d_1 < d_2 + \cdots + d_r$ (and thus $r \ge 3$), then $\sigma_{\ell}  (\X_{n-1, \lambda})$ is not defective.   
\item[(b)] If $d_1 \ge d_2 + \cdots + d_r$, then the secant variety $\sigma_{\ell}  (\X_{n-1, \lambda})$ is defective if and only if it does not fill its ambient space. 
\end{itemize}
\end{conj}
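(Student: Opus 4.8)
The plan is to deduce both parts from Conjecture \ref{conj:main conj}, which furnishes an explicit formula for $\dim \sigma_\ell(\X_{n-1,\lambda})$; so the whole argument is a careful bookkeeping of the correction terms appearing there, and the real content is entirely contained in the earlier conjecture (or theorem, in the cases where it is proven). Recall that the defect $\delta_\ell$ is the difference between $\ell \cdot \dim \X_{n-1,\lambda} + \ell - 1$ and $\dim \sigma_\ell(\X_{n-1,\lambda})$ whenever the secant variety does not fill; if it does fill, one must instead compare $N-1$ with the naive expected dimension. First I would isolate the ``correction sum''
\[
C(\ell, n, \lambda) = \sum_{k=2}^\ell (-1)^k \binom{\ell}{k} \binom{d_1 - (k-1)s + n-1}{n-1} + \binom{\ell}{2}\binom{2d_2 - d + n-1}{n-1} + \ell(\ell-1)\binom{d_1 + d_2 - d + n-1}{n-1},
\]
so that Conjecture \ref{conj:main conj}(b) reads $\dim \sigma_\ell = \ell \dim \X_{n-1,\lambda} + \ell - 1 - C(\ell,n,\lambda)$, and observe that $\sigma_\ell$ is defective (in the non-filling case) precisely when $C(\ell,n,\lambda) > 0$.

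For part (a), assume $d_1 < s = d_2 + \cdots + d_r$ (which forces $r \ge 3$, hence the last two terms of $C$ vanish by the binomial-coefficient-zero convention, since $2d_2 - d = 2d_2 - d_1 - s \le d_2 - d_1 < 0$ would need checking — more carefully, for $r\ge 3$ we have $d \ge d_1 + d_2 + 1 > 2d_2$ unless $d_1 \ge d_2$, and in any case one argues $d_1 + d_2 - d = -(d_3+\cdots+d_r) < 0$ so the very last term is $0$, and $2d_2 - d + n - 1$: here $2d_2 \le d_1 + d_2 \le d - 1 < d$, so that term is also $0$). Thus $C(\ell,n,\lambda) = \sum_{k=2}^\ell (-1)^k \binom{\ell}{k}\binom{d_1 - (k-1)s + n - 1}{n-1}$. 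Since $d_1 < s$, already for $k = 2$ we get $d_1 - s < 0$, so every binomial coefficient in the sum is zero, whence $C = 0$ and $\delta_\ell = 0$; in the filling case, defectivity is vacuous. This gives (a). For part (b), assume $d_1 \ge s$. In the non-filling case we must show $C(\ell,n,\lambda) > 0$. If $r \ge 3$, again the last two terms vanish and $C$ reduces to the alternating sum; here one would show it is positive by a convexity/monotonicity argument on the binomials $\binom{d_1 - (k-1)s + n-1}{n-1}$ as $k$ ranges over $2,\ldots,\ell$ (the terms are nonincreasing in $k$ and strictly log-convex, making the alternating sum positive whenever the $k=2$ term is positive, i.e.\ whenever $d_1 \ge s$ — actually $d_1 \ge s$ only gives $d_1 - s \ge 0$, so the $k=2$ term is at least $1$). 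If $r = 2$, so $s = d_2 = d - d_1$ and the hypothesis is $d_1 \ge d_2$: then the last two terms of $C$ are both present, $\binom{2d_2 - d + n - 1}{n-1}$ and $\binom{d_1 + d_2 - d + n-1}{n-1} = \binom{n-1}{n-1} = 1$, and I would combine them with the alternating sum (whose $k=2$ term $\binom{d_1 - d_2 + n-1}{n-1}$ is $\ge 1$) to conclude positivity. Conversely, if $\sigma_\ell$ fills its ambient space it is by definition not defective, so the ``only if'' direction of (b) is immediate.

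The main obstacle is the positivity of the correction term $C(\ell,n,\lambda)$ in part (b): one has an alternating sum of binomial coefficients, and showing it is strictly positive requires a genuine inequality — most naturally the strict log-convexity of $k \mapsto \binom{d_1 - (k-1)s + n-1}{n-1}$ (extended by the zero convention past the point where the argument goes negative), together with a careful treatment of the boundary terms when $r = 2$ and of the truncation when $(k-1)s > d_1 + n - 1$. A clean way to organize this is to reinterpret $C(\ell,n,\lambda)$ directly as $\sum_{j=s}^{d} a_j(\ell,n,\lambda)$-type defect contributions via Definition \ref{def:integers a-j}, so that positivity of $C$ becomes the statement that at least one $a_j > 0$ for $j$ in the relevant range while no $a_j \le 0$ (which is exactly the negation of the filling criterion in Conjecture \ref{conj:main conj}(a)) — in other words, parts (a) and (b) of Conjecture \ref{conj:defectivity conj} are really two sides of the dichotomy already encoded in the integers $a_j$, and the proof amounts to making that dichotomy explicit. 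I would also double-check the edge case $\ell$ large enough that $\sigma_\ell$ fills even though $d_1 < s$, to confirm (a) is not vacuously asserting non-defectivity where the formula would misbehave.
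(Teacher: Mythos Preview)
Your treatment of part~(a) is correct and matches the paper's: under $d_1<s$ all three correction terms in Conjecture~\ref{conj:main conj}(b) vanish, so $C(\ell,n,\lambda)=0$ and non-defectivity follows. The paper argues identically (Proposition~\ref{prop:non-defective}(a)).

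Part~(b) has a genuine gap, and it is precisely the gap the paper itself identifies and does \emph{not} close. You correctly reduce to showing $C(\ell,n,\lambda)>0$ in the non-filling case, and for $r\ge3$ this becomes positivity of the alternating sum $g=\sum_{k=2}^\ell(-1)^k\binom{\ell}{k}\binom{d_1-(k-1)s+n-1}{n-1}$. Your ``convexity/monotonicity'' sketch is not a proof: the coefficients $\binom{\ell}{k}$ are not monotone in $k$, and log-convexity of the binomial sequence does not by itself force such an alternating $\binom{\ell}{k}$-weighted sum to be positive. The paper handles this by reinterpreting $g$ as $\dim_{\k}[\Syz]_d$, the degree-$d$ piece of the first syzygy module of a complete intersection of $\ell$ general forms of degree $s$ (Remark~\ref{rem:syzygy interpretation}); since $\Syz$ is torsion-free with initial degree $2s$, one gets $g>0$ exactly when $d_1\ge s$. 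But this interpretation requires $\ell\le n$ (otherwise $\ell$ general forms of degree $s$ are not a regular sequence and the Koszul complex is not a resolution), and the paper explicitly confines its argument to that range (Proposition~\ref{prop:non-defective}(b)(ii)). For $\ell>n$ the paper only proves the implication when $d_1<2s$ (single surviving term, your observation) or when $d_1\ge(n-1)(s-1)$ (a separate direct argument that $\sigma_\ell$ fills), and leaves the intermediate range $2s\le d_1<(n-1)(s-1)$ open, supported only by numerical evidence (Remark~\ref{NumericalTestingRemark}). Your proposed argument, if it worked, would close a gap the authors could not; as written it does not.
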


This conjecture 
highlights the role of the ``partition dividing hyperplane'' $d_1 = d_2 + \cdots + d_r$ 
in the space of partitions with $r$ positive parts, 
as introduced in \cite{CGGS} and discussed here in Remark \ref{PDH}.
Of course, Conjecture \ref{conj:defectivity conj} might be true even if the more
specific formulation given in Conjecture \ref{conj:main conj} is not.
Moreover, while Conjecture \ref{conj:main conj} implies most of Conjecture \ref{conj:defectivity conj},
it is not yet clear that Conjecture \ref{conj:main conj} implies all of Conjecture \ref{conj:defectivity conj};
see Proposition \ref{prop:non-defective} and Remark \ref{NumericalTestingRemark}.
In particular, notice that Conjecture \ref{conj:defectivity conj}(a) is an immediate consequence of 
Conjecture \ref{conj:main conj}, but
we can show that Conjecture \ref{conj:defectivity conj}(b) follows from 
Conjecture \ref{conj:main conj} only in certain cases (see Proposition \ref{prop:non-defective}). 

Our results on defectiveness show unconditionally: 

\begin{thm}\label{thm:defectivity - intro}
Let   $\lambda = [d_1,\ldots,d_r]$ be a partition of $d$  with $r \ge 2$ parts and let $s=d_2+\cdots+d_r$. Then: 
\begin{itemize}
\item[(a)]  If $d_1 < s$ (and hence $r\geq3$) and $2\ell \le n$, then $\sigma_{\ell}  (\X_{n-1, \lambda})$ is not defective.
\item[(b)]  If  $d_1 \ge s$ and $2\ell \le n$,
then $\sigma_{\ell}  (\X_{n-1, \lambda})$ is defective if and only if it does not fill its ambient space. 
\item[(c)] If $\ell\geq n$ and $d_1\geq(n-1)(s-1)$, then $\sigma_\ell(\X_{n-1,\lambda})$ always fills its ambient space,
while if $2\ell \le n$, then $\sigma_\ell(\X_{n-1,\lambda})$ fills its ambient space if and only if one of the following conditions is satisfied:
\begin{itemize}
\item[(i)] $n = 4$, $\ell = 2$,  and  $\lambda \in \{[1,1], [2,1], [1,1,1]\}$ \quad or 
\item[(ii)] $n = 2 \ell \ge 6$ and   $\lambda = [1,1]$.
\end{itemize}
\end{itemize}
\end{thm}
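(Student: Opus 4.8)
The plan is to use Terracini's Lemma together with the explicit shape of the tangent-space ideals to reduce everything to the range $2\ell\le n$, where Conjecture~\ref{conj:main conj} is already a theorem (Theorem~\ref{thm: main thm - intro}(a)), supplemented by two structural facts about the ideals $I_{P_i}$. Recall that by Terracini's Lemma and Proposition~\ref{TangIdeal}, for $\ell$ general points $P_i=[F^{(i)}_1\cdots F^{(i)}_r]$ of $\X_{n-1,\lambda}$ we have $\dim\sigma_\ell(\X_{n-1,\lambda})=N-1-\dim[S/(I_{P_1}+\cdots+I_{P_\ell})]_d$, where $N=\binom{d+n-1}{n-1}$ and $[I_{P_i}]_d=\sum_{j=1}^r (F^{(i)}/F^{(i)}_j)\,[S]_{d_j}$, so $\dim[I_{P_i}]_d=\dim\X_{n-1,\lambda}+1$; and since $\dim\sigma_\ell\le\min\{N-1,\ \ell\dim\X_{n-1,\lambda}+\ell-1\}$ always, if $\sigma_\ell$ fills its ambient space it is automatically not defective, so in (a) and (b) only the non-filling case needs argument. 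For (a): here $d_1<s$ forces $r\ge3$, and then every binomial coefficient in the three correction terms of the formula in Conjecture~\ref{conj:main conj}(b) has upper entry $\le n-2$ (for the alternating sum because $d_1-(k-1)s+n-1\le d_1-s+n-1\le n-2$ when $k\ge2$; for the remaining two because $2d_2-d+n-1\le n-2$ and $d_1+d_2-d+n-1=n-1-(d_3+\cdots+d_r)\le n-2$ as $r\ge3$), so all correction terms vanish, $\dim\sigma_\ell=\ell\dim\X_{n-1,\lambda}+\ell-1$ in the non-filling case, and $\sigma_\ell$ is not defective.

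For (b), since ``defective implies not filling'' is automatic, I would show directly that $d_1\ge s$ forces $\sigma_\ell$ to be defective whenever it does not fill. Put $G_i:=F^{(i)}/F^{(i)}_1\in I_{P_i}$, a form of degree $s=d-d_1$. As $d_1\ge s$ we have $2s\le d$, so $G_1G_2\cdot[S]_{d-2s}$ is a nonzero subspace of $[S]_d$ lying inside both $G_1\cdot[S]_{d_1}\subseteq[I_{P_1}]_d$ and $G_2\cdot[S]_{d_1}\subseteq[I_{P_2}]_d$; hence $[I_{P_1}]_d\cap[I_{P_2}]_d\ne 0$, whence $\dim([I_{P_1}]_d+\cdots+[I_{P_\ell}]_d)\le\ell(\dim\X_{n-1,\lambda}+1)-1$ and so $\dim\sigma_\ell\le\ell\dim\X_{n-1,\lambda}+\ell-2<\ell\dim\X_{n-1,\lambda}+\ell-1$. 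Since $\sigma_\ell$ does not fill, it is defective. (This uses only $d_1\ge s$; the hypothesis $2\ell\le n$ is needed just so that, via Theorem~\ref{thm: main thm - intro}(a), one can tell when $\sigma_\ell$ fills.)

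For the first assertion of (c) I would reduce to $\ell=n$ using $\sigma_\ell\subseteq\sigma_{\ell+1}$, then degenerate. With $G_i=F^{(i)}/F^{(i)}_1\in I_{P_i}$ of degree $s$ as above, specialize each general form $F^{(i)}_j$ to $x_i^{d_j}$; then $G_i$ specializes to $x_i^{s}$ and $\sum_{i=1}^n[I_{P_i}]_d$ specializes to a subspace of $[S]_d$ containing $(x_1^s,\dots,x_n^s)_d$. Now $(x_1^s,\dots,x_n^s)$ is a complete intersection whose Artinian quotient has top degree $n(s-1)$, and $d=d_1+s\ge(n-1)(s-1)+s=n(s-1)+1$, so $(x_1^s,\dots,x_n^s)_d=[S]_d$. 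By lower semicontinuity of $\dim(\sum_{i=1}^n[I_{P_i}]_d)$ on the (irreducible) parameter space of form-tuples, its generic value is $N$; hence $[S/(I_{P_1}+\cdots+I_{P_n})]_d=0$, so $\sigma_n$ — and therefore every $\sigma_\ell$ with $\ell\ge n$ — fills its ambient space.

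For the second assertion of (c) we are again in the range $2\ell\le n$, so Conjecture~\ref{conj:main conj} holds and $\sigma_\ell$ fills iff $a_j(\ell,n,\lambda)\le 0$ for some $j$ with $s\le j\le d$. A necessary condition for filling is $N\le\ell(\dim\X_{n-1,\lambda}+1)$, and an elementary binomial estimate under $2\ell\le n$ confines $(n,\ell,\lambda)$ to finitely many families; then I would dispose of each. If $d_1<s$, part (a) gives non-defectivity, so $\sigma_\ell$ fills iff $N\le\ell(\dim\X_{n-1,\lambda}+1)$, which leaves only $\lambda=[1,1,1]$, $n=4$, $\ell=2$. For $\lambda=[1,1]$, identifying $\X_{n-1,[1,1]}$ with the rank-$\le 2$ locus of symmetric matrices shows its $\ell$-secant variety is the rank-$\le 2\ell$ locus, which fills exactly when $n\le 2\ell$, i.e. (given $2\ell\le n$) when $n=2\ell$. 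The remaining finitely many candidates with $d_1\ge s$ — e.g. $\lambda=[2,1]$ with $n=4,\ell=2$ (fills) versus $\lambda=[2,1]$ with $n\ge5$ or $\lambda\in\{[3,1],[2,2]\}$ with $n=4,\ell=2$ (do not) — are decided by evaluating the integers $a_j(\ell,n,\lambda)$ of Definition~\ref{def:integers a-j}; assembling the survivors yields exactly (i) and (ii). The hard part will be precisely this last step: $N\le\ell(\dim\X_{n-1,\lambda}+1)$ only narrows the problem to finitely many families, and ruling those out while confirming the exceptional triples requires a careful sign analysis of the $a_j(\ell,n,\lambda)$ (equivalently, of the correction terms in Conjecture~\ref{conj:main conj}(b)) throughout the range $2\ell\le n$. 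Everything else reduces quickly to the already-established Conjecture~\ref{conj:main conj} together with the two structural facts above.
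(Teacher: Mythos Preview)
Your arguments for parts (a), (b), and the first assertion of (c) are correct and close to the paper's, with one genuine improvement. For (a) and the first half of (c) you do essentially what the paper does (the paper's Proposition~\ref{prop:non-defective}(b)(iv) is exactly your degeneration/complete-intersection argument, phrased via genericity rather than specialization). For (b), however, your direct observation that $G_1G_2\cdot[S]_{d-2s}$ sits in $[I_{P_1}]_d\cap[I_{P_2}]_d$ is more elementary than the paper's route through the Hilbert-series machinery of Theorems~\ref{general r=2} and~\ref{general r geq 3}, and as you note it does not even use $2\ell\le n$; this heuristic appears in the paper only as Remark~\ref{sandro rmk}, not as the proof.

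Where you go astray is the second assertion of (c). You propose to start from the $a_j$-criterion of Conjecture~\ref{conj:main conj}(a), impose the necessary inequality $N\le\ell(\dim\X_{n-1,\lambda}+1)$, cut down to finitely many families, and then do a sign analysis of the $a_j(\ell,n,\lambda)$ case by case---and you correctly flag this as ``the hard part,'' leaving it incomplete. The paper bypasses all of this. In Theorem~\ref{thm:dim formula} it observes that $\sigma_\ell$ fills iff $[A]_d=0$ for $A=S/(I_{P_1}+\cdots+I_{P_\ell})$; since $\dim A=n-2\ell$, this forces $n=2\ell$. In that case $\HS(A)$ factors as the $\ell$-th power of the Hilbert series of the artinian reduction of $S/J$, which is a polynomial of degree $d-2$ (Remark~\ref{info about S/J}), so $\HS(A)$ has degree $\ell(d-2)$ and $[A]_d=0$ iff $\ell(d-2)<d$, i.e.\ $d<2+\frac{2}{\ell-1}$. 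This immediately yields the list (i), (ii) with no case analysis of the $a_j$'s. So your plan is not wrong, but it is needlessly laborious compared to the socle-degree argument already built into Theorem~\ref{thm:dim formula}; you should simply cite that.
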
 

\noindent We give the proof near the end of Section \ref{sec:improper inters}.

We use our results in the case $r=2$ to study the {\em variety of reducible forms of degree $d$ in $n$ variables}
\[
\X_{n-1, d} = \bigcup_{k=1}^{\lfloor \frac{d}{2} \rfloor} \X_{n-1, [d-k, k]}.
\]
We show that 
\[
\dim \X_{n-1, d}  = \dim \X_{n-1, [d-1, 1]}
\]
and that all other irreducible components of $\X_{n-1, d}$ have dimension that is smaller than the dimension of   $\X_{n-1, [d-1, 1]}$. Thus, one can hope that $\X_{n-1, [d-1, 1]}$ determines the dimension of the secant variety of $\X_{n-1, d}$. Indeed, we establish: 

\begin{thm}\label{thm:intro red forms}
If $2\ell \le n$, then 
\[
\dim \sigma_{\ell} (\X_{n-1, d}) = \dim \sigma_{\ell} (\X_{n-1, [d-1, 1]}). 
\]
Moreover, $\sigma_{\ell}  (\X_{n-1, d})$ is defective if and only it it does not fill its ambient space.
\end{thm}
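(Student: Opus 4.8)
The plan is to reduce the statement about the reducible‑forms variety $\X_{n-1,d}$ to the already‑established results for the two‑part partitions $\X_{n-1,[d-k,k]}$, using two ingredients: a dimension comparison among the pieces $\X_{n-1,[d-k,k]}$, and the behaviour of secant varieties under taking unions of components. First I would record the dimension formula $\dim \X_{n-1,[d-k,k]} = \binom{d-k+n-1}{n-1} + \binom{k+n-1}{n-1} - 2$ and show, treating this as a function of $k$ on $1 \le k \le \lfloor d/2 \rfloor$, that it is strictly decreasing in $k$; this is a routine convexity/monotonicity computation with binomial coefficients (the ``outer'' term $\binom{d-k+n-1}{n-1}$ loses more than the ``inner'' term $\binom{k+n-1}{n-1}$ gains as $k$ increases, since $d-k \ge k$). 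Hence $\X_{n-1,[d-1,1]}$ is the unique component of $\X_{n-1,d}$ of maximal dimension, which is exactly the assertion $\dim \X_{n-1,d} = \dim \X_{n-1,[d-1,1]}$ quoted in the excerpt; I would cite this as already known and move on.

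Next I would use the general principle that for a reducible variety $\X = \bigcup_k \Y_k$ one has $\sigma_\ell(\X) = \overline{\bigcup_{(\ell_1,\ldots)} J(\sigma_{\ell_1}(\Y_1), \ldots)}$ over compositions $\ell_1 + \cdots = \ell$, so in particular $\dim \sigma_\ell(\X) = \max$ over such joins of the various $\sigma_{\ell_i}(\Y_{k_i})$. Thus it suffices to show that, under the hypothesis $2\ell \le n$, the join involving only $\Y_1 = \X_{n-1,[d-1,1]}$ (i.e.\ the term $\sigma_\ell(\X_{n-1,[d-1,1]})$ itself) has dimension at least as large as any join mixing in points from the lower‑dimensional pieces $\X_{n-1,[d-k,k]}$ with $k \ge 2$. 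The natural way to do this is via Terracini: the tangent space at a general point of $\sigma_\ell$ is the linear span of the tangent spaces $T_{P_i}\X_{n-1,[d-k_i,k_i]}$, whose dimensions are governed by the Hilbert function in degree $d$ of $S/(I_{P_1} + \cdots + I_{P_\ell})$ as set up in the body of the paper. Because $2\ell \le n$ the relevant schemes meet properly (this is precisely the regime where Theorem~\ref{thm: main thm - intro}(a) and the proper‑intersection analysis apply), so I can compute each $\dim \sigma_\ell(\X_{n-1,[d-k,k]})$ by the formula of Conjecture~\ref{conj:main conj}, now a theorem in this range, and likewise each mixed join; then I compare.

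Concretely, I would argue that replacing a point on $\X_{n-1,[d-k,k]}$ ($k \ge 2$) by a point on $\X_{n-1,[d-1,1]}$ never decreases the expected (hence, in the proper range, actual) dimension of the span of the tangent spaces: the tangent‑space ideal $I_P = (F_1, \ldots, F_r)$‑type ideal is ``larger'' (contained in a smaller degree‑$d$ piece of the quotient, i.e.\ imposes more conditions) for the partition $[d-1,1]$ than for a more balanced partition, because the component forms of smaller degree generate more of degree $d$. Thus the maximal join is the pure one $\sigma_\ell(\X_{n-1,[d-1,1]})$, giving the first equality. The ``moreover'' clause — that $\sigma_\ell(\X_{n-1,d})$ is defective iff it does not fill the ambient space — then follows by combining this equality with Theorem~\ref{thm:defectivity - intro}(b): in the range $2\ell \le n$, since $d_1 = d-1 \ge 1 = s$ for the partition $[d-1,1]$, that theorem tells us $\sigma_\ell(\X_{n-1,[d-1,1]})$ is defective iff it does not fill $\P^{N-1}$, and the same ambient space $\P^{N-1} = \P([S]_d)$ contains $\X_{n-1,d}$; one only needs to check that ``defective'' for $\X_{n-1,d}$ is measured against the same expected dimension, which holds because $\dim \X_{n-1,d} = \dim \X_{n-1,[d-1,1]}$.

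The main obstacle I expect is the comparison step: making rigorous the claim that the pure join $\sigma_\ell(\X_{n-1,[d-1,1]})$ dominates all mixed joins. A naive term‑by‑term comparison of the dimension formula of Conjecture~\ref{conj:main conj} across different two‑part partitions is messy because the correction terms (the alternating binomial sums) depend on $k$ in a nonobvious way, and one also has to handle joins that mix several different $k$'s simultaneously. The clean route is to avoid the closed formula and instead argue directly with Hilbert functions: show that for a general point $P$ the ideal $I_P$ of the tangent space to $\X_{n-1,[d-1,1]}$ satisfies $\dim_\k [S/I_P]_d \ge \dim_\k [S/I_Q]_d$ for $Q$ general on any other component, and more importantly that this domination is preserved under sums of $\ell$ such ideals at general points in the proper‑intersection regime $2\ell \le n$ — which is exactly where the proper‑intersection technology of the paper guarantees that the Hilbert function of the sum is computed by the ``expected'' formula and is monotone in the individual ideals. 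Once that monotonicity is in hand, the theorem drops out.
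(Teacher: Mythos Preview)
Your strategy is exactly the paper's: reduce to comparing the pure $[d-1,1]$ join against all mixed joins via Terracini and Hilbert functions, exploit the proper-intersection regime $2\ell\le n$, and then invoke the known defectivity result for $\sigma_\ell(\X_{n-1,[d-1,1]})$ for the ``moreover'' clause.

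Two points to tighten. First, your inequality in the last paragraph is reversed: you need $\dim_\k[S/J]_j\le\dim_\k[S/I]_j$ where $J$ is the pure $[d-1,1]$ sum and $I$ the mixed sum (so that $[J]_d$ is \emph{largest}, giving the largest secant dimension). Your earlier paragraph has the direction right, so this is a slip, but it is worth fixing.

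Second, the step you flag as the main obstacle---why the Hilbert-function domination for individual tangent ideals propagates to sums of $\ell$ of them---is not proved by an abstract ``monotonicity in the individual ideals'' principle. The paper's Lemma~\ref{prop:compare Hilb c.i.} does it concretely: in the regime $2\ell\le n$ each sum $I_{P_1}+\cdots+I_{P_\ell}$ is a complete intersection of $2\ell$ general forms of degrees $k_1,d-k_1,\ldots,k_\ell,d-k_\ell$, so its Hilbert series \emph{factors} as
\[
\HS(S/(I_{P_1}+\cdots+I_{P_\ell}))=\frac{1}{(1-t)^{n-2\ell}}\prod_{i=1}^\ell\frac{(1-t^{k_i})(1-t^{d-k_i})}{(1-t)^2},
\]
and the coefficientwise inequality $\frac{(1-t^k)(1-t^{d-k})}{(1-t)^2}\ge\frac{(1-t)(1-t^{d-1})}{(1-t)^2}$ (for $1\le k\le d/2$) then gives the comparison factor by factor. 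This is the missing mechanism you should spell out rather than appealing to an unproven monotonicity.
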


\noindent We prove this result in Section \ref{sec:variety red forms} as a consequence of Theorem \ref{thm:red forms, small ell}.

Note that the  dimension of $\sigma_{\ell} (\X_{n-1, [d-1, 1]})$ is known for all $\ell, n$ and $d$ (see 
Theorem \ref{thm:case d-1, 1} or  \cite[Proposition 4.4]{BCGI}). Thus, we know exactly when $\sigma_{\ell}  (\X_{n-1, [d-1,1]})$ is defective  (see Theorem \ref{thm:red forms, small ell}). We suspect that $\dim \sigma_{\ell} (\X_{n-1, d}) = \dim \sigma_{\ell} (\X_{n-1, [d-1, 1]})$ is true for all $n$ and $\ell$.  

As another application we consider Segre varieties. We show that results on secant varieties to $\X_{n-1,\lambda}$ imply non-defectivity of secant varieties to a  Segre variety (see Theorem \ref{AOPholds}). 

In Section \ref{sec:inters, diag} we recall the basic facts about the variety of reducible hypersurfaces and how Terracini's lemma is applied. We consider the coordinate ring of the join, which is arithmetically Cohen-Macaulay of codimension $2 \ell$ in $\mathbb P^{n\ell -1}$, with known minimal free resolution and Hilbert function. We discuss how the algebra that determines the dimension of $\sigma_\ell(\X_{n-1,\lambda})$ is obtained by successive hyperplane sections (i.e. reduction by general linear forms). As long as these linear forms are regular elements, the intersection is proper and in Section \ref{sec:proper inters} we give formulas for the dimension and defect.
(As an aside, we point out that the varieties $\X_{n-1,\lambda}$ are not generally
arithmetically Cohen-Macaulay. For example, by direct computation 
they are not for $n=3$ and $\lambda$ either $[1,1,1]$ or $[1,2]$, but we do not know about their secant varieties.)

In Section \ref{sec:secant lines} we summarize our results in the case $\ell = 2$, i.e. for the secant line variety.  Proper intersection corresponds to $n \geq 4$. For the remaining case, $n=3$, we recall the results of \cite{CGGS}. This gives us a bridge from the proper intersections to the improper intersections and gives the idea of how the Lefschetz property is applied in general.

In Section \ref{sec:improper inters} we work out, in general, the connection between the computation of the dimension for arbitrarily large $\ell$, corresponding to improper intersections, and the study of Lefschetz Properties.
 Indeed, based on experiments, we conjecture that the coordinate ring of a certain join variety has enough Lefschetz elements if $2\ell > n$ (see Conjecture \ref{conj:WLP}). If this conjecture is true, then Conjecture \ref{conj:main conj} follows (see Theorem \ref{thm: gen dim formula}). However, Conjecture \ref{conj:main conj} is weaker than the conjecture on the existence 
 of enough Lefschetz elements. 
 
 In Section \ref{sec:r is 2} we focus on the case $r =2$. We show that Conjecture \ref{conj:main conj}  is a consequence of Fr\"oberg's Conjecture on the Hilbert function of ideals generated by generic forms. 
In Section~\ref{sec:variety red forms} we study the variety of reducible forms.
We conclude in Section \ref{AOPstuff} by showing how our results imply cases of conjectures raised in \cite{AOP} about defectivity of Segre Varieties. 


\section{Intersections and the Dimension of Secant Varieties} 
     \label{sec:inters, diag}

After recalling some background and introducing our notation, we lay out our method for computing the desired dimension of a secant variety. It is inspired by a technique from intersection theory. The method will be applied in later sections, where we treat the case of proper and improper intersections separately and carry out the needed computations.  

\begin{nota}
 Let $S = \k[x_1,\dots,x_n] = \bigoplus_{i \geq 0} [S]_i$ be the standard graded polynomial ring, where $\k$ is an algebraically closed field of characteristic zero.  Let $\lambda = [d_1,d_2,\dots,d_r]$ be a {\it partition of $d$ into $r \ge 2$ parts}, i.e., $\lambda \vdash d$, $d_i \in \N$, $d_1 \geq d_2 \geq \dots \geq d_r >0$ and $\sum_{i=1}^r d_i = d$.    

\medskip\medskip
If we set $N = \binom{d+n-1}{n-1}$ then the {\it variety of reducible forms in $[S]_d$ of type $\lambda$} (or {\it the variety of reducible hypersurfaces in $\P^{n-1}$ of type $\lambda$}) is, as noted above: 
$$ 
\X_{n-1,\lambda}:= \{ [F] \in \P([S]_d) = \P^{N-1} \mid F = F_1 \cdots F_r,\ \ \deg F_i = d_i \} \ .
$$ 

The map $([F_1],\ldots,[F_r])\mapsto [F]=[F_1\cdots F_r]$ induces a finite morphism
\begin{equation}
\P([S]_{d_1}) \times \cdots \times \P([S]_{d_r}) \longrightarrow \X_{n-1,\lambda},
\end{equation}\label{SegreToRedFormsMorphism}
and so we have
\begin{equation}\label{dim X}
\dim \X_{n-1,\lambda} = \Big[ \sum_{i=1}^r \binom{d_i + n-1}{n-1} \Big] - r .
\end{equation}

As discussed above, given a positive integer $\ell\leq N$, the 
variety $\sigma_\ell (\X_{n-1,\lambda})$ is the subvariety of $\P^{N-1}$ consisting of the 
closure of the union of secant $\P^{\ell-1}$'s to $\X_{n-1,\lambda}$; for $\ell\geq N$,
$\sigma_\ell (\X_{n-1,\lambda})$ is simply $\P^{N-1}$.
Following the classical terminology, $\sigma_2(\X_{n-1,\lambda})$ is called the {\it secant line variety} 
of $\X_{n-1,\lambda}$ and $\sigma_3(\X_{n-1,\lambda})$ the {\it secant plane variety} of $\X_{n-1,\lambda}$.
\end{nota}

Our main interest in this paper is the calculation of the dimensions of the varieties $\sigma_{\ell}(\X_{n-1,\lambda})$.  

\begin{rem} Notice that for $n = 2$ the question is a triviality since {\em every} hypersurface of degree $d$ in $\P^1$ is reducible of type $\lambda = [1,1,\dots,1] \vdash d$.  Thus, we assume throughout $n \ge 3$.
\end{rem}

The fundamental tool for the calculation of dimensions of secant varieties is the following celebrated result \cite{T:1}.

\begin{prop}[Terracini's Lemma] \label{Terracini}  Let $P_1, \ldots , P_\ell$ be general points on $\X_{n-1,\lambda}$ and let $T_{P_i}$ be the (projectivized) tangent space to $\X_{n-1,\lambda}$ at the point $P_i$.

The dimension of $\sigma_\ell(\X_{n-1,\lambda})$ is the dimension of the linear span of $\bigcup_{i=1}^\ell T_{P_i}$.
\end{prop}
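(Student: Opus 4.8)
The statement to prove is Terracini's Lemma, a classical result. The plan is to give the standard proof via differentiating a parametrization of the secant variety.

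\medskip

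\textbf{Proof proposal.} The plan is to prove the classical Terracini Lemma in the only form we need: that $\dim \sigma_\ell(\X_{n-1,\lambda})$ equals the dimension of the linear span $\langle T_{P_1},\ldots,T_{P_\ell}\rangle$ for general $P_1,\ldots,P_\ell \in \X_{n-1,\lambda}$. Since we work over an algebraically closed field of characteristic zero, I would use the standard differential-of-the-parametrization argument rather than any characteristic-free substitute. First, set $X = \X_{n-1,\lambda}$, let $X^{\times \ell}$ be the $\ell$-fold product, and consider the incidence/parametrization map
\[
\Phi : X^{\times \ell} \times \mathbb P^{\ell-1} \dashrightarrow \mathbb P^{N-1}, \qquad \big((Q_1,\ldots,Q_\ell),[t_1:\cdots:t_\ell]\big) \mapsto \Big[\sum_{i=1}^\ell t_i \widetilde{Q_i}\Big],
\]
defined on the dense open set where the $Q_i$ are distinct smooth points in general position; here $\widetilde{Q_i}$ denotes an affine representative. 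By definition $\sigma_\ell(X)$ is the Zariski closure of the image of $\Phi$, so $\dim \sigma_\ell(X)$ equals the rank of $d\Phi$ at a general point of the source.

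\medskip

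Next I would compute that differential. Fix a general point $\big((P_1,\ldots,P_\ell),[t_1:\cdots:t_\ell]\big)$ in the source with all $t_i \ne 0$; this is legitimate because generality of the $P_i$ is exactly the hypothesis, and a general choice of the $t_i$ is harmless since varying a single one already sweeps the relevant directions. Working in an affine chart, a tangent vector to the source is a tuple $(v_1,\ldots,v_\ell; w)$ with $v_i$ tangent to the affine cone over $X$ at $\widetilde{P_i}$ and $w \in \k^\ell$; its image under $d\Phi$ is, up to the scaling by the $t_i$ and the projectivization, a vector of the form $\sum_i t_i v_i + \sum_i w_i \widetilde{P_i}$. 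As the $v_i$ range over the tangent space to the affine cone at $\widetilde{P_i}$ — which contains the line $\k\cdot\widetilde{P_i}$ — and the $w_i$ range freely, the set of such image vectors is precisely the linear span (in $\k^N$) of the affine tangent cones $\widehat{T}_{P_1},\ldots,\widehat{T}_{P_\ell}$. Passing to $\mathbb P^{N-1}$, the image of $d\Phi$ spans exactly the projective linear span $\langle T_{P_1},\ldots,T_{P_\ell}\rangle$, so its rank equals $\dim\langle T_{P_1},\ldots,T_{P_\ell}\rangle$. Since $X$ is irreducible and a general point is smooth, each $T_{P_i}$ has dimension $\dim X$, and the $P_i$ being general makes the rank of $d\Phi$ constant on a dense open subset, hence equal to $\dim$ of the image closure $\sigma_\ell(X)$.

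\medskip

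The one genuinely substantive point — and the step I would flag as the main obstacle to making the argument fully rigorous — is the passage from ``the generic rank of $d\Phi$'' to ``the dimension of $\overline{\mathrm{im}\,\Phi}$.'' This is the generic-smoothness input and is where characteristic zero is used: by generic smoothness (Sard's theorem in the algebraic setting), the differential of a dominant morphism onto its image has rank equal to the dimension of that image at a general point of the source. I would invoke this as a standard fact, together with the observation that the locus in $X^{\times\ell}\times\mathbb P^{\ell-1}$ where $\Phi$ is defined and the $P_i$ are distinct smooth points is a nonempty (hence dense) open set, so ``general $P_1,\ldots,P_\ell$'' indeed realizes the generic rank. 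Everything else is the elementary linear-algebra computation of $d\Phi$ sketched above. For the writeup I would keep that computation brief and cite \cite{T:1} (and a standard reference for generic smoothness) for the details.
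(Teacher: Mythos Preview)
Your proof is correct and is the standard differential-geometric argument for Terracini's Lemma. The paper, however, does not prove this proposition at all: it simply states it as the ``celebrated result \cite{T:1}'' and uses it as a black box, so there is no proof to compare against. Your write-up supplies exactly the classical argument one would expect (parametrize the secant variety, differentiate, invoke generic smoothness in characteristic zero), and your identification of the generic-smoothness step as the only place characteristic zero is genuinely used is accurate.
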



As mentioned above, we have the following definitions.

\begin{defn} \label{exp dim}
The 
{\it{expected dimension}} of $\sigma_\ell(\X_{n-1,\lambda})$, written $\expd (\sigma_\ell (\X_{n-1,\lambda}))$, is
\[
\min\{ N-1, \ell \cdot \dim\X_{n-1, \ \lambda} + (\ell-1) \} .
\]
The {\em defect} of $\sigma_\ell (\X_{n-1,\lambda})$ is
\[
\delta_\ell = \expd (\sigma_\ell (\X_{n-1,\lambda})) - \dim(\sigma_\ell(\X_{n-1,\lambda})) \geq 0.
\]
We say that $\sigma_\ell(\X_{n-1,\lambda})$ is {\em defective} if $\delta_\ell > 0$.

\end{defn}


\begin{rem}\label{defective}
Thus $\sigma_ \ell (\X_{n-1, \lambda})$ is defective if and only if $\dim \sigma_\ell  (\X_{n-1, \lambda}) < N-1$ and $\dim \sigma_\ell  (\X_{n-1, \lambda}) < \ell (\dim(\X_{n-1, \lambda})) + (\ell - 1)$. We will see that in some cases it will be easier to write an expression for $\delta_\ell$ than it will be to show that it is positive. 
\end{rem}

\medskip
Clearly,  to be able to effectively use Terracini's Lemma it is essential to have a good understanding of the tangent spaces to $\X_{n-1,\lambda}$ at general (hence smooth) points.  We do that now.

Let $P = [F] \in \X_{n-1,\lambda}$ be a general point.  Then $F = F_1\cdots F_r$ where the $F_i$ are irreducible forms of degree $d_i $ in $S$.  Let $G_i = F/F_i$, and so $\deg G_i = d-d_i$.  Consider the ideal $I_P \subset S$, where $I_P = (G_1, \ldots , G_r)$.

\begin{prop}\label{TangIdeal} 
In the notation of the preceding paragraph, we have
$$
T_P = \P\Big( [I_P]_d \Big).
$$
\end{prop}

\begin{proof}This proposition is well known and proofs can be found in several places (see e.g. \cite{CCG:1} Prop. 3.2).
\end{proof} 

We refer to the variety in $\PP^{n-1}$ 
defined by $I_P$ as the {\em variety determining the (general) tangent space to $\X_{n-1, \lambda}$}.

As an immediate corollary of Propositions \ref{Terracini} and \ref{TangIdeal} we have the following:

\begin{cor}\label{dim sigma ell}
Let $P_1, \ldots , P_\ell$ be $\ell$ general points of $\X_{n-1,\lambda}$ and $I=I_{P_1} + \cdots + I_{P_\ell}$.  Then
$$
\dim (\sigma_\ell(\X_{n-1,\lambda} )) = \dim_{\k} \left [ I \right ]_d -1.
$$
\end{cor}


This means that $\dim (\sigma_\ell(\X_{n-1,\lambda} ))$ is determined by the Hilbert function in degree $d$ of the ring $S/(I_{P_1}+ \cdots +I_{P_\ell})$, which, when $\sigma_\ell$ does not fill its ambient space, is the coordinate ring of the intersection of $\ell$ varieties, in $\P^{n-1}$, determining tangent spaces to $\X_{n-1, \lambda}$.

\begin{rem}\label{N-1 less}
Let $P_1, \ldots , P_\ell$ be general points on $\X_{n-1,\lambda}$.  Suppose that 
$$
N-1\leq \ell \dim \X_{n-1,\lambda} + (\ell -1);
$$
 i.e., the expected dimension of $\X_{n-1,\lambda}$ is $N-1$.  Then
$$
\sigma_\ell\big( \X_{n-1,\lambda} \big) \hbox{ is defective } \Leftrightarrow \dim_{\k} [S/(I_{P_1} + \cdots + I_{P_\ell})]_d > 0. 
$$
In this case, $\delta_\ell = \dim_{\k} [S/(I_{P_1} + \cdots + I_{P_\ell})]_d$.  
\end{rem}

\medskip\medskip Now that we have seen the ideal that enters into the use of Terracini's Lemma, it remains to give a nicer description of the ideal $I_P$ determining the tangent space at the point $P$.

\begin{prop}\label{nicer} 
Let $P$ be a general point of $\X_{n-1,\lambda}$, $P = [F_1\cdots F_r]$ where $\deg F_i = d_i$.  Put $F = F_1\cdots F_r$. Then we have
$$
I_P =  (F/F_1, \ldots ,F/F_r) = \bigcap _{1 \leq i < j \leq r} (F_i, F_j) .
$$
\end{prop}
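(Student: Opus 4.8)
The plan is to prove the two equalities
\[
(F/F_1,\ldots,F/F_r) \;=\; \bigcap_{1\le i<j\le r}(F_i,F_j)
\]
by establishing both inclusions, exploiting the fact that $P$ is a \emph{general} point, so that $F_1,\ldots,F_r$ are distinct irreducible forms (in particular pairwise coprime) and $S$ is a UFD. The first inclusion, $I_P \subseteq (F_i,F_j)$ for each pair, is immediate: for any index $k$, the product $F/F_k = \prod_{m\ne k}F_m$ is divisible by $F_i$ if $k\ne i$ and by $F_j$ if $k\ne j$; since $r\ge 2$, at least one of $i,j$ differs from $k$, so $F/F_k \in (F_i,F_j)$. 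Hence $I_P\subseteq \bigcap_{i<j}(F_i,F_j)$.

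For the reverse inclusion I would argue locally, or equivalently by a divisibility/coprimality argument. Let $H \in \bigcap_{i<j}(F_i,F_j)$. The key claim is that for each $k$, the form $\prod_{m\ne k}F_m = F/F_k$ divides... no — rather, that $H$ lies in the ideal $(F/F_1,\ldots,F/F_r)$. One clean way: since the $F_i$ are pairwise coprime in the UFD $S$, the ideals $(F_i)$ are pairwise comaximal after localizing at any height-one prime, and more usefully, $(F_i)\cap(F_j) = (F_iF_j)$. I would show by induction on $r$, or directly, that $\bigcap_{i<j}(F_i,F_j)$ equals the ideal of all forms $H$ such that $H$ is divisible by $F/F_k$ "modulo lower terms" — concretely, decompose using the Chinese-Remainder-type statement: a form $H$ lies in $(F_i,F_j)$ for all $i<j$ iff, writing things in $S/(F_1\cdots F_r)$ or working prime by prime, $H$ vanishes to the appropriate order. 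The cleanest path is to localize at each irreducible factor: $H\in (F_i,F_j)$ for all $j\ne i$ forces, upon localizing at the prime $(F_i)$ and using that all $F_j$ ($j\ne i$) become units there, that $H\in (F_i)$ locally is automatic; the real content is combining these. So instead I would use: $(F/F_1,\ldots,F/F_r)$ and $\bigcap_{i<j}(F_i,F_j)$ are both saturated (indeed they define the codimension-two arithmetically Cohen–Macaulay scheme which is the union of the pairwise complete intersections $V(F_i,F_j)$), have the same radical, and one can check they agree by comparing Hilbert functions, or by noting the left ideal is the defining ideal of $\bigcup_{i<j} V(F_i,F_j)$ scheme-theoretically — this is a standard fact about unions of "coordinate-subspace-like" complete intersections built from pairwise coprime forms.

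Concretely, the argument I expect to write: the right-hand ideal $J=\bigcap_{i<j}(F_i,F_j)$ is the ideal of the reduced scheme $Z=\bigcup_{i<j}V(F_i,F_j)$. I claim $I_P$ is also the ideal of $Z$. Since $I_P$ is generated by the $F/F_k$, which all vanish on each $V(F_i,F_j)$, we get $I_P\subseteq J$. For equality I localize at an arbitrary prime $\mathfrak p$ of $S$: if $\mathfrak p$ contains at most one of the $F_i$, say none or only $F_1$, then all but (at most) one of the generators $F/F_k$ become units in $S_{\mathfrak p}$, so $(I_P)_{\mathfrak p} = S_{\mathfrak p}$ or $(F_1)_{\mathfrak p}$, matching $J_{\mathfrak p}$; if $\mathfrak p$ contains two of them, say $F_1,F_2$ (it cannot contain three generically, but even if it could one handles it the same way), then $F/F_k\in (F_1)$ for $k\ne 1$ and $F/F_1\in(F_2)$, giving $(I_P)_{\mathfrak p}\supseteq (F_1,F_2)_{\mathfrak p}=J_{\mathfrak p}$, hence equality. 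Since $I_P\subseteq J$ with equality at every localization, $I_P=J$.

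The main obstacle is making the localization bookkeeping honest when several $F_i$ could share a common zero locus in high codimension; genericity of $P$ keeps the $F_i$ irreducible and pairwise coprime, so any prime contains only a controlled subset of them, but one should either invoke that $V(F_i,F_j,F_k)$ has codimension $3$ (again from genericity) to reduce to the two-factor case, or simply give the UFD-level divisibility argument that avoids localization entirely. I would likely cite the standard commutative-algebra fact — or \cite{CCG:1} where this decomposition is surely recorded — rather than belabor it, since the statement says the proposition is well known.
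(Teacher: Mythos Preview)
The paper does not prove this proposition; it simply cites Proposition~\ref{TangIdeal} (equivalently \cite{CCG:1}) for the first equality and \cite{PS:1}, Theorem~2.3 for the second. Your closing remark that you would ``likely cite the standard commutative-algebra fact'' is therefore exactly what the authors do.

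Your direct localization argument is a genuine alternative and is essentially correct, but two points need tightening. First, if $\mathfrak p$ contains exactly one factor, say $F_1$, then $F/F_1=F_2\cdots F_r$ is a unit in $S_{\mathfrak p}$, so $(I_P)_{\mathfrak p}=S_{\mathfrak p}$ rather than $(F_1)_{\mathfrak p}$; this is harmless since $J_{\mathfrak p}=S_{\mathfrak p}$ as well. Second, and more substantively, when $\mathfrak p$ contains three or more of the $F_i$ the problem does \emph{not} reduce ``the same way'': localizing at such a prime leaves you with the identical statement for a shorter list of forms, so you are implicitly invoking induction on $r$, or else you must use an independent fact to avoid those primes altogether. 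The clean fix is to observe (as the paper does in Lemma~\ref{lem3.1} and Remark~\ref{info about S/J}) that $I_P$ is Cohen--Macaulay of codimension~2 via its explicit Hilbert--Burch matrix, hence unmixed; then every associated prime of $I_P$ has height~2, and by genericity of the $F_i$ no height-2 prime contains three of them. Thus it suffices to check $(I_P)_{\mathfrak p}=J_{\mathfrak p}$ only at primes containing exactly two factors, where your computation is correct.
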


\begin{proof} The first equality is given in Proposition \ref{TangIdeal}.  
 The second equality is well known (see for example \cite{PS:1}, Thm. 2.3).
\end{proof} 

Thus the ideal $I_P$, for a general point $P \in \X_{n-1,\lambda}$, is of codimension 2 in $S$ and is a finite intersection of interrelated codimension 2 complete intersection ideals.  Such ideals (and their  generalization to the situation where the complete intersection ideals have higher codimension) have been studied in several papers for many different reasons (see e.g. \cite{GHM:1}, \cite{GHMN:1}, \cite{GMS:1} and \cite{CGVT:1}). 

We now derive the graded minimal free resolution of the ideal $I_P$. 

\begin{lem}\label{lem3.1}
Let $R=\k[Y_1,\dots,Y_r]$,  $M=Y_1\cdots Y_r$, $M_i=M/Y_i$, where $r \ge 2$.   If $I=(M_1,\dots,M_r)$ then $I ={\ds\bigcap_{1\le i<j\le r}}(Y_i,Y_j)$ and the minimal graded free resolution of $I$ is
$$
0 \to R^{r-1}(-r) \overset{A}{\to} R^r(-(r-1)) \to R \to R/I \to 0.
$$
\end{lem}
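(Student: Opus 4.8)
The statement has two parts: the ideal identity $I = \bigcap_{1\le i<j\le r}(Y_i,Y_j)$ and the resolution. For the ideal identity I would proceed by induction on $r$, or by a direct monomial-ideal argument. Since $I$ is a monomial ideal, it suffices to compare the two sides after localizing at each variable, or more elementarily to check containment of monomials. The containment $I \subseteq \bigcap (Y_i,Y_j)$ is immediate because $M_k = M/Y_k$ is divisible by $Y_i$ (hence in $(Y_i,Y_j)$) for every $k$: indeed at least one of $i,j$ differs from $k$. For the reverse containment, take a monomial $m \in \bigcap_{i<j}(Y_i,Y_j)$; for each pair $i<j$, $m$ is divisible by $Y_i$ or by $Y_j$, which says that the set of variables \emph{not} dividing $m$ is an independent set in the complete graph $K_r$, i.e. has size at most one. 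Hence $m$ is divisible by all but at most one of the $Y_i$, so $m$ is a multiple of some $M_k$, giving $m \in I$. This also records along the way that the citation to \cite{PS:1} in Proposition~\ref{nicer} is consistent.

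For the resolution, the natural approach is to write down the map $A$ explicitly and verify exactness directly; this is essentially the Koszul-type syzygy structure of the "almost complete intersection" $(M_1,\dots,M_r)$. The obvious syzygies among $M_1,\dots,M_r$ are $Y_i M_i - Y_j M_j = 0$ (both equal $M$), and these span an $(r-1)$-dimensional space of linear syzygies; so I would take $A$ to be the $r\times(r-1)$ matrix whose columns are $Y_1 e_1 - Y_2 e_2,\ Y_2 e_2 - Y_3 e_3,\ \dots,\ Y_{r-1}e_{r-1} - Y_r e_r$ (a "path"-type matrix), with grading $R^{r-1}(-r) \to R^r(-(r-1))$ since each $M_i$ has degree $r-1$ and each entry of $A$ is linear. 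To prove exactness I would either (i) check that the complex $0 \to R^{r-1}(-r)\xrightarrow{A} R^r(-(r-1)) \xrightarrow{[M_1\ \cdots\ M_r]} R \to R/I \to 0$ has the right ranks and satisfies the Buchsbaum--Eisenbud acyclicity criterion — the grade of $I_1(A) = (Y_1,\dots,Y_r)$ is $r \ge 2$ and the grade of $I_{r-1}$ of the presentation matrix is $2$ (it is $\mathrm{codim}\, I = 2$), which is exactly what is needed — or (ii) give a hands-on kernel computation: a syzygy $\sum g_i M_i = 0$ pushed into the fraction field gives $\sum g_i/Y_i = 0$, and clearing denominators shows the $g_i$ are forced to be the evident combinations of the columns of $A$. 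Option (ii) is elementary and self-contained; I would likely present that, doing the bookkeeping by peeling off one variable at a time.

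The main obstacle is purely the exactness verification in the middle — specifically showing the columns of $A$ generate \emph{all} syzygies of $(M_1,\dots,M_r)$, not just that they are syzygies. The Buchsbaum--Eisenbud route makes this clean once one observes $\mathrm{grade}\,(Y_1,\dots,Y_r) = r \ge 2 = \mathrm{grade}\, I$, so the only real content is computing the ideals of minors of $A$ and checking their grades meet the thresholds; the minors of the path matrix $A$ are products of subsets of the $Y_i$, so $I_{r-1}(A)$ is generated by $\{M_1,\dots,M_r\}$ up to radical, with grade $2$, and $I_1(A) = (Y_1,\dots,Y_r)$ has grade $r$. Minimality is automatic since all entries of $A$ and all generators $M_i$ (which have degree $r-1 \ge 1$) lie in the maximal ideal. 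Finally I would transport the statement to the setting of Proposition~\ref{nicer} by substituting $Y_i \mapsto F_i$: since $F_1,\dots,F_r$ are general forms (in particular a regular sequence up to the relevant degree for the Hilbert-function computations that follow), the resolution of $I_P$ is obtained from that of $I$ by this flat base change, twisting the degrees appropriately by the $d_i$.
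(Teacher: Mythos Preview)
Your proposal is correct and follows essentially the same route as the paper: write down an explicit $(r{-}1)\times r$ syzygy matrix and invoke a structure theorem for codimension-two perfect ideals. The differences are cosmetic. The paper chooses the ``star'' matrix with rows $Y_1e_1 - Y_ie_i$ ($i=2,\dots,r$), observes directly that its maximal minors are (up to sign) the $M_i$, notes $\codim I = 2$, and then cites the Hilbert--Burch theorem; you choose the ``path'' matrix with columns $Y_ie_i - Y_{i+1}e_{i+1}$ and appeal to Buchsbaum--Eisenbud, which amounts to the same verification since $I_{r-1}(A)=I$ in either basis. Your argument is in fact more complete than the paper's, since you actually prove the intersection identity $I=\bigcap_{i<j}(Y_i,Y_j)$ (via the independent-set observation on monomials), which the paper asserts but does not justify in the proof of this lemma. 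One small cleanup: your remark about $\mathrm{grade}\,I_1(A)=(Y_1,\dots,Y_r)$ is not needed for the acyclicity criterion---the two conditions required are $\mathrm{grade}\,I_{r-1}(A)\ge 2$ and $\mathrm{grade}\,I_1(\phi)\ge 1$, both of which reduce to $\mathrm{grade}\,I\ge 2$, and that follows immediately from the intersection identity you already proved.
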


\begin{proof} Consider the matrix  $A$, defined by 
$$
A^t=\begin{bmatrix}
Y_1 & -Y_2 & 0 & \cdots & 0 \\
Y_1 & 0 & -Y_3 & \cdots & 0 \\
\vdots &  \vdots &\vdots & \vdots & \vdots \\
Y_1 & 0 & 0 & \cdots & -Y_r
\end{bmatrix}_{(r-1)\times r\ .}  
$$
The ideal generated by the maximal minors of $A$ is $I$. It has codimension 2 in $R$.   Thus, the claim follows from the Hilbert-Burch theorem. 
\end{proof} 


\begin{rem} \label{info about S/J}
Let $R=\k[Y_1,\dots,Y_r]$ be as above and let $S=\k[x_1,\dots,x_n]$. Let $F_1,\dots,F_r$ be general homogeneous polynomials in $S$ of degrees $d_1,\dots,d_r$ respectively. Let $F=\prod^r_{i=1}F_i$, $\deg F=d=\sum^r_{i=1}d_i$ and $G_i=F/F_i$, $\deg G_i = d-d_i$, for $1\le i\le r$. Let
$$
\varphi : R \to S
$$
be defined by $\varphi(Y_i)=F_i$. Then, with $I$ as in Lemma \ref{lem3.1}, 
$\varphi(I)=(G_1,\dots,G_r)=\bigcap_{1\le i<j\le r} (F_i,F_j)=J$.

By the generality in the choice of the $F_i$,  $J$ is Cohen-Macaulay of codimension $2$ (see Proposition \ref{nicer}) and its Hilbert-Burch matrix has transpose
$$
\begin{bmatrix}
F_1 & -F_2 & 0 & \cdots & 0 \\
F_1 & 0 & -F_3 & \cdots & 0 \\
\vdots &  \vdots &\vdots & \vdots & \vdots \\
F_1 & 0 & 0 & \cdots & -F_r
\end{bmatrix} _{(r-1)\times r \ .}
$$
The minimal free graded resolution of $S/J$ is
\begin{equation} 
       \label{MFR}
0 \to S^{r-1}(-d)  {\to} \bigoplus_{i=1}^r S(-(d-d_i)) \to S \to S/J \to 0. 
\end{equation}
It is a simple consequence of this resolution that the artinian reduction of $S/J$ is level with socle degree $d-2$ and Cohen-Macaulay type $(r-1)$.  \qed
\end{rem}

\begin{rem}
Observe that $J$ is the ideal $I_P$ for the point $P = [F]$ on $\X_{n-1, \lambda}$ if we assume $d_1 \ge d_2 \ge \cdots \ge d_r$.   
In this paper the main goal is to consider  sums of such ideals, i.e. ideals of the form $(G_1,\dots,G_r)$ arising from general forms of prescribed degrees as described above, and to compute the dimension of the component in degree $d$. The fact that they correspond to general points on $\X_{n-1,\lambda}$ is not needed for most of our computations. Thus, to emphasize the focus on the ideals rather than the points, we will write $I_{(1)} + \dots + I_{(\ell)}$ in place of $I_{P_1} + \dots + I_{P_\ell}$ when the geometric context is not needed, and retain the latter only when the geometry is important (e.g., Remark~\ref{sandro rmk}).
\end{rem}

\begin{rem}
    \label{HS} 
Let us recall a few results about the Hilbert series of a standard graded ring.
 
Let $A= \oplus_{i=0}^{\infty} [A]_i$.  The {\it Hilbert series of $A$} is the formal power series
$$
\HS(A) = \sum_{i=0}^{\infty} (\dim [A]_i)t^i .
$$

It is a simple matter to show the following two facts, which we will use often in what follows:

$(a)$ If $L$ is a linear non-zerodivisor in $A$ then
$$
\HS(A/LA) = (1-t)\HS(A).
$$

$(b)$  ${\ds \HS(\k[x_1, \ldots ,x_n]) = \frac{1}{(1-t)^n}}$ and  ${\ds \HS(\k[x_1, \ldots ,x_n](-a)) = \frac{t^a}{(1-t)^n}}$. 

\medskip 

Of course $(b)$ is a simple consequence of $(a)$.  

$(c)$  We can apply these observations to the minimal free resolution \eqref{MFR} in Remark \ref{info about S/J} in order to conclude that
\begin{equation}
    \label{eq:hilbSeries tangent}
\HS(S/J) = \frac{1}{(1-t)^n}\Big[ 1 - \sum_{i=1}^r t^{d-d_i} + (r-1)t^d\Big] \ .
\end{equation} 

$(d)$ If $A$ and $B$ are graded $\k$-algebras, then 
\[
\HS (A \otimes_{\k} B) = \HS (A) \cdot \HS (B). 
\] 
\end{rem}
\medskip



Consider a partition $\lambda=[d_1,\dots,d_r]$, $\lambda \vdash d$.   In the polynomial ring $\k[x_1,\dots,x_n]$ choose general homogeneous forms 
$F_1,\dots,F_r$ of degrees $d_1,\dots,d_r$ and, as in Remark \ref{info about S/J}, let $F=\prod^r_{i=1} F_i$, $G_i=F/F_i$ and $I=(G_1,\dots,G_r)=\bigcap_{1\le i<j\le r} (F_i,F_j)$.

\medskip

Inasmuch as we are interested in the secant variety $\sigma_\ell(\X_{n-1,\lambda})$ we form $\ell$ sets of general polynomials as above in $\k[x_1,\dots,x_n]$. Call the elements of the $j$-th set
$$
\{F_{j,1},\dots,F_{j,r}\},
$$
where $\deg F_{j,k}=d_k$.  As in Remark \ref{info about S/J}, for $1\leq j \leq \ell$ form $M_j = \prod^r_{i=1}F_{j,i} $, and  $G_{j,1},\dots,G_{j,r}$ where $G_{j,k}= M_j/F_{j,k}$.

Set
$$
I_{(j)}=(G_{j,1},\dots,G_{j,r})=\bigcap_{1\le i<k\le r} (F_{j,i}.F_{j,k}), \ \ \ \   1\leq j \leq\ell .
$$
Notice that each of the quotients $S/I_{(j)}$ has the same Hilbert function and minimal free resolution as that of $S/J$ given in Remark~\ref{info about S/J}. Furthermore, each ideal $I_{(j)}$ defines a variety determining the tangent space to $\X_{n-1, \lambda}$ at the point 
$P_j = [F_{j1}F_{j2}\cdots F_{jr}]$.  

We can perform the same construction as above, but this time choosing each set of $r$ general polynomials in different polynomial rings, i.e., consider 
$\{F_{j,1},\dots,F_{j,r}\}$ as polynomials in the ring $\k[x_{j,1},\dots,x_{j,n}]$. 
We can form the sum of these ideals (extended) in
$$
T=\k[x_{1,1},\dots,x_{1,n},\dots,x_{\ell,1},\dots,x_{\ell,n}] \ ,
$$
setting $\tilde I=I_{(1)}^{(e)}+\cdots+I_{(\ell)}^{(e)}$ (i.e., the sum of the extended ideals).

\begin{thm}
   \label{thm:join as tensor product}
The ring 
$$
B = T/\tilde I \cong S/I_{(1)}\otimes_{\k} \cdots \otimes_{\k} S/I_{(\ell)}
$$
is Cohen-Macaulay of dimension $\ell(n-2)$. Its  minimal graded
free resolution over $T$ is the tensor product (over $\k$) of the minimal graded free resolutions of $S/I_{(j)}$ over $S$ for $1\le j\le \ell$.
\end{thm}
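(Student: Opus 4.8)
The plan is to establish the ring isomorphism first, then read off all the stated properties from the tensor-product structure. For the isomorphism, observe that $T = \k[x_{1,1},\dots,x_{\ell,n}]$ is, as a $\k$-algebra, the tensor product $S_{(1)} \otimes_\k \cdots \otimes_\k S_{(\ell)}$, where $S_{(j)} = \k[x_{j,1},\dots,x_{j,n}]$. Each extended ideal $I_{(j)}^{(e)} \subset T$ is the image of $I_{(j)} \otimes_\k \bigotimes_{i\ne j} S_{(i)}$ under this identification, since extension of an ideal along the flat map $S_{(j)} \to T$ is exactly tensoring with the other factors. The sum $\tilde I = \sum_j I_{(j)}^{(e)}$ therefore corresponds, under the standard isomorphism for tensor products of quotient rings, to the ideal whose quotient is $\bigotimes_{j=1}^\ell (S_{(j)}/I_{(j)})$; concretely, $(A_1 \otimes_\k \cdots \otimes_\k A_\ell)/(J_1^{(e)} + \cdots + J_\ell^{(e)}) \cong (A_1/J_1)\otimes_\k \cdots \otimes_\k (A_\ell/J_\ell)$ is an elementary fact for $\k$-algebras $A_j$ with ideals $J_j$. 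Since each $S_{(j)}/I_{(j)}$ is abstractly isomorphic to $S/I_{(1)}$ (same Hilbert function and resolution, as noted in the excerpt, and indeed the construction is symmetric in $j$), this gives $B \cong S/I_{(1)} \otimes_\k \cdots \otimes_\k S/I_{(\ell)}$.

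Next I would handle the numerical and homological claims. The dimension is additive over tensor products of finitely generated $\k$-algebras: $\dim(A \otimes_\k B) = \dim A + \dim B$ over a field. Each $S/I_{(j)}$ has codimension $2$ in the $n$-variable ring $S$, hence Krull dimension $n-2$ (recall from Remark \ref{info about S/J} that $S/J$ is Cohen-Macaulay of codimension $2$), so $\dim B = \ell(n-2)$. For the free resolution statement: if $\mathbf{F}_\bullet^{(j)} \to S_{(j)}/I_{(j)}$ is a minimal graded free resolution over $S_{(j)}$, then the tensor product complex $\mathbf{F}_\bullet^{(1)} \otimes_\k \cdots \otimes_\k \mathbf{F}_\bullet^{(\ell)}$ is a complex of free $T$-modules; it is acyclic with homology $B$ because tensoring over the field $\k$ is exact (the Künneth formula has no $\mathrm{Tor}$ terms over a field), and it is minimal because the differentials, being built from the $\mathbf{F}_\bullet^{(j)}$ differentials with entries in the respective maximal ideals $(x_{j,1},\dots,x_{j,n})$, have all entries in the maximal ideal of $T$. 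Cohen-Macaulayness then follows: the tensor-product resolution \eqref{MFR}$^{\otimes \ell}$ has length $\ell$ (each factor has length $2$, but the resolution of $S/I_{(j)}$ in \eqref{MFR} has length $2$, so the tensor complex has length $2\ell$), matching the codimension $2\ell$ of $\tilde I$ in $T$, so $\operatorname{pd}_T B = \operatorname{codim}_T \tilde I$ and Auslander-Buchsbaum gives depth $= \ell n - 2\ell = \ell(n-2) = \dim B$. Alternatively, and more cleanly, a tensor product over $\k$ of Cohen-Macaulay $\k$-algebras is Cohen-Macaulay, which one may simply cite.

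The one point requiring a little care — and the closest thing to an obstacle — is verifying that the resolution of $B$ really is the tensor product of the individual resolutions as stated, i.e.\ that no cancellation occurs and that the complex computes the right thing. This is purely the Künneth theorem over a field plus the observation about minimality, so it is not a serious difficulty; the main thing is to be explicit that the base ring is a field, which kills all the higher $\mathrm{Tor}$ obstructions. One should also note in passing that since each $S/I_{(j)}$ is Cohen-Macaulay, and all factors are graded with the standard grading, the tensor product inherits the standard grading on $T$ and the resolution is graded with the evident twists (sums of the twists appearing in the factors), so the Hilbert series of $B$ is the product $\HS(S/I_{(1)})^{\ell}$ by Remark \ref{HS}(d), consistent with \eqref{eq:hilbSeries tangent}. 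I would close by remarking that this presents $B$ as the homogeneous coordinate ring of the join of the $\ell$ varieties defined by the $I_{(j)}$, sitting in $\mathbb{P}^{n\ell-1}$, which is the geometric picture driving the rest of the argument.
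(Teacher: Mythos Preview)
Your proposal is correct and follows essentially the same approach as the paper: the paper's proof is simply a one-line appeal to the K\"unneth formulas (with a citation to \cite[Lemma 3.5]{MNP}), and you have spelled out exactly that argument---the tensor-product identification of $T/\tilde I$, exactness of the tensored resolution over the field $\k$, minimality from the entries lying in the maximal ideal, and Cohen--Macaulayness via Auslander--Buchsbaum. One small slip: the factors $S/I_{(j)}$ are not all abstractly isomorphic to $S/I_{(1)}$ (only their Hilbert functions and graded Betti numbers agree), but this aside plays no role in your argument.
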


\begin{proof}
This is a consequence of the K\"unneth formulas. See \cite[Lemma 3.5]{MNP} and its proof.
\end{proof}

Note that $B$ is the coordinate ring of the join of $\ell$ varieties, each of which has codimension 2 in $\P^{n-1}$, so their join is in $\P^{n\ell -1}$. The so-called diagonal trick  gives 
\[
S/(I_{(1)} + \cdots + I_{(\ell)}) \cong B/\Delta B, 
\]
where the diagonal $\Delta$ is generated by the $(\ell - 1) n$ linear forms $x_{1,j} - x_{i, j}$ with $1 < i \le \ell$ and $1 \le j \le n$. Observe that the saturation of $I_{(1)} + \cdots + I_{(\ell)}$ defines the intersection of the indicated varieties in $\P^{n-1}$, provided this intersection is not empty. 

A key to our approach is the fact that replacing the linear forms generating the diagonal by truly general linear forms gives a quotient ring with the same Hilbert function as \linebreak $S/(I_{(1)} + \cdots + I_{(\ell)})$. To illustrate the idea, fix a polynomial ring $R$ in $m$ variables,  and  let $L \in R$ be a general linear form.  Since we have a surjection $R \rightarrow R/( L )$, if $\{ F_1,\dots,F_t \}$ is a set of general forms in $R$ of degrees $d_1,\dots,d_t$, then the restriction, $\{ \bar F_1 \dots, \bar F_t \}$, to $R/( L )$  can be viewed again as a set of general forms of degrees $d_1,\dots,d_t$ in $m-1$ variables.  Furthermore, given a prescribed construction of an ideal in $m$ variables using general forms of prescribed degrees, the restriction to $R/( L )$ of this ideal can be viewed as an application of the same construction to an ideal of general forms of the same degrees but in $m-1$ variables.  

In our setting, if $[I_P]_d = [(G_1,\dots,G_r)]_d$ is the vector space determining the tangent space to $\X_{n-1,\lambda}$ at a general point  $P$ (see Proposition \ref{TangIdeal}), then $[\bar I_P]_d = [(\bar G_1,\dots,\bar G_r)]_d$ is the degree $d$ component of an ideal that determines the tangent space at a general point of the variety $\X_{n-2,\lambda}$. The analogous statement also holds for an ideal of the form $I_{P_1}+\dots + I_{P_\ell}$.

Returning to the above notation, let $\La$ be a set of $(\ell-1)n$ general linear forms in $T$.  Then we have the following useful observation. 

\begin{lem} 
      \label{sop} 
The algebras $S/(I_{(1)} + \cdots + I_{(\ell)})$ and $B/{\La}B \cong T/({\La}, {\tilde I})$ have the same Hilbert series.  I.e.
$$
\HS (S/(I_{(1)} + \cdots + I_{(\ell)})) = \HS (B/{\La}B) \ .
$$
\end{lem}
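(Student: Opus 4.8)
The plan is to reduce the claim to the following general principle: if one forms an ideal by a fixed construction using general forms of prescribed degrees in a polynomial ring, then cutting down by a general linear form yields the same construction applied to general forms of the same degrees in one fewer variable, \emph{and} the Hilbert function of the quotient drops by exactly one factor of $(1-t)$ provided the linear form is a nonzerodivisor on the relevant quotient. The subtlety is that the $(\ell-1)n$ linear forms generating the diagonal $\Delta$ are \emph{not} general — they are the special forms $x_{1,j}-x_{i,j}$ — yet the resulting Hilbert function agrees with the one obtained from $(\ell-1)n$ truly general linear forms. So the two sides of the asserted equality are computed by two a priori different successions of hyperplane sections of $B$, and we must show they land on the same Hilbert series.

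First I would recall from Theorem~\ref{thm:join as tensor product} that $B=T/\tilde I$ is Cohen--Macaulay of dimension $\ell(n-2)$ in a ring $T$ of $\ell n$ variables. Hence $\codim B = \ell n - \ell(n-2) = 2\ell$, and $B$ has depth $\ell(n-2)$. A sequence of $(\ell-1)n \le \ell(n-2)$ linear forms (valid since $n\ge 3$ forces $\ell(n-2)\ge (\ell-1)n$ exactly when $\ell(n-2)\ge \ell n - n$, i.e. $n\ge 2\ell$; in general one must be more careful, see below) that is a regular sequence on $B$ will, by Remark~\ref{HS}(a) applied repeatedly, produce a quotient with Hilbert series $(1-t)^{(\ell-1)n}\HS(B)$. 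For \emph{general} linear forms $\La$, a standard prime-avoidance argument shows $\La$ is as close to a regular sequence as the depth of $B$ allows, and since $B$ is Cohen--Macaulay the general linear forms form a regular sequence up to the dimension bound; the surjectivity/injectivity of multiplication by each successive general linear form in the relevant degree range is exactly what keeps the Hilbert function ``as small as possible.'' On the other side, the diagonal linear forms $x_{1,j}-x_{i,j}$ are, under the isomorphism $T/\tilde I \cong S/I_{(1)}\otimes_\k\cdots\otimes_\k S/I_{(\ell)}$, precisely the forms whose quotient is $S/(I_{(1)}+\cdots+I_{(\ell)})$ — this is the content of the diagonal trick already recorded in the excerpt.

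So the heart of the argument is to show these two quotients have the same Hilbert series. I would argue as follows: after a linear change of coordinates in $T$ (replace the variables $x_{i,j}$ for $i>1$ by $x_{1,j}-x_{i,j}$), the diagonal becomes a coordinate subspace, and $B/\Delta B$ is obtained by setting those coordinates to zero. By the ``restriction of a general-forms construction'' principle stated in the paragraph preceding the lemma in the excerpt, $S/(I_{(1)}+\cdots+I_{(\ell)})$ is itself the quotient of $S=\k[x_1,\dots,x_n]$ by a sum of $\ell$ ideals $I_{(j)}$ built from general forms of the prescribed degrees in the \emph{same} $n$ variables; equivalently, it is $B$ cut by $(\ell-1)n$ linear forms that identify the $\ell$ blocks of variables. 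Now I would invoke the key generality reduction: identifying blocks of variables, when the forms in each block are chosen generically and independently, has the same effect on Hilbert functions as imposing general linear conditions, because genericity of the $F_{j,k}$ can be traded against genericity of the linear forms — formally, one can spread the specialization out: choosing general $F_{j,k}$ in $S$ and then imposing the diagonal is a specialization of choosing general $F_{j,k}$ in $T$ and then imposing $(\ell-1)n$ general linear forms, and by semicontinuity the Hilbert function of the former is $\le$ that of the latter; the reverse inequality holds because a general linear section is always a specialization of \emph{some} allowed configuration. Putting the two semicontinuity inequalities together gives equality of Hilbert series, which is the assertion.

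The main obstacle is making the last step — the interchange of ``general forms + special (diagonal) linear forms'' with ``general forms + general linear forms'' — fully rigorous, since naively a general linear form is a nonzerodivisor on $B$ only up to $\mathrm{depth}\,B = \ell(n-2)$, and $(\ell-1)n$ can exceed this when $2\ell>n$; past that point multiplication by the linear forms need not be injective, and one is really asserting a \emph{maximal rank} statement for the successive multiplication maps, not a regular-sequence statement. In the proper-intersection range $2\ell\le n$ the regular-sequence argument suffices and the lemma is unconditional; in the improper range one needs the observation (which the paper develops in later sections, tying it to Weak Lefschetz type properties) that the \emph{Hilbert series} of $B/\La B$ for general $\La$ and of $B/\Delta B$ still coincide because both equal the Hilbert series of $S/(I_{(1)}+\cdots+I_{(\ell)})$ as a ring in $n$ variables — i.e. the content of the lemma is really a tautology identifying two descriptions of the \emph{same} algebra up to Hilbert series, and the genuinely hard rank questions are deferred. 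I would therefore present the proof as: (1) diagonal trick gives $S/(I_{(1)}+\cdots+I_{(\ell)})\cong B/\Delta B$; (2) a change of variables makes $\Delta$ coordinate; (3) the restriction principle identifies $B/\Delta B$ with the ``same construction in $n$ variables''; (4) semicontinuity in both directions, using that a general linear section is a specialization of the diagonal configuration and vice versa, forces the Hilbert series to agree.
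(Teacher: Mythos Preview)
Your proposal has a genuine gap, and it also buries the one-line idea that actually proves the lemma. The semicontinuity-in-both-directions argument in your step (4) does not work as stated. Upper semicontinuity of Hilbert functions gives only $\HS(B/\La B)\le \HS(B/\Delta B)$ coefficientwise (the general linear section minimizes), which is the \emph{opposite} of the inequality you claim, and your sentence ``a general linear section is always a specialization of some allowed configuration'' is backwards: the diagonal is a specialization of the general $\La$, not the other way around. So you never obtain the reverse inequality, and the argument does not close. In addition, the entire discussion of regular sequences, depth, and Weak Lefschetz is irrelevant here: the lemma is true for all $n$ and $\ell$, with no case distinction, and its proof needs none of that machinery.

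The paper's proof bypasses the diagonal entirely and is essentially the remark you make in passing about ``trading genericity.'' Since $\La$ consists of $(\ell-1)n$ general linear forms in $T$, the quotient $T/\La T$ is a polynomial ring in $n$ variables, and for each $j$ the composite $\k[x_{j,1},\ldots,x_{j,n}]\hookrightarrow T\twoheadrightarrow T/\La T$ is (by generality of $\La$) a linear isomorphism. Hence the images $\overline{F_{j,k}}$ of the forms generating $I_{(j)}^{(e)}$ are again general forms of the prescribed degrees in $n$ variables, so $B/\La B$ is isomorphic to $S$ modulo a sum of $\ell$ tangent-space ideals at general points of $\X_{n-1,\lambda}$. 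That is exactly what $S/(I_{(1)}+\cdots+I_{(\ell)})$ is, and two generic instances of the same construction have the same Hilbert function. No comparison of $\La$ with $\Delta$ is needed.
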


\begin{proof} Each ideal $I_{(j)} \subset S$ corresponds to a choice of a general point on $\X_{n-1,\lambda}$.  Thus, it is generated by the $r$ products of $r-1$ distinct forms that are created using $r$ general forms of degrees $d_1, d_2, \ldots , d_r$ in variables $x_1, x_2, \ldots , x_n$ (see Proposition \ref{nicer}).  The same is true for the summand $I^{(e)}_{(j)}$ of $\tilde I$, although these forms are in a new set of variables.  Since the linear forms in $\La$ are general, the residue classes of the forms defining $I^{(e)}_{(j)}$ modulo $\La$ are again general forms in $S$.  It follows that the image $\overline{I^{(e)}_{(j)}}$ of $I^{(e)}_{(j)}$ in $T/{\La}T \simeq S$ also corresponds to a general point on $\X_{n-1,\lambda}$.  Thus, the ideals $I_{(1)} + \cdots + I_{(\ell)}$ and $\overline{I_{(1)}^{(e)} } + \cdots + \overline{I_{(\ell)}^{(e)} }$ have the same Hilbert function and hence the same Hilbert series. 
\end{proof}

\begin{rem}
Lemma \ref{sop} is, in a sense, the key to the results in this paper. In combination with Corollary \ref{dim sigma ell} it shows that computing the dimension of $\sigma_\ell  (\X_{n-1,\lambda})$, for {\em arbitrary} $n, \ell$ and $\lambda$, is equivalent to finding the coefficient of $t^d$ in the Hilbert series of $B/\mathcal L B$.  We emphasize here that the only requirement for the linear forms in $\mathcal L$ is that they be general. We do not need them to be regular elements. The next section will handle the case where they are regular elements, and  subsequent sections deal with the case where some of the linear forms are not  regular elements. 
\end{rem}

For our discussion of  $\dim_{\k} [B/{\La}B]_d$ and more generally the Hilbert series of $B/{\La}B$, it is helpful to consider two cases. We refer to them as {\it proper} and {\it improper} intersections.

Consider varieties $V_1,\ldots,V_s \subset \PP^{n-1}$. Then their intersection 
 is defined by the saturation of $I_{V_1} + \cdots + I_{V_s}$ and  satisfies 
\[
\codim (I_{V_1} + \cdots + I_{V_s}) \le \codim I_{V_1} + \cdots + \codim I_{V_s}.  
\]
Abusing notation slightly (in the case where $\codim (I_{V_1} + \cdots + I_{V_s}) = n$, i.e., the intersection is  the empty set), we say 
 that the varieties $V_1,\ldots,V_s \subset \PP^{n-1}$ {\em intersect properly} if 
\[
\codim (I_{V_1} + \cdots + I_{V_s}) = \codim I_{V_1} + \cdots + \codim I_{V_s}.  
\] 
Otherwise, they {\em intersect improperly}.

In particular, this means that, fixing $n$ and the partition $\lambda$,  if the intersection of the  varieties $V(I_{(1)}),\ldots,V(I_{(\ell)})$ is the empty set  for some $\ell = \ell_0$, then these varieties intersect improperly for all $\ell > \ell_0$.
\smallskip

We close this section with a fact we will have opportunities to apply later.
We can partially order partitions of an integer $d>0$ as follows. Given partitions $\lambda_1=[d_1,\ldots,d_p]$ 
and $\lambda_2=[e_1,\dots, e_q]$ of the same integer $d>0$, 
write $\lambda_1\geq\lambda_2$ if for each $i\geq0$ we have $\sum_{j\leq i}d_j\geq \sum_{j\leq i}e_j$
(where we regard $d_j$ and $e_j$ as being 0 if $j$ is out of range).
Write $\lambda_1>\lambda_2$ if $\lambda_1\geq\lambda_2$ but $\sum_{j\leq i}d_j> \sum_{j\leq i}e_j$
for some $i$. So, for example, if $q > p$, then either $\lambda_1$ and $\lambda_2$ are incomparable (as happens with $\lambda_1 = [4,3,1]$ and $\lambda_2 = [5,1,1,1]$) or $\lambda_1 > \lambda_2$ (as happens with $\lambda_1 = [5,2,1]$ and $\lambda_2 = [5,1,1,1]$). 

\begin{lem} \label{lem:compare binomials}
Let $\lambda_1=[d_1,\ldots,d_p]$ and $\lambda_2=[e_1,\dots, e_q]$ be partitions of the same integer $d$
with $\lambda_1>\lambda_2$. If $n\geq 3$, then $\dim \X_{n-1,\lambda_1}>\dim \X_{n-1,\lambda_2}$.
\end{lem}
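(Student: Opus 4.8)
The plan is to use the explicit formula \eqref{dim X} for $\dim \X_{n-1,\mu} = \big[\sum_i \binom{m_i+n-1}{n-1}\big] - r$ (where $\mu = [m_1,\ldots,m_r]$), and to reduce the general comparison to the case of a single ``elementary move'' on partitions. First I would observe that if $\lambda_1 > \lambda_2$ then one can pass from $\lambda_2$ up to $\lambda_1$ by a finite sequence of steps, each of which takes a partition and moves one unit from a smaller part to a larger part (possibly creating, or removing, a part equal to $0$, which is harmless since $\binom{0+n-1}{n-1}-1 = \binom{n-1}{n-1}-1 = 0$, matching the convention that a part must be positive). This is the standard fact that the dominance order is generated by such ``raising'' moves; since each intermediate object is a partition of $d$ (allowing a trailing zero in the bookkeeping), it suffices to prove the strict inequality $\dim\X_{n-1,\lambda_1} > \dim\X_{n-1,\lambda_2}$ when $\lambda_1$ is obtained from $\lambda_2$ by replacing a pair of parts $(a,b)$ with $a \ge b$ by $(a+1, b-1)$ (all other parts unchanged, and reindexing/dropping a zero part if $b=1$).

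For that single move, all terms of $\dim\X_{n-1,\cdot}$ cancel except the two affected ones, and (keeping track of the $-r$ correction when $b=1$, where one part disappears) the claim reduces to the purely numerical inequality
\[
\binom{a+1+n-1}{n-1} + \binom{b-1+n-1}{n-1} \;>\; \binom{a+n-1}{n-1} + \binom{b+n-1}{n-1}
\]
when $b \ge 2$, and to
\[
\binom{a+1+n-1}{n-1} \;>\; \binom{a+n-1}{n-1} + \binom{n-1}{n-1} = \binom{a+n-1}{n-1} + 1
\]
when $b = 1$ (so the part $b$ vanishes and $r$ drops by one, contributing $+1$ to the left side after accounting for $-r$ versus $-(r-1)$). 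Both follow from strict convexity of the function $k \mapsto \binom{k+n-1}{n-1}$ for $n \ge 3$: indeed
\[
\binom{a+1+n-1}{n-1} - \binom{a+n-1}{n-1} = \binom{a+n-1}{n-2}
\]
is strictly increasing in $a$ for $n-2 \ge 1$, i.e. for $n \ge 3$, so the first-difference at level $a$ (with $a \ge b$) strictly exceeds the first-difference at level $b-1$, which is exactly the first displayed inequality; the $b=1$ case is the instance $\binom{a+n-1}{n-2} > \binom{n-2}{n-2} = 1$ for $a \ge 1$, again valid since $n \ge 3$.

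The main obstacle is the bookkeeping rather than the analysis: one must set up the elementary-move decomposition of the dominance order carefully so that each intermediate step is genuinely a partition (or a partition with a padding zero that contributes $0$ to the dimension), and one must handle the case $b=1$ — where the number of parts $r$ decreases — without the $-r$ term spoiling the strict inequality. Once that is organized, each step strictly increases $\dim\X_{n-1,\cdot}$ by the convexity estimate above, and since $\lambda_1 > \lambda_2$ forces at least one such step, we conclude $\dim\X_{n-1,\lambda_1} > \dim\X_{n-1,\lambda_2}$, which is the assertion. (A cleaner alternative, avoiding the zero-part subtlety altogether, is to note that a single raising move with $b \ge 2$ and one with $b = 1$ can be treated uniformly by writing $\dim \X_{n-1,\mu} = \sum_i \big(\binom{m_i+n-1}{n-1} - 1\big)$ over the positive parts $m_i$ and checking that the strictly convex, strictly increasing sequence $c_k := \binom{k+n-1}{n-1}-1$ with $c_0 = 0$ satisfies $c_{a+1} + c_{b-1} > c_a + c_b$ for all $a \ge b \ge 1$; this is the single inequality that does all the work.)
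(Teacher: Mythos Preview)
Your proof is correct and follows essentially the same route as the paper: reduce the comparison to a single raising move in the dominance order and then invoke the strict monotonicity of $k\mapsto\binom{k+n-1}{n-2}$ for $n\ge 3$. The only difference is packaging: rather than citing the standard fact that dominance order is generated by such moves, the paper explicitly constructs one intermediate partition $\lambda_3$ (choosing indices $u<v$ so that raising $e_u$ by $1$ and lowering $e_v$ by $1$ still yields a partition satisfying $\lambda_1\ge\lambda_3>\lambda_2$) and then finishes by exactly your binomial-difference inequality. Your final remark---that writing $\dim\X_{n-1,\mu}=\sum_i\big(\binom{m_i+n-1}{n-1}-1\big)$ with $c_0=0$ handles the $b=1$ and $b\ge 2$ cases uniformly---is precisely the bookkeeping the paper's version relies on implicitly.
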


\begin{proof}
Let $u$ be the least $i$ such that $d_i>e_i$. (There must be such an $i$ since
$d_i\leq e_i$ for all $i$ implies $\sum_{j\leq i}d_j\leq \sum_{j\leq i}e_j$ for all $i$.)
Note that if $u>1$, then 
$e_{u-1}=d_{u-1}\geq d_u>e_u$ 
and $\sum_{j\leq u}d_j> \sum_{j\leq u}e_j$.

Next, let $v$ be the least $i>u$ such that $\sum_{j\leq i}d_j= \sum_{j\leq i}e_j$.
(There must be such an $i$ since both sums eventually are equal to $d$.)
Note that if $v<q$, then $e_v>e_{v+1}$. This is because if $v<q$, then
(by definition of $v$ and the fact that $\sum_{j\leq u}d_j> \sum_{j\leq u}e_j$) we have
$\sum_{j\leq v-1}d_j > \sum_{j\leq v-1}e_j$, but $\sum_{j\leq v}d_j = \sum_{j\leq v}e_j$,
so $e_v>d_v$, and $\sum_{j\leq v+1}d_j \geq \sum_{j\leq v+1}e_j$, 
so $d_v\geq d_{v+1}\geq e_{v+1}$.

Now let $\lambda_3=[f_1,\ldots,f_r]$ where $f_u=e_u+1$, $f_v=e_v-1$, and
otherwise $f_j=e_j$. Then $f_j$ is nondecreasing since $e_j$ is and
either $u=1$ or $f_{u-1}=e_{u-1}\geq e_u+1=f_u$, and 
either $v=q$ or $f_v=e_v-1\geq e_{v+1}=f_{v+1}$. 
Moreover, $\sum_{j\leq i}d_j \geq\sum_{j\leq i} f_j\geq\sum_{j\leq i} e_j$ is true for all $i$.
It holds for $i<u$ since $f_j=e_j=d_j$ in this range.
It holds for $u\leq i<v$ since $\sum_{j\leq i}d_j >\sum_{j\leq i} e_j$
but $1+\sum_{j\leq i}e_j =\sum_{j\leq i} f_j$ in this range.
And it holds for $i\geq v$, since $\sum_{j\leq i}e_j =\sum_{j\leq i} f_j$
in this range.

Thus $\lambda_1\geq\lambda_3>\lambda_2$, so it suffices by induction to show
$\dim \X_{n-1,\lambda_3}>\dim \X_{n-1,\lambda_2}$. Writing each $f_j$ in terms of $e_j$, this is equivalent to showing
$\binom{e_u+1+n-1}{n-1}+\binom{e_v-1+n-1}{n-1}-2>\binom{e_u+n-1}{n-1}+\binom{e_v+n-1}{n-1}-2$.
This in turn is equivalent to
\begin{align*}
\textstyle{\binom{e_u+1+n-2}{n-2}} & = \textstyle{\binom{e_u+n-1}{n-2}} \\
& = \textstyle{\binom{e_u+n}{n-1}-\binom{e_u+n-1}{n-1}>
\binom{e_v+n-1}{n-1}-\binom{e_v-1+n-1}{n-1}=\binom{e_v-1+n-1}{n-2}=\binom{e_v+n-2}{n-2},}
\end{align*}
which is true because $\binom{j + n-2}{n-2}$ is a strictly increasing function of $j \ge 0$ if $n-2 \ge 1$.
\end{proof}

We now have:

\begin{cor}\label{cor:dim comparison}
Let $\lambda=[d_1,\ldots,d_r]$ be any partition of $d$ with $r\geq 2$,
and let $\lambda_2=[d_1,1,\ldots,1]$ also be a partition of $d$. Assume $n \geq 3$.
If $\lambda$ is neither $\lambda_2$ nor $[d-1, 1]$, then
\[
\dim \X_{n-1, \lambda_2} <\dim \X_{n-1, \lambda} < \dim \X_{n-1, [d-1, 1]}.
\]
\end{cor}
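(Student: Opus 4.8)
The plan is to deduce this from Lemma~\ref{lem:compare binomials}, so the real work is just to verify that the hypotheses of that lemma hold for the two comparisons we want. Recall that $\lambda=[d_1,\ldots,d_r]$ is a partition of $d$ with $r\geq 2$ parts, $\lambda_2=[d_1,1,\ldots,1]$ is the partition of $d$ obtained by breaking $d_2,\ldots,d_r$ into ones (so $\lambda_2$ has $1+(d-d_1)$ parts), and $[d-1,1]$ is the ``top'' partition in the relevant poset among partitions with first part $\le d-1$. Since $n\geq 3$, Lemma~\ref{lem:compare binomials} says that $\mu>\nu$ in the partition order implies $\dim\X_{n-1,\mu}>\dim\X_{n-1,\nu}$, so it suffices to show
\[
\lambda_2 \;\le\; \lambda \;\le\; [d-1,1],
\]
with strict inequality in each place under the stated hypotheses $\lambda_2\neq\lambda\neq[d-1,1]$.

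For the left-hand comparison $\lambda_2\le\lambda$: write $\lambda=[d_1,d_2,\ldots,d_r]$ and $\lambda_2=[d_1,1,1,\ldots,1]$. Both have first part $d_1$, so the partial sums agree at $i=1$. For $i\geq 1$ we need $\sum_{j\le i}d_j \ge d_1 + (i-1)$ (interpreting the $\lambda_2$-entries beyond its length as $0$, and noting the bound is automatically satisfied once $i$ exceeds the length of $\lambda_2$, since then the $\lambda_2$ partial sum is already $d$). But $d_2,\ldots,d_r\geq 1$, so $\sum_{j=2}^{i}d_j \geq i-1$ for $2\le i\le r$, giving $\sum_{j\le i}d_j \ge d_1+(i-1)$; and for $i\ge r$ the left side is $d\ge$ the right side. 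Hence $\lambda_2\le\lambda$, and if $\lambda\neq\lambda_2$ this is strict.

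For the right-hand comparison $\lambda\le[d-1,1]$: here I would first observe that $d_1\le d-1$. Indeed if $d_1=d$ then $r=1$, contradicting $r\ge 2$; and $d_1=d-1$ would force the remaining parts to sum to $1$, i.e. $\lambda=[d-1,1]$, which is excluded. So $d_1\le d-2$. Now compare partial sums: at $i=1$ we need $d_1\le d-1$, which holds. At $i\geq 2$ we need $\sum_{j\le i}d_j \le d = (d-1)+1$, which is trivially true since all partial sums are at most $d$. So $\lambda\le[d-1,1]$, and since $\lambda\neq[d-1,1]$ by hypothesis, the inequality is strict. Applying Lemma~\ref{lem:compare binomials} to each strict inequality yields $\dim\X_{n-1,\lambda_2}<\dim\X_{n-1,\lambda}<\dim\X_{n-1,[d-1,1]}$, as claimed.

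There is essentially no hard step here; the only point requiring a little care is the bookkeeping with partitions of different lengths when comparing partial sums (the convention that out-of-range entries are $0$), and the small case analysis ruling out $d_1\in\{d-1,d\}$ on the right-hand side. One should also double-check the degenerate possibility that $\lambda_2=[d-1,1]$ (which happens exactly when $d_1=d-1$); in that case the hypotheses $\lambda_2\neq\lambda$ and $\lambda\neq[d-1,1]$ together are vacuous-free and the chain still reads correctly, but it is worth noting that the three partitions $\lambda_2,\lambda,[d-1,1]$ need not be distinct in general, only that the displayed strict inequalities hold under the stated exclusions.
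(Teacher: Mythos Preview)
Your argument is correct and follows exactly the paper's approach: the paper's proof is the one-liner ``This follows from Lemma~\ref{lem:compare binomials} and the fact that $\lambda_2<\lambda<[d-1, 1]$,'' and you have simply supplied the (routine) verification of the two strict inequalities in the dominance order. The only superfluous part is your final paragraph: once you have shown that $\lambda\neq[d-1,1]$ forces $d_1\le d-2$, the case $\lambda_2=[d-1,1]$ (i.e., $d_1=d-1$) is already excluded and needs no separate discussion.
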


\begin{proof}
This follows from Lemma \ref{lem:compare binomials} and the fact that $\lambda_2<\lambda<[d-1, 1]$.
\end{proof}

Notice that a result analogous to Lemma \ref{lem:compare binomials} is not true for the lexicographic order. For example, $\lambda_1 = [5,4,1,1,1,1] > [5,3,3,2] = \lambda_2$ in the lexicographic order, but $\dim \X_{2, \lambda_1} = 42 < 43 = \dim \X_{2, \lambda_2}$.  Observe that $\lambda_1$ and $\lambda_2$ are not comparable in the partial order used in Lemma \ref{lem:compare binomials}.


\section{Proper Intersections} 
     \label{sec:proper inters}
     
In this section we focus on the case where the varieties determining tangent spaces to $\X_{n-1, \lambda}$ at $\ell$ general points meet properly. Our main result is Theorem \ref{thm:dim formula}. The case of improper intersections is the subject of a later section. 

We first show that the  $\ell$ varieties determining tangent spaces intersect properly if $\ell$ is small enough. 

\begin{prop} 
     \label{prop:proper intersection} 
Assume $2\ell \le n$. Then:    
\begin{itemize}
\item[(a)] The $(\ell-1)n$ general linear forms in $\La$ are a $B$-regular sequence. 

\item[(b)] The varieties defined by $I_{(1)},\ldots,I_{(\ell)}$ intersect properly, that is, 
\[
\codim (I_{(1)}+\cdots+I_{(\ell)})= 2 \ell.
\]
\end{itemize}  
\end{prop}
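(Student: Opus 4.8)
The plan is to deduce both parts from the structural description of $B$ in Theorem~\ref{thm:join as tensor product}, namely that $B$ is Cohen--Macaulay of Krull dimension $\ell(n-2)$, together with the numerical observation that the hypothesis $2\ell\le n$ is exactly what makes $(\ell-1)n\le \ell(n-2)$. For part (a), I would invoke the standard fact that over an infinite field (and $\k$ is algebraically closed, hence infinite) a sequence of sufficiently general linear forms in a standard graded Cohen--Macaulay algebra is a regular sequence as long as its length does not exceed the Krull dimension. Concretely, one argues by induction on the length of an initial segment of $\La$: the quotient of a Cohen--Macaulay ring by a regular element is again Cohen--Macaulay, hence equidimensional with no embedded primes, so a general linear form lies outside all of its (finitely many) minimal primes provided the current Krull dimension is positive (a minimal prime $\neq \mathfrak m$ meets degree $1$ in a proper subspace, and genericity plus prime avoidance finish the step). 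Since we only pass through $(\ell-1)n\le \ell(n-2)=\dim B$ steps, this process never asks for a nonzerodivisor in a zero-dimensional ring, so all of $\La$ is a $B$-regular sequence.

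For part (b), I would combine part (a) with Lemma~\ref{sop}. Since $\La$ is a $B$-regular sequence of length $(\ell-1)n$, passing to $B/\La B$ drops the Krull dimension by exactly that amount, so
\[
\dim B/\La B \;=\; \ell(n-2)-(\ell-1)n \;=\; n-2\ell,
\]
which is $\ge 0$ precisely because $2\ell\le n$. By Lemma~\ref{sop}, $S/(I_{(1)}+\cdots+I_{(\ell)})$ has the same Hilbert series as $B/\La B$, and the Krull dimension of a finitely generated graded module is determined by its Hilbert series (it is the order of the pole at $t=1$). Hence $\dim S/(I_{(1)}+\cdots+I_{(\ell)})=n-2\ell$, i.e. $\codim(I_{(1)}+\cdots+I_{(\ell)})=2\ell$. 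The reverse inequality $\codim(I_{(1)}+\cdots+I_{(\ell)})\le 2\ell$ is automatic since each $I_{(j)}$ has codimension $2$ by Remark~\ref{info about S/J}, so equality already says the intersection is proper.

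The argument is essentially bookkeeping once Theorem~\ref{thm:join as tensor product} and Lemma~\ref{sop} are in hand. The only point needing a little care is the regular-sequence claim in part (a), where one must be certain that ``general'' linear forms genuinely avoid the associated primes at every stage — this is where Cohen--Macaulayness (equidimensionality, no embedded primes) and the inequality $(\ell-1)n\le\dim B$ are both used — and that the count $(\ell-1)n$ of forms in $\La$ matches the one in Lemma~\ref{sop}. I do not expect a genuine obstacle here, since the substantive inputs (the tensor-product resolution and Lemma~\ref{sop}) have already been established.
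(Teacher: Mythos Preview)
Your proof is correct and follows essentially the same approach as the paper: both use Theorem~\ref{thm:join as tensor product} to get that $B$ is Cohen--Macaulay of dimension $\ell(n-2)$, observe that $2\ell\le n$ gives $(\ell-1)n\le\ell(n-2)$ so that general linear forms form a regular sequence, and then deduce (b) via Lemma~\ref{sop}. You have simply supplied more detail (the inductive prime-avoidance argument for (a), and the Hilbert-series-determines-dimension step for (b)) than the paper's terse version.
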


\begin{proof} By Theorem \ref{thm:join as tensor product}, the algebra $B$ is Cohen-Macaulay of dimension $\ell (n-2)$. The assumption on $\ell$ guarantees $(\ell - 1) n \le \ell (n-2)$. Hence $\La$ is a regular sequence and $B/\La B$ has dimension $n - 2 \ell$. Now Lemma \ref{sop} gives $
\codim (I_{(1)}+\cdots+I_{(\ell)} )= 2 \ell$.
\end{proof} 

\begin{rem} If $\ell \le \frac{n}{2}$, then the minimal free graded resolution of $S/(I_{(1)}+\cdots+I_{(\ell)})$
has the same graded Betti numbers as the minimal free graded resolution of $T/\tilde I$ since forming a quotient by factoring with a regular sequence does not change the graded Betti numbers of the resolution modules.  
\end{rem}

\begin{rem} 
       \label{formula}
Using the isomorphism of graded modules (see Theorem \ref{thm:join as tensor product})
\[
T/\tilde I \cong S/I_{(1)} \otimes_{\k} \cdots \otimes_{\k} S/I_{(\ell)},
\]
it follows  that $\HS(T/\tilde I)=(\HS(S/J))^\ell$, where $J$ is as given in Remark \ref{info about S/J}.

Furthermore, if  $2\ell\le n$, Proposition \ref{prop:proper intersection}(a), Remark \ref{HS}(d), and Lemma \ref{sop} give  
\[ 
\begin{array}{lllllllllllllllll}
 \HS(S/(I_{(1)}+\cdots +I_{(\ell)}))
& = &(1-t)^{n(\ell-1)} \cdot  \HS(T/\tilde I) \\
& = & (1-t)^{n(\ell-1)} \cdot [\HS( S/J)]^\ell.   
\end{array}
\]
Putting this together with Equation \eqref{eq:hilbSeries tangent} in Remark \ref{HS} we obtain 
\[
\HS(\k[x_1,\dots,x_n]/(I_{(1)}+\cdots +I_{(\ell)})) = \frac{1}{(1-t)^n} \left [  1-\sum^r_{i=1} t^{d-d_i} +(r-1)t^d       \right] ^\ell .
\]

Rewriting this last expression we have, if $2\ell \leq n$, that
\begin{equation*} \label{key eqn}
\begin{array}{llllllllllllllllllllllll}
\HS (I_{(1)}+\cdots +I_{(\ell)})
=  \ds \frac{1}{(1-t)^n} - \frac{1}{(1-t)^n}\left [  1-\sum^r_{i=1} t^{d-d_i} +(r-1)t^d       \right] ^\ell.
\end{array}
\end{equation*}
\end{rem}

\medskip\medskip

If we now put together Corollary \ref{dim sigma ell} and Remark \ref{formula} we obtain: 

\begin{thm}
      \label{HS and dim sigma ell} 
Let $\lambda\vdash d$, $\lambda = [d_1, \ldots , d_r]$ and suppose that $2\ell \leq n$. Put 
\[
A = \k[x_1,\dots,x_n]/(I_{(1)}+\cdots +I_{(\ell)}). 
\] 
Then 
\[
\HS (A) = \frac{1}{(1-t)^n}\left [  1-\sum^r_{i=1} t^{d-d_i} +(r-1)t^d       \right] ^\ell.
\] 
Moreover, if  $a_d$ denotes the coefficient of $t^d$ in $\HS (A)$, then 
$$
\dim \sigma_\ell (\X_{n-1,\lambda}) = \binom{d+n-1}{n-1} - a_d - 1 .
$$
\end{thm}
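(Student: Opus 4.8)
The plan is to assemble this statement directly from results already established in the excerpt, since almost all the work has been done. The core identity to prove is the Hilbert series formula for $A = \k[x_1,\dots,x_n]/(I_{(1)}+\cdots+I_{(\ell)})$ under the hypothesis $2\ell \le n$, and then the dimension formula follows by a one-line application of Corollary \ref{dim sigma ell}.

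First I would recall that by Theorem \ref{thm:join as tensor product} the ring $B = T/\tilde I$ is isomorphic as a graded $\k$-algebra to $S/I_{(1)} \otimes_{\k} \cdots \otimes_{\k} S/I_{(\ell)}$, so by Remark \ref{HS}(d) its Hilbert series is $\HS(B) = \prod_{j=1}^\ell \HS(S/I_{(j)}) = (\HS(S/J))^\ell$, using that each $S/I_{(j)}$ has the same Hilbert function as $S/J$ (Remark \ref{info about S/J}). Next, since $2\ell \le n$, Proposition \ref{prop:proper intersection}(a) tells us the $(\ell-1)n$ general linear forms $\La$ form a $B$-regular sequence; applying Remark \ref{HS}(a) repeatedly gives $\HS(B/\La B) = (1-t)^{(\ell-1)n} \HS(B) = (1-t)^{(\ell-1)n} (\HS(S/J))^\ell$. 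Then Lemma \ref{sop} identifies $\HS(A) = \HS(S/(I_{(1)}+\cdots+I_{(\ell)})) = \HS(B/\La B)$. Substituting the explicit expression $\HS(S/J) = \frac{1}{(1-t)^n}\big[1 - \sum_{i=1}^r t^{d-d_i} + (r-1)t^d\big]$ from Equation \eqref{eq:hilbSeries tangent} in Remark \ref{HS}(c), the factor $(1-t)^{(\ell-1)n}$ cancels all but one copy of $(1-t)^{-n}$, yielding
\[
\HS(A) = \frac{1}{(1-t)^n}\left[1 - \sum_{i=1}^r t^{d-d_i} + (r-1)t^d\right]^\ell,
\]
which is exactly the claimed formula; this chain of equalities is essentially the content already recorded in Remark \ref{formula}, so this part is routine bookkeeping.

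For the dimension statement, I would invoke Corollary \ref{dim sigma ell}: $\dim \sigma_\ell(\X_{n-1,\lambda}) = \dim_{\k}[I_{(1)}+\cdots+I_{(\ell)}]_d - 1$. Writing $[I]_d = [S]_d - [A]_d$ in each degree, and noting $\dim_{\k}[S]_d = \binom{d+n-1}{n-1}$ while $\dim_{\k}[A]_d = a_d$ is by definition the coefficient of $t^d$ in $\HS(A)$, we get $\dim \sigma_\ell(\X_{n-1,\lambda}) = \binom{d+n-1}{n-1} - a_d - 1$, as required. (One should note that since the $(\ell-1)n$ linear forms are a regular sequence on $B$, which is Cohen-Macaulay of dimension $\ell(n-2) \ge (\ell-1)n$, the quotient $A$ has dimension $n - 2\ell \ge 0$, so there is no collapse issue and the Hilbert function genuinely equals the coefficients of the rational Hilbert series.)

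I do not anticipate any serious obstacle here: the statement is a direct corollary of the structural results (tensor product decomposition of the join, regularity of general linear forms under $2\ell \le n$, and the diagonal/generality reduction of Lemma \ref{sop}) that have been carefully set up in the preceding pages. The only mild care needed is to make sure the cancellation of $(1-t)$ powers is tracked correctly — $\ell$ copies of $(1-t)^{-n}$ from $(\HS(S/J))^\ell$ against $(1-t)^{(\ell-1)n}$ leaves precisely $(1-t)^{-n}$ — and to be explicit that $a_d$ in the statement is the Hilbert-function value $\dim_\k [A]_d$, which coincides with the power-series coefficient because $A$ is a finitely generated graded algebra. Thus the proof is just the concatenation: Theorem \ref{thm:join as tensor product} $\Rightarrow$ product of Hilbert series $\Rightarrow$ Proposition \ref{prop:proper intersection}(a) $\Rightarrow$ multiply by $(1-t)^{(\ell-1)n}$ $\Rightarrow$ Lemma \ref{sop} $\Rightarrow$ substitute \eqref{eq:hilbSeries tangent} $\Rightarrow$ Corollary \ref{dim sigma ell}.
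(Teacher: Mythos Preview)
Your proposal is correct and follows essentially the same approach as the paper: the theorem is stated there as the direct combination of Corollary \ref{dim sigma ell} with Remark \ref{formula}, and your chain of implications (Theorem \ref{thm:join as tensor product}, Remark \ref{HS}(d), Proposition \ref{prop:proper intersection}(a), Lemma \ref{sop}, Equation \eqref{eq:hilbSeries tangent}) is precisely an unpacking of what Remark \ref{formula} records. The only difference is that you spell out the intermediate steps and the cancellation of powers of $(1-t)$, which the paper leaves implicit.
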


We now compute the coefficient $a_d$, which gives the main result of this section. Although it is not obvious that the right-hand side of 
the formula for $\dim \sigma_\ell(\X_{n-1,\lambda})$ given in
Theorem \ref{thm:dim formula} 
is less than or equal to $\binom{d+n-1}{n-1} - 1$,
this follows from the fact that the right-hand side is
$\binom{d+n-1}{n-1} - a_d - 1$, since $a_d = \dim_{\k} [A]_d$.

\begin{thm}\label{thm:dim formula} 
Let $\lambda\vdash d$, $\lambda = [d_1,\ldots,d_r]$   with $r \ge 2$.  If $2\ell \le n$ then:
\begin{eqnarray*}
  \dim \sigma_{\ell}  (\X_{n-1, \lambda})  & = &  \ell \cdot \dim \X_{n-1, \lambda} + \ell -1  \\
&& \ - \sum_{k=2}^\ell (-1)^k \binom{\ell}{k}  \binom{d_1 - (k-1) (d_2 + \cdots + d_r) + n-1}{n-1} \\
& & -  \binom{\ell}{2} \binom{2 d_2 - d + n-1}{n-1} - \ell (\ell -1)  \binom{d_1 + d_2 - d+ n-1}{n-1}.
\end{eqnarray*}

Moreover, $\sigma_\ell(\X_{n-1,\lambda})$ fills its ambient space if and only if one of the following conditions is satisfied:
\begin{itemize}
\item[(i)] $n = 4$, $\ell = 2$,  and  $\lambda \in \{[1,1], [2,1], [1,1,1]\}$ \quad or 

\item[(ii)] $n = 2 \ell \ge 6$ and   $\lambda = [1,1]$.
\end{itemize}
\end{thm}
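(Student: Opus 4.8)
The plan is to extract the coefficient $a_d$ of $t^d$ in the Hilbert series computed in Theorem \ref{HS and dim sigma ell}, namely $\HS(A) = \frac{1}{(1-t)^n}\big[1-\sum_{i=1}^r t^{d-d_i}+(r-1)t^d\big]^\ell$, and then apply the dimension formula $\dim\sigma_\ell(\X_{n-1,\lambda}) = \binom{d+n-1}{n-1}-a_d-1$. First I would expand the $\ell$-th power by the multinomial theorem. Writing the bracket as $1 - (\sum_i t^{d-d_i}) + (r-1)t^d$, the monomials contributing to degree $\le d$ after multiplying by $\frac{1}{(1-t)^n}=\sum_m\binom{m+n-1}{n-1}t^m$ are severely restricted: since each $t^{d-d_i}$ has degree $d-d_i \ge d-d_1$ and $t^d$ has degree $d$, a product of $k\ge 1$ of these factors has degree at least $d + (k-1)(d-d_1) = d + (k-1)s$ where $s = d_2+\cdots+d_r$ unless we use only factors $t^{d-d_1}$, in which case we must track exactly which $t^{d-d_i}$ appear. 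So the only terms of the expansion that can possibly contribute to the coefficient of $t^{\le d}$ are: the constant term $1$; single factors $t^{d-d_i}$ (all $r$ of them) and $(r-1)t^d$; products of $k\ge 2$ copies of $t^{d-d_1}$, contributing degree $d+(k-1)(d_1 \text{ is wrong — recompute})$; and a few mixed low-degree products. I would carefully enumerate these.

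The key bookkeeping step is: among products of $k$ of the "forbidden" monomials, which have total degree $\le d$? A product of $k_1$ copies of $t^{d-d_1}$, one copy each of $t^{d-d_i}$ for $i$ in some set, etc., has degree $kd - (\text{sum of the } d_i\text{'s used})$. For this to be $\le d$ we need the sum of the chosen $d_i$'s (with multiplicity) to be at least $(k-1)d$. Since each $d_i \le d_1$, the only way is to use $k$ copies of the largest available parts; concretely the contributions are (i) $k$ copies of $t^{d-d_1}$ giving degree $kd-kd_1 = d - (k-1)(d-d_1)$... — here I must be careful: $kd-kd_1$, we need $\le d$, i.e. $k d_1 \ge (k-1)d = (k-1)(d_1+s)$, i.e. $d_1 \ge (k-1)s$, and then the coefficient picked up from $\frac1{(1-t)^n}$ is $\binom{d-k(d-d_1)+n-1}{n-1} = \binom{d_1-(k-1)s+n-1}{n-1}$, with multinomial sign $(-1)^k\binom{\ell}{k}$ for choosing which of the $\ell$ bracket-factors supply a $-t^{d-d_1}$ term; (ii) products using one copy of $t^{d-d_2}$ and $(k-1)$ copies of $t^{d-d_1}$, degree $kd-d_2-(k-1)d_1$, which is $\le d$ only for $k=2$ (giving $\binom{d_1+d_2-d+n-1}{n-1}$, times $2\binom{\ell}{2}=\ell(\ell-1)$ for ordered choices) or $k=2$ with two copies of $t^{d-d_2}$, degree $2d-2d_2$, giving $\binom{2d_2-d+n-1}{n-1}$ times $\binom{\ell}{2}$; and (iii) one copy of $(r-1)t^d$ paired with... degree $\ge d$ already, so only the bare $\ell(r-1)t^d$ term, degree exactly $d$, coefficient $\binom{n-1}{n-1}=1$, times $\ell(r-1)$. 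Assembling: $a_d = \binom{d+n-1}{n-1} - \ell\sum_{i=1}^r\binom{d_i+n-1}{n-1} + \ell(r-1) + \sum_{k=2}^\ell(-1)^k\binom{\ell}{k}\binom{d_1-(k-1)s+n-1}{n-1} + \binom{\ell}{2}\binom{2d_2-d+n-1}{n-1} + \ell(\ell-1)\binom{d_1+d_2-d+n-1}{n-1}$. Subtracting from $\binom{d+n-1}{n-1}$, subtracting $1$, and using $\dim\X_{n-1,\lambda} = \sum_i\binom{d_i+n-1}{n-1}-r$ from Equation \eqref{dim X} to recognize $\ell\dim\X_{n-1,\lambda}+\ell-1 = \ell\sum_i\binom{d_i+n-1}{n-1}-\ell r + \ell - 1 = \ell\sum_i\binom{d_i+n-1}{n-1} - \ell(r-1) - 1$, yields exactly the stated formula. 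The convention $\binom{a}{b}=0$ for $a<0$ automatically handles the cases where the degree constraints above fail, and also makes the penultimate and last terms vanish when $r\ge 3$ (since then $d_2+\cdots \ge d_2+d_3$ forces $2d_2-d<0$ and $d_1+d_2-d<0$ as $d = d_1+d_2+\cdots$), consistent with the remark after Conjecture \ref{conj:main conj}.

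For the "fills its ambient space" statement: $\sigma_\ell$ fills $\P^{N-1}$ iff $a_d = 0$ (equivalently $\dim\sigma_\ell = N-1$), and more strongly, since $\HS(A)$ has nonnegative coefficients and $A$ is a quotient of a polynomial ring, $a_d = 0$ forces... actually I only need $a_d=0$. I would show $a_d \ge 0$ always (it is $\dim_\k[S/(I_{(1)}+\cdots+I_{(\ell)})]_d$) and then solve $a_d=0$. Using $a_d = \binom{d+n-1}{n-1} - \ell\dim\X_{n-1,\lambda}-\ell+1 + (\text{the three correction sums})$, and noting that $\binom{d+n-1}{n-1}-1 \ge \exp.\dim$ exactly when the min in the expected dimension is achieved by $\ell\dim\X+\ell-1$, I would split into the case $2\ell\le n$ where the corrections are what they are, and directly search: since the correction terms are all nonnegative and the "expected" part $\binom{d+n-1}{n-1}-1-\ell\dim\X_{n-1,\lambda}-\ell+1$ is what must be exceeded or met, vanishing of $a_d$ is a very restrictive Diophantine condition. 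I expect a short finite case analysis: when $r=2$, $\lambda=[d_1,d_2]$, the formula for $a_d$ becomes explicit in $d_1,d_2,n,\ell$ and one checks it vanishes only in the listed sporadic cases; when $r\ge 3$ the last two corrections vanish and one argues $a_d>0$ except in the $n=4,\ell=2,\lambda=[1,1,1]$ case. The main obstacle I anticipate is precisely this last step: proving that outside the short explicit list the quantity $a_d$ is strictly positive, which requires monotonicity estimates on sums of binomial coefficients (of the type $\binom{j+n-2}{n-2}$ strictly increasing, already used in Lemma \ref{lem:compare binomials}) combined with the constraint $2\ell\le n$, rather than any single slick identity. The coefficient extraction itself (the first two paragraphs) is routine multinomial bookkeeping; the delicate part is the sharp classification of equality.
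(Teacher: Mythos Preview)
Your coefficient extraction for the dimension formula is essentially the paper's own argument: expand $\big[1-\sum_i t^{d-d_i}+(r-1)t^d\big]^\ell$, observe (as the paper does via the inequality $2(d-d_1)+(d-d_2)>d$) that only the monomials you list can land in degree $\le d$, multiply by $\sum_j\binom{j+n-1}{n-1}t^j$, and read off $a_d$. The conversion to $\dim\sigma_\ell$ via $\dim\X_{n-1,\lambda}=\sum_i\binom{d_i+n-1}{n-1}-r$ is identical.

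Where you diverge is the characterization of when $\sigma_\ell(\X_{n-1,\lambda})$ fills its ambient space. You propose to solve $a_d=0$ by a Diophantine case analysis backed by monotonicity estimates on binomial sums, and you correctly flag this as the delicate step. The paper avoids this entirely with a structural argument. First, if $2\ell<n$ then $A=S/(I_{(1)}+\cdots+I_{(\ell)})$ has Krull dimension $n-2\ell>0$ (Proposition \ref{prop:proper intersection}), so $[A]_d\neq 0$ and $\sigma_\ell$ never fills. Second, if $n=2\ell$ one rewrites
\[
\HS(A)=\left[\frac{1}{(1-t)^2}\Big(1-\sum_{i=1}^r t^{d-d_i}+(r-1)t^d\Big)\right]^{\ell},
\]
and each bracketed factor is the Hilbert series of the artinian reduction of $S/J$, which by Remark \ref{info about S/J} is level of socle degree $d-2$. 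Hence $\HS(A)$ is a polynomial of degree $\ell(d-2)$, and $[A]_d=0$ if and only if $\ell(d-2)<d$, i.e.\ $d<2+\frac{2}{\ell-1}$. For $\ell=2$ this gives $d\le 3$, yielding $\lambda\in\{[1,1],[2,1],[1,1,1]\}$; for $\ell\ge 3$ it forces $d=2$, hence $\lambda=[1,1]$. This replaces your anticipated casework with a one-line inequality, so your plan is correct but misses the clean shortcut.
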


 \begin{proof}  Let $P_1, \ldots , P_\ell$ be general points on $\X_{n-1, \lambda}$ and set   
\[
A = \k[x_1,\dots,x_n]/(I_{P_1}+\cdots +I_{P_\ell}). 
\]
Then we have seen in Theorem \ref{HS and dim sigma ell} that
\begin{equation}
    \label{eq:hilb series}
\HS(A) = \frac{1}{(1-t)^n} \left [  1-\sum^r_{i=1} t^{d-d_i} +(r-1)t^d  \right] ^\ell .
\end{equation} 
 
Observing that
\[
2 (d-d_1) + (d-d_2) \ge 2 d_2 + (d_1 + d_3 + \cdots + d_r)  > d,
\]
we get
\begin{eqnarray*}
\left [  1-\sum^r_{i=1} t^{d-d_i} +(r-1)t^d       \right] ^\ell & = &
\left [  1-\sum^r_{i=1} t^{d-d_i} \right] ^\ell + (r-1) \ell \cdot t^d + \cdots \\[1ex]
& = & 1 - \ell \sum^r_{i=1} t^{d-d_i} + \sum_{k=2}^\ell (-1)^k \binom{\ell}{k} \cdot t^{k (d-d_1)} \\
& & \ + \ell (\ell -1) \cdot t^{d-d_1 + d-d_2} + \binom{\ell}{2} \cdot t^{2 (d - d_2)} + (r-1) \ell \cdot t^d + \cdots
\end{eqnarray*}
where only the terms whose exponent of $t$  are potentially at most $d$ have been written out.
Using also 
\[
\frac{1}{(1-t)^n}  = \sum_{j \ge 0} \binom{j+n-1}{j} \cdot t^j , 
\]
 we get from Equation \eqref{eq:hilb series}
\[
\HS(A) = \left [  1-\sum^r_{i=1} t^{d-d_i} +(r-1)t^d       \right] ^\ell \cdot  \left [  \sum_{j \ge 0} \binom{j+n-1}{j} \cdot t^j  \right ] .
\]

The coefficient of $t^d$ in $\HS (A)$ is
\begin{eqnarray*}
 \dim_{\k} [A]_d  & = & \binom{d+n-1}{n-1} - \ell \sum^r_{i=1} \binom{d_i + n-1}{n-1}  + (r-1) \ell \\
& & +  \sum_{k=2}^\ell (-1)^k \binom{\ell}{k} \binom{d_1 - (k-1) (d_2 + \cdots + d_r) + n-1}{n-1} \\[1ex]
& & +  \binom{\ell}{2} \binom{2 d_2 - d + n-1}{n-1} + \ell (\ell -1)  \binom{d_1 + d_2 - d+ n-1}{n-1}.
\end{eqnarray*}

This gives  
\begin{eqnarray*}
 \dim \sigma_{\ell}  (\X_{n-1, \lambda}) & = & -1 + \dim_{\k} [I_{(1)}+\cdots +I_{(\ell)})]_d \\
& &  \\
& = & -1 + \ell \sum^r_{i=1} \binom{d_i + n-1}{n-1} - (r-1) \ell \\
& & \ - \sum_{k=2}^\ell (-1)^k \binom{\ell}{k}  \binom{d_1 - (k-1) (d_2 + \cdots + d_r) + n-1}{n-1} \\
& & -  \binom{\ell}{2} \binom{2 d_2 - d + n-1}{n-1} - \ell (\ell -1)  \binom{d_1 + d_2 - d+ n-1}{n-1} .
\end{eqnarray*}
Then using Formula (\ref{dim X}), we get
\begin{eqnarray*}
 \dim \sigma_{\ell}  (\X_{n-1, \lambda})  & = & \ell \cdot \dim \X_{n-1, \lambda} + (\ell -1)  \\
 & & \ - \sum_{k=2}^\ell (-1)^k \binom{\ell}{k}  \binom{d_1 - (k-1) (d_2 + \cdots + d_r) + n-1}{n-1} \\
 & & -  \binom{\ell}{2} \binom{2 d_2 - d + n-1}{n-1} - \ell (\ell -1) \binom{d_1 + d_2 - d+ n-1}{n-1},
\end{eqnarray*}
as claimed. 

It remains to show the characterization, for $2\ell \leq n$, of when $\sigma_{\ell}  (\X_{n-1, \lambda})$ fills its ambient space. 

This clearly occurs if and only if $[A]_d = 0$. If $2\ell < n$, then $[A]_d$ cannot be zero because $A$ is not artinian as $\dim A = n - 2 \ell$. 

Let $n = 2 \ell$. In this case we can write 
$$
\HS(A) = \left[ \frac{1}{(1-t)^{2 \ell}} \left (1-\sum_{i=1}^r t^{d-d_i} + (r-1)t^d \right ) \right] ^\ell .
$$
Remark \ref{info about S/J} implies that the artinian reduction of $S/J$ is level of socle degree $d-2$.  Hence, each factor 
\[
\frac{1}{(1-t)^{2}} \left (1-\sum_{i=1}^r t^{d-d_i} + (r-1)t^d \right )
\]
is a polynomial of degree $d-2$.  It follows that $\HS (A)$ is a polynomial of degree $\ell (d-2)$. Since $A$ is artinian, this shows that $[A]_d = 0$ if and only if $\ell (d-2) < d$. This is equivalent to
\[
d < 2 + \frac{2}{\ell-1}.
\]
If $\ell = 2$ (hence $n=4$), then we can have $d=2$ or 3 and so $\lambda$ can be $[1,1], [2,1]$ or $[1,1,1]$. If $\ell > 2$ then we must have $d = 2$ and $\lambda = [1,1]$. 
 \end{proof}

\begin{rem}\label{correctionExample}
Note that $\dim \sigma_\ell(\X_{n-1, \lambda})<N-1<\ell(\dim_\k(\X_{n-1, \lambda}))+\ell-1$ can occur (where $N=\binom{d+n-1}{n-1}$), 
as happens, for example, when $n=4$, $\lambda=[2,2]$ and $\ell=2$.
Thus, when $\sigma_\ell(\X_{n-1, \lambda})$ does not fill its ambient space, the defect in Theorem \ref{thm:dim formula} 
need not be given by
$$\textstyle\sum_{k=2}^\ell (-1)^k \binom{\ell}{k}  \binom{d_1 - (k-1) (d_2 + \cdots + d_r) + n-1}{n-1} +  
\binom{\ell}{2} \binom{2 d_2 - d + n-1}{n-1} + \ell (\ell -1)  \binom{d_1 + d_2 - d+ n-1}{n-1}.$$
In general one must use
$$\textstyle\sum_{k=2}^\ell (-1)^k \binom{\ell}{k}  \binom{d_1 - (k-1) (d_2 + \cdots + d_r) + n-1}{n-1} +  
\binom{\ell}{2} \binom{2 d_2 - d + n-1}{n-1} + \ell (\ell -1)  \binom{d_1 + d_2 - d+ n-1}{n-1}-\epsilon,$$
where $\epsilon=\max(0,\ell(\dim_\k(\X_{n-1, \lambda}))+\ell-1-(N-1))$.
\end{rem}

\begin{rem}\label{is zero}
Observe that the last term in the formula of Theorem \ref{thm:dim formula} is zero if and only if $r \ge 3$ and that the penultimate term is zero unless $ r = 2$ and $d_1 = d_2$. 
\end{rem}

\begin{rem}\label{rem:syzygy interpretation}
There is an interesting way to interpret the formula in Theorem \ref{thm:dim formula}.

Let $\mathcal I \subset S$ be an ideal generated by $\ell$ general forms of degree $s = d_2 + \dots + d_r$, where $\ell \leq n$.  Let $\Syz$ be the module of first syzygies of $\mathcal I$, i.e., the sequence 
$$
0 \longrightarrow \Syz \longrightarrow  S(-s)^\ell \longrightarrow {\mathcal I} \longrightarrow 0
$$
is exact.

From the Koszul complex we obtain the following resolution of $\Syz$,
\begin{equation}\label{KoszulES}
0\rightarrow S(-\ell s )^{\binom{\ell}{\ell}} \rightarrow \cdots \rightarrow S(-3 s)^{\binom{\ell}{3}}\rightarrow S(-2 s)^{\binom{\ell}{2}} \rightarrow \Syz\rightarrow 0
\end{equation}\label{SyzFormula}
and so 
\begin{equation}\label{SyzFormula2}
\dim_{\k} [\Syz]_d = \sum_{k=2}^{\ell} (-1)^{k-1} \binom{\ell}{k} \binom{d-k s +n-1}{n-1} .
\end{equation}
\end{rem}

Using Remark \ref{rem:syzygy interpretation} and Theorem \ref{thm:dim formula} we get
\begin{cor}\label{syzygycor}
Let $2\ell \leq n$ and $\lambda = [d_1, \ldots , d_r] \vdash d$.  Let $\mathcal I \subset S$ be an ideal generated by $\ell$ general forms of degree $s=d_2 + \cdots + d_r$ and let $\Syz$ be the first syzygy module of $\mathcal I$.

Then 
\begin{align*}
 \dim \sigma_\ell(\X_{n-1,\lambda}) =  & \ \ell \dim \X_{n-1,\lambda} + (\ell - 1) - \dim_{\k} [\Syz]_d \\[.5ex]
& - \binom{\ell}{2} \binom{2 d_2 - d + n-1}{n-1} - \ell (\ell -1) \binom{d_1 + d_2 - d+ n-1}{n-1} \ .
\end{align*}
\end{cor}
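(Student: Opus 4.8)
The plan is to deduce Corollary \ref{syzygycor} directly from Theorem \ref{thm:dim formula} together with the syzygy identity recorded in Remark \ref{rem:syzygy interpretation}. Since Theorem \ref{thm:dim formula} already gives the closed-form expression
\[
\dim \sigma_{\ell}(\X_{n-1,\lambda}) = \ell \dim \X_{n-1,\lambda} + (\ell-1) - \sum_{k=2}^{\ell}(-1)^k\binom{\ell}{k}\binom{d_1-(k-1)(d_2+\cdots+d_r)+n-1}{n-1} - \binom{\ell}{2}\binom{2d_2-d+n-1}{n-1} - \ell(\ell-1)\binom{d_1+d_2-d+n-1}{n-1},
\]
the only thing to check is that the sum over $k$ appearing here coincides with $\dim_{\k}[\Syz]_d$ as given by Equation \eqref{SyzFormula2}. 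First I would observe that, since $s = d_2+\cdots+d_r$ and $d = d_1 + s$, one has $d_1 - (k-1)s + n - 1 = d - ks + n - 1$ for every $k$. Hence the summand $\binom{d_1-(k-1)(d_2+\cdots+d_r)+n-1}{n-1}$ in Theorem \ref{thm:dim formula} is literally equal to $\binom{d-ks+n-1}{n-1}$, and therefore
\[
\sum_{k=2}^{\ell}(-1)^k\binom{\ell}{k}\binom{d_1-(k-1)(d_2+\cdots+d_r)+n-1}{n-1} = \sum_{k=2}^{\ell}(-1)^k\binom{\ell}{k}\binom{d-ks+n-1}{n-1} = \dim_{\k}[\Syz]_d,
\]
the last equality being exactly Equation \eqref{SyzFormula2} from Remark \ref{rem:syzygy interpretation}.

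For this to make sense I should first confirm the setup in Remark \ref{rem:syzygy interpretation} is legitimate under the hypothesis $2\ell \le n$: the ideal $\mathcal I$ is generated by $\ell$ general forms of degree $s$, and since $\ell \le n/2 \le n$ such general forms form a regular sequence, so the Koszul complex on them is exact and truncating it past the first syzygy module yields the resolution \eqref{KoszulES}; taking the alternating sum of Hilbert functions (using Remark \ref{HS}(b) and the additivity of Hilbert functions along an exact complex) gives \eqref{SyzFormula2}. With this in hand, substituting the identity above into the dimension formula of Theorem \ref{thm:dim formula} produces precisely the asserted expression
\[
\dim \sigma_\ell(\X_{n-1,\lambda}) = \ell \dim \X_{n-1,\lambda} + (\ell-1) - \dim_{\k}[\Syz]_d - \binom{\ell}{2}\binom{2d_2-d+n-1}{n-1} - \ell(\ell-1)\binom{d_1+d_2-d+n-1}{n-1}.
\]

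There is essentially no obstacle here; the corollary is a bookkeeping reinterpretation, and the only point requiring a moment's care is the index bookkeeping $d_1-(k-1)s = d-ks$ and making sure the syzygy resolution \eqref{KoszulES} is valid (i.e. that $\ell$ general forms of degree $s$ are a regular sequence in $n \ge 2\ell$ variables), which is immediate. I would present the proof in two short sentences: note $d_1 - (k-1)(d_2+\cdots+d_r) = d - k s$, hence the $k$-sum in Theorem \ref{thm:dim formula} equals $\dim_{\k}[\Syz]_d$ by \eqref{SyzFormula2}, and substitute.
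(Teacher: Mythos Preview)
Your proposal is correct and follows essentially the same approach as the paper: the authors simply state that the corollary follows by combining Theorem \ref{thm:dim formula} with Remark \ref{rem:syzygy interpretation}, which is exactly the substitution you carry out. Your added verification that $\ell \le n$ ensures the Koszul complex is exact (so \eqref{SyzFormula2} is valid) is a reasonable bit of care that the paper leaves implicit.
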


\begin{rem} \label{defect with syz} It is immediate from Corollary \ref{syzygycor} that if $r\geq 3$ then
$$
 \dim \sigma_\ell(\X_{n-1,\lambda})= \ell \dim \X_{n-1,\lambda} + (\ell - 1) - \dim_{\k} [\Syz]_d
$$
and if $r=2$ and $d_1> d_2$ then
$$
 \dim \sigma_\ell(\X_{n-1,\lambda})= \ell \dim \X_{n-1,\lambda} + (\ell - 1) - \dim_{\k} [\Syz]_d - \ell(\ell -1) .
$$
\end{rem}
 
We now discuss the defectivity of $\sigma_\ell(\X_{n-1, \lambda})$ if $2\ell \leq n$. We note that we 
state additional results in the case $\ell = 2$ without this restriction on $n$ in Section \ref{sec:secant lines}.
 
For convenience, we consider the case $r = 2$ separately. 

\begin{thm}\label{general r=2}
Let  $r=2$ with $\ell \le  \frac{n}{2}$ and $\lambda = [d_1,d_2]$.   
Then $\sigma_\ell (\X_{n-1,\lambda})$ 
fills its ambient space if and only if $n = 2 \ell$ and 
$\lambda = [1,1]$, or $n=4$, $\ell=2$ and $\lambda = [2,1]$.

In all other cases,  $\sigma_\ell (\X_{n-1,\lambda})$ is defective and the defect is 
\begin{equation*}
\delta_\ell = \begin{cases}
2 \ell (\ell -1) - \epsilon & \text{if } d_1 = d_2, \text{ and}\\
\ell (\ell -1) - \epsilon+\dim_{\k} [\Syz]_d & \text{if }  d_1 >d_2, 
\end{cases}
\end{equation*}
where an explicit formula for $\dim_{\k} [\Syz]_d$ is given in \eqref{SyzFormula2},
and 
$$\epsilon= \max\{ 0,\ \ell\cdot \dim_\k(\X_{n-1,\lambda}) + \ell -1-(N-1)\}.  $$
 
\end{thm}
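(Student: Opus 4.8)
The plan is to deduce this statement directly from Theorem~\ref{thm:dim formula}, Corollary~\ref{syzygycor} and Remark~\ref{correctionExample}, all specialized to $r=2$; no genuinely new argument is needed, only careful bookkeeping with binomial coefficients. Recall throughout that $\ell\ge 2$ and that, since $r=2$, we have $d=d_1+d_2$ and $s:=d_2+\cdots+d_r=d_2$.

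First I would simplify the dimension formula of Theorem~\ref{thm:dim formula} in the case $r=2$. Because $d_1+d_2-d+n-1=n-1$, the last term $\ell(\ell-1)\binom{d_1+d_2-d+n-1}{n-1}$ equals $\ell(\ell-1)$; and because $2d_2-d+n-1=d_2-d_1+n-1$, the penultimate term $\binom{\ell}{2}\binom{d_2-d_1+n-1}{n-1}$ equals $\binom{\ell}{2}$ when $d_1=d_2$ (the binomial being $\binom{n-1}{n-1}=1$) and equals $0$ when $d_1>d_2$ (if $d_1-d_2\le n-1$ then $0\le d_2-d_1+n-1<n-1$, and if $d_1-d_2\ge n$ then $d_2-d_1+n-1<0$, so the binomial vanishes in either case). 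Together with Corollary~\ref{syzygycor} and Remark~\ref{defect with syz}, this shows that $\dim\sigma_\ell(\X_{n-1,\lambda})$ equals $\ell\dim\X_{n-1,\lambda}+\ell-1-\dim_{\k}[\Syz]_d-\ell(\ell-1)$ if $d_1>d_2$, and equals $\ell\dim\X_{n-1,\lambda}+\ell-1-\dim_{\k}[\Syz]_d-\ell(\ell-1)-\binom{\ell}{2}$ if $d_1=d_2$, where $\Syz$ is the first syzygy module of the ideal generated by $\ell$ general forms of degree $s=d_2$. The ``fills its ambient space'' claim is then immediate from Theorem~\ref{thm:dim formula}: the only partitions in lists (i) and (ii) there with exactly two parts are $[1,1]$ --- which for $r=2$ occurs precisely when $n=2\ell$, since $n=4$ is covered by (i), $n=2\ell\ge 6$ by (ii), and $2\ell\ge 4$ automatically --- and $[2,1]$ with $n=4$, $\ell=2$.

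For the defect in the remaining cases I would use $\delta_\ell=\expd(\sigma_\ell(\X_{n-1,\lambda}))-\dim\sigma_\ell(\X_{n-1,\lambda})$ with $\expd(\sigma_\ell(\X_{n-1,\lambda}))=\min\{N-1,\ \ell\dim\X_{n-1,\lambda}+\ell-1\}=\ell\dim\X_{n-1,\lambda}+\ell-1-\epsilon$ by Definition~\ref{exp dim}. Subtracting the simplified dimension formula gives $\delta_\ell=\dim_{\k}[\Syz]_d+\ell(\ell-1)-\epsilon$ if $d_1>d_2$, and $\delta_\ell=\dim_{\k}[\Syz]_d+\binom{\ell}{2}+\ell(\ell-1)-\epsilon$ if $d_1=d_2$. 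In the equal-parts case, \eqref{SyzFormula2} evaluates to $\dim_{\k}[\Syz]_d=\binom{\ell}{2}$: with $d=2d_2$ one has $d-kd_2=(2-k)d_2<0$ for $k\ge 3$ (as $d_2\ge 1$), so only the $k=2$ summand survives, contributing $\binom{\ell}{2}\binom{n-1}{n-1}=\binom{\ell}{2}$; hence $\delta_\ell=2\binom{\ell}{2}+\ell(\ell-1)-\epsilon=2\ell(\ell-1)-\epsilon$, as claimed. Strict positivity of $\delta_\ell$ in all non-exceptional cases is then automatic: $\delta_\ell\ge 0$ always, and $\delta_\ell=0$ would force $\dim\sigma_\ell(\X_{n-1,\lambda})=\expd(\sigma_\ell(\X_{n-1,\lambda}))$, which when $\epsilon=0$ contradicts the above manifestly positive expressions and when $\epsilon>0$ equals $N-1$; so $\delta_\ell=0$ can only occur when $\sigma_\ell(\X_{n-1,\lambda})$ fills its ambient space --- i.e.\ in the two exceptional cases already excluded.

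I expect the only mildly delicate points to be the binomial-coefficient conventions --- in particular noting that $\binom{d_2-d_1+n-1}{n-1}=0$ for \emph{every} $d_1>d_2$, not merely for $d_1-d_2\ge n$ --- and the evaluation $\dim_{\k}[\Syz]_d=\binom{\ell}{2}$ in the case $d_1=d_2$. Neither is a real obstacle; in essence, Theorem~\ref{general r=2} is just the $r=2$ shadow of Theorem~\ref{thm:dim formula} combined with the description of the defect in Remark~\ref{correctionExample}.
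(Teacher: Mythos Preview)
Your proposal is correct and follows essentially the same route as the paper's proof: you specialize Theorem~\ref{thm:dim formula} to $r=2$ for the filling characterization, then read off the defect from Corollary~\ref{syzygycor}/Remark~\ref{defect with syz} together with the $\epsilon$-correction of Remark~\ref{correctionExample}. The additional details you supply---the explicit vanishing of $\binom{d_2-d_1+n-1}{n-1}$ for all $d_1>d_2$, the evaluation $\dim_{\k}[\Syz]_d=\binom{\ell}{2}$ when $d_1=d_2$, and the positivity argument for $\delta_\ell$---are exactly what the paper's terse proof leaves to the reader.
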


\begin{proof} 
Theorem \ref{thm:dim formula} implies the first claim, and that $\sigma_\ell (\X_{n-1,\lambda})$ does not fill its ambient space
in all other cases. Keeping Remark \ref{correctionExample} in mind, we get $\delta_\ell$ for the case that $d_1 = d_2$ from Theorem \ref{thm:dim formula},
and from Remark \ref{defect with syz} for the other case.
\end{proof}

\begin{rem}
The paper \cite{CCG:1} solves the problem of determining when $\sigma_\ell(\X_{n-1,\lambda})$ fills its ambient space in the case when $r=2$ and $2 \ell \leq n+1$, in quite different language. We should point out, first, that their $r$ is our $\ell$ and their $n$ is our $n-1$. Most of the cases that they need to consider satisfy $2 \ell = n+1$, and so do not overlap with Theorem \ref{general r=2} (but see Theorem \ref{thm:dim when froeberg} (a)).  Nevertheless, the second sentence of Theorem \ref{general r=2} follows from Theorem 5.1 of \cite{CCG:1}.
We included it above for reference and because it is such an easy consequence of our current approach.
\end{rem}

\begin{thm} \label{general r geq 3}
Let $\lambda \vdash d$, $\lambda =[d_1, \ldots , d_r]$, where $r \ge 3$.   Assume $2\ell \le n$. 
\begin{enumerate} 

\item[(a)] If $d_1 < d_2 + \dots + d_r$, then 
  $$ 
\dim \sigma_\ell( \X_{n-1,\lambda} ) = \ell \cdot \dim \X_{n-1,\lambda}+ (\ell - 1) \leq N-1
$$
 and $\sigma_\ell( \X_{n-1,\lambda})$ is not defective.

\item[(b)] If $d_1 \geq d_2 + \dots + d_r$, then $\sigma_\ell (\X_{n-1,\lambda})$ is defective, with defect
$$
\delta_\ell = \dim_{\k} [\Syz]_d -\epsilon, 
$$
where $\dim_{\k} [\Syz]_d$ is given explicitly in \eqref{SyzFormula}
and $\epsilon$ is as given in Remark \ref{correctionExample}.
\end{enumerate}
\end{thm}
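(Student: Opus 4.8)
The plan is to read everything off the dimension formula of Theorem~\ref{thm:dim formula}, which applies because $2\ell\le n$, specialized to $r\ge 3$. First I would invoke Remark~\ref{defect with syz}: since $r\ge 3$, the two tail binomials of Theorem~\ref{thm:dim formula} vanish (indeed $d\ge 2d_2+1$ forces $2d_2-d+n-1\le n-2<n-1$, and $d_1+d_2-d+n-1=n-1-(d_3+\cdots+d_r)<n-1$, so both $\binom{\cdot}{n-1}$ are $0$ by the binomial convention), and hence $\dim\sigma_\ell(\X_{n-1,\lambda})=\ell\dim\X_{n-1,\lambda}+(\ell-1)-\dim_\k[\Syz]_d$, where $\Syz$ is the first syzygy module of an ideal generated by $\ell$ general forms of degree $s=d_2+\cdots+d_r$ and $\dim_\k[\Syz]_d=\sum_{k=2}^{\ell}(-1)^k\binom{\ell}{k}\binom{d-ks+n-1}{n-1}$ by Remark~\ref{rem:syzygy interpretation}.

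For part (a), the hypothesis $d_1<s$ makes $\dim_\k[\Syz]_d$ vanish term by term: for every $k\ge 2$ one has $d-ks=d_1-(k-1)s\le d_1-s<0$, so each summand is $0$. Thus $\dim\sigma_\ell(\X_{n-1,\lambda})=\ell\dim\X_{n-1,\lambda}+(\ell-1)$. Because $\sigma_\ell(\X_{n-1,\lambda})$ is a subvariety of $\PP^{N-1}$, this number is automatically $\le N-1$, so it also equals $\expd(\sigma_\ell(\X_{n-1,\lambda}))=\min\{N-1,\ \ell\dim\X_{n-1,\lambda}+(\ell-1)\}$, and $\sigma_\ell(\X_{n-1,\lambda})$ is not defective.

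For part (b) I would first note that, under $d_1\ge s$ and $r\ge 3$, the variety $\sigma_\ell(\X_{n-1,\lambda})$ never fills its ambient space: the filling cases of Theorem~\ref{thm:dim formula} with a partition into $r\ge 3$ parts reduce to $n=4$, $\ell=2$, $\lambda=[1,1,1]$, and there $d_1=1<2=s$, contradicting the hypothesis. Hence $\dim\sigma_\ell(\X_{n-1,\lambda})\le N-2$. Then I would split according to whether $\ell\dim\X_{n-1,\lambda}+(\ell-1)$ is $\le N-1$ (so $\epsilon=0$ in the notation of Remark~\ref{correctionExample}) or $>N-1$ (so $\epsilon>0$). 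In the first regime $\expd=\ell\dim\X_{n-1,\lambda}+(\ell-1)$, so $\delta_\ell=\dim_\k[\Syz]_d-\epsilon=\dim_\k[\Syz]_d$, which must still be shown positive; in the second regime $\expd=N-1$, so $\delta_\ell=(N-1)-\dim\sigma_\ell(\X_{n-1,\lambda})=\dim_\k[\Syz]_d-\epsilon\ge 1$, using $\dim\sigma_\ell(\X_{n-1,\lambda})\le N-2$. In both regimes $\delta_\ell=\dim_\k[\Syz]_d-\epsilon$, as asserted.

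The only step needing an argument rather than bookkeeping is the positivity $\dim_\k[\Syz]_d>0$ in the first regime (equivalently, defectivity when the naive count does not exceed $N-1$); here I would argue structurally rather than through the alternating sum. Namely, $\Syz$ is a nonzero submodule of the free, hence torsion-free, $S$-module $S(-s)^\ell$, and by the Koszul resolution \eqref{KoszulES} it has a generator in degree $2s$; since $d_1\ge s$ gives $d=d_1+s\ge 2s$, multiplying such a generator by a general form of degree $d-2s$ produces a nonzero element of $[\Syz]_d$, so $\dim_\k[\Syz]_d\ge 1>0$. The overall care point is simply that Theorem~\ref{thm:dim formula} returns the \emph{true} dimension, so the defect must be measured against $\min\{N-1,\ \ell\dim\X_{n-1,\lambda}+(\ell-1)\}$, which is exactly what the $\epsilon$-correction of Remark~\ref{correctionExample} encodes.
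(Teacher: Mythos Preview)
Your proposal is correct and follows essentially the same route as the paper: invoke Theorem~\ref{thm:dim formula} together with Remark~\ref{defect with syz} to get $\dim\sigma_\ell(\X_{n-1,\lambda})=\ell\dim\X_{n-1,\lambda}+(\ell-1)-\dim_\k[\Syz]_d$, then for (a) observe the syzygy term vanishes when $d_1<s$, and for (b) use the filling classification to see $\sigma_\ell$ does not fill, split into the two $\epsilon$-regimes, and establish $\dim_\k[\Syz]_d>0$ via torsion-freeness and the initial degree $2s\le d$ of $\Syz$. The paper's proof is organized in precisely this way, only more tersely.
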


\begin{proof}
We begin with (a). 
The proof is immediate from Theorem \ref{thm:dim formula} since, as partly noted
in Remark \ref{is zero}, all the summands in that formula vanish except for $\ell \cdot \dim \X_{n-1,\lambda}+ (\ell - 1)$.

As for (b), Theorem \ref{thm:dim formula}  shows that $\sigma_\ell( \X_{n-1,\lambda})$ does not fill its ambient space. 
Thus, if $N-1\leq\ell(\dim \X_{n-1,\lambda})+\ell-1$, then $\sigma_\ell( \X_{n-1,\lambda})$ is defective, and using
Remarks \ref{correctionExample} and \ref{defect with syz} we see the defect is 
$\delta_\ell=\dim_{\k} [\Syz]_d-\epsilon$. 
Suppose now that
$N-1>\ell(\dim \X_{n-1,\lambda})+\ell-1$, so $\epsilon=0$
Hence, Remark \ref{defect with syz} gives that the defect is
\[
\delta_\ell = \dim_{\k} [\Syz]_d=\dim_{\k} [\Syz]_d-\epsilon.
\]
It remains to show that it is positive. However, $\Syz$ is  a submodule of a free module, and the generators of $\Syz$ have degree $2(d_2+\cdots+d_r) \le d$ (see \eqref{KoszulES}). We conclude that $\dim_{\k} [\Syz]_d >0$, and so $\sigma_\ell(\X_{n-1,\lambda})$ again is defective.
\end{proof} 

\begin{rem} \label{PDH} 
Theorems \ref{general r=2} and \ref{general r geq 3} give
us our first view of the fact that the hyperplane $d_1 = d_2 + \dots + d_r$ in $\mathbb N^r$ separates two very different kinds of behaviors with respect to defectivity, when $[d_1, \ldots , d_r] = \lambda\vdash d$ is a partition of $d$ into $r\geq 2$ parts. This was observed for $n=2$ in \cite{CGGS}, and it will recur frequently in this paper. As a result, we will follow \cite{CGGS} in referring to this as the {\em partition dividing hyperplane in $\mathbb N^r$}.
\end{rem}

\begin{rem} \label{sandro rmk}  
The formula for $\dim \sigma_\ell (\X_{n-1,\lambda})$ given in Conjecture \ref{conj:main conj}(b) and Theorem \ref{thm:dim formula} can be interpreted intuitively by using Terracini's Lemma via Corollary \ref{dim sigma ell} and the identity $\dim_{\k} [I_{P_1}+ \cdots +I_{P_\ell}]_d = \dim_{\k}([I_{P_1}]_d+ \cdots +[I_{P_\ell}]_d)$. 
The simplest case occurs when the spaces $[I_{P_i}]_d$ meet pair-wise in just $0$. In that case
$\dim_{\k}([I_{P_1}]_d+ \cdots +[I_{P_\ell}]_d)=\dim_{\k}[I_{P_1}]_d+ \cdots +\dim_{\k} [I_{P_\ell}]_d
=\ell(\dim_{\k} [I_{P_1}])=\ell(1+\dim \X_{n-1,\lambda})$ so 
$\dim \sigma_\ell (\X_{n-1,\lambda})=\ell(\dim \X_{n-1,\lambda})+\ell-1$.

Often the pairs will not meet only in 0. To consider that case,
we set $\underline{i}=\{i_1,\ldots,i_j\}$ for $1\leq i_1<\cdots< i_j\leq \ell$ and say $|\underline{i}|=j$.
We then define $V_{\underline{i}}=\cap_{t\in\underline{i}} [I_{P_t}]$. 
For $1\leq u\leq\ell$, let $v_u=\sum_{|\underline{i}|=u}\dim V_{\underline{i}}$,
so $v_1=\dim_{\k}[I_{P_1}]_d+ \cdots +\dim_{\k} [I_{P_\ell}]_d$,
$v_2$ is the sum of the dimensions of the pair-wise intersections of the $[I_{P_i}]_d$,
$v_3$ is the sum of the triple intersections, and so on.
Inclusion-exclusion now gives
$\dim_{\k}([I_{P_1}]_d+ \cdots +[I_{P_\ell}]_d)=\sum_{1\leq u\leq\ell}(-1)^{u+1}v_u$.

Let's look at this in the case that  $\lambda$ is such that $d_1\geq s = d_2+d_3+ \cdots +d_r$; i.e.  $\lambda$ is 
above the ``partition dividing hyperplane" in $\mathbb{N}^r$.
This is an example for which it is not possible that 
$[I_{P_{i_1}}]_d\cap[I_{P_{i_2}}]_d=0$ for $i_1\neq i_2$.
To see this, say each $P_j$ corresponds to the form
$F_{j,1}F_{j,2} \cdots F_{j,r}$.
Then $I_{P_i} \cap I_{P_j}$ will, for every $i,j\in \{1, \dots ,\ell\}$, $i\neq j$, contain all the products of the type: 
$F_{i,2} \cdots F_{i,r}F_{j,2} \cdots F_{j,r}G$,  
where $G$ is any form of degree $d-2s=d_1-s$. 
Thus $\dim_\k [I_{P_{i_1}}]_d\cap[I_{P_{i_2}}]_d\geq \binom{d-2s+n-1}{n-1}$ for each
pair $i_1\neq i_2$, so $v_2\geq \binom{\ell}{2}\binom{d-2s+n-1}{n-1}$.
Moreover, when $r=2$, $I_{P_i}\cap I_{P_j}$ will also contain  the forms 
$F_{1,i}F_{2,j}$, 
for all $i\neq j\in \{1, \ldots ,\ell \}$ so in this case $v_2\geq \binom{\ell}{2}\binom{d-2s+n-1}{n-1}+\ell(\ell-1)$. 
If $r=2$ and $d_1=d_2$, we also have 
$F_{1,i}F_{1,j}$ in the intersection, so $v_2\geq \binom{\ell}{2}\binom{d-2s+n-1}{n-1}+\ell(\ell-1)+\binom{\ell}{2}$. 

Similarly, if $d_1\geq 2s$, then $\dim_\k [I_{P_{i_1}}\cap I_{P_{i_2}}\cap I_{P_{i_3}}]_d\geq \binom{d-3s+n-1}{n-1}$, 
since 
$$F_{i_1,2} \cdots F_{i_1,r}F_{i_2,2} \cdots F_{i_2,r}F_{i_3,2} \cdots F_{i_3,r}H\in [I_{P_{i_1}}\cap I_{P_{i_2}}\cap I_{P_{i_3}}]_d$$ 
for any $H\in S_{d-3s}$. Hence $v_3\geq\binom{\ell}{3}\binom{d-3s+n-1}{n-1}$.
In the same way, if $d_1\geq ks$, then $v_k\geq\binom{\ell}{k}\binom{d-ks+n-1}{n-1}$.

If the lower bounds on each $v_u$ above were to equal the corresponding $v_u$, then 
$\dim_{\k}([I_{P_1}]_d+ \cdots +[I_{P_\ell}]_d)=\sum_{1\leq u\leq\ell}(-1)^{u+1}v_u$
becomes precisely the formula given in 
Conjecture \ref{conj:main conj}(b) and proved in a special case in Theorem \ref{thm:dim formula}.
Of course, because cancellations may occur in an alternating sum, it is possible
for the formula in Conjecture \ref{conj:main conj}(b) to hold even if some of the lower bounds
were to be strictly less than their corresponding $v_u$. However, this paper makes the case that the proper context for Conjecture \ref{conj:main conj} is intersection theory of projective schemes. 
When the intersections are not proper, we will show in the coming sections that Conjecture \ref{conj:main conj} is a consequence of conjecturing that a Lefschetz property governs the behavior of the intersections. This presents a novel and structural approach to the dimension of secant varieties that we feel makes the conjecture much more strongly motivated than it otherwise would be (since it gives a unified perspective for the case of proper intersections, where we prove the formula of Conjecture \ref{conj:main conj}, and the case of improper intersections, where the formula is only conjectural), and furthermore provides new tools for the computations. 
\end{rem}


\section{The Secant Line Variety and Passage to Improper Intersections} \label{secant line section} 
     \label{sec:secant lines}

A consequence of the work done up to this point is that if $n \geq 4$ we have the following result for the secant line variety of the variety of reducible forms of prescribed type:

\begin{thm}
     \label{thm: l = 2, proper intersection} 
Assume $\ell = 2$ and $n \geq 4$. Then, for all partitions $\lambda = [d_1,\dots,d_r]$ of $d$  with $r \geq 2$: 
\begin{itemize}

\item[(a)]  \begin{eqnarray*}
  \dim \sigma_{2}  (\X_{n-1, \lambda})  & = &  2 \dim \X_{n-1, \lambda} + 1 - \binom{d_1 -  (d_2 + \cdots + d_r) + n-1}{n-1}  \\[.5ex]
&& \  - \binom{2 d_2 - d + n-1}{n-1} - 2 \binom{d_1 + d_2 - d+ n-1}{n-1};
\end{eqnarray*} 

\item[(b)]  $\sigma_2( \X_{n-1,\lambda})$ fills its ambient space if and only if $\lambda \in \{[1,1], [2, 1], [1,1,1]\}$ and $n = 4$;

\item[(c)] if $\sigma_2( \X_{n-1,\lambda})$ does not fill its ambient space, then 
$\sigma_{2}  (\X_{n-1, \lambda})$ is not defective if and only if $d_1 < d_2 + \cdots + d_r$; and 

\item[(d)] if $\sigma_2( \X_{n-1,\lambda})$ is defective, then the defect is 
\[
\binom{d_1 -  (d_2 + \cdots + d_r) + n-1}{n-1}  + \binom{2 d_2 - d + n-1}{n-1} + 2 \binom{d_1 + d_2 - d+ n-1}{n-1}-\epsilon, 
\]
where $\epsilon$ is as given in Remark \ref{correctionExample}.
\end{itemize}
\end{thm}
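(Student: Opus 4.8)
The plan is to derive everything in Theorem \ref{thm: l = 2, proper intersection} from the general machinery already established in Section \ref{sec:proper inters}, specializing $\ell = 2$. The key observation is that the hypothesis $n \geq 4$ is exactly $2\ell \leq n$ when $\ell = 2$, so Theorem \ref{thm:dim formula} and all its corollaries apply directly.

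\medskip
\noindent\textbf{Part (a).} First I would simply substitute $\ell = 2$ into the dimension formula of Theorem \ref{thm:dim formula}. The sum $\sum_{k=2}^\ell (-1)^k \binom{\ell}{k}\binom{d_1-(k-1)(d_2+\cdots+d_r)+n-1}{n-1}$ collapses to the single term $k=2$, namely $\binom{2}{2}\binom{d_1-(d_2+\cdots+d_r)+n-1}{n-1} = \binom{d_1-(d_2+\cdots+d_r)+n-1}{n-1}$. Also $\binom{\ell}{2} = 1$ and $\ell(\ell-1) = 2$. Plugging these in gives the stated formula verbatim; this is a purely mechanical substitution.

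\medskip
\noindent\textbf{Part (b).} This is the specialization of the ``fills its ambient space'' criterion in Theorem \ref{thm:dim formula} to $\ell = 2$: condition (i) reads $n = 4$, $\ell = 2$, $\lambda \in \{[1,1],[2,1],[1,1,1]\}$, and condition (ii), requiring $n = 2\ell \geq 6$ hence $\ell \geq 3$, is vacuous when $\ell = 2$. So the claim is immediate.

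\medskip
\noindent\textbf{Parts (c) and (d).} Here I would distinguish $r = 2$ and $r \geq 3$ and invoke Theorems \ref{general r=2} and \ref{general r geq 3} respectively, again with $\ell = 2$. For $r \geq 3$: by Theorem \ref{general r geq 3}(a), if $d_1 < d_2+\cdots+d_r$ then $\sigma_2$ is not defective; by \ref{general r geq 3}(b), if $d_1 \geq d_2+\cdots+d_r$ then it is defective with defect $\dim_{\k}[\Syz]_d - \epsilon$, and one checks (using Remark \ref{is zero}, so that the last two binomial terms of part (a) vanish when $r \geq 3$ except for the $r=2,d_1=d_2$ case) that $\dim_{\k}[\Syz]_d$ agrees with $\binom{d_1-(d_2+\cdots+d_r)+n-1}{n-1}$ when $\ell = 2$ — indeed $\dim_{\k}[\Syz]_d = \sum_{k=2}^2(-1)^k\binom{2}{k}\binom{d-ks+n-1}{n-1} = \binom{d-2s+n-1}{n-1} = \binom{d_1-s+n-1}{n-1}$ since $d - 2s = d_1 - s$. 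For $r = 2$: Theorem \ref{general r=2} with $\ell=2$ says $\sigma_2$ fills its ambient space exactly in the cases already covered by (b), and otherwise is defective; since $d_1 \geq d_2$ always holds for a partition, this matches the dichotomy in (c) (the case $d_1 < d_2+\cdots+d_r = d_2$ is impossible, so for $r=2$ the variety is always defective when it does not fill the ambient space). The defect formulas in Theorem \ref{general r=2} (both the $d_1 = d_2$ and $d_1 > d_2$ cases) then need to be reconciled with the uniform expression in part (d); I would verify this by expanding $\dim_{\k}[\Syz]_d = \binom{d_1-d_2+n-1}{n-1}$ and $\binom{2d_2-d+n-1}{n-1} = \binom{d_2-d_1+n-1}{n-1}$ (which is nonzero only if $d_1 = d_2$, consistent with Remark \ref{is zero}) and $2\binom{d_1+d_2-d+n-1}{n-1} = 2\binom{n-1}{n-1} = 2 = \ell(\ell-1)$, and checking the two cases match $2\ell(\ell-1) - \epsilon = 4 - \epsilon$ and $\ell(\ell-1) - \epsilon + \dim_{\k}[\Syz]_d = 2 - \epsilon + \binom{d_1-d_2+n-1}{n-1}$ respectively. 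The $\epsilon$ correction is carried through from Remark \ref{correctionExample} unchanged.

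\medskip
\noindent\textbf{Main obstacle.} There is no serious mathematical obstacle — the theorem is a direct corollary of the results already proved. The only care needed is bookkeeping: making sure the collapsed sums, the vanishing of binomial coefficients with negative top (Remark \ref{is zero}), and the $\epsilon$ correction term are all handled consistently across the $r = 2$ and $r \geq 3$ cases, and confirming that case (ii) of the ambient-space criterion genuinely drops out when $\ell = 2$. I would write the proof as ``This follows from Theorems \ref{thm:dim formula}, \ref{general r=2}, and \ref{general r geq 3} by setting $\ell = 2$,'' followed by the short verifications of the identities $\binom{\ell}{2} = 1$, $\ell(\ell-1)=2$, $d - 2s = d_1 - s$, and the collapse of the Koszul sum, together with the remark that condition (ii) requires $\ell \geq 3$.
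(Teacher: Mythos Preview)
Your proposal is correct and essentially follows the paper's approach. The paper derives (a) and (b) from Theorem~\ref{thm:dim formula} exactly as you do; for (c) it cites Theorem~\ref{thm:defectivity - intro}, but that result is itself proved from Theorems~\ref{general r=2} and~\ref{general r geq 3}, so your direct citation is equivalent. For (d) the paper simply says ``Claim (d) is a consequence of (a)'' --- meaning one reads off the defect from the dimension formula in (a) together with the definition of $\epsilon$ --- whereas you take the longer route of reconciling the case-split defect formulas from Theorems~\ref{general r=2} and~\ref{general r geq 3} with the unified expression; both arguments are valid, but the paper's is shorter.
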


\begin{proof}
Parts (a) and (b) are immediate from Theorem \ref{thm:dim formula}. 
Theorem \ref{thm:defectivity - intro} gives part (c). Claim (d) is a consequence of (a). 
\end{proof} 

Notice that Theorem \ref{thm: l = 2, proper intersection} assumes $n\geq 4$, since $\ell=2$ but it relies on results that assume $n\geq 2\ell$.  The main purpose of this section is to understand what is needed to pass beyond the condition $2 \ell \leq n$ with our approach. We begin with a review of the main results of \cite{CGGS}, since that paper gives a careful analysis of the case $\ell = 2$, $n = 3$ using entirely different methods. Then we will see what would be needed in order to pass from the results of the previous section to this case. Having the essential idea in hand, subsequent sections of this paper will carry out the calculations.  The main result is a single, explicit (albeit complicated) conjectured formula for the dimension of the secant variety for {\em any} choice of $n$, $\lambda$ and $\ell$ (see Conjecture \ref{conj:main conj}  and Theorem \ref{thm: gen dim formula}). In particular, the results of the previous sections agree with this conjecture.  We are then able to give many consequences, some proven unconditionally, some conjectural.

So first we recall the results of \cite{CGGS}, which in particular  confirm the importance of the ``partition dividing hyperplane" mentioned in Remark \ref{PDH}. 
 We will see that, in general, the partitions \lq\lq below" the hyperplane (i.e., those partitions for which $d_1 < d_2+\cdots+d_r$) behave quite differently from those \lq\lq above" the hyperplane (i.e., those partitions for which $d_1 \geq d_2+\cdots+d_r$). In fact, we expect this to be true in all cases (see Conjecture \ref{conj:defectivity conj} and Proposition \ref{prop:non-defective}).

Propositions \ref{r>6} and \ref{r less than 6} can be deduced easily from the results of \cite{CGGS}.
We see that the condition for defectivity when $r\geq6$ is straightforward, but there are exceptions 
when $2\leq r<6$.

\begin{prop}\label{r>6} Let $n=3$ with $\lambda= [d_1, \ldots , d_r]\vdash d$ a partition of $d$ into $r \geq 6$ parts
and $s=d_2+\cdots+d_r$.  Set 
$$
p=  \sum _{2\leq i<j\leq r}d_id_j  .
$$
\begin{itemize}
\item[(a)] If $d_1 \geq s$ , then $\sigma_2( \X_{2,\lambda})$ is always defective, and the defect is 
\begin{equation}
      \label{eq:defect} 
\min\left\{ \binom{d_1-s+2}{2}, 2p-3s \right\}.  
\end{equation}

\item[(b)] If $d_1 < s$, then 
$$
\dim \left( \sigma_2( \X_{2,\lambda})\right) = 2\dim ( \X_{2,\lambda}) + 1, 
$$
and hence $\sigma_2( \X_{2,\lambda})$ is not defective.
\end{itemize}
\end{prop}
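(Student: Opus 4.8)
The plan is to reduce everything to the case analysis already made available in \cite{CGGS} for $n=3$, $\ell=2$, and then to translate their numerical invariants into the statement above. Recall that by Corollary \ref{dim sigma ell} we need $\dim_\k[I_{P_1}+I_{P_2}]_d$, equivalently $\dim_\k[S/(I_{P_1}+I_{P_2})]_d$, where $S=\k[x_1,x_2,x_3]$ and each $I_{P_i}=(F_{i,1}\cdots F_{i,r}/F_{i,j})_{j=1}^r=\bigcap_{j<k}(F_{i,j},F_{i,k})$ is built from general forms of degrees $d_1,\dots,d_r$. Since $2\ell=4>3=n$, this is an improper intersection, so Theorem \ref{thm:dim formula} does not apply directly; instead one extracts the value from the analysis in \cite{CGGS}. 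The first step is therefore to locate in \cite{CGGS} the precise computation of $\dim\sigma_2(\X_{2,\lambda})$ (or of the relevant Hilbert function value) for a partition with $r\ge 6$ parts, noting the dichotomy driven by the partition dividing hyperplane $d_1=s$ where $s=d_2+\cdots+d_r$.

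For part (b), when $d_1<s$, one shows that the two tangent spaces $T_{P_1},T_{P_2}$, i.e.\ $\P([I_{P_1}]_d)$ and $\P([I_{P_2}]_d)$, are in ``general enough'' position that $[I_{P_1}]_d\cap[I_{P_2}]_d=0$; then by Terracini (Proposition \ref{Terracini}) and the computation $\dim_\k[I_{P_i}]_d=1+\dim\X_{2,\lambda}$ (from Proposition \ref{TangIdeal} and Equation \eqref{dim X}), the span has dimension $2\dim\X_{2,\lambda}+1$, which is also the expected dimension in this range (one must check $2\dim\X_{2,\lambda}+1\le N-1$, which holds below the dividing hyperplane), so $\sigma_2$ is not defective. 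The vanishing of the intersection is exactly what \cite{CGGS} establishes in this regime, so one cites it; alternatively one can argue via the Hilbert series computation there showing $[S/(I_{P_1}+I_{P_2})]_d$ has the expected dimension $N-1-(2\dim\X_{2,\lambda}+1)$... wait, more precisely that $\dim_\k[I_{P_1}+I_{P_2}]_d$ equals $2(1+\dim\X_{2,\lambda})$.

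For part (a), when $d_1\ge s$, the heuristic of Remark \ref{sandro rmk} already predicts that $[I_{P_1}]_d\cap[I_{P_2}]_d$ is nonzero: it contains all products $F_{1,2}\cdots F_{1,r}F_{2,2}\cdots F_{2,r}G$ with $\deg G=d_1-s$ (a space of dimension $\binom{d_1-s+2}{2}$ when $n=3$), plus, since these are partitions with $r\ge 6$ parts, further contributions. The claim is that the actual defect equals $\min\{\binom{d_1-s+2}{2},\,2p-3s\}$ where $p=\sum_{2\le i<j\le r}d_id_j$. Here the two competing quantities have natural meanings: $\binom{d_1-s+2}{2}$ is the dimension of the ``obvious'' part of the intersection coming from the common factor of degree $d_1-s$, while $2p-3s$ is the amount by which the expected dimension overshoots the ambient space $\P^{N-1}$ (i.e.\ $\epsilon$ in the notation of Remark \ref{correctionExample}), forcing the secant variety to fill and capping the defect. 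Concretely, one sets $E=2\dim\X_{2,\lambda}+1-(N-1)$; a direct binomial computation (using $\dim\X_{2,\lambda}=\sum\binom{d_i+2}{2}-r$ and $N=\binom{d+2}{2}$, together with $\sum_{i<j}d_id_j=\binom{d}{2}-\sum_i\binom{d_i}{2}$ and isolating the terms involving $d_1$) shows $E=2p-3s$ when $r\ge 6$ so that the $d_1^2$ terms cancel. When $2p-3s\le \binom{d_1-s+2}{2}$ the secant variety fills $\P^{N-1}$ and the defect is $\min$ $=2p-3s$; when $2p-3s>\binom{d_1-s+2}{2}$ the secant variety does not fill, its dimension is $2\dim\X_{2,\lambda}+1-\binom{d_1-s+2}{2}$, and the defect is $\binom{d_1-s+2}{2}=\min$. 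To complete this one must confirm that the only contribution to the intersection (equivalently, the only source of defect) is the ``obvious'' one of dimension $\binom{d_1-s+2}{2}$ — that is, that when $r\ge 6$ the analysis of \cite{CGGS} yields no additional collapse; this is precisely where the hypothesis $r\ge 6$ is used, and it is the step where one genuinely invokes the detailed case-by-case results of that paper rather than re-deriving them.

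The main obstacle is therefore not any single computation but the bookkeeping of identifying the invariants $p$ and $2p-3s$ with the quantities appearing in \cite{CGGS} (their notation and normalization differ, and in particular their $n$ is our $n-1$ and their $r$ is our $\ell$), and verifying that for $r\ge 6$ parts no ``sporadic'' extra defect occurs — the exceptions that do arise for $2\le r<6$ being exactly what Proposition \ref{r less than 6} will record separately. Once the dictionary with \cite{CGGS} is set up, parts (a) and (b) follow by the binomial identity $E=2p-3s$ and the general principle (Remark \ref{correctionExample}, Remark \ref{N-1 less}) that the defect is the excess intersection dimension truncated by the amount the expected dimension exceeds $N-1$.
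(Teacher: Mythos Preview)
Your overall strategy---cite \cite{CGGS} for the dimension of $\sigma_2(\X_{2,\lambda})$ and then do the numerics---matches what the paper does (it simply says the proposition ``can be deduced easily from the results of \cite{CGGS}''). However, your numerical interpretation in part (a) is wrong, and with it the role you assign to the hypothesis $r\ge 6$.

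You claim that $E:=2\dim\X_{2,\lambda}+1-(N-1)$ equals $2p-3s$, and that $r\ge 6$ makes ``the $d_1^2$ terms cancel''. Neither is correct. A direct computation (with $d=d_1+s$, $2p=s^2-\sum_{i\ge 2}d_i^2$) gives instead
\[
E=\binom{d_1-s+2}{2}-(2p-3s),
\]
so $2p-3s$ is \emph{not} the overshoot $\epsilon$; it is the codimension $N-1-\bigl[2\dim\X_{2,\lambda}+1-\binom{d_1-s+2}{2}\bigr]$ of the secant variety (using the \cite{CGGS} dimension formula). With this in hand the $\min$ is just the usual dichotomy: if $E\ge 0$ then $\expd=N-1$ and the defect is the codimension $2p-3s$; if $E<0$ then $\expd=2\dim\X_{2,\lambda}+1$ and the defect is $\binom{d_1-s+2}{2}$. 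Since $E=\binom{d_1-s+2}{2}-(2p-3s)$, the smaller of the two is always the actual defect.

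Consequently your assertion ``when $2p-3s\le\binom{d_1-s+2}{2}$ the secant variety fills $\P^{N-1}$'' is also wrong: the secant variety fills iff its codimension $2p-3s\le 0$, and this is precisely what $r\ge 6$ forbids. Indeed, since $d_i\le d_2$ for $i\ge 2$ one has $\sum_{i\ge 2}d_i^2\le d_2 s$, hence $2p\ge s(s-d_2)\ge s(r-2)\ge 4s$ when $r\ge 6$, so $2p-3s\ge s>0$. \emph{That} is where the hypothesis $r\ge 6$ enters, not in any cancellation of $d_1^2$ terms or in ruling out ``additional collapse''. (For $r\le 5$ one can have $2p-3s\le 0$, which is exactly why Proposition~\ref{r less than 6} lists filling exceptions.)

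For part (b), once \cite{CGGS} gives $\dim\sigma_2=2\dim\X_{2,\lambda}+1$, non-defectivity is automatic: since $\dim\sigma_2\le N-1$ always, the expected dimension is $\min\{N-1,2\dim\X_{2,\lambda}+1\}=2\dim\X_{2,\lambda}+1$. There is nothing separate to check ``below the dividing hyperplane''.
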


\begin{prop} \label{r less than 6}
Let $n=3$ with $\lambda= [d_1, \ldots , d_r]\vdash d$ a partition of $d$ into $2\leq r<6$ parts and let $s=d_2+\cdots+d_r$.
\begin{itemize}
  \item[(a)] If $r=2$, then the secant line variety fills its ambient space, and so it is never defective.
\item[(b)]  For the following partitions the secant variety $\sigma_2( \X_{2,\lambda})$ fills its ambient  space and so the defect is zero:
\begin{itemize}

\item[$\bullet$]  $r=3$ and $\lambda \in \{ [d_1,d_2,1], [d_1,2,2], [d_1,3,2], [d_1,4,2], [d_1,5,2],[d_1,6,2], [d_1,3,3] \}$;

\item[$\bullet$]   $r=4$ and  $\lambda \in \{ [d_1,1,1,1],[d_1,2,1,1],[d_1,3,1,1],[d_1,4,1,1] \}$;  

\item[$\bullet$]  $r = 5$ and $\lambda = [d_1,1,1,1,1]$.  
  
 \end{itemize}

 \item[(c)] Apart from the partitions described above, if $d_1\geq s$ and $r \ge 3$, then $\sigma_2( \X_{2,\lambda})$ is always defective.  In this case the defect is equal to \eqref{eq:defect}  above.

\item [(d)] If $d_1<s$ then $\sigma_2( \X_{2,\lambda})$ is never defective.  Apart from the partitions described in (b),  the secant line variety has dimension $2 \dim( \X_{2,\lambda}) + 1$.  
\end{itemize} 
\end{prop}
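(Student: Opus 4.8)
The plan is to obtain Proposition~\ref{r less than 6} — and, by the same route, Proposition~\ref{r>6} — by extracting the dimension of $\sigma_2(\X_{2,\lambda})$ for every partition $\lambda=[d_1,\dots,d_r]\vdash d$ from the analysis of reducible plane curves in \cite{CGGS}, and then sorting the resulting data. First I would fix the dictionary between the two treatments: the parameter called $r$ in \cite{CGGS} is our secant parameter $\ell=2$, and their projective ambient dimension $n$ is our $n-1=2$. With $\dim\X_{2,\lambda}=\sum_{i=1}^r\binom{d_i+2}{2}-r$ and $N=\binom{d+2}{2}$, the task then reduces to comparing the dimension supplied by \cite{CGGS} with the expected dimension $\min\{N-1,\,2\dim\X_{2,\lambda}+1\}$, organised along the partition dividing hyperplane $d_1=s$ of Remark~\ref{PDH}.

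Below the hyperplane, i.e.\ when $d_1<s$ (which forces $r\ge 3$), \cite{CGGS} shows there is no defect, so $\dim\sigma_2(\X_{2,\lambda})=\min\{N-1,\,2\dim\X_{2,\lambda}+1\}$; this gives Proposition~\ref{r>6}(b) and the non-defectiveness part of Proposition~\ref{r less than 6}(d). To see which term realizes the minimum, I would check by a direct binomial estimate that $2\dim\X_{2,\lambda}+1\le N-1$ for all such $\lambda$ apart from the finitely many small partitions of Proposition~\ref{r less than 6}(b) that happen to satisfy $d_1<s$ (such as $[1,1,1]$, $[2,2,2]$, $[2,1,1,1]$), for which instead $2\dim\X_{2,\lambda}+1>N-1$ and $\sigma_2(\X_{2,\lambda})$ fills $\P^{N-1}$. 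This yields the ``apart from the partitions described in (b)'' clause of Proposition~\ref{r less than 6}(d).

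On and above the hyperplane, i.e.\ when $d_1\ge s$ (automatic if $r=2$), \cite{CGGS} produces a genuine correction. When $r\ge 3$ the two general tangent-space ideals necessarily overlap in degree $d$ — by Remark~\ref{sandro rmk} (with $n=3$, $\ell=2$) the overlap is at least $\binom{d_1-s+2}{2}$ — and \cite{CGGS} shows the resulting defect equals $\min\{\binom{d_1-s+2}{2},\,2p-3s\}$ with $p=\sum_{2\le i<j\le r}d_id_j$. I would then argue: for $r\ge 6$ and $d_1\ge s$, a short estimate using $d_2\ge\cdots\ge d_r\ge 1$ (the extreme case being $d_3=\cdots=d_r=1$) shows $2p-3s>0$, while $\binom{d_1-s+2}{2}\ge 1$, so the defect is positive and $\sigma_2(\X_{2,\lambda})$ is always defective, proving Proposition~\ref{r>6}(a); for $2\le r<6$ the quantity $2p-3s$ can fail to be positive, and solving $2p-3s\le 0$ under $d_1\ge d_2\ge\cdots\ge d_r\ge 1$ yields exactly the list in Proposition~\ref{r less than 6}(a),(b) — $r=2$ (where $p=0$), the enumerated triples ($d_3=1$, or $d_3=2$ with $d_2\le 6$, or $[d_1,3,3]$), the enumerated quadruples, and $[d_1,1,1,1,1]$ — in which the forced overlap fills up the codimension and $\sigma_2(\X_{2,\lambda})$ fills $\P^{N-1}$; in every remaining case with $d_1\ge s$ and $r\ge 3$ one has $2p-3s>0$, so the defect is \eqref{eq:defect}, which is Proposition~\ref{r less than 6}(c).

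I expect the main obstacle to be the finite enumeration just indicated: solving $2p-3s\le 0$ carefully under the ordering constraint on the $d_i$, and then confirming — as \cite{CGGS} does — that in each borderline partition $\sigma_2(\X_{2,\lambda})$ genuinely fills its ambient space rather than merely passing a parameter count, together with the companion verification that outside these finitely many shapes one indeed has $2\dim\X_{2,\lambda}+1\le N-1$. The cleanest subcase is $r=2$: that $\sigma_2(\X_{2,[d_1,d_2]})$ always fills $\P^{N-1}$ is the classical statement that a general plane curve of degree $d$ lies on a secant line to the variety of reducible curves, recorded in \cite{CGGS}.
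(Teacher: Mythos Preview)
Your approach matches the paper's: both simply defer to \cite{CGGS}, and your added observation that the list in (b) is characterized by $2p-3s\le 0$ (which one checks case by case on $(d_2,\ldots,d_r)$) is exactly the bookkeeping needed to unpack that citation.

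There is, however, a slip in your treatment of part (d). You assert that the partitions in (b) satisfying $d_1<s$ are ``finitely many small partitions.'' This is false: the family $[m,m,1]$ lies in (b) (as a case of $[d_1,d_2,1]$) and has $d_1=m<m+1=s$ for every $m\ge 1$, so the exceptional set in (d) is infinite. More generally, every shape $(d_2,\ldots,d_r)$ appearing in (b) admits values of $d_1$ with $d_2\le d_1<s$ whenever $r\ge 3$. Consequently your ``direct binomial estimate'' for part (d) cannot be a finite check; you must actually prove the inequality $2\dim\X_{2,\lambda}+1\le N-1$ for all $\lambda$ with $d_1<s$ and $2p-3s>0$. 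A clean way to do this: a short computation (expanding the binomials and using $d=d_1+s$) gives the identity
\[
(N-1)-(2\dim\X_{2,\lambda}+1)=(2p-3s)-\tbinom{s-d_1-1}{2}\qquad(d_1<s),
\]
so what is required is $2p-3s\ge\binom{s-d_1-1}{2}$ whenever $2p-3s>0$ and $d_2\le d_1<s$. This is the estimate you still need to supply; it is elementary but not vacuous, and your proposal as written does not address it.
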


\begin{ex}  Consider $\lambda = [2,2,2,1]$.  This partition has $d_1<s$ and so we are below the partition dividing hyperplane.
By Proposition \ref{r less than 6}(d), $\dim \sigma_2( \X_{2,\lambda})=2\dim \X_{2,\lambda}+ 1$ and a computation shows that
$2\dim \X_{2,\lambda}+ 1=N-1$, hence $\sigma_2( \X_{2,\lambda})$ fills its ambient space for this example.
\end{ex}

\begin{rem} \label{hyper surf} In case $\ell = 2$, $\lambda\vdash d, \ \lambda =  [d_1, \ldots , d_r]$, with $d_1 = s$ we can show that the only time that $\sigma_2 ( \X_{2,\lambda})$ is a hypersurface in its ambient space is when $\lambda = [9,7,2], [5,2,2,1]$ and $[7,5,1,1]$.  It would be interesting to find the equations of these hypersurfaces.  These are all defective secant line varieties, and up to this point whenever we have been able to find equations for such defective secant varieties they have been determinants.  Is that the situation in this case as well?   
\end{rem}

Now fix $n = 3$ and $\ell = 2$. Then the codimension of $\sigma_{2} (\X_{2, \lambda})$ is 
given by \linebreak $\dim_{\k} [S/(I_{P_1}+I_{P_2})]_d$ (see 
Corollary \ref{dim sigma ell}). By Lemma \ref{sop} and Theorem \ref{thm:join as tensor product}, the latter is equal to $\dim_{\k} [B/ \mathcal L B]_d$, where $B = T/(I_{(1)}^{(e)}+I_{(2)}^{(e)})$ and $\mathcal L \subset T$ is a regular sequence comprised of $(\ell-1)n = 3$ general linear forms.  By Remark \ref{formula}, we know the Hilbert function of $B$.
Thus, the important question now is to determine the relation between the Hilbert function of $B$ and that of $B/ \mathcal L B$. 

Since $\dim B = \ell(n-2) = 2$ (Theorem \ref{thm:join as tensor product}), the first two linear forms in $\mathcal L$ form a $B$-regular sequence by Proposition \ref{prop:proper intersection}. Say $\mathcal L = \{L_1,L_2,L_3 \}$ and $L_1,L_2$ form a regular sequence. Thus we have to find the Hilbert function after reducing by one more general linear form,  $L_3$.  Let $\mathcal L' = \{L_1,L_2 \}$ and let $ B' = B/\mathcal L' B$. 

Note that there is an exact sequence
\begin{equation} \label{mult SES}
 B'(-1) \stackrel{\times L_3}{\longrightarrow}  B' \rightarrow  B'/ L_3  B' \rightarrow 0.
\end{equation}
Thus,  in order to determine the Hilbert function of $B/\mathcal L B \cong  B'/ L_3   B'$ it is enough to know that $\times L_3$ has maximal rank at each degree.  This is exactly what is provided by the Weak Lefschetz Property, as described in the next section. It is the basis for the remaining calculations and the general formulae that they give.

As an example, let us consider a case mentioned in Remark \ref{hyper surf} and verify that  if $\lambda = [9,7,2]$, and if the maximal rank property holds for $ B'$, then our results of the previous sections imply that $\sigma_2 (\X_{2,\lambda})$ is a hypersurface.    Using Remark \ref{formula}, we get for the Hilbert series of $ B'$
\begin{align*}
\HS ( B') &  = (1 - t)^2 \cdot \HS (B)  \\[.5ex] 
& = \frac{(1-t)^6}{(1-t)^4}  \cdot \HS(B) \\[.5ex]
& =  \frac{1}{(1-t)^4}\left [1 - t^{16} - t^{11} - t^9 + t^{18} \right ]^2 \\[.5ex] 
& = 1 + 4 t + \cdots + 634 t^{17} + 635 t^{18} + \cdots + 4 t^{32}.  
\end{align*}
Thus, 
\[ 
h_{17} = \dim_{\k} [ B']_{17} = 634 \quad \text{ and } \quad h_{18} = \dim_{\k} [ B']_{18} = 635 .
\] 
Note that $d = 18$ in our example. If multiplication by $L_3$ on $ B'$ has maximal rank, we obtain (see Sequence  \eqref{mult SES}) that for $S = \k[x_1,x_2,x_3]$, 
\[
\dim_{\k} [S/(I_{P_1}+I_{P_2})]_d = \dim_{\k} [ B'/L_3  B']_d =  \max \{ h_d - h_{d-1}, 0 \} = 1.
\]
Since this is precisely the  codimension of $\sigma_2 (\X_{2,\lambda})$, we see that under the hypothesis that multiplication by $L_3$ has maximal rank we  obtain that indeed $\sigma_2 (\X_{2,\lambda})$ is a hypersurface in its ambient space.

For arbitrary $n$ and $\ell > \frac{n}{2}$,  we will need that the maximal rank property holds sequentially, using the right number of linear forms, until we arrive at $n$ variables.  In the following sections we make use of this idea to give a general formula for the dimensions of the secant varieties (see Theorem \ref{thm: gen dim formula} and Conjecture \ref{conj:main conj}), and as a consequence we describe the defective cases, assuming that suitable maximal rank properties hold. In many cases we know for different reasons that these maximal rank properties do hold, and in those cases we obtain unconditional (not conjectural) formulas. 
Given the many special cases and the seemingly disparate results covering them that have been found up to now, we were astonished to find a simple unifying principle that produces a single conjectural formula for the exact dimension of the secant variety for any given $n$, $\lambda$ and $\ell$.


\section{Improper Intersections and Lefschetz properties}  
\label{sec:improper inters}

We now consider the case in which the $\ell$ varieties $V(I_{(j)})$ ($1 \le j \le \ell$) 
that determine tangent spaces at $\ell$ general points to $\X_{n-1, \lambda}$ intersect improperly. By Proposition \ref{prop:proper intersection}(b), their intersection is proper if $2\ell \le n$. However, if $2\ell > n$ then we will see that  the intersection is improper,  so in particular the diagonal trick from intersection theory becomes more difficult to apply.  To make up for this, we will   use Lefschetz properties as formally introduced in \cite{HMNW} in order to determine $\dim_{\k} [S/(I_{(1)} + \cdots + I_{(\ell)})]_d$. Because we are dealing with general forms, these properties are known in some cases, and we conjecture them in the remaining cases.

More precisely, we conjecture that a general artinian reduction of the coordinate ring of the join of $V(I_{(1)}),\ldots,V(I_{(\ell)})$ has enough Lefschetz elements (see Conjecture \ref{conj:WLP} below). 

We continue to use the notation introduced above. In particular, 
\[
B = S/I_{(1)} \otimes_{\k} \cdots \otimes_{\k} S/I_{(\ell)}  \cong T/\tilde{I}
\] 
is the coordinate ring of the join of $V(I_{(1)}^{(e)}),\ldots,V(I_{(\ell)}^{(e)})$ in $\PP^{\ell n - 1}$. 

As above, let $\cL$ be a set of $(\ell -1) n$ general linear forms in $T$. Let $\cL' \subset \cL$ be a subset consisting of $\min \{\ell (n-2), (\ell - 1) n\}$ such forms. Thus, 
\[
\cL' = \begin{cases}
\cL & \text{if } \ell \le \frac{n}{2}\\
\subsetneqq \cL & \text{if } \ell > \frac{n}{2}
\end{cases}.
\]
Then there is the following useful observation. 

\begin{prop}
    \label{prop:improper inters}
If  $2\ell \ge n$, then: 
\begin{itemize}

\item[(a)] The linear forms in $\cL'$ form a $B$-regular sequence and $\dim B/\cL' B = 0$. 

\item[(b)] $\codim (I_{(1)}+\cdots+I_{(\ell)})= n$, and hence the varieties $V(I_{(1)}),\ldots,V(I_{(\ell)})$ intersect improperly if $2\ell > n$. 

\end{itemize}
\end{prop}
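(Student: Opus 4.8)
The plan is to deduce both parts from the structural results already established, primarily Theorem \ref{thm:join as tensor product} and Lemma \ref{sop}, together with a dimension count. Recall that $B$ is Cohen--Macaulay of Krull dimension $\ell(n-2)$. The key numerical fact is that the hypothesis $2\ell \ge n$ is equivalent to $\ell(n-2) \le (\ell-1)n$, since $\ell(n-2) = \ell n - 2\ell$ and $(\ell-1)n = \ell n - n$, so the inequality $\ell n - 2\ell \le \ell n - n$ is just $n \le 2\ell$. Hence the subset $\cL' \subset \cL$ has exactly $\ell(n-2) = \dim B$ elements (this is where the definition $\cL' $ of cardinality $\min\{\ell(n-2),(\ell-1)n\}$ kicks in).

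For part (a), the standard fact is that $\dim B$ general linear forms in the ambient polynomial ring $T$ form a system of parameters for the Cohen--Macaulay ring $B$, and a system of parameters in a Cohen--Macaulay ring is automatically a regular sequence. So first I would invoke genericity of $\cL$ (hence of $\cL'$) to conclude that the $\ell(n-2)$ forms in $\cL'$ are a homogeneous system of parameters for $B$; then Cohen--Macaulayness of $B$ (Theorem \ref{thm:join as tensor product}) upgrades this to a regular sequence. Factoring out a maximal regular sequence drops the dimension to $0$, so $\dim B/\cL' B = 0$, giving the second assertion of (a).

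For part (b), I would use Lemma \ref{sop}, which says $S/(I_{(1)}+\cdots+I_{(\ell)})$ has the same Hilbert series as $B/\cL B$. Now $\cL \supseteq \cL'$, and modding out the remaining $(\ell-1)n - \ell(n-2) = 2\ell - n$ linear forms in $\cL \setminus \cL'$ from the already $0$-dimensional ring $B/\cL' B$ keeps it $0$-dimensional (dimension cannot go negative, and it is already $0$). Hence $B/\cL B$ is a finite-length (artinian) graded ring, so $S/(I_{(1)}+\cdots+I_{(\ell)})$ is artinian, i.e. $\codim(I_{(1)}+\cdots+I_{(\ell)}) = n$. When $2\ell > n$ strictly, this codimension $n$ is less than $\codim I_{(1)} + \cdots + \codim I_{(\ell)} = 2\ell$, so by the definition of improper intersection given just before the statement, the varieties $V(I_{(1)}),\ldots,V(I_{(\ell)})$ intersect improperly (and in the boundary case $2\ell = n$ the intersection is still proper, consistent with Proposition \ref{prop:proper intersection}).

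The only point requiring any care — and the one I would expect to be the main obstacle to write cleanly rather than mathematically hard — is the genericity argument that $\dim B$ general linear forms form a system of parameters for $B$. This is a standard prime-avoidance / generic-hyperplane-section argument over the infinite field $\k$: one shows inductively that a general linear form avoids all the (finitely many) minimal primes of the successive quotients of positive dimension, so each successive general linear form cuts the dimension down by exactly one until reaching $0$. Since $\k$ is algebraically closed (hence infinite), such general forms exist, and the argument is routine; I would simply cite it rather than expand it. Everything else — the arithmetic identity $2\ell\ge n \iff \ell(n-2)\le(\ell-1)n$, the Cohen--Macaulay $\Rightarrow$ (s.o.p. is regular sequence) implication, and the comparison of codimensions — is immediate from the quoted results.
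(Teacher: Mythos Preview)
Your proposal is correct and follows essentially the same approach as the paper: invoke Theorem \ref{thm:join as tensor product} for the Cohen--Macaulayness and dimension of $B$, use genericity of the linear forms in $\cL'$ to get a regular sequence (hence artinian quotient), and then apply Lemma \ref{sop} to transfer the codimension statement to $S/(I_{(1)}+\cdots+I_{(\ell)})$. The paper's own proof is in fact much terser than yours, so your write-up spells out precisely the numerical check and prime-avoidance reasoning that the paper leaves implicit.
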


\begin{proof}
By Theorem \ref{thm:join as tensor product}, $B$ is Cohen-Macaulay of dimension $\ell (n-2)$. Hence, Claim (a) follows by the generality of the linear forms in $\cL'$.

Part (a) shows in particular that $\dim B/\cL B = 0$. Hence Lemma \ref{sop} gives 
\[
\codim (I_{(1)}+\cdots+I_{(\ell)})= n, 
\]
and we are done. 
\end{proof} 

By Lemma \ref{sop}, we are interested in the Hilbert function of $B/\cL B$. We know the Hilbert function of $B$, and thus the Hilbert function of its general artinian reduction $B/\cL' B$ by Proposition \ref{prop:improper inters}(a). However, if $\ell > \frac{n}{2}$, then $\cL' \neq \cL$. 

Recall that a linear form $L$ is a non-zerodivisor of a graded algebra $A$ if the multiplication by $L$ on $A$ is injective. If $A \neq 0$ is artinian, this cannot be true. However, one may hope that this multiplication still has maximal rank. This has been codified in \cite{HMNW} as follows:

\begin{defn}
   \label{def:WLP} 
Let $A = S/J$ be an artinian graded $\k$-algebra. Then $A$ is said to have the  {\em Weak Lefschetz Property} if, for each integer $i$, multiplication
by a general linear form $L \in A$ from $[A]_i$ to $[A]_{i+1}$ has maximal
rank. In this case, the form $L$  is called a {\em Lefschetz element} of $A$. 

We say $A$  has the {\em Strong Lefschetz Property} if, for all $i$ and $e$, multiplication by $L^e$ from $[A]_i$ to $[A]_{i+e}$ has maximal rank.  
\end{defn}

The first systematic study of these properties in this generality was carried out in \cite{HMNW}.  In particular,  the Hilbert functions of algebras with either 
the \WLP\ or \SLP\ were classified there, and a sharp bound was given on the possible graded  Betti numbers. Thus, the presence of these
properties leads to strong restrictions on the possible invariants. Many natural families of algebras are expected to have  a Lefschetz property. However, it is typically very difficult to establish this. 
We refer to \cite{HMMNWW} and \cite{wlp-survey} for  further information and results. 

There is a useful numerical characterization of Lefschetz elements. To state it we need some notation. 

\begin{defn}
    \label{def:positive part series} 
Let $\sum_{i \ge 0}  a_i t^i$  be a formal power series, where $a_i \in \ZZ$. Then we define an associated power series with non-negative coefficients by 
\[
\left | \sum_{i \ge 0}  a_i t^i \right |^+   = \sum_{i \ge 0} b_i t^i,
\]
where 
\[
b_i = \begin{cases}
a_i & \text{if } a_j > 0 \text{ for all }  j \le i \\
0 & \text{otherwise}. 
\end{cases} 
\]
\end{defn}

\begin{lem}
    \label{lem:char Lefschetz element}
Let $A$ be a standard artinian graded algebra, and let $L \in A$ be a linear form. Then the following conditions are equivalent: 
\begin{itemize}

\item[(a)] $L$ is a Lefschetz element of $A$. 

\item[(b)] The Hilbert function of $A/LA$ is given by
\[
\dim_{\k}  [A/L A]_i = \max \{ 0,\; \dim_{\k} [A]_i - \dim_{\k} [A]_{i-1} \} \quad \text{ for all integers } i. 
\]

\item[(c)]  The Hilbert series of $A/LA$ is 
\[
\HS (A/LA) = \left | (1-t) \cdot \HS (A) \right |^+.  
\] 
\end{itemize}
\end{lem}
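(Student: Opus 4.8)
\textbf{Proof proposal for Lemma~\ref{lem:char Lefschetz element}.}
The plan is to establish the cycle of implications (a) $\Rightarrow$ (b) $\Rightarrow$ (c) $\Rightarrow$ (a), since (b) and (c) are really the same statement packaged in different notation (the operation $|\,\cdot\,|^+$ on the power series $(1-t)\HS(A)$ truncates to zero exactly at the first degree where a coefficient fails to be positive, which matches the ``$\max\{0,\dim[A]_i-\dim[A]_{i-1}\}$'' prescription as long as one checks that once $\dim[A/LA]_i=0$ it stays $0$ in higher degrees). So the only real content is the equivalence of (a) with the Hilbert-function formula in (b).

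First I would fix a general linear form $L\in A$ and, for each $i$, consider the multiplication map $\mu_i\colon [A]_{i-1}\to[A]_i$ and look at the exact sequence
\[
[A/LA]_{i-1} \xrightarrow{\ \times L\ } [A]_i \longrightarrow [A/LA]_i \longrightarrow 0,
\]
together with the observation that $[LA]_i = \operatorname{im}\mu_i$, so that
\[
\dim_{\k}[A/LA]_i = \dim_{\k}[A]_i - \operatorname{rank}\mu_i.
\]
For the implication (a) $\Rightarrow$ (b): if $L$ is a Lefschetz element then by definition $\mu_i$ has maximal rank, i.e.\ $\operatorname{rank}\mu_i = \min\{\dim_{\k}[A]_{i-1},\dim_{\k}[A]_i\}$, and substituting this into the displayed formula gives exactly $\dim_{\k}[A/LA]_i=\max\{0,\dim_{\k}[A]_i-\dim_{\k}[A]_{i-1}\}$. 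For the converse (c) $\Rightarrow$ (a) (equivalently (b) $\Rightarrow$ (a)): one always has $\operatorname{rank}\mu_i\le\min\{\dim_{\k}[A]_{i-1},\dim_{\k}[A]_i\}$, hence $\dim_{\k}[A/LA]_i\ge\max\{0,\dim_{\k}[A]_i-\dim_{\k}[A]_{i-1}\}$ for every $i$ unconditionally; if equality holds in every degree, then $\operatorname{rank}\mu_i$ is forced to equal $\min\{\dim_{\k}[A]_{i-1},\dim_{\k}[A]_i\}$ for all $i$, which is precisely the assertion that $L$ is Lefschetz.

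The bookkeeping step (b) $\Leftrightarrow$ (c) requires checking that the naive degree-by-degree formula $\max\{0,\dim_{\k}[A]_i-\dim_{\k}[A]_{i-1}\}$ actually agrees with the truncated series $|(1-t)\HS(A)|^+$, i.e.\ that the Hilbert function of $A/LA$ cannot ``revive'' after hitting $0$. This follows from the surjection $[A/LA]_i\twoheadrightarrow[A/LA]_{i+1}$ induced by multiplication by a general linear variable (or simply because $A/LA$ is a cyclic module over a polynomial ring in fewer variables, generated in degree $0$, whose Hilbert function once zero stays zero); combined with the degree-by-degree computation above this pins down $\HS(A/LA)=|(1-t)\HS(A)|^+$. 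I do not expect any serious obstacle here: the argument is essentially the standard dimension count for a short exact sequence of graded pieces, and the main thing to be careful about is the interplay between ``maximal rank in each degree'' and the truncation behavior of $|\,\cdot\,|^+$, which is purely combinatorial. If a reference is preferred, this is the content of \cite[Proposition 2.1]{HMNW} (or \cite{wlp-survey}), and one could simply cite it; but the self-contained argument above is short enough to include.
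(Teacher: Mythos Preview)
Your argument is correct and is exactly the unpacking that the paper declines to provide: the paper's entire proof is the single sentence ``This is immediate from the definitions.'' So there is nothing to compare against beyond noting that you have made explicit what the authors leave implicit.

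Two small cleanups. First, in your displayed exact sequence the leftmost term should be $[A]_{i-1}$, not $[A/LA]_{i-1}$; you use the correct version immediately afterward when you write $\dim_{\k}[A/LA]_i=\dim_{\k}[A]_i-\operatorname{rank}\mu_i$, so this is just a typo. Second, your first justification for ``once $[A/LA]_i=0$ it stays zero'' via a surjection $[A/LA]_i\twoheadrightarrow[A/LA]_{i+1}$ induced by a single linear form is not literally correct (multiplication by one linear form need not be surjective); your parenthetical alternative---that $A/LA$ is standard graded, so $[A/LA]_1\cdot[A/LA]_i=[A/LA]_{i+1}$---is the right reason and suffices. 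With those fixes the cycle (a)$\Rightarrow$(b)$\Rightarrow$(c)$\Rightarrow$(a) goes through, and your observation that the unconditional inequality $\dim_{\k}[A/LA]_i\ge\max\{0,\dim_{\k}[A]_i-\dim_{\k}[A]_{i-1}\}$ forces (c)$\Rightarrow$(b) is exactly the clean way to close the loop.
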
 

This is immediate from the definitions. 

It is natural to ask if an algebra has the Weak Lefschetz Property repeatedly, using more than one linear form.  This notion was first introduced by Iarrobino (according to the introduction of \cite{HW}) and formalized in \cite{HW} and \cite{Constantinescu}.

\begin{defn}
    \label{def:k-WLP}
An artinian standard graded $\k$-algebra $A $ is said to have the {\em $k$-Weak Lefschetz Property} (denoted $k$-WLP) if either $k=0$, or $k>0$ and there are linear forms $L_1,\ldots,L_k \in A$ such that $L_i$ is a Lefschetz element of $A/(L_1,\ldots,L_{i-1}) A$ for all $i = 1,\ldots,k$.     In this case, $\{L_1,\ldots,L_k\}$ is called a {\em $k$-Lefschetz set} of $A$. 
\end{defn} 

By definition, every artinian algebra has the $0$-WLP. Moreover, if an algebra $A$ has the $k$-WLP, then a set of $k$ general linear forms is a $k$-Lefschetz set of $A$. Since all quotients of polynomial rings of at most two variables have the \WLP\ by \cite{HMNW}, the conditions $(n-2)$-WLP and $n$-WLP are equivalent for quotients of $S$. 

Using Lemma \ref{lem:char Lefschetz element}, the above property can be restated as follows. 

\begin{lem}
    \label{lem:char k-WLP}
An artinian standard graded algebra $A$ has the $k$-WLP  if and only if there are linear forms $L_1,\ldots,L_k \in A$ such that 
\[
\HS (A/(L_1,\ldots,L_i) A) =  \left | (1-t)^i \cdot \HS (A) \right |^+ \quad \text{ for all } i \le k.
\] 
\end{lem}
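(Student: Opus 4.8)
The statement to prove is Lemma \ref{lem:char k-WLP}, which characterizes the $k$-Weak Lefschetz Property in terms of Hilbert series.

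The plan is to prove this by induction on $k$, reducing it to the single-step characterization of a Lefschetz element already provided in Lemma \ref{lem:char Lefschetz element}. The base case $k=0$ is vacuous, since by definition every artinian algebra has the $0$-WLP, and the Hilbert series condition for $i=0$ reads $\HS(A) = |\HS(A)|^+$, which holds because the Hilbert function of an artinian standard graded algebra is a nonnegative sequence that is positive up to the socle degree and then zero, so the ``$+$'' truncation does nothing. For the inductive step, I would assume the equivalence holds for $k-1$ and then peel off the last linear form.

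The key observation is that $A$ has the $k$-WLP if and only if $A$ has the $(k-1)$-WLP \emph{and} there is a choice of $(k-1)$-Lefschetz set $L_1,\dots,L_{k-1}$ such that some linear form $L_k$ is a Lefschetz element of $A/(L_1,\dots,L_{k-1})A$. Since a Lefschetz element of a quotient is a general linear form (and, as remarked after Definition \ref{def:k-WLP}, $k$ general linear forms form a $k$-Lefschetz set of any algebra having the $k$-WLP), one can work throughout with general linear forms $L_1,\dots,L_k$. First I would apply Lemma \ref{lem:char Lefschetz element}(c) to the algebra $A' = A/(L_1,\dots,L_{k-1})A$: the form $L_k$ is a Lefschetz element of $A'$ if and only if
\[
\HS(A'/L_k A') = \bigl| (1-t)\cdot \HS(A') \bigr|^+.
\]
By the inductive hypothesis, $L_1,\dots,L_{k-1}$ forming a $(k-1)$-Lefschetz set is equivalent to $\HS(A/(L_1,\dots,L_j)A) = |(1-t)^j \HS(A)|^+$ for all $j \le k-1$; in particular $\HS(A') = |(1-t)^{k-1}\HS(A)|^+$. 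Substituting, the condition becomes
\[
\HS(A/(L_1,\dots,L_k)A) = \Bigl| (1-t)\cdot \bigl| (1-t)^{k-1}\HS(A) \bigr|^+ \Bigr|^+,
\]
so the whole lemma comes down to the purely formal identity
\[
\Bigl| (1-t)\cdot \bigl| (1-t)^{k-1}\HS(A) \bigr|^+ \Bigr|^+ = \bigl| (1-t)^k \HS(A) \bigr|^+.
\]

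The main obstacle — really the only nontrivial point — is verifying this idempotency-type identity for the ``positive part'' operator, namely that $\bigl| (1-t)\cdot |f|^+ \bigr|^+ = \bigl| (1-t) f \bigr|^+$ whenever $f$ is a power series whose coefficients are nonnegative and eventually zero (which is the case for $(1-t)^{k-1}\HS(A)$ by induction, since it is the Hilbert series of the artinian algebra $A/(L_1,\dots,L_{k-1})A$). Here I would argue as follows: write $|f|^+ = \sum b_i t^i$, and let $m$ be the largest index with $b_m > 0$ (if $|f|^+ = 0$ both sides vanish). By Definition \ref{def:positive part series}, $b_i = a_i$ for all $i \le m$ where $f = \sum a_i t^i$, and $b_i = 0$ for $i > m$; note also $a_{m+1} \le 0$ (this is exactly why the truncation occurred at $m$, unless $f$ itself already terminates at $m$, in which case $a_{m+1}=0$ as well). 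Now compare the coefficient sequences of $(1-t)f$ and $(1-t)|f|^+$: they agree in every degree $i \le m$, since in that range $|f|^+$ and $f$ have identical coefficients in degrees $i$ and $i-1$. The first coefficient of $(1-t)f$ that can differ from that of $(1-t)|f|^+$ is in degree $m+1$, where $(1-t)f$ has coefficient $a_{m+1} - a_m \le -a_m < 0$ (using $a_{m+1}\le 0 < a_m = b_m$) while $(1-t)|f|^+$ has coefficient $-b_m < 0$. Since $(1-t)f$ has coefficients agreeing with $(1-t)|f|^+$ through degree $m$ and both have a strictly negative coefficient in degree $m+1$, applying $|\cdot|^+$ to either truncates at the same place and yields the same series. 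This establishes the identity, completing the induction and the proof. (One should also record the degenerate sub-case where $|f|^+ = f$, i.e. $f$ has all positive coefficients up to its last nonzero term: then the identity is immediate. And if $\HS(A) = 0$, i.e. $A = 0$, everything is trivial.)
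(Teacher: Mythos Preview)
Your argument is correct and follows the same route as the paper's proof, which simply invokes Lemma~\ref{lem:char Lefschetz element} together with \cite[Lemma~4]{froeberg}; the latter is precisely the ``positive part'' identity $\bigl|(1-t)\cdot|f|^+\bigr|^+ = \bigl|(1-t)f\bigr|^+$ that you prove by hand. One small correction: the parenthetical claim that $f=(1-t)^{k-1}\HS(A)$ has nonnegative coefficients because it equals $\HS\bigl(A/(L_1,\ldots,L_{k-1})A\bigr)$ is not right---by the inductive hypothesis that Hilbert series is $|f|^+$, not $f$, and $f$ itself can have negative coefficients. Fortunately your verification of the identity never uses nonnegativity of the $a_i$, only that $f$ is a polynomial (so $m$ exists), which does hold since $A$ is artinian; so the proof stands.
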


\begin{proof} 
This follows by combining Lemma \ref{lem:char Lefschetz element} and \cite[Lemma 4]{froeberg}. 
\end{proof}

We now want to use these concepts to determine the dimension of  secant varieties $\sigma_{\ell}  (\X_{n-1, \lambda})$, using $B/\cL B$. Since the linear forms in $\cL$ are general, it is reasonable to ask if, in the case where $\ell > \frac{n}{2}$,  the linear forms in $\cL \setminus \cL'$ form a Lefschetz set of the general artinian reduction $B/\cL' B$. We illustrate the usefulness of this property.

\begin{ex}
    \label{exa:WLP}
Consider the case $n = 4$, $\ell = 3$, and $\lambda = [3,2,2]$, so $d = 7$. 
Then Remark \ref{formula} gives
\[
\HS (B) = \frac{(1 - t^4 - 2 t^5 + 2t^7)^3}{(1-t)^{12}}. 
\]
Since $|\cL'| = 6$, Proposition \ref{prop:proper intersection} implies 
\begin{align*}    
\HS (B/\cL' B) & = \frac{(1 - t^4 - 2 t^5 + 2t^7)^3}{(1-t)^6} \\
& =  8 t^{15}+48 t^{14}+144 t^{13}+292 t^{12}+456 t^{11}+588  t^{10}+646 t^9+612 t^8 \\
& \quad +504 t^7+363 t^6+228 t^5+123 t^4+56 t^3+21 t^2+6 t+1\\ 
& = \sum_{i = 0}^{15} c_i t^i
\end{align*}

 Note that the passage from $B$ to $C= B/\cL' B$ corresponds to intersecting the join of $V(I_{P_1}), V(I_{P_2})$, and $V(I_{P_3})$ properly with a linear subspace of codimension three. However, $C$ is artinian, and thus any further hyperplane sections correspond to improper intersections. 

We have checked by computer that $C$ has the $2$-WLP, so let   $L_1, L_2$ be a $2$-Lefschetz set. Then, we may assume that $\cL = \cL' \cup \{L_1, L_2\}$, and Lemma \ref{sop} gives 
\[
\HS (A) = \HS (B/\cL B) = \HS (C/(L_1, L_2) C) = \left | (1-t)^2 \cdot \HS (C) \right |^+.
\]
We compute this in two steps. First, we get 
\begin{align*}   
\HS (C/L_1 C) & = \sum_{i \ge 0} \max \{0, c_i - c_{i-1} \} t^i \\
& = 34 t^9+108 t^8+141 t^7+135 t^6+105 t^5+67 t^4+35 t^3+15 t^2+5 t+1 \\
& = \sum_{i = 0}^{9} b_i t^i. 
\end{align*} 
Thus, we obtain 
\begin{align*}   
\HS (A) =  \HS (C/(L_1, L_2) C) & =  \sum_{i \ge 0} \max \{0, b_i - b_{i-1} \} t^i \\
& = 6 t^7+30 t^6+38 t^5+32 t^4+20 t^3+10 t^2+4 t+1. 
\end{align*}
In particular, the secant variety $\sigma_{3} (\X_{3, \lambda})$ has codimension $\dim_{\k} [A]_d = 6$ in its ambient space. Hence, $\sigma_{3} (\X_{3, \lambda})$ is non-defective of dimension 113     and does not fill its ambient space. 
\end{ex}

Computer experiments (see Remark~\ref{rem:evidence}) suggest that a similar analysis can always be carried out. Thus, we conjecture: 

\begin{conj}[WLP-Conjecture]
    \label{conj:WLP}
The algebra $B/\cL' B$ has the $k$-WLP for  $k = \max \{0, \, 2 \ell - n\}$.   
\end{conj}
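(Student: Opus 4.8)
\smallskip
\noindent\textbf{A strategy toward a proof of Conjecture \ref{conj:WLP}.}
Set $k=\max\{0,2\ell-n\}$; the case $k=0$ is vacuous, so assume $2\ell>n$, in which case $|\cL'|=\ell(n-2)$. The plan splits into a \emph{deformation} step, which replaces the general artinian reduction $B/\cL'B$ by a more tractable ``block-diagonal'' one, and a \emph{structural} step about Lefschetz properties of tensor products. For the deformation step, recall that $B$ is Cohen--Macaulay of dimension $\ell(n-2)$ by Theorem \ref{thm:join as tensor product}. Inside the affine space parametrizing all sequences of $\ell(n-2)$ linear forms of $T$, those forming a $B$-regular sequence constitute a dense open subset $U$, and over $U$ the quotient $B/\mathbf{L}B$ has constant Hilbert series $(1-t)^{\ell(n-2)}\HS(B)$, hence is flat. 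Adjoining $k$ further general linear forms $M_1,\dots,M_k$ as additional parameters and letting the degree vary, the ranks of all multiplication maps $\times M_i\colon [B/(\mathbf{L},M_1,\dots,M_{i-1})B]_j\to[B/(\mathbf{L},M_1,\dots,M_{i-1})B]_{j+1}$ are lower semicontinuous on this parameter space. Therefore, to prove that the \emph{general} member $B/\cL'B$ has the $k$-WLP it suffices to produce \emph{one} point of $U$ over which $B/\mathbf{L}B$ has the $k$-WLP.

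The natural candidate is $\mathbf{L}_0=$ the union, for $j=1,\dots,\ell$, of $n-2$ general linear forms in the variables $x_{j,1},\dots,x_{j,n}$. Using $B\cong S/I_{(1)}\otimes_\k\cdots\otimes_\k S/I_{(\ell)}$ one sees that
$$
B/\mathbf{L}_0B\;\cong\;\overline{A}_1\otimes_\k\cdots\otimes_\k\overline{A}_\ell\;=:\;D ,
$$
where $\overline{A}_j$ is the artinian reduction of $S/I_{(j)}$; this is artinian, so $\mathbf{L}_0$ is a system of parameters on the Cohen--Macaulay ring $B$ and hence $\mathbf{L}_0\in U$. Thus the conjecture is reduced to showing that $D$ has the $k$-WLP. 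Each factor $\overline{A}_j$ is a quotient of $\k[u,v]$, hence has the Strong Lefschetz Property by \cite{HMNW}; concretely it is level of socle degree $d-2$ and type $r-1$, with Hilbert series $(1-t)^{-2}(1-\sum_i t^{d-d_i}+(r-1)t^d)$ (Remark \ref{info about S/J}). By the standard $\mathfrak{sl}_2$ argument --- the diagonal action on a tensor product of $\mathfrak{sl}_2$-representations is again an $\mathfrak{sl}_2$-representation, and in characteristic zero these are semisimple --- a tensor product of algebras each enjoying the SLP again enjoys the SLP; see \cite{HMMNWW}. In particular $D$ has the WLP, which (granting the cited inputs) already settles the case $k=1$, i.e.\ $n$ odd and $\ell=\tfrac{n+1}{2}$, for every $r$ and every $\lambda$.

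The main obstacle is the case $k\ge 2$. There one needs not a single Lefschetz element of $D$ but that $k$ \emph{successive} general linear forms behave with maximal rank, equivalently (Lemma \ref{lem:char k-WLP}) that $\HS(D/(L_1,\dots,L_i)D)=\bigl|(1-t)^i\HS(D)\bigr|^+$ for all $i\le k$. The SLP of $D$ governs powers of one general form but gives no a priori control over quotients by several independent general forms, and the difficulty here is of the same nature as Fr\"oberg's conjecture: indeed in Section \ref{sec:r is 2} we show that for $r=2$ Conjecture \ref{conj:WLP} (equivalently Conjecture \ref{conj:main conj}) follows from Fr\"oberg's conjecture, so no unconditional proof for all parameters is to be expected. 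Realistic partial targets along this route are: $k\le 2$ by a direct hyperplane-section analysis of $D$; fixed small $\ell$; and special partitions such as $\lambda=[d-1,1]$ or $r$ large, where $S/I_{(j)}$ has a simple enough minimal free resolution that the tensored resolution of $D$ from Theorem \ref{thm:join as tensor product} can be examined directly and the $k$-WLP read off from the cancellation of consecutive (``ghost'') Betti numbers after adjoining $L_1,\dots,L_k$, in the manner of resolution-based proofs of Lefschetz properties. This resolution-theoretic route is also the natural substitute whenever the deformation to $D$ discards too much information.
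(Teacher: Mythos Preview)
This is stated as a conjecture; the paper does not prove it (it is used as a hypothesis in Theorem~\ref{thm: gen dim formula} and verified only trivially for $k=0$). So there is no proof in the paper to compare against, and your text is best read as a strategy.

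Your deformation step is fine. The structural step has a genuine gap. The $\mathfrak{sl}_2$ argument that transports SLP across tensor products applies only to the \emph{narrow} Strong Lefschetz Property---symmetric Hilbert function and isomorphisms $\times L^{\,d-2i}\colon[A]_i\to[A]_{d-i}$---because this is exactly what makes $A$ an $\mathfrak{sl}_2$-module. Your factors $\overline{A}_j$ are level of socle degree $d-2$ and Cohen--Macaulay type $r-1$ (Remark~\ref{info about S/J}); for $r\ge 3$ they are not Gorenstein, their Hilbert function is not symmetric, and the $\mathfrak{sl}_2$ machinery does not apply. Worse, your block-diagonal specialization $D$ can actually fail WLP. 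Take $\lambda=[1,1,1]$ and $\ell=2$ (so $n=3$, $k=1$): each $\overline{A}_j\cong\k[u_j,v_j]/(u_j,v_j)^2$, hence
\[
D\;\cong\;\k[u_1,v_1,u_2,v_2]\big/\bigl((u_1,v_1)^2+(u_2,v_2)^2\bigr),
\]
with Hilbert function $(1,4,4)$. Writing a general linear form as $L=L_1+L_2$ with $L_i\in\langle u_i,v_i\rangle$, one has $L\cdot(L_1-L_2)=L_1^2-L_2^2=0$ in $D$, so $\times L\colon[D]_1\to[D]_2$ is never injective between spaces of equal dimension, and $D$ lacks the WLP. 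Thus the specialization to $D$ cannot establish even the case $k=1$ when $r\ge 3$; the sentence ``settles the case $k=1$ \ldots\ for every $r$ and every $\lambda$'' is not correct. (This does not refute the conjecture---only your choice of special fibre; a more generic $\cL'$ may well give an algebra with WLP here.)

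For $r=2$ your factors are Gorenstein complete intersections in two variables, the narrow SLP holds, and the tensor-product argument is valid; you recover WLP for $D$ and hence the case $k=1$. That matches what the paper obtains in Theorem~\ref{thm:dim when froeberg}(a) by a different route, namely Fr\"oberg's conjecture for $n{+}1$ general forms in $n$ variables via the SLP of complete intersections. Your assessment of the $k\ge 2$ obstacle, and its kinship with Fr\"oberg's conjecture, is accurate.
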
  

\begin{rem}
    \label{rem:evidence} 
(i) From the point of view of dimensions of secant varieties, we are interested in
computing the value in degree $d$ of the Hilbert function of $S/(I_{(1)} + \cdots + I_{(\ell)}) \cong B/{\La}B$. 
In fact,  we know the entire Hilbert function of $B/\cL' B$ precisely, but this is not enough, since
in general $\cL' \subsetneq \cL$. The new idea, illustrated in Example \ref{exa:WLP}, is that if $B/\cL' B$ has
the $k$-WLP for $k = \max\{ 0, 2 \ell - n\}$,  then we also know the entire Hilbert function of $B/\cL B$ 
by Lemma \ref{lem:char k-WLP}, giving us much more than we need. This idea is carefully worked out in the remainder of this section. 
In particular,  we derive the precise formula of
Theorem \ref{thm:explain a-j} and apply it in Theorem \ref{thm: main thm - intro} to determine the dimension of the secant variety,
our main interest.
    
(ii)     
Conjecture \ref{conj:WLP} is supported by a great deal of computer evidence using CoCoA \cite{cocoa}  and
Macaulay2 \cite{Macaulay2}.  These calculations were performed over $\ZZ/31991 \ZZ$ by using random linear forms, which implies that $k$-WLP holds also in characteristic zero

Checking the conjecture requires the computation of $k+1$ Hilbert functions (see Lemma \ref{lem:char k-WLP}). In particular, the conjecture predicts the Hilbert function of $B/\cL B$. We confirmed this prediction in the following cases: 
\[
\begin{array}{c|c|c|c}
n & d & \ell & \lambda \\ \hline
3 & 4,\dots,10 & 2, \dots 10 & \hbox{all possible partitions} \\
4 & 4,\dots, 10 & 3, \dots, 9 & \hbox{all possible partitions} \\
5 & 4, \dots, 10 & 3,4,5 & \hbox{all possible partitions} \\
6 & 4,\dots, 9 & 4 & \hbox{all possible partitions} 
\end{array}
\]
This gives some evidence for Conjecture \ref{conj:WLP}. 

We give even stronger evidence by confirming Conjecture \ref{conj:WLP} itself in a range of cases.  Specifically, we confirmed the $k$-WLP for $k = \max\{ 0, 2 \ell - n\}$ in the following cases: 
\[
\begin{array}{c|c|c|l}
n & d & \ell & \lambda \\ \hline
3 & 3,\dots,10 & 2& \hbox{all possible partitions} \\
3 & 3,\dots, 8 & 3 & \hbox{all possible partitions}  \\
3 & 3,4, 5 & 4 & \hbox{all possible partitions} \\
4 & 3,\dots, 8 & 3  & \hbox{all possible partitions} \\
\end{array}
\]
\end{rem} 

Notice that, by definition, the WLP-Conjecture \ref{conj:WLP} is true if  $\ell \le \frac{n}{2}$. 

In Section \ref{sec:r is 2} we will see that  in the case  $r = 2$ this conjecture is closely related to a well-known conjecture by Fr\"oberg, lending additional evidence to the WLP-Conjecture. 

 Here we show that if true, the WLP-Conjecture allows us to extend Theorem  \ref{thm:dim formula} to $\ell$ with $2\ell > n$.  In order to express this we need more notation. 

\begin{defn}
   \label{def:integers a-j}
Let  $\lambda = [d_1,\ldots,d_r]\vdash d$  be a partition with $r \ge 2$ , and let $\ell$ and $n$ be positive integers. For $j = 0,\ldots,d$, define integers $a_j = a_j (\ell, n, \lambda)$  by 
\begin{align*}
a_j  = &  \binom{j+n-1}{n-1} - \ell \sum^r_{i=1} \binom{j + d_i - d+ n-1}{n-1}  + (r-1) \ell \binom{j}{d} \\
&  +  \sum_{k=2}^\ell (-1)^k \binom{\ell}{k} \binom{j - k (d - d_1)+ n-1}{n-1} \\[1ex]
& +  \binom{\ell}{2} \binom{j + 2 d_2 -  2 d + n-1}{n-1} + \ell (\ell -1)  \binom{j + d_1 + d_2 - 2 d+ n-1}{n-1}. 
\end{align*}   
\end{defn}

Observe that $a_j (\ell, n, \lambda) > 0$ if $0 \le j < s = d_2 + \cdots + d_r$ as, for example, $\binom{j + d_i - d+ n-1}{n-1} = 0$ in this case.  

Now we explain the meaning of the numbers $a_j(\ell,n,\lambda)$.

\begin{thm}
    \label{thm:explain a-j}
Assume that the WLP-Conjecture is true for some $\ell, n$, and $\lambda$.   Let $P_1, \ldots , P_\ell$ be general points on $\X_{n-1, \lambda}$,  and set   $A = S/(I_{P_1}+\cdots +I_{P_\ell})$.
If $i \le d$ is a non-negative integer
then 
\[
\dim_{\k} [A]_i = \begin{cases}
0 & \text{if $a_j \le 0$ for some  $j$ with $0 \le j \le i$} \\
a_i > 0 & \text{otherwise}.  
\end{cases}
\]
In particular, if  $a_j (\ell, n , \lambda) > 0$ for all $j = 0,\ldots,i-1$ and $a_i(\ell,n,\lambda) \geq 0$, then
\[
a_i (\ell, n , \lambda) = \dim_{\k} [A]_i. 
\]    
\end{thm}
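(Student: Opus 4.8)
### Proof Plan

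The plan is to compute the Hilbert series of $A = S/(I_{P_1}+\cdots+I_{P_\ell})$ directly, using Lemma \ref{sop} to reduce to the join algebra $B$ and then applying the WLP-Conjecture to handle the passage past a regular sequence. First I would recall from Lemma \ref{sop} that $\HS(A) = \HS(B/\cL B)$, where $\cL$ is a set of $(\ell-1)n$ general linear forms in $T$. Split $\cL = \cL' \cup (\cL \setminus \cL')$ where $\cL'$ has $\min\{\ell(n-2), (\ell-1)n\}$ elements. By Proposition \ref{prop:improper inters}(a) (or Proposition \ref{prop:proper intersection}(a) in the proper case), the forms in $\cL'$ are a $B$-regular sequence, so by Remark \ref{HS}(a) and Remark \ref{formula},
\[
\HS(B/\cL' B) = (1-t)^{|\cL'|} \cdot \HS(B) = (1-t)^{|\cL'|} \cdot \left[\frac{1}{(1-t)^n}\left(1 - \sum_{i=1}^r t^{d-d_i} + (r-1)t^d\right)\right]^\ell.
\]

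Next I would invoke the WLP-Conjecture: the algebra $C := B/\cL' B$ has the $k$-WLP for $k = \max\{0, 2\ell-n\}$, which is exactly $|\cL \setminus \cL'|$ (when $\ell \le n/2$ this set is empty and the statement is trivial; when $\ell > n/2$ one checks $|\cL| - |\cL'| = (\ell-1)n - \ell(n-2) = 2\ell - n$). Since the forms in $\cL$ are general and $C$ has the $k$-WLP, the forms in $\cL \setminus \cL'$ form a $k$-Lefschetz set of $C$, so by Lemma \ref{lem:char k-WLP},
\[
\HS(A) = \HS(B/\cL B) = \HS(C/(\cL\setminus\cL')C) = \left| (1-t)^{k} \cdot \HS(C) \right|^+ = \left| (1-t)^{(\ell-1)n}\cdot \HS(B)\right|^+.
\]
Now I would identify the coefficient of $t^j$ in the rational function $(1-t)^{(\ell-1)n}\HS(B) = \frac{1}{(1-t)^n}\left(1 - \sum_{i=1}^r t^{d-d_i} + (r-1)t^d\right)^\ell$ with the integer $a_j(\ell,n,\lambda)$. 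This is the same computation already carried out in the proof of Theorem \ref{thm:dim formula}: expanding $\left(1 - \sum t^{d-d_i} + (r-1)t^d\right)^\ell$ and multiplying by $\sum_{m\ge 0}\binom{m+n-1}{n-1}t^m$, the coefficient of $t^j$ (for $j \le d$, so that higher-order cross terms cannot contribute since $2(d-d_1)+(d-d_2) > d$ as observed there) is precisely the expression in Definition \ref{def:integers a-j} — after matching $t^{k(d-d_1)}$ with the binomial shifts, $(r-1)\ell\binom{j}{d}$ capturing the $t^d$ term only when $j=d$, and so on. So the ``raw'' series $(1-t)^{(\ell-1)n}\HS(B)$ has $j$-th coefficient $a_j$ for $0\le j\le d$.

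Finally I would unwind the definition of $|\cdot|^+$ from Definition \ref{def:positive part series}: if $b_j$ denotes the coefficient of $t^j$ in $\HS(A)$, then $b_j = a_j$ if $a_{j'} > 0$ for all $j' \le j$, and $b_j = 0$ otherwise. Restricting to $i \le d$ this gives exactly the claimed case distinction, $\dim_\k[A]_i = a_i$ if $a_j > 0$ for all $j \le i$ and $0$ if some $a_j \le 0$ with $j \le i$. The ``in particular'' clause is the subcase where $a_j > 0$ for $j = 0,\ldots,i-1$: then $|\cdot|^+$ does not truncate before degree $i$, and since $a_i \ge 0$ the coefficient is recorded as $a_i$ itself (the borderline $a_i = 0$ is consistent, as the $|\cdot|^+$ operation keeps coefficient $a_i$ as long as all \emph{strictly} earlier ones are positive — here I'd double check the convention in Definition \ref{def:positive part series}: $b_i = a_i$ requires $a_{j} > 0$ for all $j \le i$, so for $a_i = 0$ one gets $b_i = 0 = a_i$ anyway, and the stated equality still holds).

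The main obstacle is purely bookkeeping: verifying that the coefficient extraction from $\frac{1}{(1-t)^n}\left(1-\sum t^{d-d_i}+(r-1)t^d\right)^\ell$ in degrees $j \le d$ matches Definition \ref{def:integers a-j} term-by-term, including the $\binom{j}{d}$ indicator and the fact that only monomials $t^m$ with $m \le d$ in the expansion of the $\ell$-th power can survive. This is essentially the computation from Theorem \ref{thm:dim formula} performed in all degrees $j \le d$ rather than just $j = d$, so no new idea is needed; the delicate point is confirming that no cross terms of the form $t^{a(d-d_1)+b(d-d_2)+\cdots}$ with total degree $\le d$ beyond those listed can appear — which follows from $d_1 \ge d_2 \ge \cdots \ge d_r$ and the inequality $2(d-d_1)+(d-d_2) > d$ noted in the proof of Theorem \ref{thm:dim formula}.
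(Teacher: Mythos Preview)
Your proposal is correct and follows essentially the same approach as the paper's proof: reduce via Lemma \ref{sop} to $B/\cL B$, use that $\cL'$ is a $B$-regular sequence, apply the WLP-Conjecture together with Lemma \ref{lem:char k-WLP} to obtain $\HS(A)=\left|\frac{1}{(1-t)^n}\big(1-\sum t^{d-d_i}+(r-1)t^d\big)^\ell\right|^+$, and then identify the degree-$j$ coefficients (for $j\le d$) with the $a_j(\ell,n,\lambda)$ via the computation from Theorem \ref{thm:dim formula}. The only cosmetic difference is that the paper treats the case $2\ell\le n$ separately (quoting Theorem \ref{thm:dim formula} directly and noting that then $a_j\ge 0$ automatically), whereas you fold it into the same formula by observing that $k=0$ makes the $|\cdot|^+$ step vacuous.
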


\begin{proof} 
To simplify notation set $a_j = a_j (\ell, n , \lambda)$. 

First consider the case where $2\ell \le n$. Then the proof of  Theorem  \ref{thm:dim formula} gives, for all $i \le d$, 
\[
\dim_{\k} [A]_i = a_i, 
\]
and so the conclusion holds without the hypothesis that $a_j>0$ for $j < i$.  Note that, in this case, $a_j \geq 0$ for all $j\leq i$.

Now assume that $2\ell > n$. We have seen in Remark \ref{formula} that
\begin{align*}
\HS(B) &  =  \left [ \HS(S/I_{P_1}) \right]^\ell  \\[1ex]
& = \frac{1}{(1-t)^{\ell n}} \left [  1-\sum^r_{i=1} t^{d-d_i} +(r-1)t^d       \right] ^\ell .
\end{align*}
Since  the elements of $\cL'$ provide a regular sequence in $B$ of length $\ell (n-2)$ by Proposition \ref{prop:improper inters}, we get 
\[
\HS (B/ \cL' B) = \frac{1}{(1-t)^{2 \ell}} \left [  1-\sum^r_{i=1} t^{d-d_i} +(r-1)t^d       \right] ^\ell .
\]
Hence Lemmas \ref{sop} and \ref{lem:char k-WLP} together with the WLP-Conjecture give 
\begin{equation}
     \label{eq:Hilb series predicted by WLP}
\HS (A) = \left | \frac{1}{(1-t)^{n}} \left [  1-\sum^r_{i=1} t^{d-d_i} +(r-1)t^d       \right] ^\ell \; \right |^+.
\end{equation}
Define integers $b_j$ by 
\[
\sum_{j \ge 0} b_j t^j = \frac{1}{(1-t)^{n}} \left [  1-\sum^r_{i=1} t^{d-d_i} +(r-1)t^d       \right] ^\ell. 
\]
Then computations as in the proof of Theorem  \ref{thm:dim formula} provide for $j \le d$, 
\begin{align*}
b_j  = \ & \binom{j+n-1}{n-1} - \ell \sum^r_{i=1} \binom{j + d_i - d+ n-1}{n-1}  + (r-1) \ell \binom{j}{d} \\
& \ +  \sum_{k=2}^\ell (-1)^k \binom{\ell}{k} \binom{j - k (d - d_1)+ n-1}{n-1} \\[1ex]
& \ +  \binom{\ell}{2} \binom{j + 2 d_2 -  2 d + n-1}{n-1} + \ell (\ell -1)  \binom{j + d_1 + d_2 - 2 d+ n-1}{n-1} \\
= & \ a_j. 
\end{align*}
Thus, we conclude for all non-negative integers $i \le d$
\[
\dim_{\k} [A]_i = \begin{cases}
0 & \text{if $a_j \le 0$ for some  $j$ with $0 \le j \le i$} \\
a_i > 0 & \text{otherwise}.  
\end{cases}
\]
\end{proof}

Notice that when $ 2\ell < n$ the intersection of the varieties determining tangent spaces to $\X_{n-1, \lambda}$ is non-empty, and the $a_j(\ell,n,\lambda)$ give its Hilbert function. When $ 2\ell = n$, the intersection of the  varieties determining tangent spaces to $\X_{n-1, \lambda}$ becomes empty, but this is still a proper intersection and so the methods of Section 3 continue to apply and result in the values given by the $a_j(\ell,n, \lambda)$. 
As soon as $2\ell > n$, however,  this intersection remains empty but becomes improper. Nevertheless, the $a_j(\ell,n, \lambda)\geq 0$ essentially provide the Hilbert function of the ``algebraic intersection," i.e. give the Hilbert function of $S/(I_{P_1}+\cdots +I_{P_\ell})$, as formalized in the previous result.

We now have the following extension of Theorem  \ref{thm:dim formula}: 

\begin{thm}
   \label{thm: gen dim formula} 
Let  $\lambda = [d_1,\ldots,d_r]\vdash d$  be a partition with $r \ge 2$. Assume that the WLP-Conjecture is true for some $\ell$ and $n$. Then:  
\begin{itemize}
\item[(a)] The secant variety $\sigma_{\ell} (\X_{n-1,\lambda})$ does not fill its ambient space if and only if 
\[
a_j (\ell, n , \lambda) > 0 \quad \text{ for all } j = 0,\ldots,d.
\]

\item[(b)] If $\sigma_{\ell} (\X_{n-1,\lambda})$ does not fill its ambient space, then it has dimension
\begin{align*}
\dim \sigma_{\ell}  (\X_{n-1, \lambda}) = & \ \ell \cdot \dim \X_{n-1, \lambda} + \ell -1    \\[1ex]
& \hspace*{0.3cm} {\displaystyle -  \sum_{k=2}^\ell (-1)^k \binom{\ell}{k}  \binom{d_1 - (k-1) (d_2 + \cdots + d_r) + n-1}{n-1} }  \\[1ex]
& \hspace*{0.3cm} {\displaystyle -  \binom{\ell}{2} \binom{2 d_2 - d + n-1}{n-1} - \ell (\ell -1)  \binom{d_1 + d_2 - d+ n-1}{n-1} }
\end{align*} 
\end{itemize}
\end{thm}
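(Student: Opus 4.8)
\textbf{Proof proposal for Theorem \ref{thm: gen dim formula}.}
The plan is to deduce the statement directly from Theorem \ref{thm:explain a-j}, which already packages the content of the WLP-Conjecture into a description of $\dim_{\k}[A]_i$ for $i \le d$, where $P_1,\ldots,P_\ell$ are general points of $\X_{n-1,\lambda}$ and $A = S/(I_{P_1}+\cdots+I_{P_\ell})$. The first step is to recall from Corollary \ref{dim sigma ell} that
\[
\dim \sigma_{\ell}(\X_{n-1,\lambda}) = \dim_{\k}[I_{P_1}+\cdots+I_{P_\ell}]_d - 1 = \binom{d+n-1}{n-1} - 1 - \dim_{\k}[A]_d,
\]
so that $\sigma_{\ell}(\X_{n-1,\lambda})$ fills its ambient space if and only if $\dim_{\k}[A]_d = 0$.

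For part (a), I would apply Theorem \ref{thm:explain a-j} with $i = d$: it gives $\dim_{\k}[A]_d = 0$ precisely when $a_j(\ell,n,\lambda) \le 0$ for some $j$ with $0 \le j \le d$, and $\dim_{\k}[A]_d = a_d(\ell,n,\lambda) > 0$ otherwise. Negating, $\sigma_{\ell}(\X_{n-1,\lambda})$ does not fill its ambient space if and only if $a_j(\ell,n,\lambda) > 0$ for all $j = 0,\ldots,d$, which is exactly (a). (Here one can also note that $a_j > 0$ automatically for $0 \le j < s$ by the observation following Definition \ref{def:integers a-j}, so only $j = s,\ldots,d$ are really being tested, matching the phrasing of Conjecture \ref{conj:main conj}.)

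For part (b), assume $\sigma_{\ell}(\X_{n-1,\lambda})$ does not fill its ambient space; then by (a) and Theorem \ref{thm:explain a-j} we have $\dim_{\k}[A]_d = a_d(\ell,n,\lambda)$, whence
\[
\dim \sigma_{\ell}(\X_{n-1,\lambda}) = \binom{d+n-1}{n-1} - 1 - a_d(\ell,n,\lambda).
\]
The remaining step is a computation: substitute $j = d$ into the formula of Definition \ref{def:integers a-j}. One uses $\binom{d}{d} = 1$, the identity $d - k(d-d_1) = d_1 - (k-1)(d_2+\cdots+d_r)$, the dimension formula $\dim \X_{n-1,\lambda} = \sum_{i=1}^r \binom{d_i+n-1}{n-1} - r$ from \eqref{dim X}, and $\ell r - (r-1)\ell = \ell$, to rewrite $\binom{d+n-1}{n-1} - a_d(\ell,n,\lambda)$ as $\ell \cdot \dim \X_{n-1,\lambda} + \ell$ minus the three correction sums appearing in the statement. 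Subtracting the additional $1$ then gives the displayed formula in (b), which also recovers Theorem \ref{thm:dim formula} when $2\ell \le n$.

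I expect no serious obstacle: the genuinely hard work — reducing to a Hilbert-function computation for $B/\cL B$, passing through the proper-intersection regime, and invoking the conjectured $k$-WLP to identify the Hilbert series of $A$ with a positive part via Lemma \ref{lem:char k-WLP} — has all been carried out in Theorem \ref{thm:explain a-j}. The only point requiring care is the bookkeeping in the final binomial simplification, in particular the correct interpretation of the term $(r-1)\ell\binom{j}{d}$ at $j = d$ and the sign tracking through the alternating sum over $k$; this theorem is essentially a clean restatement of Theorem \ref{thm:explain a-j} in the language of secant varieties.
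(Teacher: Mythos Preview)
Your proposal is correct and follows essentially the same approach as the paper: invoke Theorem \ref{thm:explain a-j} together with the identity $\dim \sigma_{\ell}(\X_{n-1,\lambda}) = \binom{d+n-1}{n-1} - 1 - \dim_{\k}[A]_d$, and then carry out the same binomial simplification as at the end of the proof of Theorem \ref{thm:dim formula}. The paper's own proof is in fact just a two-line pointer to exactly these ingredients, so your write-up is a faithful (slightly more detailed) expansion of it.
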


\begin{proof} 
Using Theorem \ref{thm:explain a-j} and 
\[
\dim \sigma_{\ell} (\X_{n-1,\lambda}) = \binom{d+n-1}{n-1} - 1 - \dim_{\k} [A]_d, 
\]
this follows from a computation as in the end of the  proof of Theorem \ref{thm:dim formula}. 
\end{proof}

\begin{rem}
    \label{rem:comp with {CGGS}}
(i) The argument in the proof of Theorem \ref{thm:explain a-j} shows more generally that the WLP-Conjecture allows us to determine the Hilbert function of the ring $A$ in every degree.    However, for finding the dimension of $\sigma_{\ell}  (\X_{n-1, \lambda})$, it is enough to know this Hilbert function in degree $d$ only. Thus, even if the WLP-conjecture is not true, it is possible that  Conjecture \ref{conj:main conj} is correct.  

(ii) As noted above, the WLP-Conjecture is true if $2\ell \leq n$.  Hence, Theorem \ref{thm: gen dim formula}  shows Conjecture \ref{conj:main conj} is true if $2\ell \le n$, thus proving Theorem \ref{thm: main thm - intro}(a). We will establish further instances of Conjecture \ref{conj:main conj} in Section \ref{sec:r is 2}. 

(iii) Complete results for  the dimension of $\sigma_{\ell}  (\X_{n-1, \lambda})$ have been previously  obtained only if $n=3$,   $\lambda = [1,\ldots,1]$ in \cite{A}, or $n=3$, $\ell = 2$ in \cite{CGGS}. Both results confirm Conjecture \ref{conj:main conj}. Moreover, if $\lambda = [1,\ldots,1]$ and $d \ge 3$, then $\sigma_{\ell}  (\X_{n-1, \lambda})$ is not defective, as predicted in   Conjecture  \ref{conj:defectivity conj}. The case $\lambda = [1,1]$ is covered by Theorem \ref{thm: main thm - intro}. The case $\ell = 2$ was discussed in Section \ref{sec:secant lines}. 

Thus, Conjecture \ref{conj:main conj} presents a unified formula for $\dim \sigma_{\ell}  (\X_{n-1, \lambda})$ in all cases. It is consistent with all the known results  that we have checked.
 
\end{rem}

We explore some consequences of our main conjecture, Conjecture \ref{conj:main conj}.

As we show in our next result, Conjecture \ref{conj:defectivity conj}(a) 
is an immediate consequence of Conjecture \ref{conj:main conj}
and thus holds in the many cases for which we establish Conjecture \ref{conj:main conj}.
The situation for Conjecture \ref{conj:defectivity conj}(b) is more complicated.
For certain choices of the parameters $n$, $\ell$ and $\lambda$, our next result shows
that Conjecture \ref{conj:defectivity conj}(b) is true while for some others
it shows that Conjecture \ref{conj:main conj} implies Conjecture \ref{conj:defectivity conj}(b).
In the remaining cases, for each $d_2 \geq \cdots\geq d_r>0$ and $\ell \geq n$,
it shows that there are at most finitely many cases,
namely $s =d_2 + \cdots + d_r \leq d_1 < (n-1)(s-1)$, for which
we do not know either that
Conjecture \ref{conj:defectivity conj}(b) is true or that 
Conjecture \ref{conj:main conj} implies Conjecture \ref{conj:defectivity conj}(b).
For these cases we have run numerical tests based on Proposition \ref{prop:non-defective}, as discussed in more detail below, 
which support our expectation that Conjecture \ref{conj:main conj} implies 
Conjecture \ref{conj:defectivity conj}(b) in these cases also.

\begin{prop}\label{prop:non-defective} 
As usual, let $n\geq 3$, $r\geq 2$, $N=\binom{n+d-1}{n-1}$ and 
$\lambda=[d_1,\ldots,d_r]$, where $d = d_1 + s$ and $s = d_2 + \dots + d_r$.
\begin{itemize}
\item[(a)] Assume $d_1 < s$ (and thus $r \ge 3$)). Then
Conjecture \ref{conj:main conj} implies Conjecture \ref{conj:defectivity conj}(a)
(i.e., that $\sigma_{\ell}  (\X_{n-1, \lambda})$ is not defective).
\item[(b)] Now assume $d_1 \geq s$.
\begin{itemize}
\item[(i)] If $2\ell \leq n$, then Conjecture \ref{conj:defectivity conj}(b) is true (i.e., either $\sigma_{\ell}  (\X_{n-1, \lambda})$
fills its ambient space, $\P^{N-1}$, or it is defective).
\item[(ii)] If $\frac{n}{2} < \ell \leq n$, then Conjecture \ref{conj:main conj} implies Conjecture \ref{conj:defectivity conj}(b).

\item[(iii)] If $d_1 < 2s$, then Conjecture \ref{conj:main conj} implies  Conjecture \ref{conj:defectivity conj}(b).

\item[(iv)] If $n\leq\ell$ and  $(n-1)(s-1)\leq d_1$, then $\sigma_{\ell}  (\X_{n-1, \lambda})$ fills its ambient space
(and hence Conjecture \ref{conj:defectivity conj}(b) is true).
\end{itemize}
\item[(c)] If $\ell\geq \binom{s+n-1}{n-1}$, then Conjecture \ref{conj:main conj} is true and $\sigma_{\ell}  (\X_{n-1, \lambda})$ fills its ambient space
(hence Conjecture \ref{conj:defectivity conj} is true).  

\item[(d)] 
If $\ell\geq \binom{s+n-1}{n-1}/t$, where $t$ is the number of occurrences of $d_1$ in $\lambda$, 
then Conjecture~\ref{conj:main conj} implies that $\sigma_{\ell}  (\X_{n-1, \lambda})$ fills its ambient space (and 
 hence, for such $\ell$, if Conjecture~\ref{conj:main conj} is true, then so is 
 Conjecture \ref{conj:defectivity conj}.)
\end{itemize}     
\end{prop}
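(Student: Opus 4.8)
I would organize the argument around two observations and then dispatch the parts case by case. The first observation is that for a general point $P_i=[F_{i,1}\cdots F_{i,r}]$ the tangent-space ideal $I_{P_i}$ contains the degree-$s$ form $G_{i,1}=F_{i,2}\cdots F_{i,r}$, so $I_{P_1}+\cdots+I_{P_\ell}$ contains the ideal generated by $\ell$ forms of degree $s$, each of which is a general product of forms of degrees $d_2,\dots,d_r$. The second is the single evaluation, from Definition \ref{def:integers a-j},
\[
a_s(\ell,n,\lambda)=\binom{s+n-1}{n-1}-\ell\,t,
\]
where $t$ is the number of parts of $\lambda$ equal to $d_1$: at $j=s$ every binomial coefficient other than $\binom{s+n-1}{n-1}$ has top entry $<n-1$ (hence vanishes), except $\binom{s+d_i-d+n-1}{n-1}=\binom{d_i-d_1+n-1}{n-1}$, which is $1$ when $d_i=d_1$ and $0$ otherwise. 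Since $d_1\ge 1$ we have $s<d$, so $j=s$ is an admissible index in Conjecture \ref{conj:main conj}(a).

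\textbf{Filling statements ((b)(iv), (c), (d)).} For (b)(iv) I would pick $n$ of the $\ell\ge n$ forms $G_{1,1},\dots,G_{\ell,1}$ and argue they form a regular sequence: being a regular sequence of $n$ forms of degree $s$ is Zariski open, and the locus of products $F_2\cdots F_r$ with $\deg F_j=d_j$ is irreducible, so it suffices to exhibit one such sequence, namely $(x_1^s,\dots,x_n^s)$ (take $F_{i,j}=x_i^{d_j}$). Then $S/(G_{1,1},\dots,G_{n,1})$ is an Artinian complete intersection of socle degree $n(s-1)$, whereas $d=d_1+s\ge(n-1)(s-1)+s=n(s-1)+1>n(s-1)$; hence $[S]_d\subseteq(G_{1,1},\dots,G_{n,1})\subseteq I_{P_1}+\cdots+I_{P_\ell}$, and $\sigma_\ell(\X_{n-1,\lambda})$ fills its ambient space by Corollary \ref{dim sigma ell}. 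For (c), when $\ell\ge\binom{s+n-1}{n-1}=\dim_\k[S]_s$ the $\ell$ general points of $\X_{n-1,[d_2,\dots,d_r]}$ given by the $G_{i,1}$ span $\P([S]_s)$ (no hyperplane contains that variety), so $(G_{1,1},\dots,G_{\ell,1})\supseteq[S]_s$ and therefore contains $[S]_d$, and $\sigma_\ell$ fills; since $t\ge1$, the displayed formula gives $a_s\le0$, so both sides of the biconditional in Conjecture \ref{conj:main conj}(a) are true and part (b) of that conjecture is vacuous, whence Conjecture \ref{conj:main conj}, and with it Conjecture \ref{conj:defectivity conj}, holds unconditionally in this range. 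Part (d) then follows at once: $\ell\ge\binom{s+n-1}{n-1}/t$ gives $a_s\le0$, so if Conjecture \ref{conj:main conj} holds its part (a) forces $\sigma_\ell$ to fill.

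\textbf{Defectivity statements ((a), (b)(i)--(iii)).} Part (b)(i) is the proper-intersection range $2\ell\le n$ with $d_1\ge s$, already covered by Theorems \ref{general r=2} and \ref{general r geq 3}, whose formulas show the defect is positive whenever $\sigma_\ell$ does not fill. For the conditional parts I would assume Conjecture \ref{conj:main conj} and that $\sigma_\ell$ does not fill, so its dimension is given by Conjecture \ref{conj:main conj}(b); it then suffices to control the three subtracted ``correction'' terms. If they sum to a positive number, then $\dim\sigma_\ell<\ell\dim\X_{n-1,\lambda}+\ell-1$ and also $\dim\sigma_\ell<N-1$, so $\sigma_\ell$ is defective; if they all vanish, the dimension equals $\ell\dim\X_{n-1,\lambda}+\ell-1$, which is then the expected dimension, so $\sigma_\ell$ is not defective. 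In case (a), $d_1<s$ makes $\binom{d_1-(k-1)s+n-1}{n-1}=0$ for $k\ge2$, and $r\ge3$ makes $2d_2-d=d_2-d_1-(d_3+\cdots+d_r)<0$ and $d_1+d_2-d=-(d_3+\cdots+d_r)<0$, so all three correction terms vanish. In case (b)(ii), $d_1\ge s$ together with $\ell\le n$ lets me use Remark \ref{rem:syzygy interpretation} to rewrite $\sum_{k=2}^\ell(-1)^k\binom{\ell}{k}\binom{d_1-(k-1)s+n-1}{n-1}=\dim_\k[\Syz]_d$, which is positive because $\Syz$ is a nonzero torsion-free module generated in degree $2s$ (see \eqref{KoszulES}) and $d\ge 2s$; the remaining two terms are $\ge0$. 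In case (b)(iii), $d_1<2s$ kills every summand of that alternating sum except the $k=2$ term $\binom{\ell}{2}\binom{d_1-s+n-1}{n-1}\ge1$ (no bound on $\ell$ being required here), and again the other two terms are $\ge0$.

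\textbf{Expected main obstacle.} The only genuinely new unconditional ingredient is the regular-sequence claim used in (b)(iv) --- that $n$ general products of general forms of degrees $d_2,\dots,d_r$ form a regular sequence; this needs the openness/irreducibility argument together with the explicit witness $(x_1^s,\dots,x_n^s)$, and is the point most deserving of a careful write-up. Everything else reduces to bookkeeping with the convention $\binom{a}{b}=0$ for $a<0$ and with the hypotheses of Remark \ref{rem:syzygy interpretation}, in particular the role of $\ell\le n$ there, which is exactly why (b)(ii) carries that restriction while (b)(iii) does not.
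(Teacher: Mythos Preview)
Your proof is correct and follows essentially the same approach as the paper's own proof. The only cosmetic differences are that for (b)(iv) you supply an explicit witness $(x_1^s,\dots,x_n^s)$ for the regular-sequence claim where the paper simply invokes ``genericity'', and for (c) you argue the $G_{i,1}$ span $[S]_s$ via nondegeneracy of the variety of such products rather than via the paper's explicit monomial basis---both are valid routes to the same spanning conclusion.
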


\begin{proof}
(a) The assumptions  imply that the second and third lines of the formula in Conjecture \ref{conj:main conj}(b) are zero, so
\[
\dim \sigma_{\ell}  (\X_{n-1, \lambda}) =  \ell \cdot \dim \X_{n-1, \lambda} + \ell -1 =  
\min \left \{ \binom{d+n-1}{n-1} - 1, \;  \ell \cdot \dim \X_{n-1, \lambda} + \ell -1   \right \}, 
\]
which is  the expected dimension. 

(b)(i) This follows from Theorem \ref{thm:dim formula}.

(b)(ii) By definition, $\sigma_{\ell}  (\X_{n-1, \lambda})$ is not defective if it fills its ambient space, $\P^{N-1}$,
so assume $\sigma_{\ell}  (\X_{n-1, \lambda})$ does not fill its ambient space. Then 
$\dim \sigma_{\ell}  (\X_{n-1, \lambda})<N-1$ and Conjecture \ref{conj:main conj}(b)  gives 
\begin{equation}\label{Conj1.1b formula}
\begin{split}
\dim \sigma_{\ell}  (\X_{n-1, \lambda})  =\ &  \ell \cdot \dim \X_{n-1, \lambda} + \ell -1   \\
 &  - \sum_{k=2}^\ell (-1)^k \binom{\ell}{k}  \binom{d_1 - (k-1)s + n-1}{n-1} \\
 &  -  \binom{\ell}{2} \binom{2 d_2 - d + n-1}{n-1} - \ell (\ell -1)  \binom{d_1 + d_2 - d+ n-1}{n-1} .
\end{split}
\end{equation}
Let $\Syz$ be the first syzygy module of a complete intersection in $S$ that is generated by $\ell \le n$  forms of 
degree $d-d_1 = d_2 +\ldots + d_r$. We observed in Remark \ref{rem:syzygy interpretation} that 
\[
\sum_{k=2}^\ell (-1)^k \binom{\ell}{k}  \binom{d_1 - (k-1)s + n-1}{n-1} = \dim_{\k} [\Syz]_d. 
\]
Hence we get 
\[
\dim \sigma_{\ell}  (\X_{n-1, \lambda}) \le \ell \cdot \dim \X_{n-1, \lambda} + \ell -1 - \dim_{\k} [\Syz]_d.  
\]
The Koszul resolution shows that the initial degree of $\Syz$ is $2 (d-d_1)$. Since $\Syz$ is torsion free, it 
follows that $[\Syz]_d \neq 0$ if and only if $d \ge 2 (d - d_1)$, which is equivalent to 
\[
d_1 \ge d - d_1 = s. 
\]
Therefore our assumption gives 
\[
\dim \sigma_{\ell}  (\X_{n-1, \lambda}) < \ell \cdot \dim \X_{n-1, \lambda} + \ell -1,
\]
so $\dim \sigma_{\ell}  (\X_{n-1, \lambda})< \hbox{exp.dim } \sigma_{\ell} (\X_{n-1,\lambda})$, and we are done. 

(b)(iii)  As in the proof of (b)(ii), if $\sigma_{\ell}  (\X_{n-1, \lambda})$ does not fill its ambient space 
(and so $\dim \sigma_{\ell}  (\X_{n-1, \lambda})<N-1$) we must show that
it is defective. Put 
\[
g=\sum_{j=2}^\ell (-1)^j \binom{\ell}{j}  \binom{d_1 - (j-1)s + n-1}{n-1}.
\]
If $s \le d_1 < 2s$, then $g >0$. Thus, the summation in display \eqref{Conj1.1b formula} is positive,  but it is subtracted so we have
$\dim \sigma_{\ell}  (\X_{n-1, \lambda})  <  \ell \cdot \dim \X_{n-1, \lambda} + \ell -1$, and hence
$\sigma_{\ell}  (\X_{n-1, \lambda})$ is defective. Thus in the presence of the restriction on $d_1$, Conjecture \ref{conj:main conj} implies Conjecture \ref{conj:defectivity conj}(b).

(b)(iv) To see that $\sigma_{\ell}  (\X_{n-1, \lambda})$ fills its ambient space for $d_1\geq (n-1)(s-1)$ and
$\ell\geq n$, it is enough to do so for $\ell=n$. Take $\ell$ general points $P_j$ on $\X_{n-1, \lambda}$. Each ideal $I_{P_j}$ 
contains a minimal generator of degree $d - d_1 = s$. Thus, by genericity, the ideal $I=I_{P_1}+\cdots+I_{P_\ell}$ 
contains a complete intersection generated by $n$ forms of degree $s$. The socle degree of this complete intersection is $n(s-1)$. Thus $[R/I]_d=0$ if
$d_1+s=d>n(s-1)$; i.e., if $d_1\geq (n-1)(s-1)$.  It follows that for these $\ell$ and $d_1$ we get that Conjecture \ref{conj:defectivity conj}(b) is true.

(c) Note that $a_s = a_s(\ell, n, \lambda) =\binom{s+n-1}{n-1} -t\ell$, hence $a_s\leq0$ for  $\ell\geq \binom{s+n-1}{n-1}/t$, and  so also for $\ell \geq \binom{s+n-1}{n-1}$. So with the latter hypothesis, to prove Conjecture \ref{conj:main conj} we must show that $\sigma_{\ell}  (\X_{n-1, \lambda})$ fills its ambient space. Notice that $[S]_s$ has a basis consisting of monomials, hence a basis of forms each of which factors as a product of forms of degrees $d_2,\dots,d_r$.  Thus we can find points $P_1,\dots,P_\ell$ for which $I=I_{P_1}+\cdots+I_{P_\ell}$ spans $[S]_s$, so also in degree $d$ we have $[I]_d = [S]_d$. The same is then true for a general choice of $\ell$ points, and so $\sigma_{\ell}  (\X_{n-1, \lambda})$ indeed fills its ambient space. But then for these $\ell$, all parts of Conjecture  \ref{conj:defectivity conj} are automatically true as well.

(d) Finally, assume that $\ell\geq \binom{s+n-1}{n-1}/t$. Since $a_s \leq 0$, Conjecture \ref{conj:main conj} would imply that
$\sigma_{\ell}  (\X_{n-1, \lambda})$ fills its ambient space and so
Conjecture \ref{conj:defectivity conj} would hold.  
(We also note that the bound $\ell\geq \binom{s+n-1}{n-1}/t$ is sharp,
since $\sigma_{\ell}  (\X_{n-1, \lambda})$
does not always fill its ambient space for $\ell<\binom{s+n-1}{n-1}/t$, as we see for 
$\sigma_{\ell}  (\X_{n-1, [1,1]})$ by Theorem \ref{thm:defectivity - intro}(c).)
\end{proof}

\begin{rem}\label{NumericalTestingRemark}
 Proposition \ref{prop:non-defective}
is the basis for numerical tests that support our belief that all of the cases left open
do in fact follow from Conjecture \ref{conj:main conj}. In these cases, we have $n < \ell$ and $2s\leq d_1<(n-1)(s-1)$. As noted above, Conjecture \ref{conj:main conj} predicts: If $\sigma_{\ell}  (\X_{n-1, \lambda})$ does not  fill its ambient space, then 
\begin{equation*}\begin{split}
\dim \sigma_{\ell}  (\X_{n-1, \lambda})  =\ &  \ell \cdot \dim \X_{n-1, \lambda} + \ell -1  - g  \\
 &  -  \binom{\ell}{2} \binom{2 d_2 - d + n-1}{n-1} - \ell (\ell -1)  \binom{d_1 + d_2 - d+ n-1}{n-1} .
\end{split}
\end{equation*}

Thus, Conjecture \ref{conj:defectivity conj}(b) follows if $g > 0$. 

We have used Macaulay2 \cite{Macaulay2}
to check for all cases satisfying the above restrictions with 
$n,\ell,s\leq 60$ that Conjecture \ref{conj:main conj} implies  Conjecture \ref{conj:defectivity conj}(b).
There were 57,345,933 such cases. Note that typically for each $s$ and $d_1$, there are many possible 
partitions of $d=d_1+s$; thus for each of the 57,345,933 cases for which $g\leq0$, we merely checked that
$\ell\dim(\X_{n-1,[d_1,1,\ldots,1]})+\ell\geq N$, and hence by Proposition \ref{prop:non-defective}(b)(iii)
we see that Conjecture \ref{conj:main conj} implies Conjecture \ref{conj:defectivity conj}(b) 
for each case when $\lambda=[d_1,1,\ldots,1]$. But by 
Corollary \ref{cor:dim comparison}
this means
Conjecture \ref{conj:main conj} implies Conjecture \ref{conj:defectivity conj}(b) 
also for all other partitions $\lambda$ of $d=d_1+s$ for each of these 57,345,933 cases.
\end{rem}

\begin{cor}\label{Thm1.2c}
Let $3\leq n\leq \ell\leq 1+\frac{d_1+n-1}{s}$ with $\lambda=[d_1,\ldots,d_r] \vdash d$, $r \geq 3$, and $s=d_2+\cdots+d_r$.
Then Conjecture \ref{conj:main conj} is true for such $n$, $\ell$ and $\lambda$.
\end{cor}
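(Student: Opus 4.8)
The plan is to split the claim into two parts: (i) show that $\sigma_\ell(\X_{n-1,\lambda})$ fills its ambient space under these hypotheses, which makes the conclusion of Conjecture \ref{conj:main conj}(b) vacuous; and (ii) verify the numerical criterion of Conjecture \ref{conj:main conj}(a) by exhibiting an explicit $j$ with $a_j(\ell,n,\lambda)\le 0$. For (i), observe that $n\le\ell\le 1+\frac{d_1+n-1}{s}$ forces $n-1\le\frac{d_1+n-1}{s}$, i.e. $(n-1)(s-1)\le d_1$; since also $n\le\ell$, Proposition \ref{prop:non-defective}(b)(iv) applies and gives that $\sigma_\ell(\X_{n-1,\lambda})$ fills $\P^{N-1}$. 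Equivalently, by Corollary \ref{dim sigma ell}, $\dim_\k[S/(I_{P_1}+\cdots+I_{P_\ell})]_d=0$. Thus the hypothesis of Conjecture \ref{conj:main conj}(b) is never satisfied here, and (b) (together with the defectivity assertions referencing it) holds trivially.

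For (ii): because the secant variety fills its ambient space, Conjecture \ref{conj:main conj}(a) is correct for these parameters exactly when some $a_j(\ell,n,\lambda)\le 0$ with $s\le j\le d$. I will take $j=d$ and show $a_d<0$. Since $r\ge 3$ and $d_1\ge d_2$ we have $2d_2<d$ and $d_1+d_2<d$, so the last two binomial terms in Definition \ref{def:integers a-j} vanish at $j=d$; also $\binom{d}{d}=1$ and $d-k(d-d_1)=d_1-(k-1)s$, leaving
\[
a_d=\binom{d+n-1}{n-1}-\ell\sum_{i=1}^r\binom{d_i+n-1}{n-1}+(r-1)\ell+\sum_{k=2}^\ell(-1)^k\binom{\ell}{k}\binom{d_1-(k-1)s+n-1}{n-1}.
\]
The alternating sum is the coefficient of $t^d$ in $\dfrac{(1-t^s)^\ell-1+\ell t^s}{(1-t)^n}$, and here the two bounds on $\ell$ do the work: since $\ell\ge n$, $\dfrac{(1-t^s)^\ell}{(1-t)^n}=(1-t)^{\ell-n}(1+t+\cdots+t^{s-1})^\ell$ is a polynomial of degree $\ell s-n$, while $\ell\le 1+\frac{d_1+n-1}{s}$ gives $\ell s-n\le d-1<d$, so it contributes nothing in degree $d$. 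Hence the alternating sum equals $-\binom{d+n-1}{n-1}+\ell\binom{d_1+n-1}{n-1}$; substituting it back and cancelling the $i=1$ summand yields
\[
a_d=\ell\Big[(r-1)-\sum_{i=2}^r\binom{d_i+n-1}{n-1}\Big]=-\ell\sum_{i=2}^r\Big[\binom{d_i+n-1}{n-1}-1\Big].
\]
Since $n\ge 3$ and $d_i\ge 1$ for $i\ge 2$, each bracket is at least $n-1\ge 2$, and there are $r-1\ge 2$ of them, so $a_d\le -\ell(r-1)(n-1)<0$. This proves Conjecture \ref{conj:main conj}(a), and hence the whole conjecture, for the given $n,\ell,\lambda$.

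I do not expect a genuine obstacle; the content lies in noticing that the two hypotheses on $\ell$ play complementary roles. The lower bound $n\le\ell$, together with its consequence $(n-1)(s-1)\le d_1$, forces the secant variety to fill (via the complete-intersection argument already in the proof of Proposition \ref{prop:non-defective}), trivializing part (b); and the upper bound $\ell\le 1+\frac{d_1+n-1}{s}$ is exactly the threshold below which the complete-intersection part $\frac{(1-t^s)^\ell}{(1-t)^n}$ of the relevant Hilbert series has already run out by degree $d$, which is what makes the closed-form evaluation of $a_d$ possible. The only care needed is with the binomial convention ($\binom{a}{b}=0$ for $a<0$, and $\binom{m}{n-1}=0$ for $0\le m\le n-2$) when checking that the degenerate terms of $a_d$ vanish, and with the fact that $r\ge 3$ is used both there and to guarantee the final sum has at least two strictly positive summands.
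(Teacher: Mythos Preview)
Your proof is correct and follows the same two-step structure as the paper: first use Proposition \ref{prop:non-defective}(b)(iv) to show $\sigma_\ell(\X_{n-1,\lambda})$ fills its ambient space, then verify $a_d(\ell,n,\lambda)<0$ by evaluating the alternating sum $\sum_{k=2}^\ell(-1)^k\binom{\ell}{k}\binom{d-ks+n-1}{n-1}$ as $-\binom{d+n-1}{n-1}+\ell\binom{d_1+n-1}{n-1}$ and simplifying to $a_d=-\ell\sum_{i\ge 2}\big(\binom{d_i+n-1}{n-1}-1\big)<0$.

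The only real difference is how that identity is obtained. The paper cites an external combinatorial identity (formula 10.13 in Gould's tables), carefully checking that the hypothesis $\ell>n-1$ and the condition $d-\ell s+n-1\ge 0$ (equivalent to your upper bound on $\ell$) make the conventions match. You instead observe that the alternating sum is the degree-$d$ coefficient of $\frac{(1-t^s)^\ell-1+\ell t^s}{(1-t)^n}$ and that, since $\ell\ge n$ makes $\frac{(1-t^s)^\ell}{(1-t)^n}=(1-t)^{\ell-n}(1+\cdots+t^{s-1})^\ell$ a polynomial of degree $\ell s-n\le d-1$, its contribution in degree $d$ vanishes. This generating-function derivation is self-contained, avoids the external reference, and meshes naturally with the Hilbert-series viewpoint already used throughout the paper; the paper's citation is shorter but less transparent about \emph{why} the bounds on $\ell$ are exactly what is needed.
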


\begin{proof} 
Conjecture \ref{conj:main conj}(a) asserts that $\sigma_\ell(\XX_{n-1,\lambda})$ fills its ambient space 
if and only if $a_j(\ell,n,\lambda)$ 
 is not positive for some  integer $j$ with  $s\leq j\leq d$,
while Conjecture \ref{conj:main conj}(b) applies only when $\sigma_\ell(\XX_{n-1,\lambda})$ does not fill its ambient space. 
Since $d_1 \geq (n - 1)(s-1)$ is equivalent to $1 + \frac{d_1+n-1}{s} \geq n$,  Proposition \ref{prop:non-defective} implies $\sigma_\ell(\XX_{n-1,\lambda})$ fills its ambient space. Thus
Conjecture \ref{conj:main conj} is true if we show $a_d(\ell,n,\lambda)\leq 0$.

Recall the identity 
\[
\sum_{k=2}^\ell (-1)^k \binom{\ell}{k}  \binom{d - k s + n-1}{n-1}=0.
\]
(See formula 10.13 of \url{http://www.math.wvu.edu/~gould/Vol.4.PDF},
where $n,k,r,y,x$ in 10.13 become, respectively, our
$\ell$, $j$, $n-1$, $d+n-1$ and $-s$, so the assumption $n>r$ in 10.13 
becomes $\ell > n-1$ and is thus satisfied. We also note that 10.13 does not assume
that $\binom{a}{b}=0$ when $a<0$, but our assumption $\ell\leq 1+\frac{d_1+n-1}{s}$ 
is equivalent to $d-\ell s + n-1\geq 0$. This ensures that $d-js+n-1\geq0$ hence
the convention used in 10.13 agrees with our convention that    
$\binom{d-js+n-1}{n-1}=0$ when $d-js+n-1<n-1$.)

Using the identity above, we have
$$\sum_{j=2}^\ell(-1)^j\binom{\ell}{j}\binom{d-js+n-1}{n-1}=-\binom{d+n-1}{n-1}+\ell\binom{d_1+n-1}{n-1},$$
and substituting this into the expression for $a_d(\ell,n,\lambda)$
given in Definition \ref{def:integers a-j} we obtain
$$a_d(\ell,n,\lambda)=-\ell\sum_{i>1}\Bigg(\binom{d_i+n-1}{n-1}-1\Bigg)<0.$$
\end{proof}

We are now ready to prove one of the main results of the paper, as mentioned in the introduction.

\begin{proof}[Proof of Theorem \ref{thm:defectivity - intro}]
Part (a) follows from Theorem \ref{general r geq 3}(a). 
Part (b) follows from Theorem \ref{general r=2} and Theorem \ref{general r geq 3}(b).
Part (c) follows from Proposition \ref{prop:non-defective}(iv) and Theorem \ref{thm:dim formula}.
\end{proof}

We conclude this section with a case where we can show that the prediction of Conjecture~\ref{conj:main conj} is at least an  upper bound. 

\begin{prop}
     \label{prop:upper bound}
If $2 \ell = n+1$, then the dimension of $\sigma_{\ell}  (\X_{n-1, \lambda})$ is at most the number predicted in Conjecture \ref{conj:main conj}.     
\end{prop}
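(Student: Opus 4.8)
The plan is to bound $\dim \sigma_\ell(\X_{n-1,\lambda})$ from above by bounding $\dim_{\k}[A]_d$ from below, where $A = S/(I_{P_1}+\cdots+I_{P_\ell})$ for $\ell$ general points of $\X_{n-1,\lambda}$; recall from Corollary \ref{dim sigma ell} that $\dim \sigma_\ell(\X_{n-1,\lambda}) = \binom{d+n-1}{n-1} - 1 - \dim_{\k}[A]_d$, so an upper bound on the left follows from a lower bound on $\dim_{\k}[A]_d$. First I would record what ``the number predicted by Conjecture \ref{conj:main conj}'' is: by the computation at the end of the proof of Theorem \ref{thm:dim formula} (see also Theorem \ref{thm: gen dim formula}), it equals $\binom{d+n-1}{n-1}-1$ when $a_j(\ell,n,\lambda)\leq 0$ for some $j$ with $s\leq j\leq d$, and it equals $\binom{d+n-1}{n-1}-1-a_d(\ell,n,\lambda)$ otherwise. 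Hence it suffices to prove $\dim_{\k}[A]_d\geq a_d(\ell,n,\lambda)$ in the second case, the first case being trivial since any secant variety has dimension at most $\binom{d+n-1}{n-1}-1$.

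Next I would exploit the hypothesis $2\ell = n+1$, whose point is that only \emph{one} extra general linear form is involved. Since $2\ell = n+1$, one has $(\ell-1)n = \ell(n-2)+1$, so the set $\cL$ of $(\ell-1)n$ general linear forms from Lemma \ref{sop} can be written $\cL = \cL'\cup\{L\}$, where $\cL'$ consists of $\ell(n-2)$ of them. By Proposition \ref{prop:improper inters}(a), $\cL'$ is a $B$-regular sequence, so $C := B/\cL' B$ is artinian, and using $\HS(B) = g(t)/(1-t)^{\ell n}$ from Remark \ref{formula} together with Remark \ref{HS}(a) one gets $\HS(C) = (1-t)^{\ell(n-2)}\HS(B) = g(t)/(1-t)^{n+1}$, where $g(t) = \bigl[1-\sum_{i=1}^r t^{d-d_i}+(r-1)t^d\bigr]^\ell$. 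Writing $b_j = \dim_{\k}[C]_j$, Lemma \ref{sop} gives $\dim_{\k}[A]_d = \dim_{\k}[B/\cL B]_d = \dim_{\k}[C/LC]_d$.

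The crucial — and entirely elementary — observation is then that, since $[C/LC]_d$ is the cokernel of multiplication $\times L\colon [C]_{d-1}\to[C]_d$, whose rank is at most $b_{d-1}$, one has the \emph{unconditional} inequality $\dim_{\k}[A]_d = \dim_{\k}[C/LC]_d \geq \max\{0,\, b_d - b_{d-1}\}$. This is the only point where the failure of the diagonal forms to be a regular sequence is felt, and in this single-form situation no Lefschetz hypothesis is needed to get this inequality (a Lefschetz hypothesis would only be needed for the reverse one). On the other hand $(1-t)\HS(C) = g(t)/(1-t)^n$, and the coefficient of $t^j$ in $g(t)/(1-t)^n$ is $a_j(\ell,n,\lambda)$ for $j\leq d$ — this is exactly the computation carried out in the proof of Theorem \ref{thm:explain a-j}. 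Since multiplying a power series by $(1-t)$ replaces $b_j$ by $b_j - b_{j-1}$, this yields $b_j - b_{j-1} = a_j(\ell,n,\lambda)$ for all $j\leq d$.

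To conclude, I would split into the two cases above. If $a_j(\ell,n,\lambda)\leq 0$ for some $j$ with $s\leq j\leq d$, the predicted number is $\binom{d+n-1}{n-1}-1$ and there is nothing to prove. Otherwise $a_j(\ell,n,\lambda)>0$ for all $0\leq j\leq d$ (recall that $a_j(\ell,n,\lambda)>0$ automatically for $0\leq j<s$), so in particular $b_d - b_{d-1} = a_d(\ell,n,\lambda)>0$, and the inequality of the previous paragraph gives $\dim_{\k}[A]_d\geq a_d(\ell,n,\lambda)$; hence $\dim \sigma_\ell(\X_{n-1,\lambda})\leq \binom{d+n-1}{n-1}-1-a_d(\ell,n,\lambda)$, which is the predicted value. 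I do not expect a real obstacle: the argument is short once one notices that for $2\ell = n+1$ the unique improper reduction step involves a single linear form, for which the cokernel bound is automatic; the only bookkeeping is the identification of $\binom{d+n-1}{n-1}-1-a_d(\ell,n,\lambda)$ with the displayed formula in Conjecture \ref{conj:main conj}(b), which is already contained in the proofs of Theorems \ref{thm:dim formula} and \ref{thm: gen dim formula}.
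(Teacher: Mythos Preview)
Your proof is correct and follows essentially the same approach as the paper's own proof. The paper phrases the key step as the coefficientwise inequality $\HS(A'/LA')\ge |(1-t)\HS(A')|^{+}$ and then reads off degree $d$, whereas you argue directly in degree $d$ via the cokernel bound $\dim_{\k}[C/LC]_d\ge\max\{0,b_d-b_{d-1}\}$; these are the same observation, and the remaining case split and identification of $b_d-b_{d-1}$ with $a_d(\ell,n,\lambda)$ are identical.
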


\begin{proof}
Using the above notation, put $A' = B/\cL'$. Let $L \in A'$ be a general linear element. Notice that the assumption on $\ell$ gives $|\cL | = |\cL' | + 1$. Thus, we get 
\begin{equation}
         \label{eq:upper bound}
\HS (A) = \HS (B/\cL B) = \HS (A'/L A') \ge \left | (1-t) \HS (A') \right |^+,
\end{equation}
which means that the comparison is true coefficientwise.

If $a_j (\ell, n, \lambda) \le 0$ for some non-negative $j \le d$, then Conjecture \ref{conj:main conj}  says that $\sigma_{\ell} (\X_{n-1, \lambda})$ fills its ambient space, and so the estimate follows. 

Otherwise, Equation \eqref{eq:upper bound} implies 
\[
\dim_{\k} [A]_d \ge a_d (\ell, n, \lambda), 
\]
which gives 
\[
\dim \sigma_{\ell}  (\X_{n-1, \lambda}) = \binom{d + n-1}{n-1} - 1 - \dim_{\k} [A]_d \le \binom{d + n-1}{n-1} - 1 - a_d (\ell, n, \lambda).
\]
Since, the right-hand side is the formula for $\dim \sigma_{\ell}  (\X_{n-1, \lambda})$ that is predicted by Conjecture \ref{conj:main conj}, this completes the argument. 
\end{proof}

\begin{rem}
    \label{rem:upper bound} 
Consider an arbitrary graded $\k$-algebra, and let $L_1, L_2 \in [A]_1$ be two general elements. Then it is \emph{not} necessarily true that 
\[
\HS (A/(L_1, L_2) \ge \left | (1-t)^2 \cdot \HS (A') \right |^+.
\]   
Thus, the above argument cannot be easily extended to show that Conjecture \ref{conj:main conj} gives an upper bound for  $\dim \sigma_{\ell}  (\X_{n-1, \lambda})$ for all $\ell \ge \frac{n+1}{2}$. Note however that in the following section we will prove that Conjecture \ref{conj:main conj} does give an upper bound if $r = 2$ by using a different approach. 
\end{rem}


\section{Forms with two factors  and Fr\"oberg's Conjecture} 
   \label{sec:r is 2}

In this section we focus on the case $r = 2$, that is, we consider secant varieties to the varieties whose general point corresponds to a product of two irreducible polynomials. 
We begin  by recalling that the dimension of secant varieties, in case $r = 2$, is related to a famous conjecture of Fr\"oberg (see \cite{CCG:1}). We systematically relate this conjecture to our approach in the previous section. In particular, we will see that Fr\"oberg's Conjecture and the WLP-Conjecture lead to the same prediction for the dimension of the secant variety in the case $r = 2$.  This allows us to establish further instances of Conjecture~\ref{conj:main conj}. 

Fr\"oberg's Conjecture concerns the Hilbert function of an ideal generated by generic forms. More precisely, it says: 

\begin{conj}[Fr\"oberg's Conjecture \cite{froeberg}] 
     \label{conj:Froeberg} 
Let $J \subset S = \k[x_1,\ldots,x_n]$ be an ideal generated by $s$ generic forms of degrees $e_1,\ldots,e_s$ in $S$. Then the Hilbert series of $S/J$ is 
\[
\HS (S/J) = \left | \frac{\prod_{i=1}^s (1 - t^{e_i})}{(1-t)^n}    \right |^+ . 
\]     
\end{conj}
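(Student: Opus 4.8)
Conjecture \ref{conj:Froeberg} is a well-known open problem, so in place of a complete proof I describe the reductions one makes, the techniques that settle the known cases, and the point at which current methods stall.

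The first step is to reduce to the artinian case. Since a generic choice of forms of given degrees minimizes $\dim_\k [S/J]_i$ in each degree $i$ (upper-semicontinuity of the Hilbert function), it suffices to exhibit, for the prescribed degrees $e_1,\dots,e_s$, a single ideal whose Hilbert function is bounded above in each degree by the coefficients of $\left|\prod_{i=1}^s(1-t^{e_i})/(1-t)^n\right|^+$, and separately to verify the reverse inequality for a generic ideal. When $s<n$ a generic sequence is regular, so $S/J$ is resolved by the Koszul complex, $\HS(S/J)=\prod_{i=1}^s(1-t^{e_i})/(1-t)^n$, this series already has nonnegative coefficients, and the conjecture holds on the nose; hence one may assume $s\ge n$, i.e.\ that $S/J$ is artinian. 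This is precisely the situation we meet in Section \ref{sec:r is 2}: for $r=2$, Proposition \ref{nicer} shows that the tangent-space ideal at a general point of $\X_{n-1,[d_1,d_2]}$ is the complete intersection $(F_{j1},F_{j2})$, so $I_{P_1}+\cdots+I_{P_\ell}$ is generated by $2\ell$ generic forms, $\ell$ of degree $d_1$ and $\ell$ of degree $d_2$, and Conjecture \ref{conj:Froeberg} directly predicts $\dim_\k[S/(I_{P_1}+\cdots+I_{P_\ell})]_d$, hence $\dim\sigma_\ell(\X_{n-1,[d_1,d_2]})$.

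Two techniques prove genuine instances. The first is the Lefschetz route: compare $S/J$ with an explicit monomial artinian algebra such as $\k[x_1,\dots,x_n]/(x_1^{a_1},\dots,x_n^{a_n})$, show that this algebra has the Strong (or Weak) Lefschetz Property, and transfer maximal rank of the relevant multiplication maps to the generic forms by semicontinuity; this handles the case $s=n+1$ (Stanley, via hard Lefschetz on a product of projective spaces, whence the characteristic-zero hypothesis) and the conjecture in the first critical degree (Hochster--Laksov). The second is to degenerate the generic forms to a monomial, or lex-plus-powers, ideal and compute its Hilbert function directly, as is ultimately done for $n\le 2$ (Fr\"oberg \cite{froeberg}) and $n=3$ (Anick, in characteristic zero). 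A systematic plan would therefore run: (i) reduce, by adding one generic generator at a time, to a manageable configuration of degrees; (ii) at each added generator prove the required maximal-rank statement as a Lefschetz property of an explicitly described artinian algebra; (iii) reassemble the graded pieces via the $|\cdot|^+$ operation, with bookkeeping as in Lemma \ref{lem:char k-WLP}.

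The main obstacle is step (ii) outside the known ranges: for $n\ge 4$ and $s\ge n+2$ there is no known artinian algebra --- monomial or otherwise --- whose Hilbert function can be \emph{proved} equal to $\left|\prod_{i=1}^s(1-t^{e_i})/(1-t)^n\right|^+$, because one cannot control the Koszul-type syzygies among $s$ generic forms past the first degree in which the naive dimension count turns nonpositive; equivalently, no flat degeneration is known that has both the generic special fibre and a computable Hilbert function. For this reason the present paper does not attempt Conjecture \ref{conj:Froeberg}; rather, when $r=2$ it is used as a hypothesis to derive the WLP-Conjecture (Conjecture \ref{conj:WLP}), and hence Conjecture \ref{conj:main conj}, while unconditional consequences are obtained in exactly the ranges where Fr\"oberg's conjecture is a theorem --- $n\le 3$, or at most $n+1$ generic forms (which for $r=2$ means $\ell\le\frac{n+1}{2}$), matching Theorem \ref{thm: main thm - intro}(b)(i) and (iii).
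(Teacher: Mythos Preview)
Your proposal is appropriate: the statement is Fr\"oberg's Conjecture, which the paper records as an open problem and does not attempt to prove, so there is no ``paper's own proof'' to compare against. Your summary of the known reductions and settled cases (regular sequences for $s\le n$, $n\le 3$ via Anick, $s=n+1$ via the Strong Lefschetz Property for monomial complete intersections, the Hochster--Laksov first-degree result) and of how the paper deploys the conjecture for $r=2$ is accurate and matches the paper's own usage in Section~\ref{sec:r is 2}.
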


There is an equivalent version of Fr\"oberg's Conjecture that gives a recursion to predict the Hilbert function of such an algebra. 

\begin{conj}[Fr\"oberg's Conjecture, recursive version] 
     \label{conj:Froeberg recursive} 
Let $J \subset S = \k[x_1,\ldots,x_n]$ be an ideal that is generated by generic forms, and let $f \in S$ be a generic form of degree $e$. Then, for all integers $j$, 
\[
\dim_{\k} [S/(J, f)]_j = \max \{0, \;  \dim_{\k}   [S/J]_j -   \dim_{\k}  [S/J]_{j - e} \}. 
\]     
\end{conj}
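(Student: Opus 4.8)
The plan is to recognize that, for a fixed ideal $J$ generated by generic forms and a generic form $f$ of degree $e$, the asserted equality is exactly the statement that the multiplication map $\times f \colon [S/J]_{j-e} \to [S/J]_j$ has maximal rank for every $j$. Indeed, the exact sequence
\[
[S/J]_{j-e} \xrightarrow{\times f} [S/J]_j \to [S/(J,f)]_j \to 0
\]
gives $\dim_{\k}[S/(J,f)]_j = \dim_{\k}[S/J]_j - \operatorname{rank}(\times f)$, so the desired formula holds in all degrees if and only if $\operatorname{rank}(\times f) = \min\{\dim_{\k}[S/J]_{j-e},\ \dim_{\k}[S/J]_j\}$ for all $j$, which yields $\dim_{\k}[S/(J,f)]_j = \max\{0,\ \dim_{\k}[S/J]_j - \dim_{\k}[S/J]_{j-e}\}$. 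Thus the whole content of the statement is a single Lefschetz-type maximal-rank property of a generic form acting on the generic algebra $S/J$, of exactly the kind codified in Lemma \ref{lem:char Lefschetz element}, and proving the recursion is equivalent to establishing this property.

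The natural route to the maximal-rank property is semicontinuity: in the parameter space of coefficient tuples of the generators of $J$ together with those of $f$, the locus on which every map $\times f$ attains maximal rank is open, so it suffices to exhibit one choice of $J$ and $f$ for which all these maps have maximal rank. Following Fr\"oberg \cite{froeberg}, one specializes the generic forms to monomials or to powers of the variables and tries to reduce the rank in each degree to an explicit combinatorial count. I would first dispatch the safe cases in this way: for $n\le 2$ the maximal-rank property for $\times f$ is known for all artinian quotients of $\k[x_1,x_2]$ in characteristic zero (and in particular the Weak Lefschetz Property always holds there, by \cite{HMNW}); for $n=3$ the statement is Anick's theorem; and whenever $J$ is generated by at most $n-1$ generic forms, $S/J$ is a complete intersection and $f$ is a nonzerodivisor on it, so $\times f$ is injective and the recursion is immediate (note that injectivity forces $\dim_{\k}[S/J]_{j-e}\le\dim_{\k}[S/J]_j$, consistent with the $\max$). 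The recursion then matches the closed form $\bigl|\prod_{i}(1-t^{e_i})/(1-t)^n\bigr|^{+}$ of Conjecture \ref{conj:Froeberg}.

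The hard part will be the general case: this statement is precisely the celebrated Fr\"oberg Conjecture, and no construction of a maximal-rank specialization is known for $n\ge 4$ and arbitrary degree sequences. The difficulty is to control, simultaneously in all degrees $j$, the rank of the matrix whose entries are the coefficients of $f$ against a basis of $[S/J]_{j-e}$; monomial specializations restore explicit combinatorics but can destroy maximal rank, and there is no inductive mechanism---on $n$, on the number of generators, or on their degrees---known to propagate the property in full generality. Consequently the realistic deliverable is an unconditional proof only in the ranges indicated above (together with the other known cases in the literature), with the recursion remaining conjectural otherwise; this is why it is stated here as a conjecture and used in Section \ref{sec:r is 2} only to derive the consequences for $\sigma_\ell(\X_{n-1,\lambda})$ in the case $r=2$.
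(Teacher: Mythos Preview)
The statement is labeled as a \emph{conjecture} in the paper, and the paper supplies no proof for it; it is simply stated as an equivalent recursive formulation of Fr\"oberg's Conjecture \ref{conj:Froeberg} and then invoked as a hypothesis in the proofs of Theorems \ref{thm:balanced} and \ref{thm:dim when froeberg}. There is therefore no proof in the paper to compare your proposal against.

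Your write-up correctly identifies this. Your reduction via the exact sequence $[S/J]_{j-e}\xrightarrow{\times f}[S/J]_j\to[S/(J,f)]_j\to 0$ to a maximal-rank statement is exactly the standard interpretation, and your inventory of unconditionally known cases (two variables via \cite{HMNW}, three variables via Anick \cite{anick}, the complete-intersection range where $f$ is a nonzerodivisor) matches what the paper itself uses later in Section~\ref{sec:r is 2}. Your concluding paragraph is the right assessment: the general statement remains open, which is precisely why the paper records it as a conjecture rather than a theorem.
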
     

Comparing the latter version with Definition \ref{def:WLP} shows that $S/J$ has the \WLP\ if the above is true and $e=1$. 
We refer to \cite[Proposition 2.1]{MMN3} for further results on the relation between Fr\"oberg's 
conjecture and the Lefschetz properties. Here we need only the following observation. 

\begin{prop}
     \label{prop:Froeberg gives WLP}
Assume Fr\"oberg's Conjecture is true for polynomial rings in up to $n$ variables. If $J \subset S = \k[x_1,\ldots,x_n]$ is 
an ideal that is generated by at least $n$ generic forms, then $S/J$ has the $n$-WLP.      
\end{prop}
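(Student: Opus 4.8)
The plan is to combine Fröberg's Conjecture with the numerical criterion for Lefschetz elements in Lemma \ref{lem:char Lefschetz element}, exploiting the principle (already used before Lemma \ref{sop}) that enlarging an ideal of general forms by general linear forms again produces an ideal of general forms. Write $J=(F_1,\ldots,F_s)\subset S$ with the $F_k$ general of degree $e_k$ and $s\geq n$. Fix general linear forms $L_1,\ldots,L_n\in S$, chosen so that $F_1,\ldots,F_s,L_1,\ldots,L_n$ is a general tuple of forms of degrees $e_1,\ldots,e_s,1,\ldots,1$, and for $0\leq i\leq n$ put $A_i=S/(J,L_1,\ldots,L_i)$, so that $A_0=S/J$ and $A_i=A_{i-1}/L_iA_{i-1}$. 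The goal is to show that $\{L_1,\ldots,L_n\}$ is an $n$-Lefschetz set of $S/J$, which by Definition \ref{def:k-WLP} gives the $n$-WLP.

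First I would compute the Hilbert functions of all the $A_i$. For each $i$ the ideal $(J,L_1,\ldots,L_i)$ is generated by $s+i\geq n$ general forms, so $A_i$ is artinian; this is precisely where the hypothesis $s\geq n$ enters, since it makes every $A_i$ — in particular $A_0=S/J$ — artinian, so that the Lefschetz notions apply and Lemma \ref{lem:char Lefschetz element} is available. Setting $g=\prod_{k=1}^s(1-t^{e_k})/(1-t)^n$, which is a polynomial with constant term $1$ because $(1-t)^s\mid\prod_k(1-t^{e_k})$ and $s\geq n$, Fröberg's Conjecture in $n$ variables gives $\HS(A_i)=\bigl|(1-t)^ig\bigr|^{+}$. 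Now fix $i$ with $1\leq i\leq n$: since $A_{i-1}$ is artinian, Lemma \ref{lem:char Lefschetz element} shows that the general linear form $L_i$ is a Lefschetz element of $A_{i-1}$ as soon as $\HS(A_{i-1}/L_iA_{i-1})=\bigl|(1-t)\,\HS(A_{i-1})\bigr|^{+}$. By the previous computation the left side is $\HS(A_i)=\bigl|(1-t)^ig\bigr|^{+}$ and the right side is $\bigl|(1-t)\,\bigl|(1-t)^{i-1}g\bigr|^{+}\bigr|^{+}$, so it suffices to verify the one-step truncation identity $\bigl|(1-t)\,|h|^{+}\bigr|^{+}=\bigl|(1-t)h\bigr|^{+}$ for $h=(1-t)^{i-1}g$, which has positive constant term. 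This identity — essentially \cite[Lemma 4]{froeberg} — follows by comparing coefficients: $(1-t)h$ and $(1-t)|h|^{+}$ have identical coefficients in every degree strictly below the first degree $m$ in which $h$ becomes non-positive, while at degree $m$ both are strictly negative, so the two truncations coincide. Applying this for each $i=1,\ldots,n$ shows $\{L_1,\ldots,L_n\}$ is an $n$-Lefschetz set of $S/J$.

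I do not expect a serious obstacle: the entire content is the observation that an ideal of general forms enlarged by general linear forms is still an ideal of general forms, so that Fröberg's Conjecture simultaneously pins down $\HS(A_i)$ for all $i$, after which the Lefschetz property at each stage collapses to the elementary one-step truncation identity. The only points that need care are checking that each $A_i$ is artinian (the role of the hypothesis $s\geq n$) and quoting the truncation identity from \cite{froeberg}; neither is difficult.
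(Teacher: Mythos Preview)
Your proof is correct and essentially self-contained, but the paper takes a different, shorter route. The paper observes that the recursive form of Fr\"oberg's Conjecture (Conjecture \ref{conj:Froeberg recursive}) with $e=1$ says precisely that a general linear form $L$ is a Lefschetz element of $S/J$; then it notes that $S/(J,L)$ is isomorphic to a polynomial ring in $n-1$ variables modulo an ideal generated by generic forms, and so the argument may be repeated $n$ times, descending through $n,n-1,\ldots$ variables. Thus the paper inducts on the number of variables and invokes Fr\"oberg's Conjecture in each smaller ring in turn. You instead stay in $S$ throughout, compute $\HS(A_i)=\bigl|(1-t)^ig\bigr|^{+}$ for every $i$ directly from Fr\"oberg's Conjecture in $n$ variables, and then reduce the Lefschetz condition at each step to the one-step truncation identity $\bigl|(1-t)\,|h|^{+}\bigr|^{+}=\bigl|(1-t)h\bigr|^{+}$ of \cite[Lemma 4]{froeberg}. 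Your argument actually uses only Fr\"oberg's Conjecture in exactly $n$ variables (not the full ``up to $n$'' hypothesis), whereas the paper's proof is shorter and avoids appealing to the truncation identity explicitly.
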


\begin{proof}
Let $L \in S$ be a generic linear form. Then,  as noted above, $L$ is a Lefschetz element for $S/J$. 
Since $S/(J, L)$ is isomorphic to a quotient of a polynomial ring in $n-1$ variables modulo an ideal 
generated by generic forms in these $(n-1)$ variables, we can use this argument  $n$ times. 
\end{proof} 

Since each quotient of a polynomial ring in at most two variables has the \WLP\ by \cite{HMNW}, 
the properties $n$-WLP and $(n-2)$-WLP are equivalent if $n \ge 2$. 

We now relate this to the secant  varieties of $\X_{n-1, \lambda}$, where $\lambda$ is a partition with two parts. 
In this case, we simplify notation and write $\lambda = [d- k,k]$, where $1 \le k \le \frac{d}{2}$.

Our starting point for the case $r = 2$ is the following observation. 

\begin{lem}
   \label{lem:dim for r=2}
If $\lambda = [d- k,k]$, then    
\[
\dim \sigma_{\ell}  (\X_{n-1, \lambda})  = -1 + \dim_{\k} [I]_d,
\]
where $I \subset S$ is an ideal generated by $\ell$ generic forms of degree $d-k$ and $\ell$ generic forms of degree $k$.
\end{lem}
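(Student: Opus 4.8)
The plan is to reduce everything to two facts already in hand: the description of the tangent-space ideals from Proposition~\ref{nicer}, specialized to $r=2$, and the dimension formula of Corollary~\ref{dim sigma ell}. First I would write a general point $P_j \in \X_{n-1,\lambda}$ (where $\lambda = [d-k,k]$) as $P_j = [F_{j,1}F_{j,2}]$ with $\deg F_{j,1} = d-k$ and $\deg F_{j,2} = k$. Proposition~\ref{nicer} with $r=2$ then gives
\[
I_{P_j} = \bigl((F_{j,1}F_{j,2})/F_{j,1},\ (F_{j,1}F_{j,2})/F_{j,2}\bigr) = (F_{j,2},\,F_{j,1}),
\]
an ideal generated by one form of degree $d-k$ and one form of degree $k$. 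Hence, with $I = I_{P_1} + \cdots + I_{P_\ell}$ as in Corollary~\ref{dim sigma ell},
\[
I = (F_{1,1},\dots,F_{\ell,1},\ F_{1,2},\dots,F_{\ell,2}),
\]
an ideal generated by $\ell$ forms of degree $d-k$ together with $\ell$ forms of degree $k$.

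The second step is to observe that these $2\ell$ generators are generic forms. Since a general point of $\X_{n-1,\lambda}$ is, by definition, the image of a general pair $([F_{j,1}],[F_{j,2}])$ under the finite (hence dominant) morphism $\mathbb{P}([S]_{d-k})\times\mathbb{P}([S]_k)\to \X_{n-1,\lambda}$, and since the $\ell$ points $P_1,\dots,P_\ell$ are chosen independently, the coefficients of the forms $F_{j,i}$ constitute a general point of the corresponding product of projective spaces. Therefore $I$ is precisely an ideal generated by $\ell$ generic forms of degree $d-k$ and $\ell$ generic forms of degree $k$ (and when $d-k=k$ this just means $2\ell$ generic forms of degree $k$), so $\dim_\k[I]_d$ is exactly the quantity named in the statement. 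Combining this with Corollary~\ref{dim sigma ell} yields $\dim\sigma_\ell(\X_{n-1,\lambda}) = \dim_\k[I]_d - 1$, as claimed.

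I do not expect a genuine obstacle here: the argument is essentially a substitution into results already established. The only point requiring a word of care is the passage from ``factors of $\ell$ general points of $\X_{n-1,\lambda}$'' to ``$2\ell$ generic forms,'' and this is immediate because generality is an open dense condition that is preserved under the dominant parametrization of $\X_{n-1,\lambda}$, so the Hilbert function of $S/I$ in degree $d$ agrees with that of the quotient by $2\ell$ truly generic forms of the prescribed degrees.
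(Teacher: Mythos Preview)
Your proof is correct and follows exactly the same approach as the paper, which simply cites Corollary~\ref{dim sigma ell} and Proposition~\ref{nicer} in a one-line proof. You have merely spelled out the details of how those two results combine in the case $r=2$.
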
 

\begin{proof}
This follows from Corollary \ref{dim sigma ell} and Proposition \ref{nicer}. 
\end{proof} 

In the case $r=2$, the definition of the integers $a_j (\ell, n, \lambda)$ becomes somewhat simpler. 

\begin{rem}
    \label{rem:simples a-j for r is 2} 
Assume $\lambda = [d- k,k]$, where $1 \le k \le \frac{d}{2}$. Then 
\begin{align*}
a_j (\ell, n, \lambda)  = &  \binom{j+n-1}{n-1} - \ell \left [ \binom{j + k - d+ n-1}{n-1}  + \binom{j - k + n-1}{n-1} \right ]  \\
&  +  \sum_{i=2}^\ell (-1)^i \binom{\ell}{i} \binom{j - i k + n-1}{n-1} 
\end{align*}     
if $0 \le j < d$, and 
\begin{align*}
a_d  (\ell, n, \lambda) = &  \binom{d+n-1}{n-1} - \ell \left [ \binom{k + n-1}{n-1}  + \binom{d - k + n-1}{n-1} \right ]  \\
&  +  \sum_{i=2}^\ell (-1)^i \binom{\ell}{i} \binom{d - i k + n-1}{n-1} + \binom{\ell}{2} \binom{2 k - d +n -1}{n-1} + \ell^2. 
\end{align*}  
Observe that the penultimate term is zero, unless $k = \frac{d}{2}$. 
\end{rem}

Lemma \ref{lem:dim for r=2} allows us to relate Fr\"oberg's Conjecture to our work in the previous sections. 

\begin{prop}
      \label{prop:Froeberg gives main conj} 
Let $\lambda = [d-k, k]$ be a partition with two parts. Then, for each $\ell \ge 2$ and each $n \ge 3$, the value for the dimension of the secant variety in Conjecture \ref{conj:main conj} gives an upper bound for the dimension of $\sigma_{\ell} (\X_{n-1, \lambda})$. 
Moreover, if Fr\"oberg's Conjecture is true for 
$S$, then, for all $\ell \ge 2$, the variety  $\sigma_{\ell} (\X_{n-1, \lambda})$ has the dimension predicted in Conjecture \ref{conj:main conj}.
\end{prop}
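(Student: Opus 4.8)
The plan is to use Lemma~\ref{lem:dim for r=2} to convert the problem into a statement about the Hilbert function, in degree $d$, of an ideal generated by generic forms, and then to compare with the Fr\"oberg series. Throughout write $\lambda = [d-k,k]$ with $1 \le k \le \frac d2$. First I would set up the dictionary: by Lemma~\ref{lem:dim for r=2}, $\dim \sigma_\ell(\X_{n-1,\lambda}) = \binom{d+n-1}{n-1} - 1 - \dim_\k[S/I]_d$, where $I \subset S$ is generated by $\ell$ generic forms of degree $d-k$ together with $\ell$ generic forms of degree $k$, so the Fr\"oberg series of $S/I$ is $\frac{(1-t^k)^\ell(1-t^{d-k})^\ell}{(1-t)^n}$. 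By the computation recorded in Remark~\ref{rem:simples a-j for r is 2} (equivalently, the identity $b_j = a_j$ established inside the proof of Theorem~\ref{thm:explain a-j}), the coefficient of this series in each degree $j \le d$ is $a_j(\ell,n,\lambda)$. Since $a_j > 0$ for $j < s = k$ (the remark after Definition~\ref{def:integers a-j}), the value that Conjecture~\ref{conj:main conj} assigns to $\dim_\k[S/I]_d$ --- namely $0$ if $a_j \le 0$ for some $j$ with $s \le j \le d$, and $a_d$ otherwise --- is exactly the degree-$d$ coefficient of $\bigl|\frac{(1-t^k)^\ell(1-t^{d-k})^\ell}{(1-t)^n}\bigr|^+$.

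With this in hand the conditional statement is immediate: if Fr\"oberg's Conjecture holds for $S$, then $\HS(S/I) = \bigl|\frac{(1-t^k)^\ell(1-t^{d-k})^\ell}{(1-t)^n}\bigr|^+$, so $\dim_\k[S/I]_d$ equals the predicted value, and hence $\dim\sigma_\ell(\X_{n-1,\lambda})$ agrees with Conjecture~\ref{conj:main conj}(b) by the same arithmetic used at the end of the proof of Theorem~\ref{thm:dim formula} (equivalently Theorem~\ref{thm: gen dim formula}); Conjecture~\ref{conj:main conj}(a) follows because $\sigma_\ell$ fills its ambient space precisely when $[S/I]_d = 0$.

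For the unconditional upper bound it remains to prove $\dim_\k[S/I]_d \ge \bigl[\,\bigl|\frac{(1-t^k)^\ell(1-t^{d-k})^\ell}{(1-t)^n}\bigr|^+\,\bigr]_d$. I would avoid adjoining the $2\ell$ generic forms one at a time, since, as Remark~\ref{rem:upper bound} points out, $H \mapsto |(1-t)H|^+$ is not monotone and the naive chaining breaks. Instead, pass to $B$: by Remark~\ref{formula} together with Proposition~\ref{prop:improper inters}, the artinian algebra $C := B/\cL' B$ has Hilbert series exactly $g(t)^\ell$, where $g(t) = \frac{(1-t^k)(1-t^{d-k})}{(1-t)^2}$ is the Hilbert series of an artinian complete intersection in two variables, and by Lemma~\ref{sop} one has $\HS(A) = \HS\bigl(C/(L_1,\dots,L_{k_0})C\bigr)$ for $k_0 = \max\{0,2\ell - n\}$ general linear forms $L_i$. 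The plan is then to chain the single-form cokernel estimate $\HS(D/LD) \ge |(1-t)\HS(D)|^+$ (the one-variable inequality already used in the proof of Proposition~\ref{prop:upper bound}), using that $g(t)$ is symmetric and unimodal --- hence so is $g(t)^\ell$ --- to control the truncations and conclude $\HS(A) \ge |(1-t)^{k_0}g(t)^\ell|^+ = \bigl|\frac{(1-t^k)^\ell(1-t^{d-k})^\ell}{(1-t)^n}\bigr|^+$, whence the claim follows by reading off the degree-$d$ coefficient and invoking Lemma~\ref{lem:dim for r=2}.

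The hard part is precisely this last step. For a general graded algebra the chaining fails already at $k_0 = 2$, so making it legitimate here requires genuinely exploiting the shape of $g(t)^\ell$: one must show that the truncation is compatible with the stepwise reduction, i.e.\ that $\bigl|(1-t)\,|(1-t)^{i-1}g(t)^\ell|^+\,\bigr|^+ = |(1-t)^i g(t)^\ell|^+$ for all $i \le k_0$, and that this equality survives passage through the inequalities produced at each stage. The point will be that $g(t)^\ell$, together with its iterated first differences in the relevant range, remains nonnegative and changes monotonicity behavior in a controlled way --- a reflection of the two-variable complete intersection behind $g(t)$ having the Strong Lefschetz Property --- whereas no such feature is available for an arbitrary artinian algebra.
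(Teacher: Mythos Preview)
Your treatment of the conditional statement (Fr\"oberg implies Conjecture~\ref{conj:main conj}) is correct and matches the paper's argument: translate via Lemma~\ref{lem:dim for r=2}, identify the Fr\"oberg series, and read off degree $d$.

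The unconditional upper bound, however, is where your proposal diverges from the paper and where it is incomplete. You set up the iterated one-step inequality $\HS(D/LD)\ge |(1-t)\HS(D)|^+$ and then try to chain it $k_0=2\ell-n$ times, relying on the symmetry and unimodality of $g(t)^\ell$ to control the truncations. You yourself flag this as ``the hard part'' and do not finish it. In fact this detour is unnecessary: the inequality
\[
\HS(S/I)\;\ge\;\left|\frac{(1-t^{d-k})^\ell(1-t^k)^\ell}{(1-t)^n}\right|^+
\]
is exactly Fr\"oberg's theorem in \cite{froeberg}, applied to the ideal $I$ (or equivalently to $U/J$ plus $2\ell-n$ general linear forms). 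The paper simply invokes that theorem and is done in one line. So the gap in your proposal is not that your route cannot be made to work, but that you have left its key step as a promise about unimodality and iterated truncation, when the needed inequality is already a theorem in the literature. Replace the entire chaining argument by a direct citation of Fr\"oberg's inequality and the proof is complete.
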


\begin{proof} 
If $\ell \le \frac{n}{2}$, Conjecture \ref{conj:main conj} is  true  by Theorem \ref{thm:dim formula}. Thus, we may assume $2 \ell > n$. 

With $I$ as in Lemma \ref{lem:dim for r=2}, we have 
\[
\dim \sigma_{\ell} (\X_{n-1,\lambda}) = \binom{d+n-1}{n-1} - 1 - \dim_{\k} [S/I]_d.
\]
The value predicted for $\dim \sigma_{\ell} (\X_{n-1,\lambda})$ by Conjecture \ref{conj:main conj}
comes, by Theorem \ref{thm:explain a-j}, from the value of $\dim_{\k} [S/I]_d$ predicted by the 
WLP-Conjecture \ref{conj:WLP}. Thus it is enough to derive from Fr\"oberg's Conjecture \ref{conj:Froeberg}
the same value for $\dim_{\k} [S/I]_d$ as given by Conjecture \ref{conj:WLP}, and to show that this 
value is a lower bound for the actual value.

Using our earlier notation, observe that $B/\cL' B \cong U/J$, where $U$ is a polynomial ring in $2 \ell$ 
variables and $J$ is a complete intersection generated by $\ell$ general forms of degree $k$ and $\ell$ 
general forms of degree $d-k$. Hence the Hilbert series of $U/J$ is 
\[
\HS (U/J) = \frac{(1 - t^{d-k})^\ell (1-t^k)^\ell}{(1-t)^{2 \ell}} . 
\]     
Lemma \ref{sop}  shows that $S/I$ is isomorphic to $B/\cL B$, which is obtained from $U/J$ by quotienting by $(2 \ell - n)$ 
general linear forms. Hence \cite[Theorem on  p.\ 120, \S1]{froeberg} gives
\[
\HS (S/I) \geq  \left | \frac{(1 - t^{d-k})^\ell (1-t^k)^\ell(1-t)^{2\ell-n}}{(1-t)^{2\ell}}    \right |^+=\left | \frac{(1 - t^{d-k})^\ell (1-t^k)^\ell}{(1-t)^n}    \right |^+,
\]    
the right hand side of which is exactly the Hilbert series of $S/I$ as predicted by Fr\"oberg's Conjecture \ref{conj:Froeberg}.
Moreover, the WLP-Conjecture \ref{conj:WLP} predicts this same Hilbert series for $S/I$ (see Equation \eqref{eq:Hilb series predicted by WLP}),
and hence equality holds if Fr\"oberg's Conjecture does.
\end{proof} 

\begin{rem}
Notice that in Proposition \ref{prop:Froeberg gives main conj} we assumed the correctness of Fr\"oberg's Conjecture only for ideals in $S$. If we assume more, namely that this conjecture is true for all ideals in $\max \{n, \ 2 \ell \}$ variables, then Proposition \ref{prop:Froeberg gives WLP} shows that $B/\cL' B$  has the $(2 \ell-n)$-\WLP. Hence, in this case  Theorem \ref{thm: gen dim formula} immediately gives the conclusion of the above proposition. 
\end{rem} 

We are ready to establish new instances where Conjecture \ref{conj:main conj} holds.

\begin{thm}
    \label{thm:dim when froeberg}
Conjecture \ref{conj:main conj} is true if $r=2$ and     
\begin{itemize}
\item[(a)] $2\ell \le n+1$ \quad or 
\item[(b)] $n = 3$ \quad or 
\item[(c)] $\lambda = [1, 1]$, that is, $d = 2$.  
\end{itemize}
\end{thm}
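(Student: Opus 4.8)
The plan is to reduce all three cases to the machinery already assembled, chiefly Proposition~\ref{prop:Froeberg gives main conj}, which says that for a partition $\lambda = [d-k,k]$ with two parts the value in Conjecture~\ref{conj:main conj} is always an upper bound for $\dim\sigma_\ell(\X_{n-1,\lambda})$, and that equality holds whenever Fr\"oberg's Conjecture holds for the relevant polynomial ring. So in each case I need only verify Fr\"oberg's Conjecture (or enough of it) for the ideal $I\subset S$ generated by $\ell$ generic forms of degree $d-k$ and $\ell$ generic forms of degree $k$, where $S=\k[x_1,\ldots,x_n]$; equivalently (via Lemma~\ref{sop}) for a generic-linear-forms quotient of the complete intersection $U/J$ in $2\ell$ variables described in the proof of Proposition~\ref{prop:Froeberg gives main conj}.

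For part (a), $2\ell\le n+1$: if $2\ell\le n$ this is already Theorem~\ref{thm:dim formula} (and Remark~\ref{rem:comp with {CGGS}}(ii)), so the only new case is $2\ell=n+1$. Here Proposition~\ref{prop:upper bound} already gives that the predicted value is an upper bound; for the matching lower bound I would use that $B/\cL'B\cong U/J$ is a complete intersection in $2\ell=n+1$ variables, and we pass to $S/I$ by quotienting by a single general linear form. A complete intersection has the \SLP, hence the \WLP, in any number of variables by the characteristic-zero result of Stanley/Watanabe, so the single reduction by a general linear form has maximal rank in every degree; this yields $\HS(S/I)=\bigl|(1-t)\HS(U/J)\bigr|^+$, which is exactly the Fr\"oberg/WLP-predicted series. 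Combined with Proposition~\ref{prop:Froeberg gives main conj} (or directly with Theorem~\ref{thm:explain a-j}, whose WLP hypothesis is now verified), this gives Conjecture~\ref{conj:main conj}.

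For part (b), $n=3$: Fr\"oberg's Conjecture is known to be true for polynomial rings in $n\le 3$ variables (Anick's theorem for $n=3$). But Proposition~\ref{prop:Froeberg gives main conj} as stated needs Fr\"oberg for $S=\k[x_1,x_2,x_3]$ only, so it applies directly and delivers the dimension predicted by Conjecture~\ref{conj:main conj}. (Alternatively, invoking Fr\"oberg in $\max\{n,2\ell\}$ variables is not available here when $2\ell>3$, so the point is precisely that Proposition~\ref{prop:Froeberg gives main conj} was crafted to need Fr\"oberg only in the $n$ variables of $S$.) For part (c), $\lambda=[1,1]$ so $d=2$, $k=1$: then $I$ is generated by $2\ell$ generic linear forms of $S$, so $I=[S]_1$ as soon as $2\ell\ge n$ — which always holds in the improper range — and $\dim_\k[S]_2=0$ modulo $I$ forces $[S/I]_2=0$, i.e. $\sigma_\ell(\X_{n-1,[1,1]})$ fills its ambient space; one then checks this agrees with Conjecture~\ref{conj:main conj}(a) by noting $a_s=a_1(\ell,n,[1,1])=\binom{n}{n-1}-2\ell=n-2\ell\le 0$ exactly when $2\ell\ge n$, while for $2\ell\le n$ the statement is Theorem~\ref{thm:dim formula}. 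So in every subcase the conjectured formula is confirmed.

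The main obstacle is part (a) with $2\ell=n+1$: Proposition~\ref{prop:upper bound} supplies only the upper bound, and one must produce the reverse inequality. The cleanest route is the observation above that $B/\cL'B$ is a complete intersection and hence enjoys the \WLP\ unconditionally in characteristic zero, so that reducing by the one extra general linear form in $\cL\setminus\cL'$ behaves as predicted; the delicate point to get right is simply that $B/\cL'B$ really is a (graded) complete intersection in $2\ell$ variables — this is exactly the content of Proposition~\ref{prop:improper inters}(a) together with the tensor-product structure of Theorem~\ref{thm:join as tensor product} and the resolution \eqref{MFR}, since for $r=2$ each factor $S/I_{(j)}$ becomes a complete intersection of two forms. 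Once that is in hand the rest is bookkeeping identical to the end of the proof of Theorem~\ref{thm:dim formula}.
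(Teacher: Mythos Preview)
Your proposal is correct and follows essentially the same route as the paper: both reduce everything to Proposition~\ref{prop:Froeberg gives main conj}, invoke Anick for (b), and handle (c) via the triviality of Fr\"oberg for linear forms (which you unpack directly). For (a) with $2\ell=n+1$ there is a cosmetic difference: the paper verifies Fr\"oberg for $n+1$ generic forms in $S$ by taking $n$ of them as a complete intersection in $n$ variables and using its \SLP\ to control the $(n+1)$-st, whereas you verify the $1$-WLP of the complete intersection $B/\cL'B$ in $2\ell=n+1$ variables before cutting by the single extra linear form---but both arguments rest on the same theorem (generic complete intersections have the \SLP\ in characteristic zero) applied in rings that are linked by Lemma~\ref{sop}, so they are two phrasings of the same step.
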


\begin{proof}
We use Proposition \ref{prop:Froeberg gives main conj}. 
Fr\"oberg's Conjecture is true for forms in at most three variables by a result of Anick \cite{anick}. This gives (b). The conjecture also holds for ideals generated by general linear forms, and thus (c) follows. 

Turning to  (a), by Theorem \ref{thm:dim formula} it suffices to consider the case where $2 \ell = n + 1$. Then $I$ is generated by $n+1$ general forms in $n$ variables. For such ideals Fr\"oberg's Conjecture is true because complete intersections of general forms have the \SLP\ (see \cite{stanley, watanabe, RRR, sekiguchi} or \cite{HP}).  
\end{proof}

\begin{rem}
With some work, part (a) of Theorem \ref{thm:dim when froeberg} can be shown to be equivalent to Theorem 5.1 of \cite{CCG:1}. There, however, all the cases where $\sigma_\ell(\X_{n-1,\lambda})$ fills its ambient space are enumerated.
\end{rem}

We now begin working out more explicit formulas for some particular partitions as consequences of Proposition \ref{prop:Froeberg gives main conj}. First we consider balanced partitions. 

\begin{thm} 
      \label{thm:balanced}
Consider a balanced partition $\lambda  =  [\frac{d}{2},\frac{d}{2}]$ and fix $n \geq 3$. Assume that Fr\"oberg's Conjecture holds for the polynomial ring $S = \k[x_1,\ldots,x_n]$. 
Put
\[
\ell_0 = \frac{1}{2} \left [ \binom{\frac{d}{2} + n-1}{n-1} + \frac{1}{2} - \sqrt{\left [ \binom{\frac{d}{2} + n-1}{n-1} + \frac{1}{2} \right ]^2 - 2 \binom{d+n-1}{n-1}} \ \right ].
\]
Then
\begin{multline*}
\dim \sigma_\ell(\X_{n-1, [\frac{d}{2},\frac{d}{2}]})
 =  \begin{cases}
\ell \cdot \dim \X_{n-1,\lambda} + \ell-1 - 2 \ell (\ell - 1)  <  \binom{d+n-1}{n-1} - 1 & \text{ if } 2 \le \ell < \ell_0 \\[2ex]
\binom{d+n-1}{n-1} - 1 & \text{ if } \ell_0 \le \ell.
\end{cases}
\end{multline*}
In particular, the secant variety $\sigma_\ell(\X_{n-1,\lambda})$ is defective if and only if it does not fill its ambient space.
Furthermore, the defect is
\begin{align*}
\delta_\ell & =  \begin{cases}
2 \ell (\ell - 1) & \text{ if } 2 \le \ell \le {\displaystyle \frac{\binom{d + n-1}{n-1}}{2 \binom{\frac{d}{2} + n-1}{n-1} - 1}} \\[4ex]
\binom{d+n-1}{n-1} - 1 - \dim \sigma_\ell(\X_{n-1, [\frac{d}{2},\frac{d}{2}]})   & \text{ if } {\displaystyle \frac{\binom{d + n-1}{n-1}}{2 \binom{\frac{d}{2} + n-1}{n-1} - 1}} < \ell < \ell_0.
\end{cases}
\end{align*}
\end{thm}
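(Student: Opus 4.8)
The plan is to derive the whole statement from Fr\"oberg's Conjecture. By Lemma~\ref{lem:dim for r=2}, for $\lambda=[\frac{d}{2},\frac{d}{2}]$ we have $\dim\sigma_\ell(\X_{n-1,\lambda})=\binom{d+n-1}{n-1}-1-\dim_{\k}[S/I]_d$, where $I\subset S$ is generated by $2\ell$ generic forms, all of the \emph{same} degree $e:=\frac{d}{2}$. Hence, assuming Fr\"oberg's Conjecture for $S$, $\HS(S/I)=\big|\frac{(1-t^e)^{2\ell}}{(1-t)^n}\big|^+$. Writing $\sum_m c_m t^m=\frac{(1-t^e)^{2\ell}}{(1-t)^n}$, an expansion as in the proof of Theorem~\ref{thm:dim formula} gives $c_m=\binom{m+n-1}{n-1}-2\ell\binom{m-e+n-1}{n-1}$ for $0\le m<d$, and $c_d=\binom{d+n-1}{n-1}-2\ell\binom{e+n-1}{n-1}+\binom{2\ell}{2}$, which one checks equals $a_d(\ell,n,\lambda)$ of Definition~\ref{def:integers a-j} (cf.\ Remark~\ref{rem:simples a-j for r is 2}). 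Equivalently, one could quote Proposition~\ref{prop:Froeberg gives main conj} and Conjecture~\ref{conj:main conj} directly.

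Since $\dim_{\k}[S/I]_d$ is the coefficient of $t^d$ in $\big|\sum_m c_m t^m\big|^+$, the secant variety fills its ambient space if and only if $c_m\le0$ for some $m\le d$. For $m<e$ one has $c_m>0$; for $e\le m<d$ the ratio $\binom{m+n-1}{n-1}/\binom{m-e+n-1}{n-1}=\prod_{t=1}^{n-1}\frac{m+t}{m-e+t}$ is strictly decreasing in $m$, so among these indices $c_m\le0$ is easiest to satisfy at $m=d-1$. Thus $\sigma_\ell(\X_{n-1,\lambda})$ fills its ambient space iff $c_{d-1}\le0$ or $c_d\le0$. Now $c_d=2\ell^2-(2\binom{e+n-1}{n-1}+1)\ell+\binom{d+n-1}{n-1}$ as a quadratic in $\ell$; its discriminant equals $4\big(\binom{e+n-1}{n-1}^2+\binom{e+n-1}{n-1}-2\binom{d+n-1}{n-1}\big)+1\ge1>0$, because the surjective product map from ordered pairs of degree-$e$ monomials to degree-$d$ monomials is at least $2$-to-$1$ off the $\binom{e+n-1}{n-1}$ squares, giving $\binom{e+n-1}{n-1}^2\ge2\binom{d+n-1}{n-1}-\binom{e+n-1}{n-1}$. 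Hence $c_d$ has two positive real roots $\ell_0<\ell_1$, the smaller being precisely the $\ell_0$ in the statement, and $c_d\le0\iff\ell_0\le\ell\le\ell_1$. Setting $\rho:=\binom{2e+n-2}{n-1}/\binom{e+n-2}{n-1}$, one has $c_{d-1}\le0\iff\ell\ge\rho/2$. Since $\rho\le\binom{e+n-1}{n-1}$ (monotonicity of the ratio above) and $\ell_1\ge\frac12\binom{e+n-1}{n-1}$, we get $\rho/2\le\ell_1$; combined with $\ell_0\le\rho/2$ (next paragraph) this yields $\{\ell:c_{d-1}\le0\text{ or }c_d\le0\}=[\ell_0,\infty)$, i.e.\ $\sigma_\ell(\X_{n-1,\lambda})$ fills its ambient space iff $\ell\ge\ell_0$.

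The crux is the inequality $\ell_0\le\rho/2$. As $\rho\le\binom{e+n-1}{n-1}$ forces $\rho/2$ to lie to the left of the vertex $\frac{2\binom{e+n-1}{n-1}+1}{4}$ of the parabola $c_d(\ell)$, this is equivalent to $c_d(\rho/2)\le0$. Clearing denominators, writing $B_m:=\binom{m+n-1}{n-1}$ and using $B_m/B_{m-1}=\frac{m+n-1}{m}$, this reduces to the purely binomial inequality $\binom{2e+n-2}{n-1}\le\big(\binom{e+n-2}{n-2}+1\big)\binom{e+n-2}{n-1}$. I would prove it by writing $\binom{2e+n-2}{n-1}/\binom{e+n-2}{n-1}=\prod_{j=0}^{n-2}\frac{2e+j}{e+j}$, clearing denominators, and checking that the resulting polynomial in $e$ is divisible by $e(e-1)$ with the complementary factor positive for $e\ge1$ (for instance it is $e(e-1)\ge0$ when $n=3$ and $e(e-1)(e+6)\ge0$ when $n=4$). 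Verifying this binomial inequality uniformly in $n$ and $e\ge1$ is the main technical obstacle; everything else is bookkeeping.

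Finally, when $\sigma_\ell(\X_{n-1,\lambda})$ does not fill its ambient space — equivalently $2\le\ell<\ell_0$, which forces $c_d=a_d>0$ — all $c_m>0$ for $m\le d$, so $\dim_{\k}[S/I]_d=a_d$ and $\dim\sigma_\ell(\X_{n-1,\lambda})=\binom{d+n-1}{n-1}-1-a_d$. Substituting the formula for $a_d$ and $\dim\X_{n-1,\lambda}=2\binom{e+n-1}{n-1}-2$ gives $\dim\sigma_\ell(\X_{n-1,\lambda})=\ell\cdot\dim\X_{n-1,\lambda}+\ell-1-2\ell(\ell-1)$, and the strict inequality with $\binom{d+n-1}{n-1}-1$ is exactly $a_d>0$. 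In particular $\dim\sigma_\ell(\X_{n-1,\lambda})<\ell\cdot\dim\X_{n-1,\lambda}+\ell-1$ as well, so $\sigma_\ell(\X_{n-1,\lambda})$ is defective precisely when it does not fill its ambient space. The defect equals $\expd(\sigma_\ell(\X_{n-1,\lambda}))-\dim\sigma_\ell(\X_{n-1,\lambda})$, and $\expd(\sigma_\ell(\X_{n-1,\lambda}))=\ell\cdot\dim\X_{n-1,\lambda}+\ell-1$ iff $\ell\big(2\binom{e+n-1}{n-1}-1\big)\le\binom{d+n-1}{n-1}$, i.e.\ $\ell\le\binom{d+n-1}{n-1}\big/\big(2\binom{e+n-1}{n-1}-1\big)$, in which case $\delta_\ell=2\ell(\ell-1)$; in the complementary range $\delta_\ell=\binom{d+n-1}{n-1}-1-\dim\sigma_\ell(\X_{n-1,\lambda})$.
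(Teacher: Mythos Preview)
Your argument has a genuine gap that you yourself flag: the inequality $\ell_0\le\rho/2$, which you reduce to $\binom{2e+n-2}{n-1}\le\bigl(\binom{e+n-2}{n-2}+1\bigr)\binom{e+n-2}{n-1}$, is never actually proved for general $n$ and $e$. You verify it for $n=3,4$ and then write ``Verifying this binomial inequality uniformly in $n$ and $e\ge1$ is the main technical obstacle.'' As it stands, the proof is incomplete.

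The paper sidesteps this obstacle entirely by using the \emph{recursive} form of Fr\"oberg's Conjecture (Conjecture~\ref{conj:Froeberg recursive}) rather than the closed $|\cdot|^+$ form you chose. For $\ell\ge n/2$ it sets $t=2\ell-n$, lets $\mathfrak a_i$ be the ideal of $n+i$ general forms of degree $k=d/2$, and applies the recursion $\dim_{\k}[S/\mathfrak a_{i+1}]_d=\max\{0,\dim_{\k}[S/\mathfrak a_i]_d-\dim_{\k}[S/\mathfrak a_i]_k\}$ to step from the complete intersection $\mathfrak a_0$ to $I=\mathfrak a_t$. Because $d=2k$, only the two degrees $k$ and $d$ enter: $\dim_{\k}[S/\mathfrak a_i]_k=\binom{k+n-1}{n-1}-n-i$ is explicit, and $\dim_{\k}[S/\mathfrak a_0]_d$ is read off from the Koszul resolution. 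Telescoping yields directly $\dim_{\k}[S/I]_d=\max\bigl\{0,\ N-2\ell\binom{k+n-1}{n-1}+2\ell^2-\ell\bigr\}$, which is exactly the quadratic in $\ell$ whose smaller root is $\ell_0$. No intermediate degree $e<m<d$ ever appears, so the comparison of $c_{d-1}$ with $c_d$ --- and with it your binomial inequality --- simply does not arise.

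Your monomial-counting argument for the positivity of the discriminant is clean and is an alternative to the paper's induction on $n$; everything from ``Finally'' onward is fine and matches the paper. The fix is not to prove your binomial inequality but to switch to the recursive Fr\"oberg formulation, which makes it unnecessary.
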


\begin{proof}
Put $N = \binom{d+n-1}{n-1}$ and recall that
\[
\dim \X_{n-1,\lambda} = 2 \cdot \binom{\frac{d}{2}+n-1}{n-1}   -2 .
\]
Thus, the expected dimension of $\sigma_\ell (\X_{n-1,\lambda})$ is
\begin{align*}
\hbox{exp.dim } \sigma_\ell (\X_{n-1,\lambda}) &  =   \min \{ N-1,  \ell \cdot \dim \X_{n-1,\lambda} + (\ell -1) \} \\[1ex]
& = \min \left \{ N-1\ , \  2 \ell \cdot \binom{\frac{d}{2}+n-1}{n-1}  - \ell -1 \right \}.
\end{align*}
In particular,
\begin{equation}
   \label{eq:est exp dim}
\hbox{exp.dim } \sigma_\ell (\X_{n-1,\lambda}) = \ell \cdot \dim \X_{n-1,\lambda} + (\ell -1) \text{ if and only if } 2 \le \ell \le \frac{N}{2 \binom{\frac{d}{2} + n-1}{n-1} - 1}.
\end{equation}

We now will consider various ranges for the value of $\ell$ and use Lemma \ref{lem:dim for r=2}. For the partition $\lambda$, the ideal $I$ is generated by $2 \ell$ general forms of degree $\frac{d}{2}$. Instead of applying Proposition \ref{prop:Froeberg gives main conj} directly, it is more convenient to use the recursive approach (see Conjecture \ref{conj:Froeberg recursive}). 

Assume first $2 \le \ell < \frac{n}{2}$. Then Theorem  \ref{thm:dim formula} gives that $\sigma_\ell(\X_{n-1,\lambda})$ does not fill its ambient space and has dimension 
\begin{align*}
\dim \sigma_\ell(\X_{n-1,\lambda}) & = \ell \cdot \dim \X_{n-1,\lambda} + (\ell-1) - 2 \binom{\ell}{2} - \ell (\ell -1) \\[1ex]
& = \ell \cdot \dim \X_{n-1,\lambda} + (\ell-1) - 2 \ell (\ell -1).
\end{align*}
This proves the statement if $\ell < \frac{n}{2}$.

Assume now $\frac{n}{2} \le \ell$. In order to simplify notation, set $k = \frac{d}{2}$ and $t = 2 \ell - n \ge 0$. 

In this range of $\ell$, $S/I$ is artinian and
\[
\dim_{\k} [S/I]_k = \max \left \{0, \; \binom{k+n-1}{n-1} - 2 \ell \right \}.
\]
Hence $[S/I]_k = 0$ if $2 \ell \ge \binom{k+n-1}{n-1}$, which implies $[S/I]_d = 0$. It follows that $\sigma_\ell(\X_{n-1,\lambda})$ fills $\PP^{N-1}$ for such $\ell$.

We are left to consider $\ell$ such that $\frac{n}{2} \le \ell < \frac{1}{2} \binom{k+n-1}{n-1}$. Notice that this forces $k \ge 2$, that is, $d \ge 4$.

For $i = 0,1,\ldots,t = 2 \ell -n$, let $\mathfrak a_i$ be the ideal generated by $n+i$ general forms of degree $k$.
Observe that $\mathfrak a_0$ is a complete intersection and $I = \mathfrak a_{t}$.
Notice that, for all $i$,
\[
\dim_{\k} [S/\mathfrak a_i]_k  = \binom{k+n-1}{n-1} - n-i = \dim_{\k} [S/\mathfrak a_0]_k - i.
\]
The minimal free resolution of $S/\mathfrak a_0$ has the form
\[
\dots
\rightarrow
S(-d)^{\binom{n}{2}} \\
\rightarrow
S(-k)^n \\
\rightarrow S \rightarrow S/\mathfrak a_0 \rightarrow 0,
\]
where we only display the terms that are non-trivial in degree $d$. This shows
\[
\dim_{\k}  [S/\mathfrak a_0]_d = N -  n \cdot \binom{k+n-1}{n-1}  + \binom{n}{2}.
\]

Fr\"oberg's Conjecture \ref{conj:Froeberg recursive} predicts, for all $i$,
\[
\dim_{\k} [S/\mathfrak a_{i+1}]_d = \max \{ \dim_{\k} [S/\mathfrak a_i]_d - \dim_{\k} [S/\mathfrak a_i]_k, \ 0 \}.
\]
Hence, we get
\begin{align*}
\dim_{\k} [S/I]_d & =  \max \left  \{  0, \ \dim_\k [S/\mathfrak a_0]_d - t \cdot \dim_\k [S/\mathfrak a_0]_k + \binom{t}{2} \right \} \\[1ex]
& = \max \left \{ 0\ , \  \left [ N -  n \cdot \binom{k+n-1}{n-1}  + \binom{n}{2} \right ] - t \left [ \binom{k+n-1}{n-1} - n \right ] + \binom{t}{2} \right \} \\[1ex]
& =  \max \left \{ 0 \ , \  N - 2 \ell \cdot \binom{k+n-1}{n-1} + \binom{n}{2} + t n + \binom{t}{2}      \right \} \\[1ex]
& = \max \left \{ 0 \ , \  N - 2 \ell \cdot \binom{k+n-1}{n-1} + 2 \ell^2 - \ell \right \}.
\end{align*}
It follows that
\begin{align*}
\dim \sigma_\ell(\X_{n-1,\lambda}) & = \min \left  \{ N  -1 \ , \   2 \ell \binom{k+n-1}{n-1} + \ell-1 - 2 \ell^2 \right \}
\end{align*}
(note that we subtracted 1 from the dimension of the component of the ideal). 
Therefore, for $\ell < \frac{1}{2} \binom{k+n-1}{n-1}$, the variety $\sigma_\ell(\X_{n-1,\lambda})$ fills $\PP^{N-1}$ if and only if
\[
N \le  2 \ell \binom{k+n-1}{n-1} + \ell - 2 \ell^2,
\]
which means
\[
\ell \ge \frac{1}{2} \left [ \binom{k+ n-1}{n-1} + \frac{1}{2} - \frac{1}{2} \sqrt{\left [ \binom{k + n-1}{n-1} + \frac{1}{2} \right ]^2 - 2 N} \ \right ] = \ell_0.
\]
An induction on $n \ge 2$ shows that  the radicand is at least $\frac{1}{4}$, which also implies 
\[
\left \lceil \frac{1}{2} \binom{k+n-1}{n-1} \right \rceil \ge \ell_0. 
\]

We conclude that $\sigma_\ell(\X_{n-1,\lambda})$ fills its ambient space if and only if $\ell \ge \ell_0$ and that 
\begin{align*}
\dim \sigma_\ell(\X_{n-1,\lambda}) & =  2 \ell \binom{k+n-1}{n-1} + \ell-1 - 2 \ell^2 \\
& = \ell \cdot \dim \X_{n-1,\lambda} + \ell-1 - 2 \ell (\ell - 1)
\end{align*}
if $\frac{n}{2} \le \ell \le \ell_0$. This concludes finding the dimension of $\sigma_\ell(\X_{n-1,\lambda})$. Combining the result with Observation \eqref{eq:est exp dim} proves the assertion on the defect.
\end{proof}

Second, we consider the most unbalanced partition of $d$ into two parts. Notice that the following result  is true unconditionally. Since the partition $[1, 1]$ has been dealt with in the previous result (see Theorem \ref{thm:dim when froeberg}), there is no harm in assuming $d \ge 3$ in the next statement. 
The fact that $\sigma_\ell (\X_{n-1,[d-1, 1]})$ fills its ambient space if and only if $\ell \geq \ell_0$ was  shown in \cite[Proposition 5.6]{CCG:1}. The dimension of $\sigma_\ell (\X_{n-1,[d-1, 1]})$ can also be found in \cite[Proposition 4.4]{BCGI}. We give a new proof of these facts using our methods.

Note that, in the following theorem, the formula for $\dim(\X_{n-1,[d-1,1]})$ is simply the specialization of the formula of Conjecture \ref{conj:main conj} to the case at hand.

\begin{thm}
    \label{thm:case d-1, 1}
Assume $\lambda = [d-1, 1]$, where $d \ge 3$.  Put
\[
\ell_0 = 
\min \left \{ \ell \ge \frac{n}{2} \; | \; \ell \in \ZZ \text{ and } \binom{d - \ell + n - 1}{d} \le \ell (n- \ell) \right \}.
\]
Then $\ell_0 \le n-1$ and
\[
\dim \sigma_{\ell}  (\X_{n-1, [d-1, 1]})  =
\begin{cases}
\binom{d+n-1}{n-1} -   \binom{d+n- \ell - 1}{d}  + \ell (n- \ell) - 1 <  \binom{d+n-1}{n-1}  - 1 & \text{ if } 2 \le \ell < \ell_0 \\[.5ex]
\binom{d+n-1}{n-1}  - 1 & \text{ if }  \ell_0 \le \ell.
\end{cases}
\]
Moreover, the secant variety $\sigma_\ell(\X_{n-1,\lambda})$ is defective if and only if it does not fill its ambient space. In this case, the defect is 
\begin{align*}
\delta_\ell & =  \begin{cases}
\binom{d+n- \ell - 1}{d}  +    \ell \cdot  \binom{d+n-2}{n-1} - \binom{d+n-1}{n-1}+ \ell^2 \ge \ell^2 & \text{ if } 2 \le \ell \le {\displaystyle \frac{\binom{d + n-1}{n-1}}{\binom{d + n-2}{n-1} + n}} \\[4ex]
\binom{d+n-1}{n-1} - 1 - \dim \sigma_\ell(\X_{n-1, [\frac{d}{2},\frac{d}{2}]})   & \text{ if } {\displaystyle \frac{\binom{d + n-1}{n-1}}{\binom{d + n-2}{n-1} + n}} < \ell < \ell_0.
\end{cases}
\end{align*}
\end{thm}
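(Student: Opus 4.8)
The plan is to reduce to a Hilbert‑function computation via Lemma \ref{lem:dim for r=2} and then split on the size of $\ell$, the range $2\ell\le n+1$ being already covered by Theorem \ref{thm:dim when froeberg}(a) (whose conclusion, Conjecture \ref{conj:main conj}, specializes for $\lambda=[d-1,1]$ to exactly the dimension formula asserted here, via the binomial identity $\sum_{k=0}^{\ell}(-1)^k\binom{\ell}{k}\binom{d-k+n-1}{n-1}=\binom{d+n-\ell-1}{d}$ for $\ell\le n$, as used in Corollary \ref{Thm1.2c}). First I would set up the reduction: by Lemma \ref{lem:dim for r=2} with $k=1$, $\dim\sigma_\ell(\X_{n-1,[d-1,1]})=-1+\dim_{\k}[I]_d$, where $I\subset S$ is generated by $\ell$ generic linear forms $L_1,\dots,L_\ell$ and $\ell$ generic forms $F_1,\dots,F_\ell$ of degree $d-1$. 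If $\ell\ge n$ then $I$ contains $n$ generic linear forms, so $[S/I]_1=0$, hence $[S/I]_d=0$ and $\sigma_\ell$ fills its ambient space; this also gives $\ell_0\le n-1$, since substituting $\ell=n-1$ into the inequality defining $\ell_0$ yields $\binom{d}{d}=1\le (n-1)\cdot 1$, true for $n\ge 2$. So assume $2\le\ell\le n-1$: then $L_1,\dots,L_\ell$ is a regular sequence, $S/(L_1,\dots,L_\ell)\cong R:=\k[y_1,\dots,y_{n-\ell}]$, and the images $\bar F_1,\dots,\bar F_\ell$ are $\ell$ generic forms of degree $d-1$ in $R$, so $\dim_{\k}[S/I]_d=\dim_{\k}[R/(\bar F_1,\dots,\bar F_\ell)]_d$.

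The core is the claim
\[
\dim_{\k}[R/(\bar F_1,\dots,\bar F_\ell)]_d=\max\Big\{0,\ \binom{d+n-\ell-1}{d}-\ell(n-\ell)\Big\},
\]
which is the value predicted by Fr\"oberg's Conjecture \ref{conj:Froeberg} in degree $d$ (only the first two terms of $(1-t^{d-1})^{\ell}$ contribute to the coefficient of $t^d$ because $2(d-1)>d$ for $d\ge 3$). Since $(\bar F_1,\dots,\bar F_\ell)$ is generated in degree $d-1$, its degree‑$d$ piece is $R_1\cdot\langle\bar F_1,\dots,\bar F_\ell\rangle$, so the claim is equivalent to the multiplication map $\mu\colon R_1\otimes_{\k}\langle\bar F_1,\dots,\bar F_\ell\rangle\to R_d$ having maximal rank, the bound $\le\min\{\ell(n-\ell),\dim_{\k}R_d\}$ being automatic. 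For $2\ell\le n+1$ this follows from Theorem \ref{thm:dim when froeberg}(a) (and when moreover $2\ell\le n$ one can also see it by semicontinuity of rank using the monomial specialization $\bar F_i=y_i^{d-1}$, for which no degree‑$d$ monomial is divisible by two distinct $y_i^{d-1}$). For $2\ell\ge n+2$ I would instead verify Conjecture \ref{conj:WLP} for this $\lambda$: by the identification in the proof of Proposition \ref{prop:Froeberg gives main conj}, here $B/\cL' B$ is isomorphic to a \emph{generic complete intersection} $C$ of $\ell$ forms of degree $d-1$ in $\ell$ variables (since $d_2=1$), and $B/\cL B$ is obtained from $C$ by dividing out $2\ell-n$ general linear forms. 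A generic complete intersection has the Strong Lefschetz Property (\cite{stanley, watanabe, RRR, sekiguchi} or \cite{HP}); propagating this through the successive general hyperplane sections — i.e.\ showing $C$ has the $(2\ell-n)$‑WLP — is the genuine input, a known case of the Lefschetz/Fr\"oberg circle of results discussed in Section \ref{sec:r is 2}. Granting it, Lemmas \ref{sop} and \ref{lem:char k-WLP} (equivalently Theorem \ref{thm: gen dim formula}) give the displayed formula for $\dim_{\k}[S/I]_d$, and hence the stated $\dim\sigma_\ell$.

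Everything else is bookkeeping. Corollary \ref{dim sigma ell} gives $\dim\sigma_\ell(\X_{n-1,[d-1,1]})=\binom{d+n-1}{n-1}-1-\dim_{\k}[S/I]_d$, which is the first displayed case of the theorem when $\binom{d+n-\ell-1}{d}-\ell(n-\ell)>0$ and equals $\binom{d+n-1}{n-1}-1$ otherwise. Arranging that the ideal $I$ for the parameter $\ell$ contains the one for $\ell-1$ shows $\dim_{\k}[S/I]_d$ is non‑increasing in $\ell$, so once it vanishes it stays $0$; hence the threshold of the theorem is exactly $\ell_0=\min\{\ell\ge n/2:\binom{d+n-\ell-1}{d}\le\ell(n-\ell)\}$, and $\sigma_\ell$ fills its ambient space precisely for $\ell\ge\ell_0$. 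Finally, comparing with $\expd\,\sigma_\ell(\X_{n-1,[d-1,1]})=\min\{\binom{d+n-1}{n-1}-1,\ \ell\dim\X_{n-1,[d-1,1]}+\ell-1\}$ and substituting $\dim\X_{n-1,[d-1,1]}=\binom{d+n-2}{n-1}+n-2$, I would conclude that whenever $\sigma_\ell$ does not fill its ambient space the defect is $\binom{d+n-\ell-1}{d}-\ell(n-\ell)>0$ corrected by the term $\epsilon$ of Remark \ref{correctionExample} — so it is defective exactly then — and the two stated formulas for $\delta_\ell$ come out by expanding this according to which term attains the minimum in $\expd$.

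The main obstacle is the $(2\ell-n)$‑WLP of the generic complete intersection $C$ when $2\ell\ge n+2$, equivalently Fr\"oberg's Conjecture in degree $d$ for $\ell$ generic forms of degree $d-1$ in $n-\ell$ variables in the over‑determined range ($\ell>n-\ell$). When $2\ell-n=1$ a single general hyperplane section of a complete intersection is controlled directly by the SLP, which is why Theorem \ref{thm:dim when froeberg}(a) stops there; for $2\ell-n\ge 2$ the intermediate rings cease to be complete intersections, so one must invoke the established Lefschetz/Fr\"oberg results for ideals of generic forms. Every other step is either a trivial special case ($\ell\ge n$, or $2\ell\le n$ via the monomial degeneration), the SLP of generic complete intersections, or an elementary binomial‑coefficient manipulation.
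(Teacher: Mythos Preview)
Your reduction is the same as the paper's: via Lemma \ref{lem:dim for r=2} you arrive at $\ell$ generic forms $\bar F_1,\dots,\bar F_\ell$ of degree $d-1$ in $R=\k[y_1,\dots,y_{n-\ell}]$, and you correctly isolate the core claim as the maximal rank of the multiplication map $\mu\colon R_1\otimes\langle\bar F_1,\dots,\bar F_\ell\rangle\to R_d$, i.e.\ that the $\bar F_i$ have no unexpected \emph{linear} syzygies. The difference is in how this is proved.

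The paper does not go through the WLP-Conjecture at all. It observes that only the linear syzygies of $(\bar F_1,\dots,\bar F_\ell)$ affect the degree-$d$ component, and then invokes the theorem of Hochster--Laksov \cite{HL}: for a generic subspace of $[R]_{d-1}$ the number of linear syzygies is the minimum possible, so $\mu$ has maximal rank. This is a single unconditional reference valid for all $\ell<n$ simultaneously.

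Your proposed route for $2\ell\ge n+2$ has a genuine gap. You reduce to showing the $(2\ell-n)$-WLP of the generic complete intersection $C$ of $\ell$ forms of degree $d-1$ in $\ell$ variables, and then appeal to ``established Lefschetz/Fr\"oberg results.'' But the SLP of a complete intersection gives only the $1$-WLP; it does not yield the $k$-WLP for $k\ge2$, because after the first hyperplane section the algebra is no longer a complete intersection and no general SLP result applies to it. Worse, the $(2\ell-n)$-WLP of $C$ is literally the statement that $\ell$ generic forms of degree $d-1$ in $\ell-(2\ell-n)=n-\ell$ variables satisfy Fr\"oberg's prediction up through degree $d$ --- exactly the claim you set out to prove --- so the detour through $B/\cL' B$ is circular. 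Fr\"oberg's conjecture is not known in this generality; what \emph{is} known, and what rescues the argument, is precisely the Hochster--Laksov theorem, which handles the single degree $d=(d-1)+1$ needed here. Replace your WLP argument for $2\ell\ge n+2$ with a direct citation of \cite{HL}, and the proof goes through (indeed, you can then drop the case split on $\ell$ entirely). The remaining bookkeeping on $\ell_0$ and the defect is fine.
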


\begin{proof} 
Again we use Lemma \ref{lem:dim for r=2}. This time
the ideal $I = I_{(1)} + \cdots +  I_{(\ell)}$ contains $\ell$ generic linear forms. Thus, $I = (x_1,\ldots,x_n)$ if $\ell \ge n$, and we are done in this case. If $\ell < n$, then we get
\[
A = S/I \cong \k[x_1,\ldots,x_{n-\ell}]/(G_1,\ldots,G_{\ell}),
\]
where each $G_j$ is a generic form of degree $d-1$ in $T = \k[x_1,\ldots,x_{n-\ell}]$. It follows that
\begin{align*}
\hspace{2em}&\hspace{-2em} \dim \sigma_{\ell} (\X_{n-1, \lambda})\\
& = -1 + \dim_{\k} [I]_d \\
& =  -1 + \dim_{\k} [S]_d - \dim_{\k} [A]_d \\
& =  -1 + \dim_{\k} [S]_d - \dim_{\k} [T]_d + \dim_{\k} [(G_1,\ldots,G_{\ell})]_d \\
& =  -1 + \binom{d+n-1}{n-1} - \binom{d+n- \ell - 1}{d} + \min \left \{\binom{d+n- \ell - 1}{d}, \ell (n- \ell) \right \} \\
& =  -1 + \binom{d+n-1}{n-1} -   \max \left \{0, \binom{d+n- \ell - 1}{d} - \ell (n- \ell) \right \}.
\end{align*}
In order to see the penultimate equality consider the graded minimal free resolution of $(G_1,\ldots,G_{\ell})$. Its beginning is of the form
\[
\cdots \to F \to T^{\ell} (-d+1) \to (G_1,\ldots,G_{\ell}) \to 0,
\]
where $F$ is a graded free $T$-module. It follows that  to compute $\dim_{\k} [(G_1,\ldots,G_{\ell})]_d$ it is enough to know the number of linearly independent linear syzygies of the ideal $(G_1,\ldots,G_{\ell})$. Since the forms $G_j$ are generic this number is the least possible by the main result in \cite{HL}, and the dimension formula follows. It shows that $\sigma_{\ell} (\X_{n-1, \lambda})$ fills its ambient space if and only if
\[
\binom{d+n - \ell - 1}{d} \le \ell (n - \ell).
\]
If $\ell = n-1$, this is true. Hence, the number $\ell_0$ is well defined and satisfies $\ell_0 \le n - 1$. Furthermore, if $\sigma_\ell(\X_{n-1,\lambda})$ 
fills its ambient space, then so does $\sigma_{\ell + 1} (\X_{n-1,\lambda})$.
This completes the argument for finding the dimension of $\sigma_{\ell} (\X_{n-1, \lambda})$.

It remains to discuss the defect of $\sigma_{\ell} (\X_{n-1, \lambda})$.
Note that
\[
\dim \X_{n-1, \lambda} = \binom{d+n-2}{n-1} + n -1,
\]
and so the expected dimension of $\sigma_{\ell} (\X_{n-1, \lambda})$ is
\[
\hbox{exp.dim } \sigma_\ell (\X_{n-1,\lambda}) = \min \left \{\binom{d+n-1}{n-1} - 1, \; \ell \cdot  \binom{d+n-2}{d-1} + \ell n - 1. \right \}.
\]
We need to show that $\sigma_{\ell} (\X_{n-1, \lambda})$ is defective if and only if $2 \le \ell < \ell_0 < n$. Since for such $\ell$ the variety $\sigma_{\ell} (\X_{n-1, \lambda})$ does not fill its ambient space, this is equivalent to proving
\[
\ell \cdot  \binom{d+n-2}{d-1} + \ell n - 1 > \binom{d+n-1}{n-1} -   \binom{d+n- \ell - 1}{d}  + \ell (n- \ell) - 1,
\]
that is,
\begin{equation}
    \label{eq:defect lin factor}
\binom{d+n- \ell - 1}{d} - \binom{d+n-1}{n-1} +    \ell \cdot  \binom{d+n-2}{d-1} > - \ell^2.
\end{equation}
Notice that
\begin{align*}
\binom{d+n- \ell - 1}{d} - \binom{d+n-1}{n-1} +    \ell \cdot  \binom{d+n-2}{d-1} &
= \sum_{j=2}^{\ell} (-1)^j \binom{\ell}{j} \binom{d-j+n-1}{n-1} \\
& = \dim_{\k} [\Syz]_d,
\end{align*}
where $\Syz$ is the first syzygy module of a complete intersection in $S$ that is generated by $\ell < n$ linear forms (see Remark \ref{rem:syzygy interpretation}). This shows that the left-hand side in Inequality \eqref{eq:defect lin factor} is non-negative, and hence establishes that this inequality is true.

In order to determine the positive defect, it is enough to observe that
\[
\hbox{exp.dim } \sigma_\ell (\X_{n-1,\lambda}) \le  \binom{d+n-1}{n-1} - 1
\]
if and only if
\[
\ell \le {\displaystyle \frac{\binom{d + n-1}{n-1}}{\binom{d + n-2}{n-1} + n}}.
\]
This concludes the calculation of the defect.
\end{proof}

Since we  discussed the case $\ell = 2$ in Section \ref{sec:secant lines}, we  illustrate the last result in the case $\ell = 3$. 

\begin{cor}
    \label{cor:l=3 r=2}
    Consider the secant plane variety $\sigma_{3} (\X_{n-1, [d-1, 1]})$.
\begin{itemize}
\item[(a)] $\sigma_{3} (\X_{n-1, [d-1, 1]})$ fills its ambient space if and only if
    \begin{itemize}
    \item[(i)] $n \in \{3, 4\}$ and $d \ge 2$, \quad or 
    \item[(ii)] $n = 5$ and $d \in\{ 2, 3, 4, 5\}$, \quad or 
    \item[(iii)] $n = 6$ and $d = 2$.  
    \end{itemize}

\item[(b)] In all other cases $\sigma_{3} (\X_{n-1, [d-1, 1]})$ is defective with dimension
\[
\dim \sigma_{3} (\X_{n-1, [d-1, 1]}) = \begin{cases} 
6 n - 16 & \text{ if } d = 2 \\[.5ex]
{\ds \binom{d+n-1}{n-1} - \binom{d+n-4}{d} + 3n -10 } & \text{ if } d \ge 3. 
\end{cases}
\]
\end{itemize}
\end{cor}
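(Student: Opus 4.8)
The plan is to derive everything from Theorem \ref{thm:case d-1, 1} by specializing to $\ell = 3$, and then to handle separately the case $d = 2$ (i.e.\ $\lambda = [1,1]$), which that theorem explicitly excludes. First I would set $\ell = 3$ in Theorem \ref{thm:case d-1, 1}: there the filling criterion $\binom{d-\ell+n-1}{d} \le \ell(n-\ell)$ becomes
\[
\binom{d+n-4}{d} \le 3(n-3),
\]
and, when this inequality fails, Theorem \ref{thm:case d-1, 1} gives
\[
\dim \sigma_{3}(\X_{n-1,[d-1,1]}) = \binom{d+n-1}{n-1} - \binom{d+n-4}{d} + 3(n-3) - 1 = \binom{d+n-1}{n-1} - \binom{d+n-4}{d} + 3n - 10,
\]
together with the assertion that $\sigma_3$ is then defective. (When $n = 3$ we have $\ell = n$, so $\sigma_3$ automatically fills, consistent with $\binom{d-1}{d} = 0 \le 0$.) This already disposes of all $d \ge 3$.

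Next I would treat $d = 2$, i.e.\ $\lambda = [1,1]$, by a direct computation. By Corollary \ref{dim sigma ell} and Proposition \ref{nicer} one has $\dim \sigma_3(\X_{n-1,[1,1]}) = -1 + \dim_{\k}[I]_2$, where $I$ is generated by $6$ generic linear forms of $S$. If $n \le 6$ these span $[S]_1$, so $I = (x_1,\ldots,x_n)$ and $\sigma_3(\X_{n-1,[1,1]})$ fills $\PP^{N-1}$; if $n \ge 7$ then $S/I \cong \k[x_1,\ldots,x_{n-6}]$, whence $\dim_{\k}[I]_2 = \binom{n+1}{2} - \binom{n-5}{2} = 6n-15$ and $\dim \sigma_3(\X_{n-1,[1,1]}) = 6n-16$. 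Since $\dim \X_{n-1,[1,1]} = 2n-2$, the expected dimension is $\min\{\binom{n+1}{2}-1,\ 6n-4\}$, and one checks $6n-16 < 6n-4$ and, using $(n-5)(n-6)>0$ for $n \ge 7$, also $6n-16 < \binom{n+1}{2}-1$; hence $\sigma_3(\X_{n-1,[1,1]})$ is defective for $n \ge 7$. This gives the $d=2$ line of part (b).

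Finally I would read off part (a) by resolving, for $d \ge 3$, exactly when $\binom{d+n-4}{d} \le 3(n-3)$ holds: for $n=3$ the left side is $0$, so it always holds; for $n=4$ it reads $1 \le 3$, always; for $n=5$ it reads $d+1 \le 6$, i.e.\ $d \le 5$; and for $n \ge 6$, since $\binom{d+n-4}{d}=\binom{d+n-4}{n-4}$ is nondecreasing in $d$, for $d \ge 3$ one has $\binom{d+n-4}{n-4} \ge \binom{n-1}{3} = \tfrac{(n-1)(n-2)}{6}(n-3) > 3(n-3)$, because $(n-1)(n-2) \ge 20 > 18$, so it never holds. Combining this with the $d=2$ analysis (filling exactly for $3 \le n \le 6$) shows that $\sigma_3(\X_{n-1,[d-1,1]})$ fills its ambient space precisely in cases (a)(i)--(iii), and in every other case its dimension is given by the formula in (b). The only genuinely delicate point is recognizing that $d=2$ must be handled outside Theorem \ref{thm:case d-1, 1}; everything else is bookkeeping with the binomial inequality above, the mild care needed being the monotonicity argument that forces $n \ge 6$, $d\ge 3$ to be non-filling.
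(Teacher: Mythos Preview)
Your proof is correct and follows essentially the same strategy as the paper's: specialize Theorem~\ref{thm:case d-1, 1} to $\ell=3$ for $d\ge 3$, and treat $d=2$ separately by the direct linear-forms computation. The only cosmetic differences are that the paper cites Theorem~\ref{thm:dim formula} to exclude $n\ge 6$ (for $d\ge 3$) and Theorem~\ref{thm:balanced} for defectivity when $d=2$, whereas you verify both by the elementary binomial/expected-dimension inequalities; your route is a bit more self-contained, and the paper's is a bit shorter.
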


\begin{proof}

Consider first $d = 2$. Then we can apply Theorem \ref{thm:balanced}. However, it is easier to argue directly. In this case, the ideal $I$ in Lemma \ref{lem:dim for r=2} is generated by $2 \ell = 6$ linear forms. Hence $[I]_2 = [S]_2$ if and only if $n \le 6$. If $n \ge 7$, then we get 
\begin{eqnarray*}
\dim \sigma_{3} (\X_{n-1, ([1, 1]}) = \dim_{\k} [I]_2 - 1 & = & \binom{n+1}{2} - \binom{n-5}{2} - 1 \\[1ex]
& = & 6n - 16. 
\end{eqnarray*} 
Moreover, $\sigma_{2} (\X_{n-1, [1, 1]})$ is defective if it does not fill its ambient space by Theorem \ref{thm:balanced}. 

Assume now $d \ge 3$. Then Theorem \ref{thm:case d-1, 1} shows that $\sigma_{3} (\X_{n-1, [d-1, 1]})$ fills its ambient space if and only if 
\[
\frac{n}{2} \le \ell_0 \le 3.  
\]
Furthermore, $\sigma_{3} (\X_{n-1, [d-1, 1]})$ does not fill its ambient space if $n \ge 6$ by Theorem \ref{thm:dim formula}. Hence, it remains to consider the cases $n \in \{3, 4, 5\}$. 

If $n = 5$, this forces $\ell_0 = 3$, which means 
\[
d+1 \le 6 \quad \text{ and } \quad \binom{d+2}{2} > 6, 
\]
that is, $d \in \{3, 4, 5\}$. 

Since $\ell_0 \leq n-1$, we get $\ell_0 \le 3$ if $n \le 4$, and thus $\sigma_{3} (\X_{n-1, ([1, 1]})$ fills its ambient space. This shows Part~(a). Claim (b) follows by Theorem \ref{thm:case d-1, 1}. 
\end{proof}


\section{The variety of reducible forms}
     \label{sec:variety red forms} 

Every reducible form of degree $d$ in $n$ variables corresponds to a point of the variety 
\[
\X_{n-1, d} = \bigcup_{k=1}^{\lfloor \frac{d}{2} \rfloor} \X_{n-1, [d-k, k]}.
\]
  (Notice that this holds even for reducible forms with more than two factors.)  

Thus, we call $\X_{n-1, d}$ the {\em variety of reducible forms of degree $d$ in $n$ variables}. In this  section we study its secant varieties. This is based on the results on the secant varieties of the various $\X_{n-1, [d-k, k]}$, where $k$ varies between $1$ and $\lfloor \frac{d}{2} \rfloor$. 

\begin{rem}
     \label{rem:d is 2} 
The variety $\X_{n-1, d}$ is irreducible if and only if $d = 2$. In this case $\X_{n-1, 2} = \X_{n-1, [1, 1]}$ and 
\[
\dim \sigma_{\ell} (\X_{n-1, 2}) = \begin{cases} 
2 \ell (n - \ell) + \ell -1 & \text{if } 2\ell < n\\
\binom{n+1}{2} - 1 & \text{if } 2\ell \ge n. 
\end{cases}
\]  
Moreover, $\sigma_{\ell} (\X_{n-1, 2})$ is defective if and only if $2 \le \ell < \frac{n}{2}$. 
In this case, the defect is given by Theorem \ref{thm:balanced}. 
\end{rem}

We begin by determining the dimension of $\X_{n-1, d}$. The next result is an immediate consequence of 
the definition of $X_ {n - 1, d}$, of the second inequality of Corollary  \ref{cor:dim comparison}, and of formula \eqref{dim X}.

\begin{prop}
     \label{prop:dim red var} 
For all $d \ge 2$ and $n \ge 3$, 
\[
\dim \X_{n-1, d} = \dim \X_{n-1, [d-1, 1]} = \binom{d + n-2}{n-1} + n-2.      
\]
\end{prop}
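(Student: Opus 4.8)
The plan is to deduce Proposition \ref{prop:dim red var} directly from the dimension comparison already established in Corollary \ref{cor:dim comparison}, together with the explicit formula for $\dim \X_{n-1,\lambda}$ in Equation \eqref{dim X}. Recall that $\X_{n-1,d}$ is by definition the finite union $\bigcup_{k=1}^{\lfloor d/2\rfloor}\X_{n-1,[d-k,k]}$, so its dimension is the maximum of $\dim \X_{n-1,[d-k,k]}$ over $1\le k\le\lfloor d/2\rfloor$. (If $d=2$ the union has a single term, $\X_{n-1,[1,1]}$, so there is nothing to compare; this case is noted in Remark \ref{rem:d is 2} and is consistent with the claimed formula.)

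For $d\ge 3$, first I would observe that each partition $[d-k,k]$ with $k\ge 2$ satisfies $[d-k,k]\ne[d-1,1]$ and $[d-k,k]\ne[d_1,1,\dots,1]$ (the latter because $[d-k,k]$ has only two parts and its second part is $k\ge 2>1$), and moreover $[d-k,k]<[d-1,1]$ in the partial order of Lemma \ref{lem:compare binomials}, since the partial sums satisfy $d-k\le d-1$ with strict inequality for $k\ge 2$. Hence Corollary \ref{cor:dim comparison} (or Lemma \ref{lem:compare binomials} applied directly) gives $\dim \X_{n-1,[d-k,k]}<\dim \X_{n-1,[d-1,1]}$ for every $k$ with $2\le k\le\lfloor d/2\rfloor$, provided $n\ge 3$. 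Consequently the maximum over $k$ is attained at $k=1$, which yields
\[
\dim \X_{n-1,d}=\dim \X_{n-1,[d-1,1]}.
\]
Finally, the explicit value follows by plugging $\lambda=[d-1,1]$ into Equation \eqref{dim X}: $\dim \X_{n-1,[d-1,1]}=\binom{d-1+n-1}{n-1}+\binom{1+n-1}{n-1}-2=\binom{d+n-2}{n-1}+n-2$; wait, one must be careful here — the paper records this dimension elsewhere as $\binom{d+n-2}{n-1}+n-1$ (see Theorem \ref{thm:case d-1, 1}), so I would simply cite that computation rather than redo it, noting $\binom{n}{n-1}=n$ and hence $\binom{d+n-2}{n-1}+n-1$ after the arithmetic is carried out correctly.

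There is essentially no obstacle: the entire content has been front-loaded into Lemma \ref{lem:compare binomials} and Corollary \ref{cor:dim comparison}, and the proof amounts to recognizing that $[d-1,1]$ is the largest partition with two parts in the relevant partial order and that all partitions $[d-k,k]$ appearing in the union are two-part partitions dominated by it. The only point requiring the smallest care is the boundary case $d=2$ (where the union is a single variety and there is nothing to maximize) and the routine binomial identity $\binom{d+n-2}{n-1}+\binom{n}{n-1}-2=\binom{d+n-2}{n-1}+n-2$ versus the stated $+n-1$; I would reconcile this by quoting the formula for $\dim\X_{n-1,[d-1,1]}$ exactly as it appears in Theorem \ref{thm:case d-1, 1}, so that Proposition \ref{prop:dim red var} is stated and proved consistently with the rest of the paper.
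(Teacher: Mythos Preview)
Your approach is correct and is exactly the paper's: the paper simply says the result is an immediate consequence of Corollary~\ref{cor:dim comparison}, and you spell out precisely that reasoning.

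One remark on the arithmetic that confused you: your computation is right and the paper's explicit formula is off by one. From Equation~\eqref{dim X} with $\lambda=[d-1,1]$ one gets
\[
\dim \X_{n-1,[d-1,1]}=\binom{d+n-2}{n-1}+\binom{n}{n-1}-2=\binom{d+n-2}{n-1}+n-2,
\]
not $n-1$. (A quick check: $n=3$, $d=3$ gives $\dim\X_{2,[2,1]}=\binom{4}{2}+1=7$, which agrees with $\dim(\PP^5\times\PP^2)=7$.) The same typo appears in the proof of Theorem~\ref{thm:case d-1, 1}. So rather than reconciling by deferring to the paper's stated constant, you should trust your own arithmetic here.
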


Corollary \ref{cor:dim comparison} also gives  that $\X_{n-1, [d - 1, 1]}$ is the unique irreducible component of $\X_{n-1, d}$ that has the same dimension as $\X_{n-1, d}$. 

As we noted above, $\X_{n-1,d}$ is not irreducible as soon as $d > 2$.  Thus, in order to calculate the dimension of $\sigma_\ell(\X_{n-1,d})$ one must also consider the (embedded) $\ell$-joins of the irreducible components of $\X_{n-1,d}$.

Recall that if $\X_1, \cdots, \X_\ell$ are irreducible varieties in $\P^m$ (not necessarily distinct), then the {\it embedded join} of $\X_1, \cdots , \X_\ell$, denoted
$$
J(\X_1, \cdots , \X_\ell)
$$
is the Zariski closure of the union of all the linear spaces $\langle P_1, \ldots , P_\ell \rangle \subset \P^m$ where $P_i \in \X_i$.  If all the $\X_i = \X$ then this is nothing other than $\sigma_\ell(\X)$. Furthermore, for any (possibly reducible) variety $\X \subset \P^m$, the parameter count mentioned in the introduction gives 
\[
\dim \sigma_\ell(\X) \le \min \{m, \ \ell \cdot \dim \X + \ell -1\}, 
\]
and the right-hand side is called the \emph{expected dimension} of $ \sigma_\ell(\X)$.

The following Lemma shows that if $\X_1, \cdots, \X_\ell$ are any $\ell$ irreducible components of $\X_{n-1,d}$ and $2\ell \leq n$, then 
\[
\dim \sigma_\ell(\X_{n-1,[d-1,1]}) \geq \dim J(\X_1, \cdots , \X_\ell).
\]

\begin{lem}
    \label{prop:compare Hilb c.i.}
Consider integers $k_1, k_2,\ldots, k_{\ell} \in \{1,\ldots, \lfloor \frac{d}{2} \rfloor \}$, where $d \ge 2$. 
Let $I \subset S = \k[x_1,\ldots,x_n]$ be an ideal generated by $2 \ell$ general forms of degrees $k_1, k_2,\ldots, k_{\ell}, d- k_1, d -  k_2,\ldots, d- k_{\ell}$.  Let $J \subset S$ be an ideal generated by $\ell$ general linear forms 
and $\ell$ general forms of degree $d - 1$. If $2\ell \le n$, then, for all integers $j$, 
\[
\dim_{\k} [S/I]_j \ge \dim_{\k} [S/J]_j. 
\]
Moreover, if $k_i > 1$ for some $i \in \{1,\ldots,\ell\}$, then there is some $j$ such that this is a strict inequality. 
\end{lem}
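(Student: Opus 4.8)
The plan is to compare the two complete intersections term by term using the explicit Hilbert series formulas, exploiting the fact that $2\ell\le n$ guarantees both ideals are generated by regular sequences (so their Hilbert series are literally the rational functions, not the truncated ones).

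\medskip

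\textbf{Setup.} Since $2\ell\le n$, the $2\ell$ general forms generating $I$ form a regular sequence in $S$ (any $2\ell\le n$ general forms in $n$ variables do), and likewise the $2\ell$ generators of $J$. Hence by Remark \ref{HS}(a),(b),
\[
\HS(S/I)=\frac{\prod_{i=1}^{\ell}(1-t^{k_i})(1-t^{d-k_i})}{(1-t)^n},\qquad
\HS(S/J)=\frac{(1-t)^{\ell}(1-t^{d-1})^{\ell}}{(1-t)^n}.
\]
So it suffices to show that the difference $\HS(S/I)-\HS(S/J)$, as a power series in $t$, has all coefficients $\ge 0$, with at least one strictly positive coefficient when some $k_i>1$.

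\medskip

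\textbf{Key reduction.} Factor out the common denominator $(1-t)^{-n}$; it has nonnegative coefficients, so it suffices to prove that
\[
\prod_{i=1}^{\ell}(1-t^{k_i})(1-t^{d-k_i})\ -\ (1-t)^{\ell}(1-t^{d-1})^{\ell}
\]
has nonnegative coefficients after being divided by an appropriate power of $(1-t)$ — more precisely, I would show the stronger pointwise statement that for each single index $i$,
\[
\frac{(1-t^{k_i})(1-t^{d-k_i})}{(1-t)(1-t^{d-1})}=\frac{(1+t+\cdots+t^{k_i-1})(1+t+\cdots+t^{d-k_i-1})}{1+t+\cdots+t^{d-2}}
\]
is a power series with nonnegative coefficients dominating $1$. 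Equivalently, I claim $(1-t^{k})(1-t^{d-k}) - (1-t)(1-t^{d-1})$ is divisible by $(1-t)$ with a nonnegative quotient; expanding, this difference equals $-t^k-t^{d-k}+t+t^{d-1}+t^{d-k+k}-t^{d} = t+t^{d-1}-t^k-t^{d-k}$ (the $t^d$ terms cancel), and $t+t^{d-1}-t^k-t^{d-k}=(t-t^k)+(t^{d-1}-t^{d-k}) = t(1-t^{k-1})-t^{d-k}(1-t^{k-1})=(t-t^{d-k})(1-t^{k-1})$, which is a product of two polynomials each divisible by $(1-t)$ with nonnegative quotient when $1\le k\le \lfloor d/2\rfloor$ (so that $k-1\ge 0$ and $d-k\ge 1\ge$ exponent bookkeeping works; note $t-t^{d-k}=t(1-t^{d-k-1})$, nonnegative quotient since $d-k-1\ge 0$). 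Then multiply these $\ell$ pointwise inequalities together: a product of power series each of which is $\ge$ another (coefficientwise, all series nonnegative) preserves the coefficientwise inequality, giving $\HS(S/I)\ge\HS(S/J)$ coefficientwise.

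\medskip

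\textbf{Strictness.} If some $k_i>1$, then for that factor the quotient $(1-t^{k_i})(1-t^{d-k_i})/\bigl((1-t)(1-t^{d-1})\bigr)$ is strictly larger than $1$ in some positive degree (indeed the numerator $t+t^{d-1}-t^{k_i}-t^{d-k_i}$ is a nonzero nonnegative polynomial divided by $(1-t)^{-1}\cdot(1-t^{d-1})^{-1}\cdots$, all nonnegative). Multiplying by the other (coefficientwise nonnegative, nonzero) factors, the strict inequality survives in some degree $j$, and then the factor $(1-t)^{-n}$, having all positive coefficients, propagates it to $\HS(S/I)$. Here the main obstacle — and the only place requiring care — is making the \emph{coefficientwise} multiplicativity argument rigorous: one must check that all the series in play genuinely have nonnegative coefficients (which the factorizations above provide) so that ``$A\ge A'$ and $B\ge B'$ coefficientwise with everything nonnegative implies $AB\ge A'B'$'' can be applied, and that strictness is not accidentally cancelled — but since every series involved is coefficientwise nonnegative, no cancellation can occur, so strictness in one factor forces strictness in the product.
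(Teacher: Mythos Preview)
Your approach is essentially the paper's: reduce to a per-pair comparison of two-variable complete intersections, then take the $\ell$-fold product, then absorb the remaining $\frac{1}{(1-t)^{n-2\ell}}$. Your explicit factorization
\[
(1-t^k)(1-t^{d-k})-(1-t)(1-t^{d-1})=(t-t^{d-k})(1-t^{k-1})
\]
is in fact more detailed than the paper, which merely asserts the base case ``follows'' from identifying $\HS(S/J)=1+t+\cdots+t^{d-2}$ when $n=2$.

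There is, however, a genuine misstep in how you frame the per-pair step. The claim that the ratio
$\frac{(1-t^{k})(1-t^{d-k})}{(1-t)(1-t^{d-1})}$
is a power series with nonnegative coefficients is \emph{false}: for $k=2$, $d=4$ it equals $\frac{(1+t)^2}{1+t+t^2}=1+t-t^2+\cdots$. Your ``Equivalently'' is therefore not equivalent, and the claim ``divisible by $(1-t)$ with nonnegative quotient'' is also too weak on its face: you need divisibility by $(1-t)^{2}$, since after dividing by $(1-t)^{2\ell}$ the comparison is between
\[
A_i=\frac{(1-t^{k_i})(1-t^{d-k_i})}{(1-t)^2}\quad\text{and}\quad B=\frac{(1-t)(1-t^{d-1})}{(1-t)^2}=1+t+\cdots+t^{d-2}.
\]
Fortunately your factorization does give $(1-t)^2$ (both factors are divisible by $1-t$), so you have indeed proved $A_i\ge B$ coefficientwise with both sides nonnegative polynomials. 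It is \emph{this} formulation (not the ratio) to which ``multiply the $\ell$ inequalities together'' legitimately applies, yielding $\prod A_i\ge B^\ell$ and hence $\HS(S/I)=\frac{1}{(1-t)^{n-2\ell}}\prod A_i\ge\frac{1}{(1-t)^{n-2\ell}}B^\ell=\HS(S/J)$. That is exactly the paper's argument. Recast in these terms your strictness argument is also fine: if $k_i>1$ then $A_i-B$ is a nonzero nonnegative polynomial, so $\prod A_j-B^\ell\ge (A_i-B)B^{\ell-1}$ has a positive coefficient, and the factor $\frac{1}{(1-t)^{n-2\ell}}$ preserves it.
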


\begin{proof}
Consider first the case, where $n = 2 \ell$. If $n = 2$, then 
\[
\dim_{\k} [S/J]_j = \begin{cases}
1 &  \text{if } 0 \le j \le d-2\\
0 & \text{otherwise}, 
\end{cases}
\]
and the claim follows in this case. Using Hilbert series, this observation can be expressed as 
\begin{equation}
    \label{eq:compare c.i.}
\frac{1}{(1 - t)^2} (1 - t^k) (1 - t^{d-k}) \ge \frac{1}{(1 - t)^2} (1 - t) (1 - t^{d-1}) \quad \text{whenever } 1 \le k \le \frac{d}{2}. 
\end{equation}
Let now $n \ge 4$. Then the Hilbert series of $S/I$ and $S/J$ are 
\[
\HS (S/I) =  \prod_{i = 1}^{\ell} \frac{(1 - t^{k_i}) (1 - t^{d-k_i})}{(1 - t)^2}
\]
and 
\[
\HS (S/J) = \prod_{i = 1}^{\ell} \frac{(1 - t) (1 - t^{d-1})}{(1 - t)^2}. 
\]
Thus, Inequality \eqref{eq:compare c.i.} gives 
\[
\HS (S/I) \ge \HS (S/J), 
\]
as desired. 

Finally, assume $n > 2 \ell$. Then 
\begin{align*}
\HS (S/I)  & =  \frac{1}{(1 - t)^{n - 2 \ell}} \cdot   \prod_{i = 1}^{\ell} \frac{(1 - t^{k_i}) (1 - t^{d-k_i})}{(1 - t)^2} \\
& \ge \frac{1}{(1 - t)^{n - 2 \ell}} \cdot  \prod_{i = 1}^{\ell} \frac{(1 - t) (1 - t^{d-1})}{(1 - t)^2} \\
& = \HS (S/J), 
\end{align*}
where the estimate follows from the case $n = 2 \ell$ and the fact that the coefficients in the power series expansion of $\frac{1}{(1 - t)^{n - 2 \ell}}$ are all non-negative. 
\end{proof}

We are ready for the main result of this section. 

\begin{thm}
    \label{thm:red forms, small ell}
Assume $2\ell \le n$. Then 
\[
\dim  \sigma_{\ell}  (\X_{n-1, d}) = \dim \sigma_{\ell}  (\X_{n-1, [d - 1, 1]}).  
\]    
Moreover:
\begin{itemize}

\item[(a)] The variety $\sigma_{\ell}  (\X_{n-1, d})$ fills its ambient space if and only if 
\begin{itemize}
\item[(i)] $2\ell = n$ and $d = 2$; \quad or 

\item[(ii)] $\ell = 2, \ n = 4$, and $d = 3$. 
\end{itemize}

\item[(b)]  If  $\sigma_{\ell}  (\X_{n-1, d})$ does not fill its ambient space, then its dimension is 
\[
\dim  \sigma_{\ell}  (\X_{n-1, d}) = 
\binom{d+n-1}{n-1} -   \binom{d+n- \ell - 1}{d}  + \ell (n- \ell) - 1, 
\]
and $\sigma_{\ell}  (\X_{n-1, d})$ is defective. 
\end{itemize}
\end{thm}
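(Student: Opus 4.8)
The plan is to reduce the problem to a comparison between the Hilbert function in degree $d$ of the coordinate ring of the join of $\ell$ (not necessarily equal) components $\X_{n-1,[d-k_i,k_i]}$ of $\X_{n-1,d}$ and the same invariant for the component $\X_{n-1,[d-1,1]}$. By Terracini's Lemma, the dimension of an embedded join $J(\X_1,\dots,\X_\ell)$ of components $\X_i=\X_{n-1,[d-k_i,k_i]}$ equals $\dim_\k[I_{P_1}+\cdots+I_{P_\ell}]_d-1$ for general points $P_i\in\X_i$, and by Proposition \ref{nicer} together with the construction preceding Theorem \ref{thm:join as tensor product}, the ideal $I_{P_1}+\cdots+I_{P_\ell}$ is generated by $2\ell$ general forms of degrees $k_1,\dots,k_\ell,d-k_1,\dots,d-k_\ell$; since $2\ell\le n$, Proposition \ref{prop:proper intersection} applies and the linear forms in $\La$ form a regular sequence, so the Hilbert series of the relevant quotient ring $S/I$ is $\prod_{i=1}^\ell\frac{(1-t^{k_i})(1-t^{d-k_i})}{(1-t)^2}\cdot\frac{1}{(1-t)^{n-2\ell}}$. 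The first step, then, is to invoke this machinery to express $\dim J(\X_1,\dots,\X_\ell)$ as $\binom{d+n-1}{n-1}-1-\dim_\k[S/I]_d$.

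Second, I would apply Lemma \ref{prop:compare Hilb c.i.}: for any choice of $k_1,\dots,k_\ell\in\{1,\dots,\lfloor d/2\rfloor\}$ with $2\ell\le n$ one has $\dim_\k[S/I]_j\ge\dim_\k[S/J]_j$ for all $j$, where $J$ corresponds to the component $\X_{n-1,[d-1,1]}$, and the inequality is strict in some degree whenever some $k_i>1$. Taking $j=d$ and passing to the complementary dimensions gives $\dim J(\X_1,\dots,\X_\ell)\le\dim\sigma_\ell(\X_{n-1,[d-1,1]})$, with equality exactly when every $k_i=1$, i.e.\ when the join is $\sigma_\ell(\X_{n-1,[d-1,1]})$ itself. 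Since $\sigma_\ell(\X_{n-1,d})$ is the union over all choices of components of the joins $J(\X_1,\dots,\X_\ell)$, and a finite union of irreducible varieties has dimension equal to the maximum of the dimensions of its members, the maximum is achieved uniquely by the all-ones choice; hence $\dim\sigma_\ell(\X_{n-1,d})=\dim\sigma_\ell(\X_{n-1,[d-1,1]})$.

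Third, for part (a) and the dimension formula in part (b), I would simply transport the corresponding statements of Theorem \ref{thm:case d-1,1} (for $d\ge 3$) and Remark \ref{rem:d is 2} (for $d=2$) across this equality. For $d=2$ we have $\X_{n-1,2}=\X_{n-1,[1,1]}$ on the nose and the answer is in Remark \ref{rem:d is 2}; the ambient space is filled precisely when $2\ell\ge n$, which under $2\ell\le n$ means $2\ell=n$, giving case (a)(i). For $d\ge 3$, Theorem \ref{thm:case d-1,1} gives that $\sigma_\ell(\X_{n-1,[d-1,1]})$ fills its ambient space iff $\ell\ge\ell_0$, and combined with the constraint $2\ell\le n$ one checks this forces $\ell=2$, $n=4$, $d=3$ (case (a)(ii)); the displayed dimension formula in (b) is exactly the $d\ge 3$ branch of Theorem \ref{thm:case d-1,1}. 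Finally, defectivity of $\sigma_\ell(\X_{n-1,d})$ when it does not fill its ambient space follows because the expected dimension of $\sigma_\ell(\X_{n-1,d})$ is computed from $\dim\X_{n-1,d}=\dim\X_{n-1,[d-1,1]}$ (Proposition \ref{prop:dim red var}), so $\sigma_\ell(\X_{n-1,d})$ and $\sigma_\ell(\X_{n-1,[d-1,1]})$ have the same expected dimension and the same actual dimension, and Theorem \ref{thm:case d-1,1} (resp.\ Remark \ref{rem:d is 2}) already records that the latter is defective in exactly these cases.

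The main obstacle is the passage from "each individual join has dimension at most that of $\sigma_\ell(\X_{n-1,[d-1,1]})$'' to "the union of all joins has exactly that dimension.'' This requires knowing that the join for the all-ones partition is literally one of the components of $\sigma_\ell(\X_{n-1,d})$ (so its dimension is attained) and that no other component exceeds it; the first point is clear since $\X_{n-1,[d-1,1]}\subseteq\X_{n-1,d}$ and the second is the content of the strict-inequality clause of Lemma \ref{prop:compare Hilb c.i.}. One should also be slightly careful that the decomposition $\sigma_\ell(\X_{n-1,d})=\bigcup J(\X_1,\dots,\X_\ell)$ is a finite union (it is, since there are finitely many components and finitely many $\ell$-tuples), so the dimension of the union is the max of the dimensions — a routine but necessary remark.
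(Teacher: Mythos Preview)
Your proposal is correct and follows essentially the same route as the paper: both reduce via Terracini's Lemma to a maximum over choices of components, invoke Lemma~\ref{prop:compare Hilb c.i.} to show the all-ones choice dominates, and then import the known results for $\X_{n-1,[d-1,1]}$. The only presentational difference is that for part~(a) the paper reads off the filling cases directly from the classification in Theorem~\ref{thm:dim formula}, whereas you go through Theorem~\ref{thm:case d-1, 1} and Remark~\ref{rem:d is 2} and must then check that $2\ell\le n$ together with $\ell\ge\ell_0$ forces $(\ell,n,d)=(2,4,3)$ when $d\ge3$; both work, but the paper's route is shorter. Your explicit remark that the expected dimensions of $\sigma_\ell(\X_{n-1,d})$ and $\sigma_\ell(\X_{n-1,[d-1,1]})$ agree (via Proposition~\ref{prop:dim red var}) is a point the paper leaves implicit, and the strict-inequality clause you invoke is not actually needed---the non-strict comparison already gives the maximum.
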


\begin{proof}
Let $P_1,\ldots,P_{\ell} \in \X_{n-1, d}$ be points such that each $P_i$ is a general point on some component, say $\X_{n-1, [d - k_i, k_i]}$, of $\X_{n-1, d}$.  
Then Terracini's Lemma gives (as in  Corollary \ref{dim sigma ell}) 
\[
\dim (\sigma_\ell(\X_{n-1,d} )) = \max \left \{\dim_{\k} \left [ I_{P_1} + \cdots + I_{P_\ell} \right ]_d -1 \; | \;  k_1, k_2,\ldots, k_{\ell} \in \left \{1,\ldots, \textstyle{\left \lfloor \frac{d}{2} \right  \rfloor} \right \} \right \}. 
\]
Using the notation of Lemma \ref{prop:compare Hilb c.i.}, this implies 
\[
\dim (\sigma_\ell(\X_{n-1,d})) = \dim_{\k} [J]_d - 1 = \dim (\sigma_\ell(\X_{n-1,[d-1, 1]} )), 
\]
as desired.

Part (a) is now a consequence of the second part of Theorem \ref{thm:dim formula}. We claim that part (b) follows  from Theorem \ref{thm:case d-1, 1}. Indeed, if $\ell < \ell_0$ then Theorem \ref{thm:case d-1, 1} gives the desired dimension. If $\ell_0 \leq \ell$ then Theorem \ref{thm:case d-1, 1} yields that $\sigma_\ell(\X_{n-1,[d-1,1]})$ fills its ambient space.
\end{proof} 

\begin{rem}
     \label{rem:compare secant dim r is 2}
Lemma \ref{prop:compare Hilb c.i.} also  implies that
\[
\dim \sigma_{\ell}  (\X_{n-1, [d-k, k]}) \le \dim \sigma_{\ell}  (\X_{n-1, [d-1, 1]}) \quad \text{ if } 2 \le k \le \frac{d}{2}, 
\]  
provided $\ell \le \frac{n}{2}$. We conjecture that this bound is true without the latter restriction. Notice that we have an upper bound for $\dim \sigma_{\ell}  (\X_{n-1, [d-k, k]})$ by Proposition \ref{prop:Froeberg gives main conj} and that we know   $\dim \sigma_{\ell}  (\X_{n-1, [d-1, 1]})$ by Theorem \ref{thm:case d-1, 1}. This reduces this conjecture to a comparison of two numbers. However, we have been unable to establish the needed estimate.  
\end{rem}

By Theorem \ref{thm:case d-1, 1}, we know exactly when the secant variety $\sigma_{\ell}  (\X_{n-1, [d - 1, 1]})$ fills its ambient space. This gives:

\begin{thm}
    \label{thm:secants red forms fill}  
The secant variety  $\sigma_{\ell}  (\X_{n-1, d})$ fills its ambient space if $\ell \ge \ell_0$, 
where  
\[
\ell_0 = 
\min \left \{ \ell \ge \frac{n}{2} \; | \; \ell \in \ZZ \text{ and } \binom{d - \ell + n - 1}{d} \le \ell (n- \ell) \right \}.
\]

In particular, $\sigma_{\ell}  (\X_{n-1, d})$ fills its ambient space if $\ell \ge n -1$. 
\end{thm}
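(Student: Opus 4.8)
The plan is to read off Theorem~\ref{thm:secants red forms fill} from Theorem~\ref{thm:case d-1, 1} together with the evident inclusion of $\X_{n-1, [d-1, 1]}$ in $\X_{n-1, d}$. First I would record that, since $\X_{n-1, [d-1, 1]}$ is one of the pieces of $\X_{n-1, d} = \bigcup_{k=1}^{\lfloor d/2 \rfloor} \X_{n-1, [d-k, k]}$, any $\ell$ points of $\X_{n-1, [d-1, 1]}$ form in particular a set of $\ell$ points of $\X_{n-1, d}$; spanning and taking Zariski closures gives
\[
\sigma_{\ell}(\X_{n-1, [d-1, 1]}) \subseteq \sigma_{\ell}(\X_{n-1, d}) \subseteq \P^{N-1}, \qquad N = \binom{d+n-1}{n-1},
\]
where $\P^{N-1} = \P([S]_d)$ is the common ambient space of all these varieties.

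Now suppose $d \ge 3$. Theorem~\ref{thm:case d-1, 1} tells us both that $\ell_0 \le n-1$ and that $\sigma_{\ell}(\X_{n-1, [d-1, 1]})$ fills $\P^{N-1}$ precisely when $\ell \ge \ell_0$. Hence, for $\ell \ge \ell_0$, the displayed chain of inclusions forces $\sigma_{\ell}(\X_{n-1, d}) = \P^{N-1}$, which is the first assertion; the inequality $\ell_0 \le n-1$ then yields the ``in particular'' clause. (That the property, once attained at $\ell = \ell_0$, persists for all larger $\ell$ is automatic from $\sigma_{\ell}(\X_{n-1, d}) \subseteq \sigma_{\ell+1}(\X_{n-1, d})$, which comes from adjoining one more point.)

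Finally I would dispose of the case $d = 2$, which Theorem~\ref{thm:case d-1, 1} excludes. Here $\X_{n-1, 2} = \X_{n-1, [1, 1]}$ is irreducible, and Remark~\ref{rem:d is 2} already states that $\sigma_{\ell}(\X_{n-1, 2})$ fills its ambient space exactly when $2\ell \ge n$, so in particular for $\ell \ge n-1$. To reconcile this with the statement of the theorem, one checks that for $d = 2$ the integer $\ell_0$ equals $\lceil n/2 \rceil$: the defining inequality $\binom{n-\ell+1}{2} \le \ell(n-\ell)$, for $n/2 \le \ell < n$, becomes $n+1 \le 3\ell$ after dividing by $n-\ell > 0$, and this already holds at the smallest admissible integer $\ell = \lceil n/2 \rceil$ since $n \ge 3$; thus $\ell \ge \ell_0$ is equivalent to $2\ell \ge n$ in this case. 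I do not anticipate a serious obstacle: the mathematical content is entirely in Theorems~\ref{thm:case d-1, 1} and~\ref{thm:red forms, small ell}, and the only points requiring attention are transferring ``fills its ambient space'' upward along the inclusion of secant varieties and the elementary comparison of the two descriptions of $\ell_0$ when $d = 2$.
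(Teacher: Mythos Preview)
Your proposal is correct and follows essentially the same route as the paper: use the inclusion $\sigma_{\ell}(\X_{n-1,[d-1,1]})\subseteq \sigma_{\ell}(\X_{n-1,d})$ (the paper phrases this as the dimension inequality), invoke Theorem~\ref{thm:case d-1, 1} for $d\ge 3$, and handle $d=2$ via Remark~\ref{rem:d is 2} after checking $\ell_0=\lceil n/2\rceil$. Your write-up is a bit more detailed than the paper's, but the argument is the same; note that Theorem~\ref{thm:red forms, small ell} is not actually needed here.
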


\begin{proof}
Notice that 
\[
\dim  \sigma_{\ell}  (\X_{n-1, d}) \ge \dim \sigma_{\ell}  (\X_{n-1, [d - 1, 1]}). 
\]    
Thus, the claim follows from Theorem \ref{thm:case d-1, 1} if $d \ge 3$. If $d = 2$, then $\ell_0 = \lceil \frac{n}{2} \rceil$, and we conclude by Remark  \ref{rem:d is 2}. 
\end{proof}

\begin{proof}[Proof of Theorem \ref{thm:intro red forms}]
Combine Theorems \ref{thm:red forms, small ell} and \ref{thm:secants red forms fill}. 
\end{proof}

\begin{rem}
     \label{rem:conj for red forms} 
If $2\ell > n$, then Theorems  \ref{thm:red forms, small ell} and \ref{thm:secants red forms fill} do not rule out the possibility that $\sigma_\ell(\X_{n-1,d})$ fills its ambient space even when $\sigma_\ell(\X_{n-1,[d-1,1]})$ does not. However, we do not expect  this ever happening. In fact, we suspect that the following extension of Theorem \ref{thm:red forms, small ell} is true: 
\[
\dim  \sigma_{\ell}  (\X_{n-1, d}) = \dim \sigma_{\ell}  (\X_{n-1, [d - 1, 1]}), 
\]   
for all $\ell, \ n$ and $d$. 
If so, then, by Theorem \ref{thm:case d-1, 1},  the converse of Theorem \ref{thm:secants red forms fill} is true and $\sigma_{\ell}  (\X_{n-1, d})$ is defective whenever it does not fill its ambient space. 
\end{rem}


\section{Application to secant varieties of Segre varieties}\label{AOPstuff}

The paper \cite{AOP} by Abo, Ottaviani and Peterson  
classifies all Segre varieties $X$ such that the $(\ell-1)$-secant variety is defective for some $\ell < 7$
and it raises some questions as to conjecturally what happens in general. 
Our results verify certain cases of these conjectures.  

We first recall some terminology from \cite{AOP} and a related result from \cite{CGG:0}.
Assume $2\leq n_r\leq\cdots\leq n_1$ (this is consistent with our ordering convention, but it is the reverse of what \cite{AOP} does). Say that 
$(n_1,\ldots, n_r)$ is \emph{balanced} if $n_1-1\leq \Pi_{i=2}^rn_i-\sum_{i=2}^r(n_i-1)$ and 
\emph{unbalanced} otherwise. Using this terminology, 
\cite{CGG:0} proves that with respect to the Segre embedding of $X={\mathbb P}^{n_1-1}\times \cdots\times {\mathbb P}^{n_r-1}$ 
in projective space, $X$ has a defective $(\ell-1)$-secant 
variety for some $\ell$ if $(n_1,\ldots, n_r)$ is unbalanced. This result, \cite[Proposition 3.3]{CGG:0},
was paraphrased  in \cite{AOP} essentially as follows (see \cite[Lemma 4.1]{AOP}):
\vskip\baselineskip

\begin{prop} With respect to the Segre embedding of $X={\mathbb P}^{n_1-1}\times \cdots\times {\mathbb P}^{n_r-1}$ 
in projective space,  $\sigma_\ell(X)$ is defective for $\ell$ satisfying 
$$\Pi_{i=2}^rn_i-\sum_{i=2}^r(n_i-1)<\ell<\min\{\Pi_{i=2}^rn_i,n_1\}.$$
\end{prop}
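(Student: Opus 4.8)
The plan is to deduce this Segre-defectivity statement from our results on secant varieties of the variety $\X_{n-1,[d-1,1]}$ in the case $r=2$. The bridge between the two worlds is the observation that a two-factor reducible form of type $[d-k,k]$ corresponds, via the finite Segre-type morphism $\PP([S]_{d-k})\times\PP([S]_k)\to \X_{n-1,[d-k,k]}$, to a point of a Segre variety; when one of the factors is linear (i.e.\ $k=1$), the relevant Segre variety is $\PP^{n-1}\times\PP^{M-1}$, where $M=\binom{d+n-2}{n-1}=\dim_\k[S]_{d-1}$. More precisely, the locus of forms $L\cdot G$ with $L$ linear and $G$ of degree $d-1$ is the image of a Segre embedding, and the tangent-space computation (Proposition~\ref{nicer}, Lemma~\ref{lem:dim for r=2}) shows that $\dim\sigma_\ell(\X_{n-1,[d-1,1]})$ is governed by the Hilbert function in degree $d$ of an ideal generated by $\ell$ general linear forms and $\ell$ general forms of degree $d-1$. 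The first step, therefore, is to make precise the identification between the secant varieties of a Segre variety of the shape $\PP^{n_1-1}\times\PP^{N-1}$ (i.e.\ a ``two-factor'' Segre with one factor arbitrary and the other of dimension $n_1-1$) and the secant varieties $\sigma_\ell(\X_{n-1,[d-1,1]})$ for appropriate $n,d$ — or, more directly, to observe that $\sigma_\ell$ of $\PP^{n_1-1}\times\PP^{m-1}$ is the variety of $m\times n_1$ matrices of rank $\le\ell$, whose dimension is classically known, and that this is exactly the content of Theorem~\ref{thm:case d-1, 1} specialized appropriately.

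The second step is to reduce the general multi-factor Segre $X=\PP^{n_1-1}\times\cdots\times\PP^{n_r-1}$ to the two-factor situation. Here one uses the standard \emph{flattening} (or reassociation) trick: grouping the last $r-1$ factors together gives a Segre re-embedding $X\hookrightarrow \PP^{n_1-1}\times\PP^{m-1}$ with $m=\Pi_{i=2}^r n_i$, and this is a linear projection (indeed a linear section on the ambient spaces) compatible with secant varieties, so $\dim\sigma_\ell(X)\le\dim\sigma_\ell(\PP^{n_1-1}\times\PP^{m-1})$. Simultaneously, the image of $X$ inside $\PP^{n_1-1}\times\PP^{m-1}$ is \emph{not} all of the second factor: its span uses only $\sum_{i=2}^r(n_i-1)+1$ of the $m$ ``coordinates'' in a suitable sense, so the $\ell$-secant variety of $X$ cannot be larger than the $\ell$-secant of a Segre whose second factor has the smaller dimension $\sum_{i=2}^r(n_i-1)$. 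Combining these two constraints with the known expected dimension gives the defectivity precisely in the stated range $\Pi_{i=2}^r n_i-\sum_{i=2}^r(n_i-1)<\ell<\min\{\Pi_{i=2}^r n_i,n_1\}$: in this range the ``matrix rank'' count forces the actual dimension below the expected dimension.

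Concretely, I would argue as follows. Set $p=\Pi_{i=2}^r n_i$ and $q=\sum_{i=2}^r(n_i-1)+1$, so that the unbalanced condition is $n_1>p-q+1$, i.e.\ $n_1\ge p-q+2$. The Segre variety of the grouped factors $\PP^{n_2-1}\times\cdots\times\PP^{n_r-1}$ sits non-degenerately in $\PP^{p-1}$, and $X$ itself sits in $\PP^{n_1-1}\times\PP^{p-1}$ as a subvariety of $\PP^{n_1 p-1}$. One checks (this is exactly \cite[Proposition 3.3]{CGG:0} / \cite[Lemma 4.1]{AOP}, reproved here via Terracini's Lemma and the tangent-space ideal of Proposition~\ref{TangIdeal} applied in the degenerate Segre situation) that a general $\ell$-secant plane to $X$ has dimension at most $\ell\cdot(n_1-1+q-1)+\ell-1=\ell(n_1+q-2)+\ell-1$, essentially because the span of $\ell$ general points factors through the smaller Segre $\PP^{n_1-1}\times\PP^{q-1}$; meanwhile $\dim X=\sum_{i=1}^r(n_i-1)=n_1-1+q-1$, so the expected dimension of $\sigma_\ell(X)$ (when it does not fill) is $\ell\dim X+\ell-1=\ell(n_1+q-2)+\ell-1$ as well — hence one must instead compare against $\PP^{n_1 p-1}$ and against the true (matrix-rank) dimension. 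The key numerical point, which I would verify by a short computation with binomial/rank formulas, is that when $p-q+1<\ell<\min\{p,n_1\}$, the true dimension $\dim\sigma_\ell(\PP^{n_1-1}\times\PP^{q-1})=\ell(n_1+q-\ell)+\ell-1$ (rank-$\ell$ locus of $n_1\times q$ matrices) is strictly smaller than $\ell\dim X+\ell-1$, which forces $\sigma_\ell(X)$ to be defective. I expect the main obstacle to be the bookkeeping in this last step: correctly identifying which ``reduced'' Segre ($\PP^{n_1-1}\times\PP^{q-1}$) the secant variety of $X$ is constrained by, and checking that the strict inequality among dimensions holds throughout the open interval — this is where one must be careful to use $\dim\sigma_\ell(\X_{n-1,[d-1,1]})$ from Theorem~\ref{thm:case d-1, 1} (equivalently the matrix-rank dimension count) rather than the expected dimension, since it is precisely the gap between the two that produces the defect.
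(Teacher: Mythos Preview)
The paper does not give a proof of this proposition at all: it is quoted from the literature, explicitly attributed to \cite[Proposition~3.3]{CGG:0} and \cite[Lemma~4.1]{AOP}. So there is no ``paper's proof'' to compare against; what you have written is an attempted independent argument.

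Your overall strategy --- flatten $X=\PP^{n_1-1}\times\cdots\times\PP^{n_r-1}$ into the two-factor Segre $\PP^{n_1-1}\times\PP^{p-1}$ with $p=\prod_{i\ge2}n_i$, and use the matrix-rank description of $\sigma_\ell(\PP^{n_1-1}\times\PP^{p-1})$ --- is the right one, and is indeed essentially the argument in \cite{CGG:0}. But the execution has genuine errors. First, the assertion that the image of $X$ in $\PP^{n_1-1}\times\PP^{p-1}$ ``spans only $\sum_{i\ge2}(n_i-1)+1$ of the coordinates'' is false: the Segre embedding of $\PP^{n_2-1}\times\cdots\times\PP^{n_r-1}$ is nondegenerate in $\PP^{p-1}$. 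There is no containment of $X$ in $\PP^{n_1-1}\times\PP^{q-1}$, and so no upper bound on $\dim\sigma_\ell(X)$ comes from $\sigma_\ell(\PP^{n_1-1}\times\PP^{q-1})$. Second, your rank-locus formula is off: for $\ell\le\min\{a,b\}$ one has $\dim\sigma_\ell(\PP^{a-1}\times\PP^{b-1})=\ell(a+b-\ell)-1$, not $\ell(a+b-\ell)+\ell-1$.

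The correct bookkeeping is this: containment $X\subset\PP^{n_1-1}\times\PP^{p-1}$ gives $\dim\sigma_\ell(X)\le\ell(n_1+p-\ell)-1$ for $\ell<\min\{n_1,p\}$, and this is also $<n_1p-1$. On the other hand $\dim X=(n_1-1)+(q-1)$ with $q=1+\sum_{i\ge2}(n_i-1)$, so the expected dimension is $\min\{n_1p-1,\ \ell(n_1+q-1)-1\}$. The inequality $\ell(n_1+p-\ell)-1<\ell(n_1+q-1)-1$ is equivalent to $\ell>p-q+1=\prod_{i\ge2}n_i-\sum_{i\ge2}(n_i-1)$, which gives defectivity in the stated range. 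The role of $q$ is in the \emph{expected} dimension, not as an upper bound.

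Finally, routing through Theorem~\ref{thm:case d-1, 1} is a red herring. That result concerns $\X_{n-1,[d-1,1]}$, whose associated Segre has factor dimensions of the special form $\binom{d_i+n-1}{n-1}$; it does not apply to arbitrary $(n_1,\ldots,n_r)$, and the proposition is a general statement about Segre varieties.
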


By a Segre variety $X$ being \emph{unbalanced} \cite{AOP} means $X={\mathbb P}^{n_1-1}\times \cdots\times {\mathbb P}^{n_r-1}$ where
$(n_1,\ldots,n_r)$ is unbalanced. With these definitions, \cite[Question 6.6]{AOP} then asks:
\vskip\baselineskip

\begin{ques}\label{AOPquest} 
Is it true for a Segre variety $X$ such that $\sigma_\ell(X)$ is defective for some $\ell$,
that either: 

1. $X$ is unbalanced; or

2. $X={\mathbb P}^2\times{\mathbb P}^{n-1}\times{\mathbb P}^{n-1}$ with $n$ odd; or

3. $X={\mathbb P}^2\times{\mathbb P}^3\times{\mathbb P}^3$; or

4. $X={\mathbb P}^1\times{\mathbb P}^1\times{\mathbb P}^{n-1}\times{\mathbb P}^{n-1}$?
\end{ques}

The conjecture in \cite{AOP} is that the answer is yes; to rephrase:

\begin{conj}\label{AOPconj}
Let $X$ be a balanced Segre variety but not any of those listed in items 2, 3 or 4 of Question \ref{AOPquest}.
Then $\sigma_\ell(X)$ is not defective for all $\ell\geq 2$.
\end{conj}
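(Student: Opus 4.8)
Since \ref{AOPconj} is stated as a conjecture, the plan is not to prove it outright but to confirm it for those Segre varieties that are visible to our methods. The bridge is the finite morphism $\P([S]_{d_1}) \times \cdots \times \P([S]_{d_r}) \to \X_{n-1,\lambda}$, $([F_1],\ldots,[F_r]) \mapsto [F_1\cdots F_r]$: it is the composition of the Segre embedding $Y := \P([S]_{d_1}) \times \cdots \times \P([S]_{d_r}) \hookrightarrow \P\big([S]_{d_1}\otimes_{\k}\cdots\otimes_{\k}[S]_{d_r}\big)$ with the linear projection induced by the multiplication surjection $[S]_{d_1}\otimes_{\k}\cdots\otimes_{\k}[S]_{d_r} \twoheadrightarrow [S]_d$, and this projection is a morphism on all of $Y$ because a decomposable tensor $F_1\otimes\cdots\otimes F_r$ with every $F_i\neq 0$ never maps to $0$. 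Writing $n_i := \binom{d_i+n-1}{n-1}$, so that $Y\cong\P^{n_1-1}\times\cdots\times\P^{n_r-1}$, formula \eqref{dim X} gives $\dim Y = \sum_i(n_i-1) = \dim\X_{n-1,\lambda}$, so the morphism is generically finite and $\sigma_{\ell}(\X_{n-1,\lambda})$ is the closure of the image of $\sigma_{\ell}(Y)$ under this linear projection.

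First I would record the chain of inequalities
\[
\dim \sigma_{\ell}(\X_{n-1,\lambda}) \ \le\ \dim \sigma_{\ell}\big(\P^{n_1-1}\times\cdots\times\P^{n_r-1}\big) \ \le\ \ell\dim Y + \ell - 1 \ =\ \ell\dim\X_{n-1,\lambda}+\ell-1,
\]
the left one coming from the previous paragraph and the middle one being the usual parameter count. Hence, whenever $\sigma_{\ell}(\X_{n-1,\lambda})$ is nondefective but does \emph{not} fill its ambient $\P^{N-1}$, one has $\dim\sigma_{\ell}(\X_{n-1,\lambda}) = \ell\dim\X_{n-1,\lambda}+\ell-1 < N-1 \le \prod_i n_i - 1$, so all three displayed quantities agree; therefore $\sigma_{\ell}(\P^{n_1-1}\times\cdots\times\P^{n_r-1})$ attains its expected dimension $\ell\dim Y + \ell-1$, i.e. is nondefective. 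This reduces \ref{AOPconj}, for Segre varieties all of whose factor dimensions are simultaneously of the form $\binom{d_i+n-1}{n-1}$ (for one common $n$), to the assertion that $\sigma_{\ell}(\X_{n-1,\lambda})$ is nondefective and proper inside $\P^{N-1}$.

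Next I would feed in our results on $\X_{n-1,\lambda}$. When $d_1 < d_2+\cdots+d_r$ --- which forces $r\ge 3$ --- Theorem \ref{thm:defectivity - intro}(a) gives that $\sigma_{\ell}(\X_{n-1,\lambda})$ is nondefective for $2\ell\le n$, while Theorem \ref{general r geq 3}(a) records that $\dim\sigma_{\ell}(\X_{n-1,\lambda}) = \ell\dim\X_{n-1,\lambda}+\ell-1$, which is $<N-1$ unless $\ell$ is so large that the expected dimension already equals $N-1$; more generally one may use the proved instances of Conjecture \ref{conj:defectivity conj}(a) in Theorem \ref{thm: main thm - intro}. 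A short binomial estimate shows that $d_1 < d_2+\cdots+d_r$ forces the associated Segre $\P^{n_1-1}\times\cdots\times\P^{n_r-1}$ to be balanced, and (apart from $\lambda=[1,1,1]$ with $n=3$, which yields $\P^2\times\P^2\times\P^2$, an instance of item 2 of Question \ref{AOPquest}) it is not among the listed exceptions. Combined with the reduction above, this gives nondefectivity of $\sigma_{\ell}\big(\P^{\binom{d_1+n-1}{n-1}-1}\times\cdots\times\P^{\binom{d_r+n-1}{n-1}-1}\big)$ in this range, confirming \ref{AOPconj} for this family.

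The hard part is everything else. The construction only reaches Segre varieties whose factor dimensions have the special shape $\binom{d_i+n-1}{n-1}$, so the bulk of \ref{AOPconj} is untouched by this approach. Moreover, for the Segre varieties it does reach, the range of $\ell$ in which defectivity could conceivably occur always lies well beyond $2\ell\le n$, since $n_1=\binom{d_1+n-1}{n-1}$ is vastly larger than $\frac{n}{2}$; confirming nondefectivity there would require controlling $\sigma_{\ell}(\X_{n-1,\lambda})$ for $2\ell>n$, which currently rests on the WLP-Conjecture \ref{conj:WLP} (equivalently, when $r=2$, on Fr\"oberg's Conjecture \ref{conj:Froeberg}), and even then one must cross-check against the classification of defective Segre varieties with $\ell<7$ in \cite{AOP} to identify the genuinely new cases.
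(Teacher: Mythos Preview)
Your approach is essentially the one taken in the paper: set up the linear projection from the Segre embedding of $Y=\P^{n_1-1}\times\cdots\times\P^{n_r-1}$ (with $n_i=\binom{d_i+n-1}{n-1}$) onto $\X_{n-1,\lambda}$, observe that this forces $\dim\sigma_\ell(\X_{n-1,\lambda})\le\dim\sigma_\ell(Y)\le\ell\dim Y+\ell-1$, and conclude (the paper packages this as Theorem~\ref{AOPholds}) that whenever $\sigma_\ell(\X_{n-1,\lambda})$ is nondefective and does not overly fill, $\sigma_\ell(Y)$ is nondefective; then invoke Theorem~\ref{general r geq 3}(a) in the range $d_1<s$, $r\ge3$, $2\ell\le n$.

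The one place where you go beyond the paper is the assertion that ``a short binomial estimate shows that $d_1<d_2+\cdots+d_r$ forces the associated Segre to be balanced.'' The paper is more cautious here: its corollary only claims balancedness for $d_2=\cdots=d_r\gg0$, proved by a crude degree comparison of the two sides as polynomials in $d_2$. Your stronger claim may well be true (the surjection $[S]_{d_2}\otimes\cdots\otimes[S]_{d_r}\twoheadrightarrow[S]_s$ gives $\prod_{i\ge2}n_i\ge\binom{s+n-1}{n-1}>n_1$, which is most of what one needs), but you have not actually supplied the estimate controlling the remaining term $\sum_{i\ge2}(n_i-1)$, so as written that step is a gap. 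Your discussion of the limitations of the method is accurate and somewhat more explicit than the paper's.
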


Our results verify this in various cases, as we now explain.

It is useful to reconsider the morphism \eqref{SegreToRedFormsMorphism}.
Let $X={\mathbb P}^{n_1-1}\times \cdots\times {\mathbb P}^{n_r-1}$. The points of the Segre embedding of $X$ 
in ${\mathbb P}^{N-1}$, where $N=\Pi_i n_i$, are exactly those of the form
$[\ldots, a_{i_11}a_{i_22}\cdots a_{i_rr},\ldots]$, where $(a_{1j},\ldots,a_{n_jj})\in {\mathbb P}^{n_j-1}$.
We can regard $[\ldots, a_{i_11}a_{i_22}\cdots a_{i_rr},\ldots]$ as the multi-homogeneous polynomial
\[
\begin{split}
F(\ldots,x_{ij},\ldots)&=\sum a_{i_11}a_{i_22}\cdots a_{i_rr}x_{i_11}x_{i_22}\cdots x_{i_rr}\\
&=\Pi_{j=1}^r (a_{1j}x_{1j}+\cdots+a_{n_jj}x_{n_jj})
\in \k[\ldots,x_{ij},\ldots]
\end{split}
\]
of multi-degree $(1^r)=(1,\ldots,1)$, where the variables $x_{ij}$ are indexed by
$1\leq i\leq n_j$ with $1\leq j\leq r$ and $\k[{\mathbb P}^{n_j-1}]=\k[x_{1j},\ldots,x_{n_jj}]$. 
Now suppose that $n_j=\binom{d_j+n-1}{n-1}$ for each $j$, where $d_1\geq d_2\geq \cdots\geq d_r$ and $d=d_1+\cdots+d_r$.
Regarding the coordinate variables $x_{ij}$ of ${\mathbb P}^{n_j-1}$
as an enumeration of the monomials $M_{ij}\in \k[x_1,\ldots,x_n]$ of degree $d_j$ for each $j$, 
we get a map 
$$F\mapsto \overline{F}\in \k[x_1,\ldots,x_n],$$ 
where $\overline{F}$ is the degree $d$ singly homogeneous polynomial
$\overline{F}(\ldots,M_{ij},\ldots)$ obtained by substituting $M_{ij}$ into $x_{ij}$.
Note that the map $\overline{\vbox to.08in{\hbox to.1in{\hfil}}}$ is actually a linear homomorphism  
\[
\overline{\vbox to.08in{\hbox to.1in{\hfil}}}:(\k[\ldots,x_{ij},\ldots])_{(1,\ldots,1)}\to (\k[x_1,\ldots,x_n])_d.
\]
The projectivizations of $(\k[\ldots,x_{ij},\ldots])_{(1,\ldots,1)}$ and $(\k[x_1,\ldots,x_n])_d$
are ${\mathbb P}^{N-1}$ and ${\mathbb P}^{\binom{d+n-1}{n-1}-1}$, respectively. The restriction of $\overline{\vbox to.08in{\hbox to.1in{\hfil}}}$ 
to the affine cone corresponding to the Segre embedding of $X$ in ${\mathbb P}^{N-1}$ is just the surjective morphism induced
on affine cones by the morphism \eqref{SegreToRedFormsMorphism} of $X$ to $\X_{n-1,\lambda}$
for $\lambda=[d_1,\ldots,d_r]$.

Since $\overline{\vbox to.08in{\hbox to.1in{\hfil}}}:(\k[\ldots,x_{ij},\ldots])_{(1,\ldots,1)}\to (\k[x_1,\ldots,x_n])_d$
is linear, it maps the affine cone of the $(\ell-1)$-secant variety of $X$ to the affine cone of the $(\ell-1)$-secant variety of $\X_{n-1,\lambda}$.
If $\sigma_{\ell} (\X_{n-1,\lambda})$ is not defective,
let us say that the $(\ell-1)$-secant variety $\sigma_{\ell} (\X_{n-1,\lambda})$
of $\X_{n-1,\lambda}$ does not \emph{overly fill} its ambient space if 
$\dim \sigma_{\ell} (\X_{n-1,\lambda}) = \ell \cdot \dim \X_{n-1,\lambda} + \ell-1$. Also, given the partition $\lambda=[d_1,\ldots,d_r]$,
let $X_{n-1,\lambda}$ denote the Segre embedding of  ${\mathbb P}^{n_1-1}\times \cdots\times {\mathbb P}^{n_r-1}$, where
$n_i=\binom{d_i+n-1}{n-1}$. We thus have:

\begin{thm}\label{AOPholds}
If $\sigma_{\ell} (\X_{n-1,\lambda})$ is not defective and does not overly fill its ambient space, then
$\sigma_\ell(X_{n-1,\lambda})$ is not defective.
\end{thm}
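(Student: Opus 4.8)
The plan is to compare the dimensions of the two secant varieties by using the linear map $\overline{\vbox to.08in{\hbox to.1in{\hfil}}}:(\k[\ldots,x_{ij},\ldots])_{(1,\ldots,1)}\to (\k[x_1,\ldots,x_n])_d$ constructed just above the statement, which on affine cones restricts to the surjective morphism from the Segre variety $X_{n-1,\lambda}$ onto $\X_{n-1,\lambda}$. Because this map is \emph{linear} on the ambient vector spaces, it carries the affine cone over $\sigma_\ell(X_{n-1,\lambda})$ \emph{onto} the affine cone over $\sigma_\ell(\X_{n-1,\lambda})$: a point of $\sigma_\ell(X_{n-1,\lambda})$ lies (up to taking closures) on a linear span $\langle Q_1,\ldots,Q_\ell\rangle$ with $Q_i$ in the cone over $X_{n-1,\lambda}$, and its image lies on $\langle \overline{Q_1},\ldots,\overline{Q_\ell}\rangle$ with $\overline{Q_i}$ in the cone over $\X_{n-1,\lambda}$; conversely every secant point downstairs is hit since $\overline{\vbox to.08in{\hbox to.1in{\hfil}}}$ is surjective on the cones over the base varieties. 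Hence there is a dominant (in fact surjective on cones) morphism $\sigma_\ell(X_{n-1,\lambda})\dashrightarrow \sigma_\ell(\X_{n-1,\lambda})$, which immediately gives the inequality
\[
\dim \sigma_\ell(X_{n-1,\lambda}) \ \geq\ \dim \sigma_\ell(\X_{n-1,\lambda}).
\]

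Next I would bring in the two hypotheses. Since $\sigma_\ell(\X_{n-1,\lambda})$ is not defective and does not overly fill its ambient space, by definition
\[
\dim \sigma_\ell(\X_{n-1,\lambda}) \ =\ \ell\cdot\dim\X_{n-1,\lambda} + \ell - 1.
\]
On the other hand, for the Segre variety the general parameter count gives the upper bound
\[
\dim \sigma_\ell(X_{n-1,\lambda}) \ \leq\ \ell\cdot\dim X_{n-1,\lambda} + \ell - 1,
\]
and the finite morphism \eqref{SegreToRedFormsMorphism} (together with \eqref{dim X} and the fact that $n_i=\binom{d_i+n-1}{n-1}$) shows $\dim X_{n-1,\lambda} = \sum_i(n_i-1) = \dim \X_{n-1,\lambda}$. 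Combining the displayed inequality above with these two facts yields
\[
\ell\cdot\dim X_{n-1,\lambda}+\ell-1 \ \geq\ \dim \sigma_\ell(X_{n-1,\lambda}) \ \geq\ \dim\sigma_\ell(\X_{n-1,\lambda}) \ =\ \ell\cdot\dim\X_{n-1,\lambda}+\ell-1 \ =\ \ell\cdot\dim X_{n-1,\lambda}+\ell-1,
\]
so equality holds throughout; in particular $\dim\sigma_\ell(X_{n-1,\lambda}) = \ell\cdot\dim X_{n-1,\lambda}+\ell-1$, which is exactly the expected dimension of $\sigma_\ell(X_{n-1,\lambda})$ in the range where it does not fill its ambient space. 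Therefore $\sigma_\ell(X_{n-1,\lambda})$ is not defective.

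I expect the only genuinely delicate point to be the first step: justifying that $\overline{\vbox to.08in{\hbox to.1in{\hfil}}}$ really maps the cone over the \emph{secant} variety $\sigma_\ell(X_{n-1,\lambda})$ \emph{onto} the cone over $\sigma_\ell(\X_{n-1,\lambda})$, i.e.\ handling the Zariski closures correctly. The clean way is to observe that a linear map sends linear spans to linear spans, hence sends the (not necessarily closed) union of secant $\PP^{\ell-1}$'s to the corresponding union downstairs; passing to closures, the image of $\sigma_\ell(X_{n-1,\lambda})$ is a closed irreducible set containing a dense subset of $\sigma_\ell(\X_{n-1,\lambda})$, hence contains all of it, while being contained in it because $\overline{\vbox to.08in{\hbox to.1in{\hfil}}}$ sends the cone over $X_{n-1,\lambda}$ into (indeed onto) the cone over $\X_{n-1,\lambda}$. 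Everything else is the standard fact that a surjective morphism of varieties cannot decrease dimension, combined with the trivial upper bound for secant varieties and the dimension count already recorded in \eqref{dim X}.
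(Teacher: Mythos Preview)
Your proof is correct and follows essentially the same approach as the paper's: both use the linear map $\overline{\vbox to.08in{\hbox to.1in{\hfil}}}$ to obtain $\dim\sigma_\ell(X_{n-1,\lambda})\geq\dim\sigma_\ell(\X_{n-1,\lambda})$, combine this with the hypothesis $\dim\sigma_\ell(\X_{n-1,\lambda})=\ell\cdot\dim\X_{n-1,\lambda}+\ell-1$ and the equality $\dim X_{n-1,\lambda}=\dim\X_{n-1,\lambda}$, and conclude. The only cosmetic difference is that the paper phrases it as a contradiction (if $\sigma_\ell(X_{n-1,\lambda})$ were defective then $\dim\sigma_\ell(X_{n-1,\lambda})<\dim\sigma_\ell(\X_{n-1,\lambda})$, contradicting the surjection on cones), while you arrange the same inequalities into a direct squeeze; your closing discussion of why the linear map respects secant varieties and Zariski closures is a bit more explicit than the paper's one-line assertion.
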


\begin{proof}
Assume $\sigma_{\ell} (\X_{n-1,\lambda})$ does not overly fill its ambient space.
If $\sigma_\ell(X_{n-1,\lambda})$ were defective, then 
$\dim \sigma_\ell(X_{n-1,\lambda}) 
< \ell \cdot \dim X_{n-1,\lambda} + \ell-1=\ell \cdot \dim \X_{n-1,\lambda} + \ell-1$.
Since $\sigma_{\ell} (\X_{n-1,\lambda})$ does not overly fill its ambient space, we have
$\ell \cdot \dim \X_{n-1,\lambda} + \ell-1
=\dim \sigma_\ell(\X_{n-1,\lambda})$ and hence $\dim \sigma_\ell(X_{n-1,\lambda}) 
< \dim \sigma_\ell(\X_{n-1,\lambda})$.
But $\overline{\vbox to.08in{\hbox to.1in{\hfil}}}:(\k[\ldots,x_{ij},\ldots])_{(1,\ldots,1)}\to (\k[x_1,\ldots,x_n])_d$
maps the affine cone of $\sigma_\ell(X_{n-1,\lambda})$ onto the affine cone of
$\sigma_\ell(\X_{n-1,\lambda})$, so we must have
$\dim \sigma_\ell(X_{n-1,\lambda}) 
\geq\dim \sigma_\ell(\X_{n-1,\lambda})$, hence $\sigma_\ell(X_{n-1,\lambda})$ cannot be defective.
\end{proof}

As an example, we have the following corollary which verifies Conjecture \ref{AOPconj} in a range of cases. Although this particular consequence is known (see \cite[Proposition 2.3]{CGG:0}), our approach is new. 


\begin{cor}
Let $1\leq d_r\leq \cdots\leq d_1<d_2+\cdots+d_r$, $3\leq r$ and $4\leq 2\ell\leq n$ be integers with
$n_i=\binom{d_i+n-1}{n-1}$ and $a=n_1\cdots n_r$. If $X$ is the Segre embedding of 
${\mathbb P}^{n_1-1}\times\cdots\times {\mathbb P}^{n_r-1}$ in ${\mathbb P}^{a-1}$, then 
$\sigma_\ell(X)$ is not defective. Moreover, $X$ is balanced for $d_2=\cdots=d_r\gg0$.
\end{cor}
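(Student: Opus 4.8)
The plan is to deduce this corollary directly from Theorem~\ref{AOPholds} together with Theorem~\ref{general r geq 3}(a), and then verify separately the numerical ``balanced'' claim. First I would set $\lambda=[d_1,\ldots,d_r]$ and observe that the hypotheses $1\leq d_r\leq\cdots\leq d_1<d_2+\cdots+d_r=s$, $r\geq 3$, and $4\leq 2\ell\leq n$ place us exactly in the setting of Theorem~\ref{general r geq 3}(a): since $2\ell\leq n$ the intersection is proper, and since $d_1<s$ we are below the partition dividing hyperplane, so
\[
\dim\sigma_\ell(\X_{n-1,\lambda})=\ell\cdot\dim\X_{n-1,\lambda}+(\ell-1)\leq N-1,
\]
and $\sigma_\ell(\X_{n-1,\lambda})$ is not defective. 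In particular $\sigma_\ell(\X_{n-1,\lambda})$ does not \emph{overly fill} its ambient space, since its dimension equals $\ell\cdot\dim\X_{n-1,\lambda}+\ell-1$ (which is at most $N-1$). Then Theorem~\ref{AOPholds} applies verbatim with $n_i=\binom{d_i+n-1}{n-1}$, giving that $\sigma_\ell(X_{n-1,\lambda})=\sigma_\ell(X)$ is not defective. This is the entire content of the first assertion, and it is essentially immediate once the two cited results are lined up.

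For the second assertion I would recall from the excerpt that a tuple $(n_1,\ldots,n_r)$ with $n_r\leq\cdots\leq n_1$ is \emph{balanced} when $n_1-1\leq\prod_{i=2}^r n_i-\sum_{i=2}^r(n_i-1)$, equivalently $n_1+\sum_{i=2}^r n_i\leq \prod_{i=2}^r n_i + r$. Specializing to $d_2=\cdots=d_r=:e$, we have $n_2=\cdots=n_r=\binom{e+n-1}{n-1}=:m$, so balancedness becomes $n_1+(r-1)m\leq m^{r-1}+r$. Since $d_1<s=(r-1)e$ and $d_i\leq d_1$, the value $n_1=\binom{d_1+n-1}{n-1}$ is bounded; more to the point, as $e\to\infty$ (with $n$, $r$ fixed and $d_1$ constrained only by $d_1<(r-1)e$, so we may as well track the worst case $n_1$), the dominant term on the right is $m^{r-1}$, which grows polynomially in $e$ of degree $(n-1)(r-1)$, whereas $n_1\leq\binom{(r-1)e-1+n-1}{n-1}$ grows only as a polynomial in $e$ of degree $n-1$. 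Since $r-1\geq 2$, the right side eventually dominates, so the inequality holds for $e\gg 0$; I would make this precise by comparing leading coefficients, noting $m^{r-1}-(r-1)m-(r-1)=\Theta(e^{(n-1)(r-1)})$ while $n_1=\Theta(e^{n-1})$, and $(n-1)(r-1)\geq 2(n-1)>n-1$. That gives ``$X$ is balanced for $d_2=\cdots=d_r\gg0$.''

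The only step requiring any care is the balancedness estimate, and even there the argument is a crude degree comparison of binomial coefficients in the single growing parameter $e$; the constraint $d_1<(r-1)e$ is exactly what keeps $n_1$ from growing as fast as $m^{r-1}$, since $n_1$ is a binomial of a quantity linear in $e$ while $m^{r-1}$ is a power of such a binomial. I do not anticipate a genuine obstacle: the first half is a direct citation chain, and the second half reduces to the elementary fact that $\binom{(r-1)e+c}{n-1}$ is asymptotically negligible compared with $\binom{e+n-1}{n-1}^{r-1}$ when $r\geq 3$. One should double-check the edge of the range $2\ell=n=4$, $r=3$ to make sure $d_1<s$ is compatible with $d_i\geq 1$ (e.g.\ $\lambda=[1,1,1]$ works), so the hypotheses are non-vacuous.
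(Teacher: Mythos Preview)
Your proposal is correct and follows essentially the same route as the paper: apply Theorem~\ref{general r geq 3}(a) to get non-defectivity of $\sigma_\ell(\X_{n-1,\lambda})$ without overly filling, then invoke Theorem~\ref{AOPholds}; for balancedness, bound $n_1$ via $d_1<(r-1)d_2$ and compare polynomial degrees in $d_2$. The paper's proof is virtually identical, phrasing the asymptotic comparison as $\binom{d_2(r-1)+n-1}{n-1}$ having degree $n-1$ in $d_2$ versus $\binom{d_2+n-1}{n-1}^{r-1}-(r-1)\binom{d_2+n-1}{n-1}$ having degree $(r-1)(n-1)$.
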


\begin{proof}
By Theorem \ref{general r geq 3}, $\sigma_\ell(\X_{n-1,\lambda})$ is not defective  for $\lambda=[d_1,\ldots,d_r]$
and does  not overly fill its ambient space, hence by Theorem \ref{AOPholds}, $\sigma_\ell(X)$ is not defective.
Now we just need to check that $X$ is balanced when $d_2=\cdots=d_r\gg0$. Since
$\binom{d_1+n-1}{n-1} < \binom{d_2(r-1)+n-1}{n-1}$, it suffices to show that
$\binom{d_2(r-1)+n-1}{n-1}\leq \binom{d_2+n-1}{n-1}^{r-1}-(r-1)\binom{d_2+n-1}{n-1}$.
But $\binom{d_2(r-1)+n-1}{n-1}$ is a polynomial in $d_2$ of degree $n-1$ while
$\binom{d_2+n-1}{n-1}^{r-1}-(r-1)\binom{d_2+n-1}{n-1}$ is a polynomial in $d_2$ of degree $(r-1)(n-1)$,
hence the inequality must hold for $d_2\gg0$.
\end{proof}


\section{Further questions and comments}

In this section we pose some natural questions arising from our work.

It is clear from the previous sections that the major algebraic question left open in the paper is the extent to which 
sums of at least two generic tangent space ideals to the varieties of reducible hypersurfaces have enough Lefschetz elements (see Conjecture \ref{conj:WLP}). One should note that less information than the full Weak Lefschetz Property is needed to establish the conjectured dimension of secant varieties to varieties of reducible forms. This allowed us to use results by Hochster and Laksov \cite{HL}, Anick \cite{anick}, and a theorem on complete intersections (see 
\cite{stanley, watanabe, RRR, sekiguchi} or \cite{HP}) in the proofs of Theorems \ref{thm:case d-1, 1} and \ref{thm:dim when froeberg}, respectively.  It would be very interesting to have new instances where (partial) Weak Lefschetz Properties are established.

As we have observed earlier in the paper, each variety of reducible hypersurfaces is a finite projection of a Segre embedding of a product of projective spaces.  Are there more geometric conclusions (than those we have found in Section \ref{AOPstuff}) that we can draw about the secant varieties of the Segre embedding from the more abundant information we have for the secant varieties of the varieties of reducible hypersurfaces? 

Another question is if any of the secant varieties of the varieties of reducible forms are arithmetically Cohen-Macaulay, apart from the trivial cases in which those secant varieties are themselves hypersurfaces in their ambient space.


Again, apart from some very small examples, we do not have equations for the varieties of reducible hypersurfaces, much less for their secant varieties (even in cases where we know the latter are hypersurfaces in their ambient spaces (see Remark \ref{hyper surf})). It would be interesting to have some intrinsic equations for these varieties or a bound on the degrees of their equations.

Mammana \cite{mammana} gives a formula for the degree of the variety of reducible plane curves but we have no generalization of that formula for varieties of reducible hypersurfaces beyond the case of plane curves.  More generally, one would like to have a formula for  the degree of the secant varieties of these varieties. This is not known even for varieties of plane curves, except in the most trivial of cases.  It would even be interesting to know the degree when the variety is a hypersurface in its ambient  space. 
 
\vskip .5cm
\thanks
\noindent{\bf Acknowledgements:}
   
The authors wish to thank Queen's University and NSERC (Canada), in the person of the second author,
for kind hospitality during the preparation of this work. 

Catalisano and Gimigliano were partially supported by GNSAGA of INDAM and by MUIR funds (Italy).  Geramita was partially supported by NSERC (CANADA) under grant No. 386080, while Harbourne was partially supported by NSA (US) under grant NO. H98230-13-1-0213.  Both Migliore and Nagel were partially supported by the Simons Foundation under grants No. 309556 (Migliore) and 317096 (Nagel).  Shin was supported by the Basic Science Research Program of the NRF (Korea) under grant No. 2013R1A1A2058240/2. 

The authors are also grateful to the referees for helpful suggestions and comments.

 
\bibliographystyle{alpha}
\bibliography{biblioSecant}

\end{document}